\newcommand{\spincT}{\mathfrak{T}}
\newcommand{\DD}{\mathbb{D}}
\newcommand{\lav}{\mathfrak{v}}
\newcommand{\law}{\mathfrak{w}}
\newcommand{\F}{\mathbb F}
\newcommand{\Rr}{\mathfrak{R}}
\newcommand{\fmap}{\mathfrak{f}}
\newcommand{\hmove}{e}
\newcommand{\hpath}{\mathfrak{h}}
\newcommand{\tfrak}{\mathfrak{t}}
\newcommand{\Sqmap}{{\mathfrak{S}}}
\newcommand{\AModCat}{\Ring\text{-}\mathsf{Mod}}
\newcommand{\AModuleCat}{\Ring\text{-}\mathsf{Modules}}
\newcommand{\Tangle}{\mathcal{T}}
\newcommand{\Lin}{\mathcal{L}}
\newcommand{\ATangleCat}{\Ring\text{-}\mathsf{Tang}}
\newcommand{\ACobCat}{\Ring\text{-}\mathsf{Tangles}}
\newcommand{\HFTmap}{\HFT}
\newcommand{\Rin}{\mathbb{M}}
\newcommand{\Cerf}{{\mathfrak{C}}}
\newcommand{\Modi}{{\mathfrak{M}}}
\newcommand{\HD}{\mathcal{H}}
\newcommand{\nd}{\mathrm{nd}}
\newcommand{\la}{\mathfrak{u}}
\newcommand{\polygon}{\Psi}
\newcommand{\Poly}{\mathrm{Polygons}}
\newcommand{\Ht}{\mathrm{H}}
\newcommand{\Hbb}{\mathbb{H}}
\newcommand{\ov}{\widehat}
\newcommand{\ovl}{\overline}
\newcommand{\ti}{\widetilde}
\newcommand{\Fsphere}{\mathbb{S}}
\newcommand{\Ncal}{\mathcal{N}}
\newcommand{\Tai}{{\mathbb{T}}_{i}}
\newcommand{\PP}{\mathbb{P}}
\newcommand{\Gcal}{\mathcal{G}}
\newcommand{\Dcal}{\mathcal{D}}
\newcommand{\el}{\kappa}
\newcommand{\Pcal}{\mathcal{P}}
\newcommand{\sig}{\sigma}
\newcommand\Field{\mathbb F}
\newcommand\ub{\underline{b}}
\newcommand\Dual{\mathcal D}
\newcommand\Duality\Dual
\newcommand\interior{\mathrm{int}}
\newcommand{\relspinc}{{\underline{\spinc}}}
\newcommand\x{\mathbf x}
\newcommand\w{\mathbf w}
\newcommand\z{\mathbf z}
\newcommand\p{\mathbf p}
\newcommand\y{\mathbf y}
\newcommand\sX{\mathcal X}
\newcommand\sY{\mathcal Y}
\newcommand\sZ{\mathcal Z}
\newcommand\ModSphere{\ModFlow\left({\mathbb S}\longrightarrow
\Sym^{g-1}(\Sigma_{1})\times \Sym^2(\Sigma_{2})\right)}
\newcommand\ModSpheres\ModSphere
\newcommand\gr{\mathrm{gr}}
\newcommand\UnparModSp{\widehat \ModSp}
\newcommand\UnparModFlow\UnparModSp
\newcommand\PD{\mathrm{PD}}
\newcommand{\spinc}{\mathfrak s}
\newcommand{\spinct}{\mathfrak t}
\newcommand\sur{F}
\newcommand\ModMaps{\mathcal M}
\newcommand\ModSp\ModMaps
\newcommand\Ta{{\mathbb T}_{\alpha}}
\newcommand\Tb{{\mathbb T}_{\beta}}
\newcommand\Tc{{\mathbb T}_{\gamma}}
\newcommand\del{\partial}
\newcommand\alphas{\mbox{\boldmath$\alpha$}}
\newcommand\betas{\mbox{\boldmath$\beta$}}
\newcommand\gammas{\mbox{\boldmath$\gamma$}}
\newcommand\deltas{\mbox{\boldmath$\delta$}}
\newcommand\Ring{\mathbb A}
\newcommand\spincrel\relspinc
\newtheorem{thm}{Theorem}[section]
\newtheorem{prop}[thm]{Proposition}
\newtheorem{cor}[thm]{Corollary}
\newtheorem{lem}[thm]{Lemma}
\newtheorem{remark}[thm]{Remark}
\theoremstyle{definition}
\newtheorem{defn}{Definition}[section]
\newtheorem{example}{Example}[section]
\def\endproof{\relax\ifmmode\expandafter\endproofmath\else
  \unskip\nobreak\hfil\penalty50\hskip.75em\hbox{}\nobreak\hfil\bull
  {\parfillskip=0pt \finalhyphendemerits=0 \bigbreak}\fi}
\def\endproofmath$${\eqno\bull$$\bigbreak}
\def\bull{\vbox{\hrule\hbox{\vrule\kern3pt\vbox{\kern6pt}\kern3pt\vrule}\hrule}}
\newcommand{\R}{\mathbb{R}}
\newcommand{\T}{\mathbb{T}}
\newcommand{\Z}{\mathbb{Z}}
\newcommand{\ind}{\mathrm{ind}}
\newcommand{\Image}{\mathrm{Im}}
\newcommand{\Span}{\mathrm{Span}}
\newcommand{\Nbd}[1]{{\mathrm{nd}}(#1)}
\newcommand{\pii}[1]{\pi_0(#1)}
\newcommand{\nbd}[1]{\Nbd{#1}}
\newcommand{\ModSWfour}{\mathcal{M}}
\newcommand{\ModFlow}{\ModSWfour}
\newcommand{\SpinC}{{\mathrm{Spin}}^c}
\newcommand\abuts\Rightarrow
\newcommand\Sym{\mathrm{Sym}}
\newcommand{\Sarc}{J}
\newcommand{\arcj}{J}
\newcommand{\Farc}{{\mathbb{I}}}
\newcommand{\Cob}{\mathcal{C}}
\newcommand{\Cobb}{\mathcal{W}}
\newcommand{\HFKT}{\text{HFK}}
\newcommand{\CFT}{\mathrm{CF}}
\newcommand{\HFT}{\mathrm{HF}}
\newcommand{\D}{\mathbb{D}}
\newcommand{\M}{\mathbb{M}}
\newcommand{\m}{\mathfrak{m}}
\newcommand{\lra}{\longrightarrow}
\newcommand{\ra}{\rightarrow}
\newcommand{\Sig}{\Sigma}
\newcommand{\Mod}{\mathcal{M}}
\newcommand{\Ci}{{\Cerf}}
\newcommand{\Ker}{\text{Ker}}
\newcommand{\zet}{\mathfrak{v}}
\providecommand\@dotsep{5}
\def\listtodoname{List of Todos}
\def\listoftodos{\@starttoc{tdo}\listtodoname}
\begin{document}

\title{Tangle Floer homology and cobordisms between tangles}%
\author{Akram Alishahi}
\address{Department of Mathematics, Columbia University, New York, NY 10027}
\email{alishahi@math.columbia.edu}
\author{Eaman Eftekhary}%
\address{School of Mathematics, Institute for Research in Fundamental Sciences (IPM),
P. O. Box 19395-5746, Tehran, Iran}%
\email{eaman@ipm.ir}

%\todo{Change the abstract}
\begin{abstract}

We introduce a generalization of oriented tangles, which are still 
called \emph{tangles}, so that they are in one-to-one correspondence 
with the sutured manifolds. We define cobordisms between sutured 
manifolds (tangles) by generalizing cobordisms between oriented 
tangles. For every commutative algebra $\Ring$ over $\Z/2\Z$, we 
define $\ACobCat$ to be the category consisting of $\Ring$-tangles, 
which are balanced tangles with $\Ring$-colorings of the tangle 
strands and fixed $\SpinC$ structures, and $\Ring$-cobordisms as 
morphisms. An $\Ring$-cobordism is a cobordism with a compatible 
$\Ring$-coloring and an \emph{affine} set of $\SpinC$ structures.  
Associated with every $\Ring$-module $\Rin$ we construct a functor
\begin{displaymath}
\HFT^\Rin:\ACobCat\lra \AModuleCat,
\end{displaymath}
called the {\emph{tangle Floer homology functor}}, where $\AModuleCat$ denotes the the category of $\Ring$-modules and  
$\Ring$-homomorphisms between them. Moreover, for any $\Ring$-tangle $\Tangle$ the $\Ring$-module $\HFT^{\Rin}(\Tangle)$ is the extension of sutured Floer homology defined in an earlier work of the authors.

%
%
%
%which associates the $\Ring$-module $\HFT^\Rin(\Tangle)$ to the
%$\Ring$-tangle $\Tangle$ and assigns an $\Ring$-homomorphism
%\[\fmap_{\Cob}^\Rin:\HFT^\Rin(\Tangle)\ra \HFT^\Rin(\Tangle')\] 
%to every $\Ring$-cobordism $\Cob$ from $\Tangle$ to $\Tangle'$. Here, $\AModuleCat$, is the the category of $\Ring$-modules and  
%$\Ring$-homomorphisms between them.

In particular, this construction generalizes the $4$-manifold 
invariants of Ozsv\'ath and Szab\'o. Moreover, applying the above 
machinery to decorated cobordisms between links, we get functorial 
maps on link Floer homology.
\end{abstract}
\maketitle

\tableofcontents

%\listoftodos

\section{Introduction}\label{sec:intro}
% ----------------------------------------------------------------

\subsection{Introduction and background}
Ozsv\'ath and Szab\'o introduced  Heegaard Floer homology 
for closed three dimensional manifolds \cite{OS-3m1,OS-3m2} which
resulted in powerful tools for the study of various structures in low 
dimensional topology, including invariants for knots 
\cite{OS-knot, Ras, Ef-LFH}, for links \cite{OS-link}, for contact 
structures \cite{OS-contact} and for sutured manifolds 
\cite{Juh,AE-1}. Juh\'asz and Thurston \cite{JT} showed that the 
Heegaard Floer groups associated with three-dimensional objects 
(closed manifolds, links and sutured manifolds)are in fact functors 
which associate a concrete module to any of the aforementioned 
topological objects, rather than just the isomorphism class of it.
Typically, Heegaard Floer homology groups come in different flavours, 
which are denoted by $\ov\HFT,\HFT^+,\HFT^-$ and $\HFT^\infty$, 
besides many other flavours which appear in knot and link Floer 
homology theories. The simplest version of these invariants, 
$\widehat{\HFT}$ and $\widehat{\HFKT}$, has been generalized to 
compact, non-closed $3$-manifolds, with a specific on the boundary, 
called sutured manifolds by Juh\'{a}sz. The authors gave a framework 
that  generalizes sutured Floer homology and brings all flavours of 
Heegaard Floer homology under the same roof in \cite{AE-1}.\\

In this paper, to define a natural notion of cobordism between 
sutured manifolds, that generalizes cobordisms between $3$-manifolds, 
knots and links, we introduce a generalization of classical oriented 
tangles such that they are in one-to-one correspondence with sutured 
manifolds without toroidal sutures. So, we call our Heegaard Floer 
invariants, \emph{tangle Floer homology}, and denote it by $\HFT$.

\begin{defn}\label{def:tangle}
A {\emph{tangle}} $(M,T)$ is an oriented $3$-manifold 
$M$ with boundary and a properly embedded, oriented $1$-manifold 
$T$. Both $M$ and $T$ have no closed components, $\del M$ is equipped 
with a fixed decomposition $\del M=\del^+M\amalg\del^-M$ such that 
$\del^-T\subset \del^-M$ and $\del^+T\subset\del^+M$.

The tangle $(M,T)$ is called {\emph{balanced}} if every component of 
$\del M$ intersects $T$ and for every connected  component $M_\circ$ 
of $M$, $\chi(\del^+ M_\circ)= \chi(\del^-M_\circ)$.
\end{defn}

Fix  an algebra $\Ring$ over $\F=\Z/2\Z$ (which will always be 
commutative through this paper). For every connected component 
$s\in\pi_0(\del ^\circ M)$ and any map $\la:\pi_0(T)\to\Ring$ define 
\[\la(s):=\prod_{\substack{t\in \pi_0(T)\\ 
\imath_T^\circ(t)=s}}\la(t)\] 
where $\circ=+,-$ and  $\imath_T^\circ:
\pi_0(T)=\pi_0(\partial^\circ T)\ra \pi_0(\partial^\circ M)$
is the map induced by the 
inclusion $\del^{\circ} T\subset \del^\circ M$.

\begin{defn}
An \emph{$\Ring$-coloring} for a balanced tangle $(M,T)$ is a map 
$\la:\pi_0(T)\to \Ring$ satisfying the following two conditions.

\begin{enumerate}
\item If $s\in\pi_0(\del M)$ corresponds to a 
connected component  with positive genus, then $\la(s)=0$,
\item For every connected component $M_\circ$ of $M$
$$\sum_{s\in\pii{\del^+M_\circ}}\la(s)=
\sum_{s\in\pii{\del^-M_\circ}}\la(s),$$ 
\end{enumerate}
\end{defn}
In particular, $\Ring=\F$ and $\la=0$  give an $\Ring$-coloring for 
any balanced tangle. 

The set $\SpinC(M)$ of $\SpinC$ structures over a tangle $(M,T)$ is 
the set of homology classes of nonzero vector fields 
on $M$ which restrict to the outward normal of $\partial^+M$ and 
the inward normal of $\partial^-M$.

\begin{defn}\label{def:A-tangle}
An $\Ring$\emph{-tangle} is a $4$-tuple 
$\Tangle=[M,T,\spinc,\la]$ where  $(M,T)$ is a balanced tangle, 
$\spinc\in\SpinC(M)$ is a $\SpinC$ structure over $M$
and  $\la:\pii{T}\ra \Ring$ is an $\Ring$-coloring of $(M,T)$.
\end{defn}

If $\Tangle$ is an $\Ring$-tangle, we use 
$[M_\Tangle,T_\Tangle,\spinc_\Tangle,\la_\Tangle]$
to denote the corresponding $4$-tuple. 
%Moreover, for every $\Ring$-tangle $\Tangle$ 
%we define $\SpinC(\Tangle):=\SpinC(M_\Tangle)$.\\
Given an $\Ring$-module $\Rin$ and associated with an $\Ring$-tangle  
$\Tangle$ the construction of the authors in \cite{AE-1} defines an 
$\Ring$-module
\begin{displaymath}
\HFT^\Rin(\Tangle)=H_*(\CFT(\Tangle)\otimes_\Ring \Rin)
\end{displaymath}
so  that its isomorphism type is an invariant of $\Tangle$. 
In light of Juh\'asz and Thurston's naturality discussions in 
\cite{JT}, one can strengthen this result as follows. 
\begin{defn}\label{def:ATangleCat} The category $\ATangleCat$ is 
defined such that:
\begin{enumerate}
\item Objects are $\Ring$-tangles.
\item Morphisms from an $\Ring$-tangle $\Tangle=[M,T,\spinc,\la]$ 
to another $\Ring$-tangle $\Tangle'=[M',T',\spinc',\la']$ are the 
diffeomorphisms $d:(M,T)\ra (M',T')$ such that $d^*\spinc'=\spinc$ 
and the following diagram is commutative. 
\begin{diagram}
\pii{T}&&\rTo{d_*}&&\pii{T'}\\
&\rdTo{\la}&&\ldTo{\la'}&\\
&&\Ring &&
\end{diagram}
\end{enumerate}
Here $d^*:\SpinC(M')\ra \SpinC(M)$ and $d_*:\pi_0(T)\ra \pi_0(T')$ 
are the maps induced by the diffeomorphism $d$.
\end{defn}
Let $\AModCat$ denote the category of $\Ring$-modules together with 
the isomorphisms between them. 
Using  \cite[Theorem 2.39]{JT} one can prove
the following theorem (see Section~\ref{sec:naturality}).

\begin{thm}\label{thm:intro-category}
For every algebra $\Ring$ over $\F$ and every 
$\Ring$-module $\Rin$,
assigning the $\Ring$-module 
$\HFT^\Rin(\Tangle)$ to the $\Ring$-tangle $\Tangle$
in  $\ATangleCat$ gives a functor
\begin{displaymath}
\HFT^\Rin: \ATangleCat\lra \AModCat.
\end{displaymath}
\end{thm}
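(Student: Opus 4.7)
The plan is to bootstrap naturality from the sutured Floer homology setting of Juhász--Thurston (Theorem 2.39 of \cite{JT}) to the $\Ring$-tangle setting via three progressive enrichments: sutured manifolds $\leadsto$ $\SpinC$-decorated tangles $\leadsto$ $\Ring$-tangles $\leadsto$ the result of tensoring over $\Ring$ with $\Rin$. First I would recall the bijective correspondence between balanced tangles $(M,T)$ and balanced sutured manifolds $(M,\gamma)$ without toroidal sutures, under which a component of $T$ corresponds to a component of $\gamma$; a diffeomorphism $d:(M,T)\to(M',T')$ of $\Ring$-tangles is then precisely a diffeomorphism of the associated sutured manifolds, and Juhász--Thurston produces a canonical isomorphism $d_*:\SFH(M,\gamma)\to\SFH(M',\gamma')$ which depends only on the isotopy class of $d$ rel boundary and satisfies $(d\circ d')_*=d_*\circ d'_*$ and $(\Id)_*=\Id$.

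Next I would upgrade to the $\SpinC$-decorated level. The construction of $\CFT(\Tangle)$ in \cite{AE-1} splits along $\SpinC$ structures, and the Juhász--Thurston naturality respects this splitting, because a representative Heegaard diagram for $(M,T)$ pushes forward under $d$ to a Heegaard diagram for $(M',T')$ with $d^*\spinc'=\spinc$ matching the $\SpinC$ decomposition. Consequently, the restriction of $d_*$ to the summand indexed by $\spinc$ lands in the summand indexed by $\spinc'$, giving a canonical isomorphism $\HFT(d):\HFT(\Tangle_0)\to\HFT(\Tangle_0')$, where $\Tangle_0$, $\Tangle_0'$ denote the underlying $\SpinC$-tangles. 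Functoriality at this stage is inherited directly from \cite{JT}.

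Now I would bring in the algebra structure carried by $\la:\pi_0(T)\to\Ring$. By construction in \cite{AE-1}, the chain complex $\CFT(\Tangle)$ is an $\Ring$-module where the $\Ring$-action is determined, component by component of $T$, by the values of $\la$ on $\pi_0(T)$. The commutativity of the triangle
\begin{diagram}
\pi_0(T)&&\rTo{d_*}&&\pi_0(T')\\
&\rdTo{\la}&&\ldTo{\la'}&\\
&&\Ring&&
\end{diagram}
is exactly the condition ensuring that $d_*$, which permutes components of $T$ according to $d$, intertwines the two $\Ring$-actions. Hence the isomorphism $\HFT(d)$ is $\Ring$-linear, and tensoring with the $\Ring$-module $\Rin$ and passing to homology yields an $\Ring$-module isomorphism $\HFT^\Rin(d):\HFT^\Rin(\Tangle)\to\HFT^\Rin(\Tangle')$. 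Functoriality of this assignment — compositions, identities — follows from the corresponding properties at the sutured level combined with the functoriality of $(-)\otimes_\Ring\Rin$ and of homology.

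The main obstacle I anticipate is the bookkeeping needed to guarantee that the Juhász--Thurston isomorphism, originally stated at the level of $\SFH$ of the underlying sutured manifold, can be lifted to a chain-level isomorphism (up to canonical chain homotopy) which is $\Ring$-linear with respect to the action dictated by $\la$; in particular one must verify that the distinguishing class of Heegaard diagrams used in \cite{AE-1} is preserved by $d$ in a way compatible with the component-wise decoration, since the $\Ring$-module structure is defined via basepoints attached to the components of $T$. Once this compatibility is checked — and it reduces to the fact that $d$ sends the basepoint data for $\Tangle$ to equivalent basepoint data for $\Tangle'$ by the commutativity of the triangle above — the rest of the argument is formal.
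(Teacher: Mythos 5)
Your proposal contains a conceptual gap that is more serious than the ``bookkeeping'' obstacle you anticipate. You treat $\CFT(\Tangle)$ as if it were $\SFH(M,\gamma)$ equipped with an added $\Ring$-module structure, so that a natural isomorphism $d_*$ on $\SFH$ could be ``lifted'' to an $\Ring$-linear one on $\HFT$ by checking compatibility of basepoint data. But $\CFT(\Tangle)$ is not an enrichment of the sutured Floer complex by a module structure: it is a genuinely different chain complex whose \emph{differential} counts holomorphic disks weighted by $\la(\phi)=\prod_i\la_i^{n_{z_i}(\phi)}$. Setting $\la_i=0$ recovers the sutured complex, but for general $\Ring$ the two differentials disagree, so a chain map realizing the Juh\'asz--Thurston isomorphism on $\SFH$ has no canonical promotion to a chain map on $\CFT$. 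The commutativity of the triangle involving $\la$, $\la'$ and $d_*$ controls what happens to the $\Ring$-grading of generators under $d$; it does not, on its own, produce the required chain map or chain homotopies in the $\Ring$-refined setting.

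What the paper actually does (Section~\ref{sec:naturality}) is rerun the Juh\'asz--Thurston machinery \emph{within} the $\Ring$-refined theory rather than attempting to import it from $\SFH$. Concretely: one defines the graph $\Gcal_{\Tangle}$ of isotopy $\Ring$-diagrams, constructs the maps associated with each edge ($\alpha$- and $\beta$-equivalences via holomorphic triangle maps against a top generator $\Theta_{\alpha\beta}$ whose existence must be re-established over $\Ring_T$, stabilization maps, diffeomorphism maps), and then verifies the four axioms of a strong Heegaard invariant --- functoriality, commutativity of distinguished rectangles, continuity, and handleswap invariance --- for these refined maps. None of these verifications is formal: for instance, the existence and uniqueness of the top generator requires the relative-grading computation showing the top-graded homology is $\Ring_T$, and the handleswap axiom requires re-running the Gromov-degeneration argument of [JT, Lemmas~9.25, 9.28] with the $\la$-weighted counts. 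Only after establishing that $\HFT^\Rin$ is a strong Heegaard invariant can one invoke [JT, Theorem~2.39] to get path-independence of the transition isomorphisms, and hence the functor. Your proposal skips this entire verification and therefore does not constitute a proof; the item you flag as a loose end is the bulk of the argument, and it does not reduce to basepoint compatibility under $d$.
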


\begin{example}\label{ex:HFcomp}
Tangle Floer homology extends sutured Floer homology, 
as well as different 
versions of knot and link Floer homology in the following sense:

\begin{itemize}
\item[(a)] Let $\Ring=\F$. Equipping any balanced tangle $(M,T)$ 
with the trivial $\Ring$-coloring $\la=0$ and an arbitrary $\SpinC$ 
class $\spinc\in\SpinC(M)$, we have 
$\HFT^{\F}(M,T,\spinc,\la)=\mathrm{SFH}(M,T,\spinc)$. 
\item[(b)] Corresponding to any closed, oriented $3$-manifold $Y$ 
with a based point $p\in Y$, there is a tangle $(Y_p,T_p)$ where $Y_p$ 
is obtained from $Y$ by removing two disjoint $3$-balls, and 
$T_{p}\subset Y_{p}$ is a properly embedded, oriented arc connecting 
the two sphere boundary component. Furthermore, $T_{p}$ passes 
through the point $p$. For $\Ring=\F[\la]$, there is a natural 
$\Ring$-coloring $\la_p$ of $T_{p}$ which labels $T_p$ by $\la$. 
Then, for any $\SpinC$ structure $\spinc\in\SpinC(Y)$, setting the 
$\Ring$-module $\Rin$ equal to $\F, \F[\la], \F[\la,\la^{-1}]$
and $ \F[\la^{-1}]$ the tangle Floer homology  
$\HFT^{\Rin}(Y_p,T_p,\spinc,\la)$ is equal to $\widehat{\HFT}(Y,\spinc)$, 
$\HFT^-(Y,\spinc)$, $\HFT^{\infty}(Y,\spinc)$ and 
$\HFT^+(Y,\spinc)$, respectively.

\item[(c)] Suppose that  $L=\amalg_{i=1}^mL_i$ is an oriented link 
in a connected, oriented, closed $3$-manifold $Y$ and 
$\p=\{p_1,\ldots,p_n\}$ is a collection of markings on $L$, so that 
each component of $L$ contains at least one marked point. We may 
consider a collection of $n$ small arcs on $L$ containing $\p$, and 
remove small balls from a neighborhood of the endpoints of these 
arcs. This gives a $3$-manifold $Y_{\p}$ with $2n$ sphere boundary 
components. The orientation on $L$ may be used to decompose this 
boundary into $n$ spheres in $\partial^+Y_{\p}$ and $n$ spheres in 
$\partial^-Y_{\p}$. After changing the orientation of the small arcs, 
we also obtain a tangle $L_{\p}$ with $2n$ connected components which 
connect the negative boundary to the positive boundary. Let 
$\Ring=\F[\la_1,\ldots,\la_n,\lav_1,..,\lav_m]$, and label the small 
arc containing $p_i$ by $\la_i$ and the remaining arcs on $L_i$ by 
$\lav_i$. This gives an $\Ring$-coloring of $(Y_{\p},L_{\p})$, denote 
by $\la_{\p}$, which assigns the variables $\la_1,\ldots,\la_n$
to the marked points in $\p$ and the variables $\lav_1,\ldots,\lav_m$
to the connected components of the link $L$.  For any $\SpinC$ 
structure $\spinc\in\SpinC(Y)$, the tangle Floer homology groups 
$\HFT^{\Rin}(Y_{\p},L_{\p},\la_{\p},\spinc)$ give the usual link 
Floer homology groups associated with $L$ using different 
$\Ring$-modules $\M$. In particular, for any knot $K\subset S^3$ with 
one based point $p$, 
$\HFT^{\frac{\F[\la,\lav]}{\langle \lav\rangle}}
(S^3_p,K_p,\la_p,\spinc)=\HFKT^-(K,\spinc)$. 
\end{itemize}
\end{example}

\subsection{Main results}\label{subsec:intro-main-results}
Associated with a cobordism $W$ from a closed, oriented $3$-manifold 
$M$ to another closed, oriented $3$-manifold $M'$, and a $\SpinC$ 
class $\spinct\in\SpinC(W)$, Ozsv\'ath and Szab\'o construct  the  
homomorphisms 
\begin{displaymath}
\fmap^\circ_{W,\spinct}:\HFT^\circ(M,\spinct|_{M})\lra
\HFT^\circ(M',\spinct|_{M'}).
\end{displaymath}
With Theorem~\ref{thm:intro-category} in place, it is natural to ask 
if the construction of  Ozsv\'ath and Szab\'o may be extended to an 
invariant for cobordisms between tangles. First, we define the notion 
of cobordisms between tangles, by generalizing the notion of 
cobordisms between classical tangles.
\begin{defn}
A {\emph{cobordism}} $(W,\sur)$ from  $(M,T)$ to $(M',T')$  consists 
of a smooth oriented  four-manifold $W$, with boundary and corners 
and without closed components,  and a properly embedded smooth 
oriented surface $\sur$ in $W$, with boundary and corners  and
without closed components, such that:
\begin{enumerate}
\item The boundary $(\partial W,\partial F)$ of  $(W,F)$ 
consists of a horizontal part 
\begin{displaymath}
\begin{split}
(\del_hW,\del_hF)&=(\del_h^+W,\del_h^+F)\amalg 
(\del_h^- W,\del_h^- F)\\
&=\left(\del^+M\times I,\del^+T\times I\right)
\amalg \left(\del^-M\times I,\del^-T\times I\right)\\
&= \left(\del^+M'\times I,\del^+T'\times I\right)
\amalg \left(\del^-M'\times I,\del^-T'\times I\right)
\end{split}
\end{displaymath}
and a vertical part $(\del_vW,\del_vF)= -(M,T)\amalg (M',T')$,
with corners  \[(\del _v W,\del_v F)\cap(\del_h W,\del_h F)
=(\del M,\del T)\amalg(\del M',\del T').\]
\item  For every component $\sur_\circ$ 
of $\sur$  the orientation induced 
on $\partial \sur_\circ$ by the orientation of $\sur_\circ$ 
agrees with the orientation inherited from $-T\amalg T'$.
\end{enumerate} 
The cobordism $(W,\sur)$  is called {\emph{stable}} if  $(M,T)$ and 
$(M',T')$ are balanced and for every connected component  
$\sur_\circ$ of $\sur$ which is not homeomorphic to a disk, 
$T\cap \sur_\circ$ and $T\cap\sur_\circ$ have 
more than one connected component. 
\end{defn}

Assume that $(W,\sur)$ is a stable cobordism from a balanced tangle 
$(M,T)$ to a balanced tangle $(M',T')$, as above. The inclusions of 
$T$ and $T'$ in $F$ induce maps
\[\jmath_T:\pii{T}\ra \pii{F}\quad \text{and}\quad
\jmath_{T'}:\pii{T'}\ra \pii{F}.\]

\begin{defn}
An $\Ring$-coloring for $(W,F)$ is a map $\la:\pi_0(\sur)\to\Ring$ 
such that $\la\circ\jmath_{T}$ and $\la\circ\jmath_{T'}$ are 
$\Ring$-colorings for $(M,T)$ and $(M',T')$, respectively.
\end{defn}

Let $\SpinC(W)$ denote the set of $\SpinC$ structures on $W$ 
(Definition \ref{def:spinc}). By an {\emph{affine set}}  of $\SpinC$ 
structures over $W$ we mean a subset 
$\spincT=\spinct+H_\spincT\subset\SpinC(W)$ which is determined by 
a $\SpinC$ structure $\spinct\in\SpinC(W)$ and a 
submodule of $\Ker(\pi)\subset H^2(W,\Z)$. Here, $\pi$ is the map 
from the cohomology long exact sequence 
\begin{diagram}
\cdots  &\rTo & H^2(W, M\amalg M',\Z) & \rTo{\imath} 
&H^2(W,\Z)&\rTo{\pi}& H^2(M\amalg M',\Z)
&\rTo&\cdots  
\end{diagram}
for $(W, M\amalg M')$. In particular, if this fixed submodule is 
trivial, then $\spincT$ consists of a single $\SpinC$ structure 
$\spinct$. If $\spinct-\spinct'\in\ker(\pi)$, then 
$\spinct|_{M}=\spinct'|_{M}$ and $\spinct|_{M'}=\spinct'|_{M'}$, 
so  $\spincT|_{M}$ and $\spincT|_{M'}$ are well-defined.

\begin{defn}\label{defn:cobordism}
An $\Ring$-{\emph{cobordism}} $\Cob=\left[W,\sur,\spincT,\la\right]$ 
from $\Tangle$ to  $\Tangle'$ consists of a stable cobordism 
$(W,\sur)$ from $(M,T)$ to $(M',T')$, an affine set 
$\spincT\subset \SpinC(W)$ of $\SpinC$ structure over $W$, and an 
$\Ring$-coloring  $\la:\pi_0(\sur)\ra \Ring$ so that 
\[\Tangle=[M,T,\spinc=\spincT|_{M},\la\circ \jmath_T]\quad\text{and}
\quad\Tangle'=[M',T',\spinc'=\spincT|_{M'},\la\circ \jmath_{T'}].\] 
If $\Cob$ is an $\Ring$-cobordism from $\Tangle$ to $\Tangle'$,
we write $\Cob:\Tangle\leadsto \Tangle'$.
\end{defn}

The $4$-tuple associated with an $\Ring$-cobordism $\Cob$ is denoted
by $[W_\Cob,\sur_\Cob,\spincT_\Cob,\la_\Cob]$ and we set  
$\SpinC(\Cob)=\SpinC(W_\Cob)$. 
When $\spincT$ consists of a single $\SpinC$ structure $\spinct$, 
we abuse the notation and denote it by $\spinct$ (or use similar 
notation for it).

Using affine sets of $\SpinC$ structures instead of single $\SpinC$ 
structures over cobordisms allows us  to compose $\Ring$-cobordisms. 
In fact, if 
\[\Cob_1=[W_1,\sur_1,\spincT_1,\la_1]:\Tangle\leadsto\Tangle'
\quad \text{and}\quad 
\Cob_2=[W_2,\sur_2,\spincT_2,\la_2]:\Tangle'\leadsto\Tangle''\]
are $\Ring$-cobordisms, then the Mayer-Vietoris exact sequence 
\begin{diagram}
\cdots  &\rTo&H^1(M_{\Tangle'},\Z)&\rTo{\delta}&H^2(W,\Z)&\rTo{\pi}&
H^2(W_1,\Z)\oplus H^2(W_2,\Z)&\rTo&\cdots  
\end{diagram}
for $W=W_1\cup W_2$, and the submodules corresponding to $\spincT_1$ 
and $\spincT_2$ determine a submodule of $H^2(W,\Z)$ as their 
pre-image under $\pi$. Since $\spincT_1$ restricts to 
$\spinc=\spinc_{\Tangle}$ on $M=M_{\Tangle}$ and $\spincT_2$ 
restricts to $\spinc''=\spinc_{\Tangle''}$ on $M''=M_{\Tangle''}$, 
this submodule determines an affine set $\spincT$ of $\SpinC$ 
structures on $W$, which is bigger than $\spincT_1\times\spincT_2$ 
unless the map $\delta$ in the above sequence is trivial. We may 
then compose the $\Ring$-cobordisms $\Cob_1$ and $\Cob_2$ to obtain 
\[\Cob=\Cob_1\cup_{\Tangle'}\Cob_2:\Tangle\leadsto\Tangle''.\]

Definition~\ref{defn:cobordism} gives a category $\ACobCat$.
The objects of $\ACobCat$ are $\Ring$-tangles and the  morphisms 
are $\Ring$-cobordisms. The category $\ATangleCat$ is a 
subcategory of $\ACobCat$: given a diffeomorphism
\[d:[M,T,\spinc,\la]\ra [M',T',\spinc',\la']\] 
define an $\Ring$-cobordism $\Cob=[W,\sur,\spinct,\la_\sur]$, where
$(W,\sur)=(M,T)\times[0,1]$, and $(M,T)\times \{1\}$ is identified 
with $(M',T')$ by the diffeomorphism $d$.
Further, the $\SpinC$ structure $\spinct$ and the map $\la_\sur$ are 
trivially determined by $\spinc$ and $\la$, respectively.  

For every $\Ring$-cobordism $\Cob$ from an $\Ring$-tangle $\Tangle$ 
to an $\Ring$-tangle $\Tangle'$,  and any $\Ring$-module $\Rin$, we 
define an $\Ring$-homomorphism 
\[\fmap_{\Cob}^{\Rin}:\HFT^{\Rin}(\Tangle)\to\HFT(\Tangle').\]

\begin{thm}\label{thm:main-intro}
For every $\F$-algebra $\Ring$, the functor $\HFT^{\Rin}$ from 
Theorem ~\ref{thm:intro-category}, extends to a functor
\begin{displaymath}
\begin{split}
&\HFT^\Rin:\ACobCat\lra\AModuleCat, \\
\end{split}
\end{displaymath}
by setting $\HFT^{\Rin}(\Cob)=\fmap_{\Cob}^{\Rin}$, for any 
$\Ring$-cobordism $\Cob$. Here, $\AModuleCat$ 
denotes the category of $\Ring$-modules with $\Ring$-homomorphisms 
between them.
\end{thm}

If $\phi:\Rin\ra\Rin'$ is a homomorphism of $\Ring$-modules,
we obtain a corresponding homomorphism 
of $\Ring$-modules
\begin{displaymath}
\fmap^\phi:\HFT^\Rin(\Tangle)\lra \HFT^{\Rin'}(\Tangle).
\end{displaymath}
Given an $\Ring$-cobordism $\Cob:\Tangle\leadsto\Tangle'$ ,
the following diagram is commutative:
\begin{diagram}
\HFT^\Rin(\Tangle)&\rTo{\ \ \fmap^\Rin_{\Cob}\ \ }&
\HFT^\Rin(\Tangle')\\
\dTo{\fmap^\phi}&&\dTo{\fmap^\phi}\\
\HFT^{\Rin'}(\Tangle)&\rTo{\ \ \fmap^{\Rin'}_{\Cob}\ \ }&
\HFT^{\Rin'}(\Tangle').
\end{diagram}
Moreover, given a short exact sequence
\begin{diagram}
0&\rTo &\Rin &\rTo{\imath} &\Rin' &\rTo{\pi} &\Rin'' &\rTo &0
\end{diagram}
of $\Ring$-modules, we obtain a corresponding exact triangle 
of tangle Floer homology $\Ring$-modules:
\begin{diagram}
\HFT^{\Rin}(\Tangle) &&\lTo{\fmap^\delta}&& \HFT^{\Rin''}(\Tangle)\\
&\rdTo{\fmap^{\imath}}&&\ruTo{\fmap^{\pi}}&\\
&&\HFT^{\Rin'}(\Tangle)&&
\end{diagram}
where $\fmap^\delta$ is a connecting homomorphism.

\subsection{Examples and applications}
\label{subsec:intro-aplications}
Let us first review some of the familiar cases of the above 
construction. \\

\begin{example}
Let $\sY=(Y,p)$ and $\sY'=(Y',p')$ be pointed, 
oriented, connected and closed $3$-manifolds. If $X$ is a smooth 
$4$-dimensional cobordism from $Y$ to $Y'$ and if $\sig$ is a simple 
path in $X$ from $p$ to $p'$, then $\sX=(X,\sig)$ gives a cobordism 
$(X_{\sig},\sur_{\sig})$ from $(Y_p,T_p)$ to $(Y_{p'},T_{p'})$, 
where $\sur_{\sig}$ is a disk. Let $\la_{\sig}$  be the 
$\F[\la]$-coloring that labels $\sur_{\sig}$ by $\la$. For every 
$\SpinC$ structure $\spinct\in\SpinC(X)$, 
$\Cob_{\sX,\spinct}=(X_{\sig},\sur_{\sig},\spinct,\la_{\sig})$ is 
an $\Ring$-cobordism from $\Tangle_{\sY,\spinc}$ to 
$\Tangle_{\sY',\spinc'}$, where $\spinc=\spinct|_{Y_p}$ and 
$\spinc'=\spinct|_{Y'_{p'}}$.
In view of Example \ref{ex:HFcomp} (part (a)),  
when we choose $\M$ equal to $\F,\F[\la],\F[\la^{-1}]$ or 
$\F[\la,\la^{-1}]$, the $\Ring$-homomorphism 
$\HFT^{\Rin}(\Cob_{\sX,\spinct})=\fmap_{\Cob_{\sX,\spinct}}^\M$ 
is the cobordism map of Ozsv\'ath and Szab\'o in the corresponding 
cases.
\end{example}

\begin{example}
(\emph{Functoriality of knot and link Floer homologies}) 
Suppose that $(L,\p)\subset Y$ and $(L',\p')\subset Y'$ are marked 
links, as in Example \ref{ex:HFcomp}, with $|\p|=|\p'|=n$. 
For a decorated cobordism $\sZ=(Z,\sur,\sig)$ consisting of a smooth, 
oriented, $4$-dimensional cobordism $Z$ from $Y$ to $Y'$,  a properly 
embedded, smooth, oriented surface $\sur\subset Z$ which connects $L$ 
to $L'$ and a properly embedded, oriented $1$-manifold 
$\sig\subset \sur$ which connects $\p$ to $\p'$ i.e. 
$\del^-\sig=\p$ and $\del^+\sig=\p'$, we construct a cobordism 
$(X_{\sig},\sur_{\sig})$ connecting $(Y_{\p},L_{\p})$ to 
$(Y'_{\p'},L'_{\p'})$. We require that $\sig$ does not have any 
closed components and that any connected component of 
$\sur-\sig$ with positive genus intersects $L$ and $L'$ in more 
than one connected component, to achieve stability. Then the 
connected components $\sig_1,\ldots,\sig_n$ give a matching between
$\p$ and $\p'$. We may thus assume that the endpints of $\sig_i$ are
$p_i\in\p$ and $p_i'\in\p'$, for $i=1,\ldots,n$.

Let us assume that $\sur=\coprod_{j=1}^m\sur_j$ and set 
$\Ring=\F[\la_1,\ldots,\la_n,\lav_1,\ldots,\lav_m]$. The 
$\Ring$-coloring of $(X_\sig,\sur_\sig)$ is given by labeling the 
disk associated with each $\sig_i$ by $\la_i$, and labeling 
each connected component of $\sur_j\setminus\sigma$ by 
$\lav_j$. If $\Ring_{L,\p}$ denotes the ring associated 
with the marked link $(L,\p)$, there is a quotient map from 
$\Ring_{L,\p}$ to $\Ring$. If two link components in $L$ are on the 
boundary of the same connected component of $\sur$, the variables 
associated to these link components by $\la_\p$ are identified 
in the quotient. In particular, if each connected component of $\sur$
intersects precisely one connected component of each one $L$ and 
$L'$, $\Ring_{L,\p}$ and $\Ring_{L',\p'}$ are both identified with 
$\Ring$. In this case, we call $(Z,\sur,\sig)$ a {\emph{decorated
link cobordism}} from $(Y,L,\p)$ to $(Y',L',\p')$.

Let us assume that $(Z,\sur,\sig)$ is a decorated link 
cobordism from $(Y,L,\p)$ to $(Y',L',\p')$. Associated with 
every $\SpinC$ structure $\spinct\in\SpinC(Z)$ we obtain an 
$\Ring$-cobordism 
\[\Cob_{\sZ,\spinct}=[W_{\sZ},\sur_{\sZ},\spinct,\la_{\sZ}]:
\Tangle_{L,\p,\spinct|_Y}\leadsto\Tangle_{L',\p',\spinct|_{Y'}}.\]
Correspondingly, we obtain the cobordism maps
\[\fmap_{\sZ,\spinct}^\M:\HFT^\M(Y,L,\p,\spinct|_{Y})\ra 
\HFT^\M(L',\p',\spinct|_{Y'}).\]
The functoriality of link Floer homology then follows from our 
main theorem.
\end{example}

\begin{remark} A similar construction, in the context of pointed 
links and cobordisms between them, is independently given by 
Ian Zemke in \cite{Ian}.
\end{remark}

%\todo{AA I changed the following paragraph. EE very good}
Suppose that the oriented knots $K$ and $K'$ differ by changing one crossing. Corresponding to this crossing change there is a cobordism obtained by a band attachment from $K$ to $L=K'\#H$, where $H$ is the right- or left-handed Hopf link. In \cite{AE-unknotting}, we use the corresponding cobordism maps for appropriate choices of based points on $K$ and $L$ and decoration on the cobordism, and we define a lower bound $\mathfrak{l}(K)$ on the unknotting number of $K$. This bound is greater 
than or equal to $\nu^-(K)$, $\nu^-(-K)$ and the order of $U$-torsions in $\mathrm{HFK}^-(K)$. Additionally, it only vanishes for the unknot, and we present 
examples of slice knots $K$ such that $\mathfrak{l}(K)$ is 
arbitrarily large. A parallel construction is used by the first
author in \cite{A-BN} to construct lower bounds on the unknotting 
number from Khovanov homology, also see \cite{AD-KM}.

\subsection{Outline of the paper}\label{subsec:intro-outline}
The paper is organized as follows. In Section~\ref{sec:back} we 
review Heegaard poly-tuples and $\SpinC$ structures over them. 
Then in Section~\ref{sec:naturality} we follow the footsteps of 
Juh\'asz and Thurston \cite{JT} to show that  tangle Floer homology 
for $\Ring$-tangles gives functors from $\ATangleCat$ to $\AModCat$.
In Section~\ref{sec:Cerf} we study parametrized Cerf decompositions 
of cobordisms between tangles. We show that any two parametrized 
Cerf decompositions for a stable cobordism can be connected by a 
sequence of Cerf moves.

In Section~\ref{sec:connected-sum} we define cobordism maps for 
parametrized cobordisms associated with attaching one or three 
handles and show that the map is invariant. In Section~\ref{sec:map} 
we introduce a special $\Ring$-tangle $\Tangle_{\sur}$ associated to 
the positive boundary of any $\Ring$-cobordism together with a 
distinguished generator $\Theta_{\sur}\in\HFT(\Tangle_{\sur})$. 
The distinguished generator makes it possible to define invariant 
cobordism maps for cobordisms parametrized with framed links and 
framed arcs.
In Section~\ref{sec:inv} we define cobordism maps for arbitrary 
$\Ring$-cobordisms by composing cobordism maps constructed in 
Sections~\ref{sec:connected-sum} and~\ref{sec:map} for cobordisms 
parametrized by framed $0$-spheres, framed knots and arcs and 
framed $2$-spheres. We prove that this map is in fact an invariant 
and does not depend on the parametrized Cerf decomposition. 
Moreover, we show that this construction gives functors from 
$\ACobCat$ to $\AModuleCat$.  
Finally in Section~\ref{sec:applications} we discuss some special 
cases and applications. In particular, decorated cobordisms between 
pointed links induce functorial maps on link Floer homology. \\

{\bf{Acknowledgements.}} The authors would like to thank Andr\'as 
Juh\'asz and Robert Lipshitz for helpful discussions and suggestions. 
Most of this work was done when the first author was a postdoc at 
Max Planck Institute for Mathematics (MPIM) in Bonn. She gratefully 
acknowledges the support and the hospitality of MPIM through this 
period.

\newpage
\section{Tangles, $\SpinC$ structures and Heegaard poly-tuples}
\label{sec:back}
In this section, we describe the correspondence between sutured manifolds and tangles. Then, we review some definitions and results from \cite{AE-1} about Heegaard Floer homology for tangles (sutured manifolds), to fix our notation.  We will also reformulate some definitions and results about Heegaard diagrams, Heegaard poly-tuples and $\SpinC$ structures from \cite{AE-1,GW,OS-3m1,OS-4mfld} to work in our setup.

\subsection{Sutured manifolds and tangles}\label{sec:alg}
Sutured manifolds were introduced by Gabai in 
\cite{Gabai-foliations1,Gabai-foliations2,Gabai-foliations3}. 
Throughout this paper, we use 
a less general family of sutured manifolds by excluding toroidal sutures.
\begin{defn}
A \emph{sutured manifold} $(X,\tau)$ is an oriented 
$3$-manifold $X$ with boundary, together 
with a set of pairwise disjoint, oriented, simple closed curves 
$\tau=\{\tau_1,\ldots,\tau_{\el}\}$ on $\del X$. 
We will denote by $A(\tau_i)$ a tubular neighborhood of $\tau_i$ 
in $\del X$, which is an annulus. 
We let $A(\tau)=A(\tau_1)\amalg\cdots\amalg A(\tau_{\el}).$ 
Every connected component of $\Rr(\tau)=\del X\setminus A(\tau)^{\circ}$
is oriented with the orientation 
induced from $X$, where $A(\tau)^\circ$ denotes the interior of 
$A(\tau)$. Furthermore, we require 
that $\Rr(\tau)=\Rr^+(\tau)\amalg\Rr^-(\tau)$ where $\Rr^+(\tau)$ 
(respectively, $\Rr^-(\tau)$) denotes 
the union of components of $\Rr(\tau)$ such that the orientation induced on
$\tau$ as the boundary of 
$\Rr^+(\tau)$ (respectively, $\Rr^-(\tau)$) agrees with
(respectively, is the opposite of) 
the orientation of $\tau$.
\end{defn}

Every sutured manifold $(X,\tau)$ determines a tangle $(M,T)$, 
where $M=\ovl{X}$ is obtained from 
$X$ by filling the sutures (i.e. attaching $2$-handles along the sutures) 
and $T$ is the set of cocores of 
these $2$-handles. The orientation on $\tau$ induces an 
orientation on $T$ and the decomposition 
$\del \Rr(\tau)=\Rr^+(\tau)\amalg\Rr^-(\tau)$ induces a decomposition $\del 
M=\del^+M\amalg\del^-M$ of the boundary of $M$.  
Conversely, every tangle $(M,T)$ determines a sutured manifold $(X,\tau)$ where $X=M\setminus \nbd{T}$ and $\tau$ is the set of meridians of $T$ along with the induced orientation. 

Note that a tangle $(M,T)$ is balanced if and only 
if the corresponding sutured manifold is balanced in 
the sense of \cite[Definition 2.2]{Juh}.

%Assume $\Ring$ be a commutative algebra over $\F=\Z/2\Z$. Given a map $\la:\pii{T}\ra \Ring$,
%for every $r\in\pi_0(\del M)$ we define 
%$$\la(r):=\prod_{\substack{t\in\pi_0(T)\\ \imath_T(t)=r}}\la(t)$$
%where $\imath_T:\pi_0(T)\ra \pi_0(\del M)$ is the map induced by inclusion. 
%Following Definition~\ref{def:A-tangle}, an $\Ring$-tangle is a $4$-tuple
%$[M,T,\spinc,\la:\pii{T}\ra \Ring]$ where  $(M,T)$ is a  balanced tangle
%and $\spinc\in\SpinC(M)$ is a $\SpinC$ structure on $M$. 
%Moreover, if $r\in\pii{\del M}$ corresponds to a 
%connected component of $\del M$ with positive genus then $\la(r)=0$, and 
%for every connected component $M_\circ$ of $M$
%\[\sum_{r\in\pii{\del^+M_\circ}}\la(r)=
%\sum_{r\in\pii{\del^-M_\circ}}\la(r).\] 
%For instance, we may take $\Ring=\F$. 
%Then any balanced tangle which is equipped with a 
%$\SpinC$ structure, together with the map $\la$ which 
%maps everything to zero, is an $\Ring$-tangle. \\

In \cite{AE-1}, we introduced a $\Z$-algebra $\Ring_{\tau}$ 
associated to the boundary of any 
balanced sutured manifold $(X,\tau)$. Assume 
$\tau=\amalg_{i=1}^{\el}\tau_i$, 
$$\Rr^-(\tau)=\coprod_{i=1}^k R_i^-\ \ \ \ \ 
\text{and}\ \ \ \ \ \Rr^+(\tau)=\coprod_{j=1}^{l}R_j^+.$$
Associated with the connected components of $\Rr(\tau)$, consider the elements 
$$\la_i^-:=\prod_{\tau_j\subset\del R_i^-}\la_j,\ \ \ i=1,\ldots,k\ \ \ \ \text{and}\ \ \ \ 
\la_i^+:=\prod_{\tau_j\subset\del R_i^+}\la_j,\ \ \ i=1,\ldots,l,$$
in the free $\Z$-algebra $\Z[\la_1,\ldots,\la_{\el}]$ generated by $\la_1,\ldots,\la_{\el}$. Then 
$$\Ring_{\tau}=\frac{\Z[\la_1,\ldots,\la_{\el}]}{\left\langle\la^+(\tau)-\la^-(\tau)\right\rangle+
\left\langle\la_i^+\ |\ g_i^+>0\right\rangle+\left\langle\la_i^-\ |\ g_i^->0\right\rangle }$$
where $\la^-(\tau)=\sum_{i=1}^{k}\la_i^-$, 
$\la^+(\tau)=\sum_{i=1}^l\la_i^+$ and $g_i^\bullet$ denotes 
the genus of $R_i^\bullet$ for $\bullet=+,-$. For any balanced tangle $(M,T)$, denote the algebra associated to its corresponding balanced sutured manifold by $\Ring_T$.

Suppose $(M,T)$ be a connected balanced tangle (i.e. a balanced tangle 
with $M$ connected). The map $\la_T:\pi_0(T)\ra \Ring_T$ which sends each component $T_i$ to the variable $\la_i$ corresponding to its meridian gives an $\Ring_T$-coloring for $(M,T)$. In fact, any 
map $\la:\pi_0(T)\ra\Ring$ is an 
$\Ring$-coloring for $(M,T)$ if and only if $\la=\phi\circ\la_T$ for  
a homomorphism $\phi$ from $\Ring_T$ to $\Ring$. 

%For a connected balanced tangle $(M,T)$ (i.e. a balanced tangle 
%with $M$ connected) let $\Ring_T$ denote the 
%algebra associated to the corresponding 
%balanced sutured manifold and $\la_T:\pi_0(T)\ra \Ring_T$ denote 
%the map which sends the component $T_i$ of $T$ to the 
%corresponding variable $\la_i\in \Ring_T$. Then, for every $\SpinC$ structure 
%$\spinc\in\SpinC(M)$, the balanced tangle $(M,T)$ together with $\spinc$ and a 
%map $\la:\pi_0(T)\ra\Ring$ becomes an 
%$\Ring$-tangle if and only if $\la=\phi\circ\la_T$ for  
%a homomorphism $\phi$ from $\Ring_T$ to $\Ring$.  

\subsection{Heegaard Floer homology for tangles} Recall that a balanced Heegaard diagram is a $4$-tuple 
$H=(\Sig,\alphas,\betas,\z)$ where $\Sig$ is a closed oriented surface, $\alphas$ and $\betas$ are sets of 
pairwise disjoint circles on $\Sig$, and $\z\subset\Sig\setminus (\alphas\cup\betas)$ is a set of points. 
Further, $|\alphas|=|\betas|$ and $\z$ intersects every connected component of $\Sig\setminus \alphas$ 
and $\Sig\setminus\betas$. Every balanced Heegaard diagram $H$, specifies a balanced tangle $(M,T)$, 
where $M$ is obtained from $\Sig\times [0,1]$ by attaching $2$-handles along the circles $\alphas\times \{0\}$ 
and $\betas\times \{1\}$, and $T=\z\times [0,1]$. 

A Heegaard diagram for a balanced tangle $(M,T)$, is a balanced Heegaard diagram $H=(\Sig,\alphas,\betas,\z)$, 
so that $\Sig$ is an embedded separating surface in $M$ which cuts $T$ transversely in $\z$, $\alphas$ and 
$\betas$ bound disjoint disks on the two sides of $\Sig$. Moreover, let $\Sig[\alphas]$ and $\Sig[\betas]$ be 
embedded surfaces obtained from compressing $\Sig$ along the $\alphas$ and $\betas$ curves, 
respectively. Then, $(\Sig[\alphas],\z)$ and $(\Sig[\betas],\z)$ are isotopic relative 
to $T$ to $(\del^-M,\del^- T)$ and $(\del^+M,\del^+T)$, respectively.

Let $H=(\Sig,\alphas,\betas,\z=\{z_1,\ldots,z_{\el}\})$ be a Heegaard diagram which corresponds 
to a balanced tangle $(M,T)$. We set $\SpinC(H)$ equal to $\SpinC(M)$. 
For an $\Field$-algebra $\Ring$, 
consider a map $\la:\z\to\Ring$. Then, corresponding to any $2$-chain $\mathcal{D}$ on $\Sig$ with 
boundary on $\alphas\cup\betas$, let $n_i(\Dcal)$ denote 
the coefficient of $\Dcal$ at $z_i$ and set 
\begin{equation}\label{eq:2-chainlab}
\la(\Dcal):=\prod_{i=1}^{\el}\la_i^{n_i(\Dcal)}\in\Ring,
\end{equation}
where $\la_i=\la(z_i)$. Let $\Sig\setminus\alphas=\amalg_{i=1}^kA_i$ and 
$\Sig\setminus\betas=\amalg_{j=1}^lB_j$.

\begin{defn} With the above notation fixed, $\la$ is called an \emph{$\Ring$-coloring} for $H$ if it 
satisfies the following two conditions.
\begin{itemize}
\item $\sum_{i=1}^k\la(A_i)=\sum_{j=1}^l\la(B_j)$,
\item if $A_i$ or $B_j$ is not a punctured sphere, then $\la(A_i)=0$ or $\la(B_j)=0$, respectively. 
\end{itemize}
\end{defn}
\begin{defn}
A Heegaard diagram $H$, together with an $\Ring$-coloring $\la$ and a $\SpinC$ class $\spinc\in\SpinC(H)$ 
is called an \emph{$\Ring$-diagram}, if $(H,\la)$ is 
\emph {$\spinc$-admissible}.  Here, $(H,\la)$ is called $\spinc$-admissible  if 
for any periodic domain $\Pcal$ with $\langle c_1(\spinc),H(\Pcal)\rangle=0$, 
either $\la(\mathcal{P})=0$ or the coefficient of $\Pcal$ at some point is negative.

\end{defn}

Any $\Ring$-diagram specifies an $\Ring$-tangle. Given an $\Ring$-tangle $\Tangle=[M,T,\spinc,\la]$, an 
$\Ring$-diagram for $\Tangle$ consists of a Heegaard diagram for $(M,T)$, together with the $\Ring$-coloring 
induced by $\la$ and the $\SpinC$ class induced by $\spinc$.   

Let $\HD=(\Sig,\alphas,\betas,\la:\z\to\Ring,\spinc)$ be an $\Ring$-diagram for the $\Ring$-tangle 
$\Tangle$. Choose a generic path $J_s$ of almost complex structures 
%$(\mathfrak{j},J_s)$ of a complex structure $\mathfrak{j}$ 
on $\Sym^{\ell}(\Sig)$, where $\ell=|\alphas|=|\betas|$. In \cite{AE-1}, we construct a chain complex 
$$\CFT_{J_s}(\Sig,\alphas,\betas,\la,\spinc),$$ 
which is generated by the intersection points $\x\in\Ta\cap\Tb$ with 
$\spinc(\x)=\spinc$, and its chain homotopy type is an invariant of $\Tangle$. 
We usually drop $J_s$ from the notation for simplicity. Furthermore, for any $\Ring$-module 
$\Rin$ the isomorphism type of the Floer homology group
$$\HFT^{\Rin}(\Tangle):=H_{\star}(\CFT(\Sig,\alphas,\betas,\la,\spinc)\otimes_{\Ring}\Rin)$$
is an invariant of $\Tangle$. For simplicity, we set $\HFT(\Tangle):=\HFT^{\Ring}(\Tangle)$. 
Moreover, for any balanced tangle $(M,T)$, let $$\HFT(M,T,\spinc):=\HFT(M,T,\la_T:\pi_0(T)\to\Ring_T,\spinc)$$
where $\Ring_T$ is the algebra associated to $(M,T)$ and $\la_T$ is its corresponding coloring map. 

%Following the notation set in Definition~\ref{def:A-tangle} and
% Definition~\ref{defn:cobordism} we may 
%define a pair of categories associated with every commutative
% algebra $\Ring$ over $\F=\Z/2\Z$.
%
%\begin{defn}\label{def:categories}
%The category of $\Ring$-tangles, denoted by $\ATangleCat$, is given as follows.
% Its objects are $\Ring$-tangles 
%and the set of morphisms from an $\Ring$-tangle $\Tangle=[M,T,\spinc,\la_T]$ 
%to another $\Ring$-tangle 
%$\Tangle'=[M',T',\spinc',\la_{T'}]$ are diffemorphisms $d:(M,T)\ra (M',T')$ such that 
%$d^*\spinc'=\spinc$ and the diagram
% \begin{diagram}
% \pii{T}&&\rTo{d}&&\pii{T'}\\
% &\rdTo{\la_T}&&\ldTo{\la_{T'}}&\\
% &&\Ring &&
% \end{diagram}
% is commutative. Every such diffeomorphism is denoted by 
% $d:\Tangle\ra \Tangle'$.
% The cobordism category of $\Ring$-tangles, 
% $\ACobCat$, is the category whose objects are $\Ring$-tangles,
%  while its set of morphisms consists of $\Ring$-cobordisms.  
%\end{defn}
%
%The category $\ATangleCat$ is a subcategory of $\ACobCat$: given a diffeomorphism
%$d:\Tangle\ra \Tangle'$, we have an $\Ring$-cobordism 
%$[W,\sur,\spinct,\la]$ where \[(W,\sur)=(M_\Tangle,T_\Tangle)\times [0,1],\]
%$\spinct$ is the trivial extension of $\spinc_\Tangle$ to $W$ and 
%$(M_\Tangle,T_\Tangle)\times \{1\}$ is identified with 
%$(M_{\Tangle'},T_{\Tangle'})$ via $d$. Moreover, $\la:\pi_0(\sur)\ra \Ring$ is the 
%map induced by $\la_\Tangle$. When $d$ is the identity, we denote the above 
%$\Ring$-cobordism by $\Tangle\times[0,1]$.
%\\
%

\subsection{$\SpinC$ structures over cobordisms}\label{subsec:SpinC}
Given a stable cobordism $(W,\sur)$ from $(M,T)$ to $(M',T')$, let $\xi$ denote the 
oriented $2$-plane field in $\del_h W=\del M\times [0,1]$ consisting of the tangent 
planes of the surfaces $\del M\times\{t\}$ for any $t\in [0,1]$. Fix an almost 
complex structure $J_0$ on 
$TW|_{\del_h W}$ such that $\xi$ consists of complex lines i.e. $2$-planes in $\xi$ 
are invariant under $J_0$. One may further extend $J_0$ to a an almost complex 
structure on $\sur$ by requiring that $J_0$ preserves the tangent space of $\sur$.
Note that the set of almost complex structures $J_0$ with the above property 
is contractible.

\begin{defn}\label{def:spinc}
A  $\SpinC$ structure on $W$ is the homology class of a pair $(J,P)$, where  
\begin{enumerate}
\item $P\subset W\setminus \del_h W$ is a finite collection of points,
\item $J$ is an almost complex structure  on $W\setminus P$ with $J|_{\del_hW}=J_0$.
\end{enumerate}
We call the pairs $(J_1,P_1)$ and $(J_2,P_2)$  {\emph{homologous}} if there exists 
a compact $1$-manifold $C\subset W\setminus\del_hW$ without closed components so that
$\del C=P_1\cup P_2$, and $J_1|_{W\setminus C}$ is isotopic relative to $\del_hW$ to $J_2|_{W\setminus C}$. Denote the set of  $\SpinC$ structures on $W$ by 
$\SpinC(W)$. Similarly, a relative $\SpinC$ structure on the pair $(W,\sur)$ 
is the homology class of a pair $(J,P)$, where $P\subset W\setminus(\del_h W\cup \sur)$
is a finite collection of points and $J$ is an almost complex structure on $W\setminus P$ 
which agrees with $J_0$ over $\del_hW\cup \sur$. The notion of homologous 
pairs is defined similarly, the only difference is that we need to consider isotopies relative $\del_hW\cup F$. The set of relative $\SpinC$ structures over 
$(W,\sur)$ is denoted by $\SpinC(W,\sur)$.
\end{defn}
Since $J_0$ is chosen from a contractible family, the above definition does not depend 
on the particular choice of $J_0$.\\

After fixing a metric over the $4$-manifold $W$, any oriented  
$2$-plane field which extends $\xi$ and is 
defined in the complement $W-P$ determines a corresponding  almost complex
structure. 
It is not hard to show that $\SpinC(W)$ is an affine space over $\Ht^2(W,\del_hW;\Z)$.\\

For every  $\SpinC$ structure $\spinc$ over $W$, the induced $\SpinC$ structures $\spinc|_{M}$ and $\spinc|_{M'}$ are defined as follows. Consider an almost complex structure $J$ (defined on $W\setminus P$) representing $\spinc$. At any point $p$ in $M$, $J$ specifies a subspaces $V_p=T_pM\cap J(T_pM)$ of $T_pM$. Similarly, at any $p\in M'$, it specifies a subspace $V'_p\subset T_{p}M'$. Let $V$ and $V'$ denote the corresponding plane fields in $M$ and $M'$, respectively. Then, $\spinc|_{M}$ and $\spinc|_{M'}$ are defined to be the $\SpinC$ classes represented by $V$ and $V'$, respectively.

\subsection{Heegaard poly-tuples}
Let $H=(\Sig,\alphas^1,\ldots,\alphas^m,\z)$ be a balanced Heegaard diagram. This 
means that $\Sig$ is a closed oriented surface and for any 
$1\le i\le m$, $\alphas^i=\{\alpha^i_1,\ldots,\alpha^i_{\ell}\}$
is a set of $\ell$ disjoint simple closed curves on $\Sig$ for some $\ell>0$.
Moreover, $\z=\{z_1,\ldots,z_\el\}$ is a set of marked points in 
$\Sig-\alphas^1-\alphas^2-\cdots-\alphas^m$ such that for any 
$1\le i\le m$ every connected component of $\Sig-\alphas^i$ intersects $\z$.\\

Associated with the balanced Heegaard diagram $H$, we define a pair 
$(W_H,\sur_H)$ as follows.  For every $1\le i\le m$, the Heegaard diagram 
$(\Sig,\alphas^i,\emptyset,\z)$ determines a tangle $(U_i,T_i)$, where $U_i=C[\alphas^i]$ 
is the compression body determined by $\alphas^i$ i.e. 
it is obtained from $\Sig\times [0,1]$ by attaching 2-handles along the curves
$\alphas^i\times\{1\}$, and $T_i=\z\times [0,1]$. 
Thus $\del U_i=\del^-U_i\amalg\del^+U_i$, where
$$\del^- U_i=\Sig\ \ \ \ \text{and}\ \ \ \ \del^+ U_i=\Sig[\alphas_i]$$
and $\Sig[\alphas_i]$ is obtained by cutting $\Sig$ along $\alphas_i$ and attaching 
disks to the boundary components of the resulting surface.  
We denote the finite set 
$T_i\cap\del^-U_i=\z\times\{0\}$ by $\z_i$.

Let $\DD_{m}$ be a $m$-gon with the vertices $v_1,\ldots,v_m$, labelled in clockwise order, 
and edges $e_1,\ldots,e_m$, where $e_i$ connects $v_{i-1}$ to $v_{i}$ for $i=2,\ldots,m$ and 
$e_1$ connects $v_m$ to $v_1$. Define
\begin{displaymath}
\begin{split}
&W_H:=\frac{\left(\Sig\times \DD_m\right)%\big
\amalg\left(\coprod_{i=1}^{m}
U_i\times e_i\right)}{\Sig\times e_i\sim\del^-U_i\times e_i}\ \ \text{and}\ \
\sur_H:=\frac{\left(\z\times \DD_m\right)%\big
\amalg\left(\coprod_{i=1}^{m}T_i\times e_i\right)}{\z\times e_i\sim\z_i\times e_i}.
\end{split}
\end{displaymath}
We  smooth the corners of $W_H$ and $\sur_H$ along $v_i\times\Sig$ for  
$1\le i\le m$. Corresponding to any vertex $v_i$, we obtain a balanced tangle in 
$(\del W_H,\del\sur_H)$ determined by the Heegaard diagram 
$(\Sig,\alphas^i,\alphas^{i+1},\z)$  denoted by 
$(M_{i,i+1},T_{i,i+1})$. Note that $\alphas^{m+1}=\alphas^{1}$ and $(M_{m,m+1},T_{m,m+1})=(M_{m,1},T_{m,1})$. 
Let 
$$M':=M_{1,2}\amalg M_{2,3}\amalg\cdots\amalg M_{m,1}\subset \del W_H\ \ \ \ 
\text{and}\ \ \ \ Z:=\del W_H-\interior(M')$$
Thus, $Z$ is a product and
$$Z=\left(\del^-M_{1,2}\amalg...\amalg\del^-M_{m,1}\right)
\times [0,1].$$

Fix an almost complex structure $J_0$ on $TW_H|_{Z}$ such that for any 
$t\in [0,1]$ the tangent planes to the surface 
$\left(\del^-M_{1,2}\amalg\cdots \amalg\del^-M_{m,1}\right)\times\{t\}$ are 
complex lines. 
The almost complex structure $J_0$ may further be extended to $\sur_H$ so 
that it preserves 
the tangent space of $\sur_H$, i.e. the tangent planes of $\sur_H$ are all complex
lines. We may then talk about the $\SpinC$ structures on $W_H$ 
and the relative $\SpinC$ structures on $(W_H,\sur_H)$.

\begin{defn} The set of $\SpinC$ structures on $W_H$, denoted by 
$\SpinC(W_H)$, is defined as the set of homology classes of the pairs $(J,P)$ 
consisting of a finite set of
points $P$ in the interior of $W_H$ and an almost complex structure
$J$ on $W_H-P$ 
such that $J|_{Z}=J_0$.  
Similarly, the set of \emph{relative $\SpinC$ structures} on $(W_H,\sur_{H})$, 
denoted by $\SpinC(W_H,\sur_H)$, is defined as the set of homology classes of 
the pairs $(J,P)$ 
consisting of a finite set $P$ of
points in the interior of $W_H-\sur_{H}$ and an almost complex structure $J$ 
on $W_H-P$ 
such that $J|_{Z\cup \sur_H}=J_0$.    
\end{defn}
%As before, 
$\SpinC(W_H)$ is an affine space over $\Ht^2(W_H,Z;\Z)$, and 
$\SpinC(W_H,\sur_{H})$ is an affine space over 
$\Ht^2(W_H,Z\cup\sur_H;\Z)$.

Let $\Tai\subset\Sym^\ell(\Sig)$ denote the torus $\alpha_1
^i\times\cdots \times\alpha_\ell^i$. Given intersection points  $\x_i\in\Tai\cap\T_{{i+1}}$, the homotopy classes of  maps 
\[\polygon:\DD_m\ra \Sym^\ell(\Sig)\ \  \ \text{s.t.}\ \ \  
\polygon(v_i)=\x_i, \ \polygon(e_i)\subset \Tai,\ \ i=1,\ldots,m\] 
is denoted by $\pi_2(\x_1,\ldots, \x_m)$. If $\pi_2(\x_1,\ldots, \x_m)$ is non-empty, 
\cite[Proposition 3.3]{GW} implies that there is an affine correspondence
\begin{displaymath}
\pi_2(\x_1,\ldots, \x_m)\simeq \Ker\left(\bigoplus_{i=1}^m\Span(\alphas^i)
\ra \Ht_1(\Sig;\Z)\right)\cong \Ht^2(W_H,\del W_H;\Z)
\end{displaymath}
where $\Span (\alphas^i)$ denotes the submodule of $\Ht_1(\Sig;\Z)$ spanned by 
the elements of $\alphas^i$. 

%The following proposition is a re-statement of Proposition 3.7 from \cite{GW}:
%\begin{prop}\label{Spin-map}
%With the above notation fixed, there is a well-defined map 
%$$\spinc_{H}:\pi_2(\x_1,\ldots,\x_m)\ra\SpinC(W_H)$$
%such that for every $\polygon\in \pi_2(\x_1,\ldots,\x_m)$
%$$\spinc_{H}(\polygon)|_{M_{i,i+1}}=\spinc(\x_{i})\ \ \text{for}\ \ i=1,\ldots,m-1\ \ \text{and}\ \ 
%\spinc_{H}(\polygon)|_{M_{m,1}}=\spinc(\x_m).$$
%\end{prop}

%Proposition 3.9 from \cite{GW}  implies that for $\polygon_1,\polygon_2
%\in\pi_2(\x_1,\ldots, \x_m)$, 
%we have $\spinc_H(\polygon_1)=\spinc_H(\polygon_2)$ 
%if and only if the difference $\Dcal(\polygon_1)-\Dcal(\polygon_2)$ between the formal 
%domains associated with $\polygon_1$ and $\polygon_2$
%is a $\Z$-linear combination of doubly periodic domains, i.e. 
%periodic domains for the Heegaard diagrams
%$(\Sig,\alphas^i,\alphas^{i+1},\z)$ for $i=1,\ldots,  m$, where $\alphas^{m+1}=\alphas^1$.\\

Fix intersection points $\x_i\ ,\x_i'\in\Tai\cap\T_{{i+1}}$ for $i=1,\ldots,m$. 
Two homotopy classes $\polygon\in\pi_2(\x_1,\ldots,\x_m)$ and 
$\polygon'\in\pi_2(\x_1',\ldots,\x_m')$ are called \emph{equivalent} if there exist Whitney 
disks $\psi_i\in \pi_2(\x_i,\x_i')$ for $i=1,\ldots,m$
so that $\polygon$ is obtained from $\polygon'$ by 
juxtaposition of each disk $\psi_i$ at the vertex 
$\x_i'$. Let $\Poly_{\{1,\ldots,m\}}$ 
denote the set of equivalence classes of such $m$-gons. We may thus re-state 
Proposition 3.9 of \cite{GW} as follows.

\begin{prop}
There is a one to one map 
$$\spinc_H:\Poly_{\{1,\ldots,m\}}\longrightarrow \SpinC(W_H)$$
so that for any $\Psi\in\pi_2(\x_1,\ldots,\x_{m})$, we have $\spinc_{H}([\Psi])|_{M_{i,i+1}}
=\spinc(\x_i)$ for any $i=1,\ldots,m$.
\end{prop}

For every index set 
$$I=\{i_1<\cdots <i_p\}\subset\{1,\ldots,m\}$$ with $|I|\ge 3$ we 
may consider the cobordism $W_I$ which corresponds to the 
compression of $W_H-\coprod_{i\notin I}(U_i\times e_i)$
along the edges $\Sig\times e_i\subset \Sig\times\DD_m$ with 
$i\notin I$. Thus, $W_I$ is represented by the Heegaard diagram
$H_I=(\Sig,\alphas^{i_1},\ldots, \alphas^{i_p},\z)$.
Let 
\[r_{I}:\SpinC(W_H)\to \SpinC(W_I).
\]
be the corresponding restriction map. Denote $\spinc_{I}=r_I(\spinc)$ for any $\spinc\in\SpinC(W_H)$.

\subsection{$\Ring$-diagrams and holomorphic polygon maps}
Let $\Ring$ be a commutative algebra over $\F$ and  $H=(\Sig,\alphas^1,\ldots, \alphas^m,\z)$ be a balanced Heegaard 
diagram as before. Consider a map $\la:\z\ra \Ring$. We will denote $\la(z_i)$ by $\la_i$. For any $2$-chain $\mathcal{D}$ on $\Sig$ with $\del \mathcal{D}$ on $\alphas^1\cup \cdots \cup\alphas^{m}$ we define $\la(\mathcal{D})$ as in Equation (\ref{eq:2-chainlab}). Suppose $\Sig\setminus\alphas^i=\amalg _{j=1}^{m^i}A^{i}_j$. Then, as before, $\la$ is called an \emph{$\Ring$-coloring} for $H$ if the following are satisfied:
\begin{enumerate}
\item For any $i,j\in\{1,\ldots, m\}$ we have $\sum_{k=1}^{m^i}\la (A_k^i)=\sum_{l=1}^{m^j}\la(A_l^j)$.
\item If $A_j^i$ is not a punctured sphere, then $\la (A^i_j)=0$.
\end{enumerate}
% If $\la$ is a representation,
%associated with every $2$-chain $\Dcal$ on $\Sig$ with boundary on 
%$\alphas^1\cup\cdots  \cup\alphas^m$ let $n_i(\Dcal)$ denote 
%the coefficient of $\Dcal$ at $z_i$ and set 
%$$\la(\Dcal):=\prod_{i=1}^{\el}\la_i^{n_i(\Dcal)}\in\Ring.$$
%

\begin{defn} \label{def:A-diagram}
Let the balanced Heegaard diagram $H=(\Sig,\alphas^1,\ldots,\alphas^m,\z)$, 
the $\Ring$-coloring $\la:\z\ra \Ring$ and $\SpinC$ structure $\spinc\in\SpinC(W_H)$  be as above. We call
\[\HD=(\Sig,\alphas^1,\ldots,\alphas^m,\la:\z\ra\Ring,\spinc)\]
an $\Ring$-{\emph{diagram}} if it is $\spinc$-{\emph{admissible}}, i.e. if 
the following admissibility condition is satisfied.
For every index set $I=\{i_1<\cdots <i_p\}$ and every doubly periodic domain 
$$\Pcal=\Pcal_{i_1i_2}+\Pcal_{i_2i_3}+\cdots +\Pcal_{i_{p}i_1}$$
with $\Pcal_{ij}$ a periodic domain on $(\Sig,\alphas^i,\alphas^j,\z)$, 
the following is true. If 
\begin{displaymath}
\begin{split}
&\sum_{j=1}^p{\Big\langle c_1(\spinc_{i_ji_{j+1}}),
H(\Pcal_{i_ji_{j+1}})\Big\rangle}=0
\end{split}
\end{displaymath}
then either $\la(\Pcal)=0$ in $\Ring$ or the coefficient of the domain
$\Pcal$ at some point $w$ is negative. Here, $\spinc_{ij}$ is the
$\SpinC$ structure on the $3$-manifold $M_{ij}$ (corresponding to 
$(\Sig,\alphas^i,\alphas^j,\z)$) which is determined by $\spinc$.
\end{defn}

Note that the admissibility condition only depends on the restrictions 
$\spinc_{ij}$ of $\spinc$. In particular, for every family $\spincT$ of 
$\SpinC$ structures with the same restrictions $\spinc_{ij}$, we can 
speak of $\Ring$-diagrams 
\[(\Sig,\alphas^1,\ldots,\alphas^m,\la:\z\ra\Ring,\spincT).\]

Let $\HD=(\Sig,\alphas^1,\ldots,\alphas^m,\la,\spinc)$ 
be an  $\Ring$-diagram  and  choose an appropriate translation 
invariant family of generic almost complex structure $J=\{J_z\}_{z\in\D_m}$.
For every pair of indices $i<j$ we may thus define the chain complex
$\CFT_{J}(\Sig,\alphas^i,\alphas^j,\la,\spinc_{ij})$.
Associated with any subset 
$$I=\{i_1<\cdots <i_p\}\subset \{1,\ldots,m\}$$ of indices, we may then define a holomorphic 
polygon map 
\begin{displaymath}
\begin{split}
&\fmap_{I}:\bigotimes_{j=1}^{p-1}
\CFT_{J}
\left(\Sig,\alphas^{i_j},\alphas^{i_{j+1}},\la,\spinc_{i_ji_{j+1}}\right)
\lra\CFT_{J}
\left(\Sig,\alphas^{i_1},\alphas^{i_{p}},\la,\spinc_{i_1i_{p}}\right).\\
&\fmap_{I}\big(\x_1\otimes\x_2\otimes \cdots \otimes\x_{p-1}\big):=
\sum_{\substack{\x_p\in\mathbb{T}_{{i_1}}\cap
\mathbb{T}_{{i_p}}\\
\spinc(\x_p)=\spinc_{i_1i_p}}}\sum_{\substack{\polygon\in
\pi_2(\x_1,\x_2,\ldots,\x_{p})\\ \mu(\polygon)=3-p\\ [\Psi]=\spinc_I}}
\big(\m(\polygon)\la(\polygon)\big).\x_{p}
\end{split}
\end{displaymath}
where $\mu(\polygon)$ denotes the Maslov index of the polygon class $\polygon$ and
$\m(\polygon)$ is the count of points in 
$\Mod(\polygon)$ modulo $2$.

\newpage

\section{Tangle complex and the issue of naturality}\label{sec:naturality}
In this section we address the issue of naturality in the sense of \cite{JT} for the
construction of \cite{AE-1}.
More precisely, we strengthen our result in \cite{AE-1} 
and show that for any $\Ring$-module $\Rin$, $\HFT^{\Rin}$ defines a functor 
from $\ATangleCat$ (See Definition \ref{def:ATangleCat}) to  $\AModCat$.
For the most part,
the argument of \cite{JT} for the naturality of sutured Floer homology may be 
copied here without significant modifications. In fact, we need to go 
through the  argument presented in Section 9 of \cite{JT} and check that all 
statements remain valid, a task that is outlined in the present section. 
%This would also help us fix  our notation for the rest of the paper.

\subsection{Oriented graph of isotopy Heegaard diagrams}
Let us fix a commutative algebra $\Ring$ over $\F$ and an $\Ring$-module 
$\Rin$ as before. 
Consider an $\Ring$-tangle $\Tangle=[M,T,\spinc,\la]$ and let $\HD=(\Sig,\alphas,\betas,\la,\spinc)$ be a Heegaard diagram for $\Tangle$.  %In this section, by an $\Ring$-diagram we mean a $5$-tuple 
%$(\Sig,\alphas,\betas,\la:\z\ra\Ring,\spinc)$, which is an $\Ring$-diagram
%in the sense of Definition~\ref{def:A-diagram}. \
Any collection of pairwise disjoint circles on $\Sig$ (like $\alphas$ and $\betas$)  is called an
{\emph{attaching set}}. 
Recall that each attaching set $\gammas$ on the pointed surface $(\Sig,\z)$
determines a tangle $(C[\gamma],T[{\gamma}])$ where $C[\gamma]$ is the compression body obtained from $\Sig\times [0,1]$
by attaching $2$-handles along $\gamma\times\{1\}$ and $T[\gamma]=\z\times [0,1]$. Then, $\del^-C[\gammas]=\Sig\times\{0\}\cong\Sig$ and 
$\del^+C[\gammas]\cong\Sig[\gammas]$, where $\Sig[\gammas]$ denotes the surface 
obtained by performing surgery on $\Sig$ along the curves in $\gammas$. 
Attaching sets $\gammas$ and $\gammas'$ on $\Sig$ are called 
\emph{compression equivalent}, denoted by $\gammas\sim\gammas'$, if there is a 
diffeomorphism $d:(C[\gammas],T[\gamma])\ra (C[\gammas'],T[\gamma'])$ such that it is the identity on 
$(\Sig,\z)=(\del^-C[\gammas],\del^-T[\gamma])=(\del^-C[\gammas'],\del^-T[\gamma'])$. This gives an equivalence relation
which descends to the isotopy classes, denoted by $[\gammas]$, of attaching sets 
$\gammas$ on
$\Sig$. If $\gammas\sim \gammas'$ then $[\gammas]$ and $[\gammas']$ are 
related by a sequence of handle slides, \cite[Lemma 2.11]{JT}. 
%\begin{defn}
%We say that an $\Ring$-diagram $(\Sig,\alphas,\betas,\la:\z\ra\Ring,\spinc)$
%is a {\emph{Heegaard diagram}} for an $\Ring$-tangle $\Tangle$ if $\Sig$ is a 
%separating surface in $M_\Tangle$ which cuts $T_\Tangle$ 
%transversely in $\z$, 
%$\la$ is induced by $\la_\Tangle$ 
%and $\alphas$ and $\betas$ are 
%attaching sets of curves such that elements of $\alphas$ and $\betas$ bound 
%disjoint disks on the 
%two sides of $\Sig$. Moreover, let $\Sig[\alphas]$ and $\Sig[\betas]$ be surfaces 
%obtained from compressing $\Sig$ along the $\alphas$ and $\betas$ curves, 
%respectively. Then, $(\Sig[\alphas],\z)$ and $(\Sig[\betas],\z)$ are isotopic relative 
%to $T_\Tangle$ to 
%\[(\del^-M_\Tangle,\del^- T_\Tangle)\ \ \ \ \text{and}\ \ \ \ 
%(\del^+M_\Tangle,\del^+T_\Tangle),\] respectively. Finally, under the aforementioned 
%isotopy, the $\SpinC$ class $\spinc$ corresponds to $\spinc_\Tangle$. 
%\end{defn}
%Let $(\Sig,\alphas,\betas,\la,\spinc)$ be a Heegaard diagram for $\Tangle$.

If for the 
attaching sets $\alphas'$ and $\betas'$ we have $[\alphas]=[\alphas']$
and $[\betas]=[\betas']$, and the $\spinc$-admissibility 
condition is satisfied then $(\Sig,\alphas',\betas',\la,\spinc)$ is also 
a Heegaard diagram for $\Tangle$. We may thus refer to the set of all such 
Heegaard diagrams as an {\emph{isotopy diagram}}
$(\Sig,A,B,\la,\spinc)$ for $\Tangle$, where $A=[\alphas]$ and $B=[\betas]$.

The isotopy diagrams 
\[\HD_1=(\Sig_1,A_1,B_1,\la_1:\z_1\ra\Ring,\spinc_1)\ \  \text{and}\ \   
\HD_2=(\Sig_2,A_2,B_2,\la_2:\z_2\ra\Ring,\spinc_2)\] 
are called $\alpha$-{\emph{equivalent}} if
$\Sig_1=\Sig_2,\ \z_1=\z_2,\ \la_1=\la_2$ and 
$B_1=B_2$ while $A_1\sim A_2$ and under the natural correspondence between 
$\SpinC$ structures, $\spinc_1=\spinc_2$. 
Similarly, we may define
$\beta$-equivalence. Stabilization and destabilization also induce operations on 
isotopy diagrams. Furthermore, a \emph{diffeomorphism} from $\HD_1$ to $\HD_2$ is an orientation preserving diffeomorphism 
$d:\Sig_1\ra \Sig_2$ such that 
$d(A_1)=A_2, d(B_1)=B_2,d(\z_1)=\z_2, d^*\spinc_2=\spinc_1$.

Finally, we may define the oriented {\emph{graph}}  $\Gcal=\Gcal_{\Tangle}$ 
(in the sense of 
\cite[Definition 2.21]{JT}) as follows. The vertices of $\Gcal$ are isotopy diagrams for 
$\Tangle$ and for any two vertices 
$\HD_1$ and $\HD_2$, the set of edges connecting $\HD_1$ to $\HD_2$, 
denoted by $\Gcal(\HD_1,\HD_2)$, is a union 
of four sets
\begin{displaymath}
\Gcal(\HD_1,\HD_2)=\Gcal_\alpha(\HD_1,\HD_2)\amalg
\Gcal_\beta(\HD_1,\HD_2)\amalg\Gcal_{\mathrm{stab}}(\HD_1,\HD_2)\amalg
\Gcal_{\mathrm{diff}}(\HD_1,\HD_2),
\end{displaymath} 
defined as follows. The set $\Gcal_\alpha(\HD_1,\HD_2)$ (or 
$\Gcal_{\beta}(\HD_1,\HD_2)$) consists of a single arrow if $\HD_1$ and $\HD_2$
are $\alpha$-equivalent (or $\beta$-equivalent), otherwise it is empty. Similarly, 
$\Gcal_{\mathrm{stab}}(\HD_1,\HD_2)$
consists of a single arrow if $\HD_2$ is obtained from $\HD_1$ by a stabilization or 
destabilization and $\Gcal_{\mathrm{diff}}(\HD_1,\HD_2)$ contains an arrow 
corresponding to any diffeomorphism
from $\HD_1$ to $\HD_2$. Let $\Gcal_\alpha,\Gcal_\beta,
\Gcal_{\mathrm{stab}}$ and $\Gcal_{\mathrm{diff}}$  denote the corresponding 
sub-graphs of $\Gcal$. The sub-graphs $\Gcal_\alpha,\Gcal_\beta$ 
and $\Gcal_{\mathrm{diff}}$ are in fact categories when endowed with the 
obvious compositions.
The graph $\Gcal$ is connected as an oriented graph.
\subsection{Special Heegaard diagrams} A Heegaard diagram $H=(\Sig,\alphas,\betas,\z)$ is called 
\emph{special} if $\alphas\sim \betas$, i.e. $\alphas$ is compression equivalent to $\betas$. Any special 
Heegaard diagram determines a balanced tangle $(M,T)$ that is obtained from a 
product tangle by applying sugeries on $\ell$ $0$-dimensional spheres, where $\ell=|\alphas|=|\betas|$. 
Furthermore, if a balanced tangle $(M,T)$ has a special Heegaard diagram then $\HFT(M,T,\spinc)=0$ 
for any \emph{non-torsin} class $\spinc\in\SpinC(M)$. Recall that a $\SpinC$ class $\spinc\in\SpinC(M)$ 
is called \emph{torsion} if $\spinc=[\relspinc]$ for a torsion relative $\SpinC$ class 
$\relspinc\in\SpinC(M,T)$ i.e. $c_1(\relspinc)=0$ as an element of $\Ht^2(X,\Z)$ for $X=M-\nd(T)$.

Associated with every $z_i\in\z=\{z_1,\hdots,z_\el\}$, let $\mu_i$ denote a small simple closed 
curve which is the boundary of a small disk around $z_i\in \Sig$. 
The torsion relative $\SpinC$ structures corresponding to a special Heegaard diagram are related 
to each other by adding integer multiples of the cohomology classes $\PD[\mu_i]\in\Ht^2(M,T;\Z)$, 
for $i=1,\hdots,\el$. In fact, the torsion relative $\SpinC$ structures $\relspinc\in\SpinC(M,T)$ with the property
that $\HFT(M,T,\relspinc)$ is non-zero form a cone, in the sense that every such relative $\SpinC$ structure
$\relspinc$ is of the form 
\begin{align*}
\relspinc=\relspinc_0+\sum_{i=1}^\el a_i\PD[\mu_i],\ \ \ \ \text{where}\ a_1,\hdots,a_\el\in\Z^{\geq 0}.
\end{align*}
The relative $\SpinC$ structure $\relspinc_0\in\SpinC(M,T)$ is thus uniquely associated with our special 
Heegaard diagram, and is called its {\emph{distinguished}} (torsion) relative $\SpinC$ structure for 
the special Heegaard diagram $H$.

Fix a special Heegaard diagram $H=(\Sig,\alphas,\betas,\z)$ as above, and let $(M,T)$ be the corresponding 
balanced tangle. Further, assume
that for the torsion $\SpinC$ class $\spinc_0\in\SpinC(M,T)$ the 
diagram $H$ is $\spinc_0$-admissible. If $\z=\{z_1,\ldots,z_{\el}\}$, then 
$\Ring_T=\frac{\Z[\la_1,\ldots,\la_{\el}]}{\mathcal{I}}$ where $\la_i$ is the variable
corresponding to $z_i$ and $\mathcal{I}$ is the ideal of relations. Note that $\mathcal{I}$ is 
generated by monomials corresponding to connected components of $\Sig\setminus \alphas$ 
(or $\Sig\setminus\betas$) that are not punctured spheres. 

Consider 
$a\cdot\x,b\cdot\y\in\CFT(M,T,\relspinc)$ such that $\x,\y\in\Ta\cap\Tb$, $\relspinc\in\SpinC(M,T)$ is torsion and 
$$a=\la_1^{a_1}\cdots \la_{\el}^{a_{\el}}\ \ \ \ \text{and}\ \ \ \ b=\la_1^{b_1}\cdots \la_{\el}^{b_{\el}}$$
are non-zero monomials in $\Ring_T$. Then, we say a homotopy disk $\phi\in\pi_2(\x,\y)$
connects $a\cdot\x$ to $b\cdot\y$ if $a_i+n_{z_i}(\phi)=b_i$ for any $i=1,\ldots,\el$.  We define 
$\mathrm{gr}(a\cdot\x,b\cdot\y)=\mu(\phi)$ for a homotopy disk 
$\phi\in\pi_2(\x,\y)$ connecting $a\cdot\x$ to $b\cdot\y$.

\begin{lem}
The map $\mathrm{gr}$ is well-defined and induces a relative $\Z$-grading on 
$$\CFT(M,T,\relspinc)=\CFT(\Sig,\alphas,\betas,\z,\relspinc),$$
for every torsion relative $\SpinC$ structure $\relspinc\in\SpinC(M,T)$.  
Furthermore, the subgroup of $\HFT(M,T,\relspinc_0)$ in top 
homological grading  is isomorphic to $\F$ for the distinguished 
relative $\SpinC$ structure $\relspinc_0\in\SpinC(M,T)$.
\end{lem}

\begin{proof}
Let $\phi_1,\phi_2\in\pi_2(\x,\y)$ be homotopy disks connecting $a.\x$ to $b.\y$. 
Then, $\Pcal=\phi_1-\phi_2$ would be a periodic domain such that $n_{z_i}
(\Pcal)=0$ for any $i=1,\ldots,\el$. Hence, 
$$\mu(\phi_1)-\mu(\phi_2)=\mu(\Pcal)=\langle c_1(\relspinc),H(\Pcal)\rangle=0.$$ 
In particular, $\mathrm{gr}$ is well-defined. For the second part,
it is not hard to show that $\HFT(M,T,\relspinc_0)$ is  invariant up to 
isomorphism in each relative grading. So the discussions in Section 6.2 of 
\cite{AE-1} implies that the subgroup with the top grading is isomorphic to 
$\F$.
\end{proof}

Hence, the homology group in the top grading has a well-defined 
generator up to sign, called {\emph{top generator}}, 
in the summand $\HFT(M,T,\relspinc_0)$ of $\HFT(M,T,\spinc_0)$, where 
$\relspinc_0\in \SpinC(M,T)$ is the distinguished torsion $\SpinC$ structure and 
$\spinc_0=[\relspinc_0]$ is its image in $\SpinC(M)$. This generator is denoted by $\Theta_{\alpha\beta}$.

\subsection{Weak Heegaard invariance}\label{sec:HFisom}
In this section, we check that $\HFT^{\Rin}$ is a \emph{weak Heegaard invariant} i.e. 
for any $\Ring$-tangle $\Tangle$, we construct a well-defined $\Ring$-module $\HFT^{\Rin}(\HD)$ 
for every vertex $\HD$ of $\Gcal_{\Tangle}$ and associate an isomorphism to any edge of $\Gcal_{\Tangle}$.

Consider an $\Ring$-diagram $\HD=(\Sig,\alphas,\betas,\la,\spinc)$ and a complex structure 
$\mathfrak{j}$ on $\Sig$. Recall that any generic path $J_s$ of perturbations of $\Sym^{\ell}(\mathfrak{j})$ 
gives a chain complex
$$\CFT_{J_s}(\Sig,\alphas,\betas,\la,\spinc),$$ 
generated by the intersection points $\x\in\Ta\cap\Tb$ with 
$\spinc(\x)=\spinc$, where $\ell=|\alphas|=|\betas|$. We will usually drop the complex structure $\mathfrak{j}$
on $\Sig$ from the notation.
Given two different choices $(\mathfrak{j},J_s)$ and $(\mathfrak{j}',J_s')$, the proof of Lemma~2.11 from 
\cite{OS-4mfld} gives an isomorphism of $\Ring$-modules
\begin{displaymath}
\Phi_{J_s\ra J_s'}:
\HFT^\M_{J_s}(\Sig,\alphas,\betas,\la,\spinc)\lra 
\HFT^\M_{J_s'}(\Sig,\alphas,\betas,\la,\spinc).
\end{displaymath} 
for any $\Ring$-module $\Rin$. 
% one obtains the chain map
% \begin{displaymath}
%\Phi_{J_s\ra J_s'}:
% \CFT_{J_s}(\Sig,\alphas,\betas,\la,\spinc)\lra 
% \CFT_{J_s'}(\Sig,\alphas,\betas,\la,\spinc).
% \end{displaymath}
%The proof of Lemma~2.11 from \cite{OS-4mfld} 
%implies that for any $\Ring$-module $\Rin$, the above chain map gives a corresponding 
%isomorphism of $\Ring$-modules
%\begin{displaymath}
%\Phi_{J_s\ra J_s'}:
% \HFT^\M_{J_s}(\Sig,\alphas,\betas,\la,\spinc)\lra 
% \HFT^\M_{J_s'}(\Sig,\alphas,\betas,\la,\spinc).
% \end{displaymath} 
% where $\HFT^\M_{J_s}(\Sig,\alphas,\betas,\la,\spinc)$ and 
%$\HFT^\M_{J_s'}(\Sig,\alphas,\betas,\la,\spinc)$ are the homology groups of the 
%chain complexes 
%\begin{displaymath}
%\CFT_{J_s}(\Sig,\alphas,\betas,\la,\spinc)\otimes_\Ring\Rin\ \ \text{and}\ \  
%\CFT_{J_s'}(\Sig,\alphas,\betas,\la,\spinc)\otimes_\Ring\Rin,
%\end{displaymath}
%respectively. 
Moreover,
$$\Phi_{J_s'\ra J_s''}\circ \Phi_{J_s\ra J_s'}=\Phi_{J_s\ra J_s''}.$$
%We drop the $\SpinC$ class $\spinc$ from the notation for simplicity. 
One may thus define
\begin{displaymath}
\HFT^\M(\Sig,\alphas,\betas,\la,\spinc)=\frac{\coprod_{J_s}
\HFT^\M_{J_s}(\Sig,\alphas,\betas,\la,\spinc)}{\sim}
\end{displaymath}
where $x\sim y$ if $y=\Phi_{J_s\ra J_s'}(x)$ for some $J_s$ and $J_s'$. 
Since we will face several equivalence relations in the definition of 
$\HFT^\M(\Tangle)$
we will abuse the notation and denote all of them by $\sim$, leaving it to the 
reader to make the distinctions.
%%%%%%%%%%%%%

Let us ssume that 
\[\HD=(\Sig,\alphas,\betas,\betas',\la,\spinct)\] is an 
$\Ring$-diagram and $\betas\sim\betas'$. Furthermore,
suppose that the restriction of $\spinct$ to $M_{\beta\beta'}$ is the torsion 
$\SpinC$ structure $\spinc_0$, which was discussed in the previous subsection.
With this restriction in place, $\spinct$ is determined by its restriction 
$\spinc$ to $M_{\alpha\beta}$, and the induced $\SpinC$ structure 
on $M_{\alpha\beta'}$ is in correspondence with $\spinc$ under the 
natural identification of $M_{\alpha\beta}$ with $M_{\alpha\beta'}$. We will thus 
abuse the notation and denote the above $\Ring$-diagram by 
\[(\Sig,\alphas,\betas,\betas',\la,\spinc)\] 
implying that the above restrictions on $\spinct$ are imposed.
Then, using the top generator 
$\Theta_{\beta\beta'}$ we may define an isomorphism
$$\Phi_{\beta\ra\beta'}^{\alpha}:\HFT^\Rin(\Sig,\alphas,\betas,\la,\spinc)
\lra \HFT^\Rin(\Sig,\alphas,\betas',\la,\spinc).$$

The arguments in the sequence of lemmas preceding Proposition 9.9 in \cite{JT}
may then be copied  in our setup to show that if 
$(\Sig,\alphas,\betas,\la,\spinc)$ and $(\Sig,\alphas,\betas',\la,\spinc)$
are Heegaard diagrams for $\Tangle$, one may define 
a chain map and a well-defined induced isomorphism
$$\Phi_{\beta\ra\beta'}^{\alpha}:\HFT^\Rin(\Sig,\alphas,\betas,\la,\spinc)
\lra \HFT^\Rin(\Sig,\alphas,\betas',\la,\spinc),$$
using the top generators and a detour to admissible triple diagrams (note that
the above assumptions do not imply that the $(\Sig,\alphas,\betas,\betas',\la,\spinc)$
is an $\Ring$-diagram).
Furthermore, if $(\Sig,\alphas,\betas'',\la,\spinc)$ is a third 
Heegaard diagram for $\Tangle$ so that $\betas\sim\betas'\sim\betas''$, we get
\begin{equation}\label{eq:functoriality-1}
\Phi_{\beta'\ra\beta''}^{\alpha}\circ \Phi_{\beta\ra\beta'}^{\alpha}=
\Phi_{\beta\ra\beta''}^{\alpha}.
\end{equation}
Similarly, one may define $\Phi^{\alpha\ra \alpha'}_{\beta}$.  If 
$(\Sig,\alphas,\betas,\la,\spinc)$, $(\Sig,\alphas',\betas,\la,\spinc)$, 
$(\Sig,\alphas,\betas',\la,\spinc)$
and $(\Sig,\alphas',\betas',\la,\spinc)$ are Heegaard diagrams for $\Tangle$ so 
that $\alphas\sim\alphas'$ and $\betas\sim\betas'$, 
one may define 
$$\Phi_{\beta\ra\beta'}^{\alpha\ra \alpha'}=
\Phi_{\beta\ra\beta'}^{\alpha'}\circ \Phi_{\beta}^{\alpha\ra \alpha'}=
\Phi_{\beta'}^{\alpha\ra \alpha'}\circ \Phi_{\beta\ra\beta'}^{\alpha}.$$ 
It follows from \cite[Lemma 9.11]{JT} that these isomorphisms satisfy
\begin{equation}\label{eq:functoriality-2}
\begin{split}
&\Phi_{\beta'\ra\beta''}^{\alpha'\ra \alpha''}
\circ\Phi_{\beta\ra\beta'}^{\alpha\ra \alpha'}=
\Phi_{\beta\ra\beta''}^{\alpha\ra \alpha''}\ \ \ \text{and}\ \ \ 
\Phi_{\beta\ra\beta}^{\alpha\ra \alpha}=\Phi_{\beta\ra\beta}^{\alpha}
=\Phi_{\beta}^{\alpha\ra \alpha}=
\mathrm{Id}_{\HFT^\Rin(\Sig,\alpha,\beta,\la,\spinc)}.
\end{split}
\end{equation}

These isomorphisms may be used to construct a well-defined $\Ring$-module $\HFT^{\Rin}(\HD)$ for every 
vertex $\HD$ of $\Gcal_{\Tangle}$ and associate isomorphisms to edges of $\Gcal_{\alpha}$ and 
$\Gcal_{\beta}$, as follows.

Given an isotopy diagram $\HD=(\Sig,A,B,\la,\spinc)$ for $\Tangle$, we denote by 
$\Mod_{\HD}$ the set of all
Heegaard diagrams corresponding to $\HD$, and we let
\begin{displaymath}
\HFT^\Rin(\HD)=
\frac{\coprod_{(\Sig,\alpha,\beta,\la,\spinc)\in \Mod_{\HD}} 
\HFT^\Rin(\Sig,\alphas,\betas,\la,\spinc)}{\sim}
\end{displaymath} 
where the generators $x\in \HFT^\Rin(\Sig,\alphas,\betas,\la,\spinc)$ and 
$x'\in\HFT^\Rin(\Sig,\alphas',\betas',\la,\spinc)$ are 
equivalent if and only if $x'=\Phi_{\beta\ra\beta'}^{\alpha\ra \alpha'}(x)$.
%This construction associates a well-defined $\Ring$-module $\HFT^\Rin(\HD)$
%to every vertex $\HD$ of $\Gcal_{\Tangle}$.

Suppose $\HD=(\Sig,A,B,\la,\spinc)$ and 
$\HD'=(\Sig,A,B',\la,\spinc)$ are $\beta$-equivalent.  Pick  
representatives $(\Sig,\alphas,\betas,\la,\spinc)$ and 
$(\Sig,\alphas,\betas',\la,\spinc)$ of 
$\HD$ and $\HD'$, respectively. The formal proof of  
\cite[Lemma 9.17]{JT} then implies 
that $\Phi^{\alpha}_{\beta\ra\beta'}$ descends to a well-defined isomorphism
\begin{displaymath}
\Phi^{A}_{B\ra B'} :\HFT^\Rin(\HD)\lra \HFT^\Rin(\HD').
\end{displaymath}
Similarly, an $\alpha$-equivalence from $\HD=(\Sig,A,B,\la,\spinc)$ 
to $\HD'=(\Sig,A',B,\la,\spinc)$ gives a
well-defined isomorphism 
\begin{displaymath}
\Phi^{A\ra A'}_{B} :\HFT^\Rin(\HD)\lra \HFT^\Rin(\HD').
\end{displaymath}

Consider an edge of $\Gcal_{\mathrm{diff}}$ i.e. a diffeomorphism $d:\HD\ra \HD'$. It gives a correspondence 
between complex structures, which in turn gives the isomorphism
\begin{displaymath}
d_*:\HFT^\Rin(\HD)\lra \HFT^\Rin(\HD'),
\end{displaymath}
c.f.  \cite[Definition 9.19 and Lemma 9.20]{JT}. 

Finally, 
if the Heegaard diagram $(\Sig',\alphas',\betas',\la',\spinc')$
is obtained from $(\Sig,\alphas,\betas,\la,\spinc)$ by stabilization, there is a correspondence 
between the homotopy classes of disks on the two sides. Furthermore,  
for  suitable almost complex
structures on the two diagrams,  there is an isomorphism of 
chain complexes associated with the two diagrams.
If $\HD$ and $\HD'$ denote the isotopy diagrams corresponding to the 
above two Heegaard diagrams for $\Tangle$,
\cite[Lemma 9.21]{JT} (which is basically  \cite[Lemma 2.15]{OS-4mfld})
may be used to construct the isomorphism
\begin{displaymath}
\sig_{\HD\ra \HD'}:\HFT^\Rin(\HD)\lra \HFT^\Rin(\HD').
\end{displaymath}

Hence, $\HFT^\Rin$ is a weak Heegaard invariant, and the above considerations reprove the 
invariance of the quasi-isomorphism type of the chain complex $\CFT^\Rin(\Tangle)$. 

\subsection{Strong Heegaard invariance} The weak Heegaard invariant $\HFT^{\Rin}$ is a 
\emph{strong Heegaard invariant} if it satisfies the following axioms, \cite{JT}:
\begin{enumerate}
\item {\bf{Fuctoriality:}} The restriction of $\HFT^\Rin$ to the categories 
$\Gcal_\alpha,\Gcal_\beta$ and $\Gcal_{\mathrm{diff}}$ is a functor to the category
$\AModCat$ of $\Ring$-modules. Moreover, if $\sig:\HD\ra \HD'$ is a stabilization and 
$\sig':\HD'\ra \HD$ is the corresponding destabilization then 
$\HFT^\Rin(\sig')=\HFT^\Rin(\sig)^{-1}$.
\item{\bf{Commutativity:}} For every distinguished rectangle
\begin{displaymath}
\begin{diagram}
\HD_1&\rTo{\hmove_1}&\HD_2\\
\dTo{\hmove_2}&&\dTo{\hmove_3}\\
\HD_3&\rTo{\hmove_4}&\HD_4
\end{diagram}
\end{displaymath}
in $\Gcal$ we have $\HFT^\Rin(\hmove_3)\circ\HFT^\Rin(\hmove_1)=
\HFT^\Rin(\hmove_4)\circ \HFT^\Rin(\hmove_2)$, where $\HFT^{\Rin}(e_i)$ 
denotes the associated isomorphism to $e_i$.
Here, distinguished rectangles are the rectangles of the above type 
such that  one of the following is the case:
\begin{itemize}
\item The horizontal arrows are $\alpha$-equivalences while the vertical 
arrows are $\beta$-equivalences.
\item Both horizontal arrows are $\alpha$- or $\beta$-equivalences, while the 
vertical arrows are stabilizations.
\item  Both horizontal arrows are $\alpha$- or $\beta$- equivalences, while the 
vertical arrows are diffeomorphisms with the same restriction on the surface.
\item The square corresponds to the two possible ways of performing disjoint 
stabilizations.
\item The horizontal arrows are diffeomorphisms and the vertical arrows are 
stabilizations which correspond to one another via the diffeomorphisms.
\end{itemize}
\item{\bf{Continuity:}} If $\HD$ is a vertex of $\Gcal$ and 
$d\in\Gcal_{\mathrm{diff}}(\HD,\HD)$
is a diffeomorphism isotopic to the identity, then 
$\HFT^\Rin(d)=\mathrm{Id}_{\HFT^\Rin(\HD)}$.
\item{\bf{Handleswap invariance:}} For every simple handle swap 
\begin{displaymath}
\begin{diagram}
\HD_1&&\\
\uTo{\hmove_3}&\rdTo{\hmove_1}&\\
\HD_3&\lTo{\hmove_2}&\HD_2
\end{diagram}
\end{displaymath}
in the sense of \cite[Definition 2.32]{JT} in $\Gcal$ we have 
\[\HFT^\Rin(\hmove_3)\circ\HFT^\Rin(\hmove_2)\circ\HFT^\Rin(\hmove_1)
=\mathrm{Id}_{\HFT^\Rin(\HD_1)}.\]
\end{enumerate} 

Functoriality follows from equations (\ref{eq:functoriality-1}) and 
(\ref{eq:functoriality-2}) for 
$\alpha$- and $\beta$-equivalences, and is a consequence of the definition for 
the diffeomorphisms. Moreover, the equality 
$\HFT_\Rin(\sig')=\HFT_\Rin(\sig)^{-1}$ is a consequence 
of the definition. 

Our earlier considerations proves the axiom of commutativity for the first 
three types of distinguished rectangles. The commutativity of a diagram of the 
fourth type is satisfied in the level of chain complexes if the 
$\Ring$-diagrams (representing $\HD_i$),  and the complex structures on the 
surfaces are chosen correctly.  Similarly and following the argument of 
\cite[Subsection 9.2]{JT}, for suitable $\Ring$-diagrams 
representing the isotopy diagrams $\HD_i$
and the correct choice of almost complex structures, the commutativity
of the diagrams of the fifth type is satisfied in the level of chain complexes.

One may then copy the proof of   \cite[Proposition 9.23]{JT} and show that if for the 
$\Ring$-diagram $(\Sig,\alphas,\betas,\la,\spinc)$ the map
$d:\Sig\ra \Sig$
is isotopic to identity, then $d_*=\Phi_{\beta\ra\beta'}^{\alpha\ra \alpha'}$, where 
$\alpha'=d(\alpha)$ and $\beta'=d(\beta)$. Thus, it follows from our definition of $\HFT_\Rin(\HD)$ that 
the induced isomorphism $\HFT^{\Rin}(d)$ is identity.
This completes the proof of continuity.

Finally, let $\HD_1,\HD_2$ and $\HD_3$ denote the isotopy diagrams corresponding to a 
handleswap. 
Choose $\Ring$-diagrams $(\Sig,\alphas_i,\betas_i,\la,\spinc_i)$ representing
$\HD_i,\ i=1,2,3$. 
The argument given for handleswap invariance in Subsection 9.3 of \cite{JT}
regards the region corresponding to the handleswap in the aforementioned Heegaard
diagrams as a genus $2$ subsurface $\Sig^0$ of the  
connected sum of $\Sig^0$  with another surface 
$\Sig^1$, such that the Heegaard diagrams are identical on $\Sig^1$, all markings 
in $\z$ lie on $\Sig^1$, and two curves from each one of the attaching sets 
$\alphas_i$ and $\betas_i$ are on $\Sig^0$. It is implied by  
\cite[Lemma 9.25 and Lemma 9.28]{JT}
(which stay correct in our more general framework)  that
every triangle class which contributes
to $\HFT^\Rin(\hmove_1)$ or $\HFT^\Rin(\hmove_2)$ may be decomposed as a disjoint 
union of a triangle class on $\Sig^1$ which does not pass through the connected sum 
region and a small traingle class on $\Sig^0$. The proof of Proposition 9.24 from 
\cite{JT} thus goes through without difficulty, completing the proof of handleswap 
invariance.

Let $\hpath$ denote an oriented path in $\Gcal_{\Tangle}$ from the isotopy diagram 
$\HD$ to the isotopy diagram $\HD'$. Composing the isomorphisms corresponding to the 
edges in $\hpath$ we obtain an isomorphism
\begin{displaymath}
\HFT^\Rin(\hpath):\HFT^\Rin(\HD)\lra \HFT^\Rin(\HD').
\end{displaymath} 
If $\hpath$ and $\hpath'$ are different oriented paths from $\HD$ to $\HD'$,
\cite[Theorem 2.39]{JT}, implies that $\HFT^\Rin(\hpath)=\HFT^\Rin(\hpath')$, since 
$\HFT^\Rin$ is a strong Heegaard invariant. In other words, associated with the vertices 
$\HD$ and $\HD'$ of $\Gcal_{\Tangle}$ there is a well-defined isomorphism
\begin{displaymath}
\HFTmap^\Rin_{\HD\ra \HD'}:\HFT^\Rin(\HD)\lra \HFT^\Rin(\HD'). 
\end{displaymath}
It is clear that 
\begin{displaymath}
\HFTmap_{\HD'\ra \HD''}^\Rin\circ \HFTmap_{\HD\ra \HD'}^\Rin=\HFTmap_{\HD\ra \HD''}^\Rin.
\end{displaymath}
One may thus define
\begin{displaymath}
\HFT^\Rin(\Tangle):=\frac{\coprod_{\HD\in |\Gcal_{\Tangle}|}\HFT^\Rin(\HD)}{\sim},
\end{displaymath}
where $x$ is equivalent to $y$ if $y=\HFTmap_{\HD\ra \HD'}^\Rin(x)$. 
Thus, associated with every $\Ring$-tangle $\Tangle$ and $\Ring$-module 
$\Rin$ we obtain a well-defined  $\Ring$-module $\HFT^\Rin(\Tangle)$. 

Let $d:\Tangle=[M,T,\la,\spinc]\ra \Tangle'=[M',T',\la',\spinc']$ be a diffeomorphism of $\Ring$-tangles. Pick an isotopy diagram $\HD=(\Sig,A,B,\la,\spinc)$ for $\Tangle$. The diffeomorphism $d$ takes $\Sig$ to a surface $\Sig'$ in $M'$ and the markings 
$\z$ to a set $\z'$ of markings such that $\z'=\Sig'\cap T'$.  Let $A'=d(A)$ and $B'=d(B)$. Furthermore, we obtain a 
$\SpinC$ structure $\spinc'$ for  $(\Sig',A',B',\z')$ which corresponds to 
$\spinc$. We may define $\la':\z'\ra\Ring$
as $\la\circ d^{-1}$.
We thus obtain the isotopy diagram
$\HD'=d(\HD)$ for $\Tangle'$. The isomorphism 
$$\HFTmap^\Rin(d):\HFT^\Rin(\HD)\lra \HFT^\Rin(\HD')$$
associated with the diffeomorphism 
$d$ from $H$ to $H'$, induces a well-defined isomorphism 
\begin{displaymath}
\HFTmap^\Rin(d):\HFT^\Rin(\Tangle)\lra \HFT^\Rin(\Tangle').
\end{displaymath}  
The above considerations imply the following theorem.

\begin{thm}\label{thm:functor-for-tangle}
For every algebra $\Ring$ over $\F$ and every $\Ring$-module $\Rin$, 
assigning $\HFT^\Rin(\Tangle)$ to the $\Ring$-tangle $\Tangle$
in  $\ATangleCat$ gives a functor
\begin{displaymath}
\HFT^\Rin: \ATangleCat\lra \AModCat.
\end{displaymath}
\end{thm}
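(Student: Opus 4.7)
The plan is to verify the three defining properties of a functor using the machinery already in place: well-definedness of $\HFTmap^\Rin(d)$ on $\HFT^\Rin(\Tangle)$, preservation of identities, and compatibility with composition. The construction has packaged all the essential technical content into the statements that $\HFT^\Rin$ is a strong Heegaard invariant and that Theorem 2.39 of \cite{JT} produces the canonical isomorphisms $\HFTmap^\Rin_{H\to H'}$, so the remaining task is largely bookkeeping at the level of the graphs $\Gcal_\Tangle$ and $\Gcal_{\Tangle'}$.

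First I would check well-definedness of $\HFTmap^\Rin(d):\HFT^\Rin(\Tangle)\to\HFT^\Rin(\Tangle')$. Given two isotopy diagrams $H_1,H_2$ for $\Tangle$, I need to verify commutativity of the square whose horizontal arrows are the canonical isomorphisms $\HFTmap^\Rin_{H_1\to H_2}$ and $\HFTmap^\Rin_{d(H_1)\to d(H_2)}$, and whose vertical arrows are $\HFTmap^\Rin(d)$ read off from $H_1$ and $H_2$, respectively. The key observation is that $d$ maps any oriented path $\hmove$ in $\Gcal_\Tangle$ from $H_1$ to $H_2$ to an oriented path $d(\hmove)$ in $\Gcal_{\Tangle'}$ from $d(H_1)$ to $d(H_2)$, respecting the four types of edges (the $\alpha$- and $\beta$-equivalences, stabilizations, and diffeomorphisms). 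One then verifies edge-by-edge that the square built from a single edge of $\hmove$ commutes; this is immediate for diffeomorphism edges from the definition, reduces to a comparison of top generators $\Theta_{\alpha\beta}$ under the induced identifications of almost-complex structures for the $\alpha$- and $\beta$-equivalence edges, and follows from the correspondence of holomorphic disks under the stabilization isomorphism $\sig_{H\to H'}$ for stabilization edges. Since Theorem 2.39 of \cite{JT} declares any path to give the same isomorphism, this completes well-definedness.

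Next, for the identity $\mathrm{id}_\Tangle$, I pick any isotopy diagram $H=(\Sig,A,B,\la,\spinc)$ for $\Tangle$. Then $\mathrm{id}(H)=H$ and the induced map at the level of Heegaard data is the identity diffeomorphism $\Sig\to\Sig$, which is certainly isotopic to the identity. By the continuity axiom established in the excerpt, the induced map on $\HFT^\Rin(H)$ is the identity, and hence so is $\HFTmap^\Rin(\mathrm{id}_\Tangle)$. For composition, given $d:\Tangle\to\Tangle'$ and $d':\Tangle'\to\Tangle''$, pick an isotopy diagram $H$ for $\Tangle$ and observe that $(d'\circ d)(H)=d'(d(H))$; at the level of surfaces and almost-complex structures, the identification induced by $d'\circ d$ is the composition of the identifications induced by $d$ and $d'$, so $(d'\circ d)_*=d'_*\circ d_*$ on $\HFT^\Rin(H)$, which descends to the desired equality $\HFTmap^\Rin(d'\circ d)=\HFTmap^\Rin(d')\circ\HFTmap^\Rin(d)$ after combining with well-definedness applied to the isotopy diagram $d(H)$ for $\Tangle'$.

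The step I would expect to require the most care is the verification that $d$ intertwines edge-wise isomorphisms, because the edge isomorphisms are defined only after choosing an $\Ring$-diagram representative within each isotopy class and a generic path of almost-complex structures, and one must ensure that the choices used to compute $\HFTmap^\Rin_{H_1\to H_2}$ can be transported by $d$ to valid choices for computing $\HFTmap^\Rin_{d(H_1)\to d(H_2)}$. However, since a diffeomorphism carries admissible triples to admissible triples, transports top generators $\Theta_{\alpha\beta}$ to top generators $\Theta_{d(\alpha)d(\beta)}$ (the definition of $\Theta_{\alpha\beta}$ depends only on the combinatorial structure of the diagram), and gives a bijection between moduli spaces of holomorphic polygons for pushed-forward almost-complex structures, the commutativity of the relevant squares at the chain-complex level is formal. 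Once this edge-wise compatibility is in hand, the functoriality assertion is a direct consequence.
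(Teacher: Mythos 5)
Your proposal is correct and follows essentially the same approach as the paper: all of the technical work is packaged into the verification that $\HFT^\Rin$ is a strong Heegaard invariant in the sense of \cite{JT}, and the remaining well-definedness, identity, and composition checks for $\HFTmap^\Rin(d)$ are exactly the ``above considerations'' the paper appeals to before stating the theorem. In particular, the edge-wise commutativity you invoke is already covered by the commutativity axiom (distinguished rectangles with diffeomorphism arrows paired with $\alpha$-/$\beta$-equivalences and with stabilizations), identity follows from the continuity axiom, and composition follows from $\Gcal_{\mathrm{diff}}$ being one of the subcategories on which $\HFT^\Rin$ is declared to be a functor.
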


\subsection{The action of $\Lambda^*(H_1(M,\Z)/\mathrm{Tors})$}
As in the usual setup of the Heegaard Floer homology, there is a natural action of 
$\Lambda^*(H_1(M_\Tangle,\Z)/\mathrm{Tors})$ on $\HFT^\M(\Tangle)$ as follows.
Let us assume that $H=(\Sig,\alphas,\betas,\la:\z\ra\Ring,\spinc)$ is an $\Ring$-diagram
for the $\Ring$-tangle $\Tangle=[M,T,\la,\spinc]$.
First of all, as discussed in Subsection 2.4 of \cite{OS-3m1}, there is a 
homotopy long exact sequence
\begin{diagram}
0&\rTo&\Z&\rTo&\pi_1(\Omega(\Ta,\Tb))&\rTo&\pi_1(\Ta\times \Tb)&\rTo&
\pi_1(\Sym^g(\Sig)).
\end{diagram}
Here $\Omega(\Ta,\Tb)$ denotes the space of paths in $\Sym^g(\Sig)$ joining 
$\Ta$ to $\Tb$.
Under the identification of $\pi_1(\Sym^g(\Sig))$ with $H^1(\Sig,\Z)$, $\pi_1(\Ta)$ and 
$\pi_1(\Tb)$ correspond to $H^1(C(\alphas),\Z)$ and $H^1(C(\betas),\Z)$, respectively.
After comparing the above exact sequence with the cohomology long exact sequence 
for the decomposition 
$M=C(\alphas)\cup_\Sig C(\betas)$, we obtain the short exact sequence
\begin{diagram}
0&\rTo&\Z&\rTo& \pi_1(\Omega(\Ta,\Tb))&\rTo & H^1(M,\Z)&\rTo &0.
\end{diagram}
Applying $\mathrm{Hom}(-,\Z)$ to the above short exact sequence we obtain
\begin{diagram}
0&\rTo&H_1(M,\Z)/\mathrm{Tors}%\simeq\mathrm{Hom}(H^1(M,\Z),\Z)
&\rTo{}& H^1(\Omega(\Ta,\Tb),\Z)&\rTo &\Z.
\end{diagram}
Every element $\zeta\in H_1(M,\Z)/\mathrm{Tors}$ may thus be realized 
as an element of  $H^1(\Omega(\Ta,\Tb),\Z)$, which is represented by a $1$-cocycle
$Z(\zeta)\in Z^1(\Omega(\Ta,\Tb),\Z)$ in the space of paths connecting $\Ta\cap\Tb$. 
If $\phi\in\pi_2(\x,\y)$ is the homotopy class of a Whitney disk, it may be viewed as an arc 
in $\Omega(\Ta,\Tb)$ which connects the constant path at $\x$ to the constant path 
at $\y$. The evaluation of  $Z(\zeta)$ over this path gives a value $n_{Z(\zeta)}(\phi)$. 
Correspondingly, we may define a map
\begin{align*}
&A_{Z(\zeta)}:\CFT(\Sig,\alphas,\betas,\la,\spinc)\ra \CFT(\Sig,\alphas,\betas,\la,\spinc)\\
&A_{Z(\zeta)}(\x):=\sum_{\substack{\y\in\Ta\cap\Tb\\ \spinc(\y)=\spinc}}
\sum_{\phi\in\pi_2^1(\x,\y)}n_{Z(\zeta)}(\phi)\m(\phi)\la(\phi)\y.
\end{align*}
The proof of  \cite[Lemma 4.18]{OS-3m1} implies that $A_{Z(\zeta)}$ is a 
chain map and the proof of  \cite[Lemma 4.19]{OS-3m1} implies that if 
$Z(\zeta)$ is a coboundary then $A_{Z(\zeta)}$ is chain homotopic 
to zero. The map induced by $A_{Z(\zeta)}$ on homology is thus independent of the choice 
of the cocycle $Z(\zeta)$, and may thus be represented by 
\[A_\zeta:\HFT^\M(\Sig,\alphas,\betas,\la,\spinc)\ra \HFT^\M(\Sig,\alphas,\betas,\la,\spinc).\] 
The proof of   \cite[Proposition 4.17]{OS-3m1} may then be copied to show that 
$A_\zeta\circ A_\zeta=0$. Consequently, we obtain an action of the exterior 
algebra $\Lambda^*(H_1(M,\Z)/\mathrm{Tors})$ on 
$\HFT^\M(\Sig,\alphas,\betas,\la,\spinc)$.

We may then follow the steps toward weak and strong Heegaard invariance of the functor
$\HFT^\M$, and observe that all the isomorphisms which correspond to the edges of the 
graph $\Gcal_{\Tangle}$ preserve the action of  $\Lambda^*(H_1(M,\Z)/\mathrm{Tors})$
constructed above. This observation implies the following proposition.

\begin{prop}\label{prop:action}
For every $\Ring$-tangle $\Tangle$ and every $\Ring$-module $\M$, there
is a natural action of  $\Lambda^*(H_1(M_\Tangle,\Z)/\mathrm{Tors})$
on $\HFT^\M(\Tangle)$.
\end{prop}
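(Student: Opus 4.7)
The construction of the action on a fixed $\Ring$-diagram $(\Sig,\alphas,\betas,\la,\spinc)$ has already been sketched just before the proposition statement: for each $\zeta \in H_1(M,\Z)/\mathrm{Tors}$ one picks a representative cocycle $Z(\zeta)$, defines the chain-level map $A_{Z(\zeta)}$ by counting disks weighted with $n_{Z(\zeta)}(\phi)$, and invokes the Ozsv\'ath--Szab\'o arguments (Lemmas 4.17--4.19 of \cite{OS-3m1}) to conclude that $A_{Z(\zeta)}$ is a chain map, that its homotopy class is independent of the choice of $Z(\zeta)$, and that $A_\zeta \circ A_\zeta = 0$. All of these arguments transport verbatim to the $\Ring$-coefficient setting because the weighting $\la(\phi)$ is multiplicative under juxtaposition of disks, so the relevant identities on boundaries of one-dimensional moduli spaces carry over term by term. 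This yields a well-defined action of $\Lambda^*(H_1(M,\Z)/\mathrm{Tors})$ on $\HFT^\M(\Sig,\alphas,\betas,\la,\spinc)$ and, after tensoring with $\M$, on $\HFT^\M(\Sig,\alphas,\betas,\la,\spinc)$.

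What remains is naturality: I need to verify that this action commutes with each of the structural isomorphisms used to assemble $\HFT^\M(\Tangle)$ from the Floer homologies of its individual $\Ring$-diagrams, namely the maps associated with the four kinds of edges of $\Gcal_\Tangle$ ($\alpha$- and $\beta$-equivalences, stabilizations, and diffeomorphisms). For diffeomorphisms this is immediate: a diffeomorphism of $\Ring$-diagrams carries cocycle representatives of $\zeta$ to cocycle representatives of $d_*\zeta$, and since the map $d_* : H_1(M,\Z)/\mathrm{Tors} \to H_1(M',\Z)/\mathrm{Tors}$ is an isomorphism, the induced diagram commutes. For stabilizations the new intersection point contributes trivially to $Z(\zeta)$ (the new handle is homologically disjoint from $\zeta$), and the Ozsv\'ath--Szab\'o chain isomorphism of \cite[Lemma~2.15]{OS-4mfld} carries a disk to a disk with the same evaluation of $Z(\zeta)$, so intertwining is immediate.

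The main content lies in the $\alpha$- and $\beta$-equivalence case, where the structural isomorphism $\Phi^\alpha_{\beta \to \beta'}$ is defined by counting holomorphic triangles weighted by the top generator $\Theta_{\beta \beta'}$. I would argue compatibility by replaying the standard Ozsv\'ath--Szab\'o proof of naturality of the $H_1/\mathrm{Tors}$-action under handleslides (\cite[Section~4.2.5]{OS-3m1}): one extends $Z(\zeta)$ to a $1$-cocycle on the larger space of paths in $\Sym^g(\Sig)$ with boundary on $\Ta \cup \Tb \cup \Tb'$, evaluates it on the triangle classes entering the definition of $\Phi^\alpha_{\beta \to \beta'}$, and reads the chain-homotopy $A_{Z(\zeta)} \circ \fmap_{\alpha\beta\beta'} \simeq \fmap_{\alpha\beta\beta'} \circ (A_{Z(\zeta)} \otimes \Id + \Id \otimes A_{Z(\zeta)})$ off from the ends of one-dimensional families of holomorphic quadrilaterals. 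Since $\Theta_{\beta\beta'}$ lies in the top homological grading and $A_{Z(\zeta)}$ strictly decreases the relative grading, $A_{Z(\zeta)} \Theta_{\beta\beta'} = 0$, and the identity collapses to the required intertwining on homology.

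The main obstacle is therefore verifying this quadrilateral-degeneration argument in the present $\Ring$-coefficient framework, in particular ensuring $\spinct$-admissibility along the one-parameter family of quadrilateral domains so that the weights $\la(\Phi)$ make sense and only finitely many classes contribute to each matrix entry; this is handled by invoking the admissibility conventions of Definition~\ref{def:A-diagram}, exactly as in \cite{AE-1}. Once naturality on single edges is established, the weak/strong Heegaard invariance machinery reviewed in this section upgrades it to naturality under the gluing maps $\HFTmap^\M_{H \to H'}$, and hence the action descends to a well-defined action of $\Lambda^*(H_1(M_\Tangle,\Z)/\mathrm{Tors})$ on $\HFT^\M(\Tangle)$.
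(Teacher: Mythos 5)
Your overall strategy --- constructing the chain-level map $A_{Z(\zeta)}$ exactly as Ozsv\'ath--Szab\'o do and then checking that each kind of structural isomorphism in the weak/strong Heegaard invariance machinery intertwines the action --- is precisely what the paper does. The paper's proof is a one-sentence assertion of this observation, so your fill-in is welcome, and the diffeomorphism and stabilization cases are handled correctly.

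There is, however, a concrete error in your treatment of the $\alpha$-/$\beta$-equivalence case. You assert that $A_{Z(\zeta)}\Theta_{\beta\beta'}=0$ ``since $\Theta_{\beta\beta'}$ lies in the top homological grading and $A_{Z(\zeta)}$ strictly decreases the relative grading.'' A degree-$(-1)$ map does not annihilate a top-degree element merely because it is in top degree; it annihilates it only if the next grading down is empty, which it is not here. Concretely, for the diagram $(\Sig,\betas,\betas',\z)$ the Heegaard Floer homology is an exterior algebra, and the $H_1$-action of a nonzero class sends the top generator to a nonzero element one degree lower (already visible for $S^1\times S^2$). So the claimed vanishing is false in general for an arbitrary class in $H_1(M_{\beta\beta'},\Z)/\mathrm{Tors}$.

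The correct reason the intertwining identity collapses is different: the cocycle appearing on the $(\betas,\betas')$-side can be chosen to be a coboundary. Following the framework the paper itself uses later (Subsection 6.4, invoking \cite[Lemma 2.6]{OS-4mfld}), a class $\zeta$ acting on the triangle map is specified by a triple $(\zeta_{\alpha\beta},\zeta_{\beta\beta'},\zeta_{\alpha\beta'})$ mapping to a common class under $H_1(M_{\alpha\beta})\oplus H_1(M_{\beta\beta'})\oplus H_1(M_{\alpha\beta'})\to H_1(W_{\alpha\beta\beta'})$. For a handleslide (or isotopy) cobordism, $W_{\alpha\beta\beta'}$ retracts onto $M_{\alpha\beta}$ and one may represent $\zeta$ by the triple $(\zeta,0,\zeta)$; with $\zeta_{\beta\beta'}=0$, the operator on the $\betas\betas'$-factor is chain homotopic to zero, and the chain-homotopy identity reduces to the intertwining you want. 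Replacing the erroneous top-grading claim by this choice-of-representative argument repairs the proof and brings it into line with what the paper is implicitly invoking.
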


\newpage

% !TEX TS-program = pdflatexmk
\section{Parametrized Cerf decomposition}\label{sec:Cerf}
\subsection{Parametrized elementary cobordism}\label{subsec:elem-cob}
Let $(M,T)$ be a  balanced tangle. For $k=0,1,2$, a framed $k$-sphere $\Fsphere$ in 
$(M,T)$ is an embedding of $S^k\times D^{3-k}$ in $M-T$. In other word, it is an embedded 
$k$-sphere $a(\Fsphere)=S^{k}\times\{0\}$, called attaching sphere, 
%\todo{AA added this notation, similar to Andras's paper} 
in $M-T$ together with a trivialization $\nu$ of its normal bundle. We will denote by 
$$\Cobb(\Fsphere )=(W(\Fsphere),\sur(\Fsphere))$$ 
the cobordism obtained by attaching a 
$k$-handle to $M\times [0,1]$ along $\Fsphere\times\{1\}$ to construct $W(\Fsphere)$ and 
setting $\sur(\Fsphere)=T\times [0,1]\subset W(\Fsphere)$. 
Thus $\Cobb(\Fsphere)$ is a cobordism from $(M,T)$ 
to $(M(\Fsphere),T)$, where $M(\Fsphere)$ is obtained by surgery on $M$ along $\Fsphere$. \\

Similarly, a framed arc $\Farc$ in $(M,T)$ is an embedding of 
$D^2\times D^1$ in $M$ such that 
\begin{displaymath}
\Farc^{-1}(T)=\left(\{x=0\}\times\{-1\}\right)\coprod  
\left(\{x=0\}\times\{1\}\right),
\end{displaymath}
where $(x,y)$ denotes the standard coordinate system on $D^2$ (as a subset of $\R^2$).  Moreover, $\Farc$ is called \emph{orientation preserving} if the restriction of $\Farc$ to 
\[\{x=0\}\times\{-1\}\coprod\{x=0\}\times\{1\}\subset \partial\left(\{x=0\}
\times [-1,1]\right),\]
which is equipped with the boundary orientation, is orientation preserving as a map 
to $T$. We think 
of $\Farc$ as an embedded arc $a(\Farc)=\{0\}\times D^1$, called attaching arc, connecting the 
two points $\del a(\Farc)\cap T$, 
together with a trivialization $\nu$ of its normal bundle in $M$ which is  
compatible at the end points, with the trivialization induced from the orientation of $T$. 
Abusing the notation, we denote $\Farc(D^2\times D^1)$ by $\Farc$. Associated with a framed 
arc $\Farc$, let $T({\Farc})\subset M$ be the properly embedded 1-manifold obtained by doing 
band surgery on $T$ along $\Farc$ i.e. 
$$T({\Farc})=\left(T-\Farc\cap T)\right)\bigcup\Farc \left( (\{x=-1\}\cup\{x=1\})
\times [-1,1]\right).$$
Note that if $\Farc$ is orientation preserving, then the orientation on $T$ induces an orientation 
on $T({\Farc})$. Moreover, 
for every set $\Farc=\{\Farc_1,\ldots,\Farc_n\}$ of framed arcs in $(M,T)$, denote 
the properly embedded 1-manifold constructed by doing surgery on $T$ along the framed arcs 
$\Farc_1,\ldots,\Farc_n$ by $T({\Farc})$. Thus, if $\Farc$ is a single framed arc, $T(\Farc)$ and 
$T(\{\Farc\})$ are the same object.
\begin{defn}
A set $\Farc=\{\Farc_1,\ldots,\Farc_n\}$ of framed arcs in $(M,T)$ is called 
\emph{acceptable} if for any 
$i$, $\Farc_i$ is orientation preserving and $(M,T({\Farc}))$ is a tangle. 
\end{defn}

Let $\Farc=\{\Farc_1,\ldots,\Farc_n\}$ be an acceptable set of framed arcs in $(M,T)$. 
Corresponding to 
$\Farc$, we construct a cobordism $\Cobb({\Farc})=(W({\Farc}),\sur({\Farc}))$ from $(M,T)$ to 
$(M,T({\Farc}))$ by attaching a {\emph{standard pair}} to 
$(M\times[0,1],T\times [0,1])$ along 
$\Farc\subset M\times\{1\}$ as follows. The {\emph{standard saddle}} is the pair 
$(H,B)$, which is identified in $\R^4$ via
\begin{displaymath}
\begin{split}
&H:=D^2\times D^1\times D^1=\left\{(x,y,z,t)\in\R^4\ \Big|\ 
\begin{array}{c}
x^2+y^2\le1\\
z,t\in [-1,1]
\end{array}
\right\}\ \ \text{and}
\\
& B=\left\{(x,y,z,t)\in D^2\times D^1\times D^1\ \Big|\ 
\begin{array}{c}
(t+1)y^2+(t-1)z^2=2t\\
x=0
\end{array}\right\}
\end{split}
\end{displaymath}
For $i\in\{-1,1\}$,  denote $\del_{i}(H,B):=(D^2\times D^1\times\{i\}, B\cap (D^2\times D^1\times\{i\}))$. 
See Figure~\ref{fig:Farc} for a picture of the projection of the standard pair over 
the $3$-dimensional box
$\{x=0\}\times D^1\times D^1$.\\

\begin{figure}[ht]
\def\svgwidth{5cm}
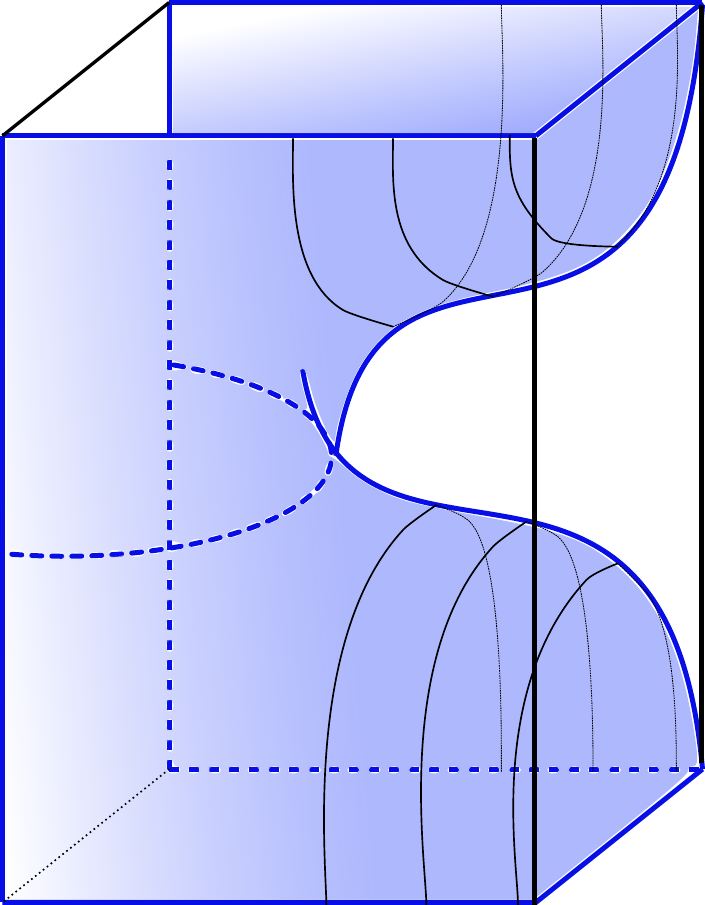
\caption{The standard pair, which is pictured in the cube $\{x=0\}=[-1,1]^3$.
}\label{fig:Farc}
\end{figure}

Let $\Cobb(\Farc)=(W(\Farc),\sur(\Farc))$ denote the cobordism 
\begin{displaymath}
\frac{\left(M\times [0,1],T\times [0,1]\right)\coprod 
\left(\coprod_{i=1}^n(H_i,B_i)\right) }
{\left\{(\Farc_i\times\{1\}, (T\cap\Farc_i)\times\{1\})\sim\del_{-1}(H_i,B_i)\ \Big|\ \text{for any}~~1\le i\le n\right\}}
\end{displaymath}
after smoothing the corresponding corners.  Here $(H_i,B_i)$ are copies of the standard pair $(H,B)$ for 
$i=1,\ldots,n$. Note that $W(\Farc)\simeq M\times [0,1]$.
\begin{defn}
A stable cobordism $\Cobb=(W,\sur)$ from the tangle $(M,T)$ to the tangle $(M',T')$ is called 
{\emph{elementary}}, if for a framed sphere $\Fsphere$ or an acceptable set of framed arcs 
$\Farc$ 
in $(M,T)$, $\Cobb$ is diffeomorphic to the corresponding cobordism $\Cobb(\Fsphere)$ or 
$\Cobb(\Farc)$.

Moreover, a {\emph{parametrized elementary cobordism}} is an elementary cobordism $\Cobb$ 
as above, accompanied with one of the following:
\begin{enumerate}
\item A framed sphere $\Fsphere\subset M$, together with the isotopy class of 
a diffeomorphism 
$$d:(M(\Fsphere),T)\ra (M',T')$$
so that for a diffeomorphism $D: \Cobb(\Fsphere)\ra \Cobb$  with $D|_{(M,T)}=\mathrm{Id}$, 
we have $d=D|_{(M(\Fsphere),T)}$.
\item A framed arc $\Farc\subset M$, together with the isotopy class of 
a diffeomorphism 
$$d:(M,T(\Farc))\ra (M',T')$$
so that for a diffeomorphism $D: \Cobb(\Farc)\ra \Cobb$ with $D|_{(M,T)}=\mathrm{Id}$, we 
have $d=D|_{(M,T(\Farc))}$.
\end{enumerate} 
\end{defn}
Note that in the above definition we might also have $\Fsphere=\emptyset$.

\begin{defn}
A {\emph{parametrized Cerf decomposition}} of a stable cobordism $\Cobb=(W,\sur)$ 
from a tangle $(M,T)$ 
to a tangle $(M',T')$ is a decomposition
$$\Cobb=\Cobb_1\cup_{(M_1,T_1)}\cdots \cup_{(M_{m-1},T_{m-1})}\Cobb_m,$$
where $\Cobb_i=(W_i,\sur_i)$ is a parametrized elementary cobordism from the tangle
$(M_{i-1},T_{i-1})$  to the tangle $(M_{i},T_{i})$, $(M_0,T_0)=(M,T)$ and 
$(M_{m},T_{m})=(M',T')$. For every $i=1,\ldots,m$, 
depending on the type of $\Cobb_i$, we let $(\Fsphere_i,d_i)$ or $(\Farc_i,d_i)$ denote its 
parametrization.
\end{defn}

\subsection{Parametrized Morse data} 
Let $\Cobb=(W,\sur)$ be a cobordism from  $(M,T)$ to  $(M',T')$. 
A function $G:W\ra [a,b]$ is called a Morse function on $\Cobb$, if $G$ has no critical points in 
a neighborhood of $\del W\cup \sur$, both $G$ and $g=G|_{\sur}$ are Morse  
(on $W$ and $F$, respectively) and 
$$G^{-1}(a)=M,\ \ \ G^{-1}(b)=M'\ \ \ \text{and}\ \ \ G|_{\del_hW}=\pi_2,$$
where $\pi_2$ is the projection over the second factor under an identification 
\begin{displaymath}
(\del_h W,\del_h \sur)=(\del M,\del T)\times [a,b].
\end{displaymath}
The set of critical points of $G$ on $\Cobb$, $\mathrm{Crit}_{\Cobb}(G)$, is defined as 
$$\mathrm{Crit}_{\Cobb}(G):=\mathrm{Crit}_{W}(G)\cup\mathrm{Crit}(g)$$
where $\mathrm{Crit}_W(G)$ and $\mathrm{Crit}(g)$ are the sets of critical points of $G$ 
on $W$ and $g=G|_{\sur}$ on $\sur$, respectively. A Morse function $G$ on $\Cobb$ is called 
{\emph{proper}} if it has distinct values at its
critical points on $\Cobb$. $G$ is called {\emph{indefinite}} if both $G$ and $g$, as  
Morse functions on $W$ and $\sur$
respectively, are indefinite i.e. have no critical points with the minimal and maximal index. 
Therefore, if $G$ is indefinite on $\Cobb$,  $g$ has only critical points of index $1$, while 
the critical points of $G$ are of indices $1,2$ or $3$.\\

\begin{defn}
Let $G:W\ra [a,b]$ be a Morse function on the cobordism $\Cobb=(W,\sur)$. 
A vector field $\xi$ on $W$ is called an 
{\emph{embedded gradient-like}} vector field for $G$ if it satisfies the following conditions:
\begin{enumerate}
\item For every $p\in W$ which is not in $\mathrm{Crit}_{\Cobb}(G)$, $dG_p(\xi_p)>0$.
\item The vector field $\xi$ is tangent to both $\sur$ and $\del_hW$.
\item For any critical point $p\in\mathrm{Crit}_W(G)$ there is an open neighborhood 
$U\subset W$ of $p$ together with a positively oriented local coordinate 
system $(x_1,x_2,x_3,x_4)$ centered at $p$ such that
\begin{displaymath}
\begin{split}
&G(x_1,x_2,x_3,x_4)=G(p)\pm x_1^2\pm x_2^2\pm x_3^2\pm x_4^2\quad \text{and}\quad
\xi(x_1,x_2,x_3,x_4)=(\pm x_1,\pm x_2,\pm x_3,\pm x_4)
\end{split}
\end{displaymath}
\item There is an open neighborhood 
$U\subset W$ of every critical point $p$ of $g$  and a positively oriented local coordinate system 
$(x_1,x_2,y_1,y_2)$ centered at $p$, such that
\begin{displaymath}
\begin{split}
& U\cap \sur=\{(x_1,x_2,y_1,y_2)\in U\ |\ y_1=y_2=0\},\\
& G(x_1,x_2,y_1,y_2)=G(p)\pm x_1^2\pm x_2^2+y_1\quad\text{and}\quad 
\xi=(\pm x_1,\pm x_2,y_1^2+y_2^2,0).
\end{split}
\end{displaymath}
\end{enumerate}
\end{defn}
\begin{defn}
A {\emph{Morse datum}} for $\Cobb$ is a triple $\Modi=(G,\ub,\xi)$, where 
\[\ub=(a=b_0<\cdots <b_{m}=b)\in\R^{m+1},\]
is an ordered  $(m+1)$-tuple of regular values for the proper Morse function 
$G:W\ra [a,b]$ on $\Cobb$,
and  $\xi$ is an embedded gradient like vector field for $G$. 
Over each interval $(b_i,b_{i+1})$, $G$ has at most one critical point in $W$, and if 
it has a critical point in $G^{-1}(b_i,b_{i+1})$ then $g$ has no critical point over this 
interval.\\
The Morse datum
$(G,\ub,\xi)$ is called {\emph{good}} if $g$ is indefinite and $g^{-1}(b_i)$ is 
a union of arcs connecting $\del_h^-W$ to $\del_h^+W$ for $i=0,\ldots,m$.   
\end{defn}

Any good Morse datum $\Modi=(G,\ub,\xi)$ induces a parametrized 
Cerf decomposition $\Ci(G,\ub,\xi)$ of $\Cobb$ by 
taking $W_i=G^{-1}[b_{i-1},b_{i}]$, $\sur_i=g^{-1}[b_{i-1},b_{i}]$ 
and $(M_i,T_i)=(G^{-1}(b_i),g^{-1}(b_i))$.  
If $G$ has a critical 
point $p\in G^{-1}[b_{i-1},b_{i}]$ of index $k_i$, the descending flow 
of $\xi$ maps $p$ to a $k_i-1$ dimensional embedded sphere $S_{i}\subset G^{-1}(b_{i-1})$, 
which is disjoint from $T_{i-1}$. Moreover, using a positive local coordinate 
system in the neighborhood of $p$ 
such that $G$ and $\xi$ have the standard structure, we obtain a framing for 
$S_i$. As in Remark 2.12 of  \cite{Juh-TQFT}, if $k_i=0,4$, the framed sphere 
$\Fsphere_i$ does not  depend on the choice of the local coordinates and is 
uniquely determined up to isotopy. Otherwise, for $k\neq 0,4$, depending on 
the positive local coordinate system, we 
obtain two non-isotopic embedded framed sphere $\Fsphere_i$ and $\ovl{\Fsphere}_i$. 
Note that for any framed 
sphere $\Fsphere:S^{k-1}\times D^{4-k}\ra M-T$ in a tangle $(M,T)$, $\ovl{\Fsphere}$ is 
defined by
$$\ovl{\Fsphere}(x,y)=\Fsphere(r_{k}(x),r_{4-k}(y)),$$
for $x\in S^{k-1}\subset \R^{k}$, $y\in D^{4-k}\subset \R^{4-k}$, where
$$r_k(x_1,x_2,\ldots,x_k)=(-x_1,x_2,\ldots,x_k).$$
Along with the framed sphere $\Fsphere_i$, we  obtain a diffeomorphism
\begin{displaymath}
d_i:(M_{i-1}(\Fsphere_i),T_{i-1})\lra (M_{i},T_{i})
\end{displaymath}
which is given using the flow of the vector field $\xi$ over 
$M_{i-1}\setminus \nd(\Fsphere_i)$, the complement
of the framed sphere $\Fsphere_i$. 
On the other hand, if $g$ has index one critical points 
$p_1,\ldots,p_{n_i}\in G^{-1}[b_{i-1},b_{i}]\cap \sur$, the descending manifolds of 
$p_1,\ldots,p_{n_i}$ under the 
flow of $\xi$ determines a set of orientation preserving framed arcs 
$\Farc_i=\{\Farc_1^i,\ldots,\Farc_{n_i}^i\}$ with end points on $T_{i-1}=g^{-1}(b_{i-1})$. 
Since $g^{-1}(b_{i})$ is a union of arcs connecting $\del^- M_{i}$ to $\del^+M_{i}$, 
the set $\Farc_i$ is acceptable.
The flow of $\xi$ gives a 
diffeomorphism 
\begin{displaymath}
d_i:(M_{i-1},T_{i-1}(\Farc_i))\lra (M_{i},T_{i}).
\end{displaymath}   
Furthermore, if $G$ has no critical points in $\Cobb_i$, we set $\Fsphere_i=\emptyset$ and the 
flow of $\xi$ defines a diffeomorphism $d_i$ from $(M_{i-1},T_{i-1})$ to $(M_{i},T_{i})$.\\

Therefore, any good Morse datum $\Modi=(G,\ub,\xi)$ defines a  parametrized Cerf 
decomposition for $\Cobb$ denoted by $\Ci(\Modi)$, well-defined up to replacing some of the 
framed spheres $\Fsphere$ by the corresponding framed spheres $\ovl{\Fsphere}$. Because of this ambiguity, we say two good Morse data $\Modi$ and $\Modi'$ induce the same parametrized Cerf decompositions for $\Cobb$ if and only if the corresponding attaching spheres and attaching arcs coincide, while they are isotopic as framed spheres or framed arcs after replacing some framed spheres $\Fsphere$ in $\Ci(\Modi)$ with $\ovl{\Fsphere}$.
%\todo{AA I added the above two sentences to clear the ambiguity of Lemma 4.7.}

\begin{lem}
Let $\Cobb$ be a stable cobordism. Then every parametrized Cerf decomposition 
for $\Cobb$ is induced by a good Morse datum.
\end{lem}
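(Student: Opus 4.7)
The plan is to construct the good Morse datum piece by piece on each elementary cobordism $\Cobb_i$, matching the parametrization data $(\Fsphere_i, d_i)$ or $(\Farc_i, d_i)$, and then to glue the pieces together along collar neighborhoods of the intermediate tangles $(M_i,T_i)$.

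First I would treat the standard models. On $\Cobb(\Fsphere)=(M\times[0,1],T\times[0,1])\cup(\text{$k$-handle})$ attached along a framed $(k-1)$-sphere $\Fsphere\subset M-T$, take the standard Morse function $G_\Fsphere$ with a single index-$k$ critical point $p$ in $W(\Fsphere)$ whose local model is $-x_1^2-\cdots-x_k^2+x_{k+1}^2+\cdots+x_4^2$ in coordinates compatible with the framing $\nu$, together with the standard gradient-like vector field. Because $\sur(\Fsphere)=T\times[0,1]$ is a product and the handle is attached away from $T$, the restriction $g_\Fsphere$ has no critical points, and one may take the projection to $[0,1]$ as $G_\Fsphere$ outside a neighborhood of $p$. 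The descending sphere of $p$ with its induced framing recovers $\Fsphere$ (up to the $\Fsphere\leftrightarrow\ovl{\Fsphere}$ ambiguity) and the flow identification on $M\setminus\nd(\Fsphere)$ coincides with the identity, hence matches $d=\mathrm{Id}$ under $D$. On $\Cobb(\Farc)$ built from standard pairs $\mathcal{H}_i=(H_i,B_i)$, use the height function $G_\Farc=t$ on $W(\Farc)\simeq M\times[0,1]$; then $G_\Farc$ has no critical points in $W$, while the restriction to each $B_i$ has a single index-$1$ saddle at the origin (since $(t+1)y^2+(t-1)z^2=2t$ at $t=0$ is $y^2=z^2$). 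The descending arc of that saddle, with its induced framing from the local coordinates, recovers $\Farc_i$. The standard gradient-like field on $H\times D^1$ can be chosen to be tangent to $B$ and to $\del_hW$.

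Next I would transport these models. Via the parametrizing diffeomorphism $D_i:\Cobb(\Fsphere_i)\to\Cobb_i$ (or $\Cobb(\Farc_i)\to\Cobb_i$) with $D_i|_{(M_{i-1},T_{i-1})}=\mathrm{Id}$ and $D_i|_{\text{top}}=d_i$, pull back the triple $(G_i,\xi_i)$ to $\Cobb_i$. This yields a Morse function on each $\Cobb_i$ whose descending data realizes $(\Fsphere_i,d_i)$ or $(\Farc_i,d_i)$ exactly. I would then choose collar neighborhoods of $(M_i,T_i)$ of the form $(M_i,T_i)\times[-\epsilon_i,\epsilon_i]$ on which the Morse function is the projection to the $[-\epsilon_i,\epsilon_i]$ factor, and on which $\xi$ is the coordinate vector field; this can be arranged because each $G_i$ already has this form near $\partial\Cobb_i$ in the standard models. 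Pick real numbers $a=b_0<b_1<\cdots<b_m=b$, shift the function on $\Cobb_i$ so that it takes values in $[b_{i-1},b_i]$, and use the matching collars to concatenate, obtaining a global $G:W\to[a,b]$ with embedded gradient-like vector field $\xi$. By construction $G$ is proper with at most one critical point of $G$ in $W_i$ (and none of $g=G|_\sur$ when there is one of $G|_W$, and vice versa), so $\ub=(b_0,\ldots,b_m)$ gives a Morse datum $\Modi=(G,\ub,\xi)$ whose induced parametrized Cerf decomposition is the given one.

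It remains to verify goodness, which is where the stability hypothesis is used. Indefiniteness of $g$ follows because the only critical points contributed to $g$ come from the arc pieces and they are index-$1$ saddles; the sphere pieces contribute no critical points to $g$ since $\sur(\Fsphere)=T\times[0,1]$ is a product. For indefiniteness of $G$, note that the critical points of $G$ in $W$ have indices equal to the dimensions of $\Fsphere_i$ plus one, i.e.\ $1,2,3$, since $\Fsphere_i$ ranges over framed $0$-, $1$-, and $2$-spheres. The remaining condition is that each $g^{-1}(b_i)=T_i$ be a union of arcs connecting $\del_h^-W$ to $\del_h^+W$: this is exactly the condition that $(M_i,T_i)$ is a tangle in the sense of Definition~\ref{def:tangle}, which is guaranteed because the parametrized Cerf decomposition has tangles $(M_i,T_i)$ as its intermediate objects, and acceptability of each $\Farc_i$ ensures $(M_{i-1},T_{i-1}(\Farc_i))$ remains a tangle at each step.

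The main obstacle will be arranging the collar compatibility at the gluings so that $\xi$ remains embedded gradient-like (in particular tangent to $\sur$ and $\del_hW$) across the interfaces, and simultaneously ensuring that after transport by $D_i$ the descending sphere/arc data agrees on the nose with the given $(\Fsphere_i,d_i)$ or $(\Farc_i,d_i)$ up only to the allowed $\Fsphere\leftrightarrow\ovl{\Fsphere}$ ambiguity; this requires a careful choice of positive local coordinates near each critical point adapted to the given framing, mirroring the discussion in Remark~2.12 of~\cite{Juh-TQFT}.
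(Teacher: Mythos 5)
Your proposal is correct and follows essentially the same route as the paper's proof: build the standard Morse function and embedded gradient-like vector field on each parametrized elementary piece (a single critical point in $W$ for a nonempty framed sphere, no $W$-critical points but index-one saddles of $g$ on the bands for a set of framed arcs), pull back through the parametrizing diffeomorphism $D_i$, and glue via matching collar neighborhoods after rescaling the target intervals. The paper handles the empty-sphere case explicitly and defers the details on the handle to Lemma 2.14 of \cite{Juh-TQFT}, but the content is the same.
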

\begin{proof}
Suppose that $\Cobb$ is an elementary cobordism  from $(M,T)$ to $(M',T')$. 

{\bf Case 1.} $\Cobb$ is parametrized by a framed sphere $\Fsphere$ in $(M,T)$ and a 
diffeomorphism $d:(M(\Fsphere),T)\to(M',T')$. It follows from the proof of   \cite[Lemma 2.14]{Juh-TQFT}, 
that one can define a Morse function $G':W(\Fsphere)\ra [0,1]$ together with an 
embedded gradient-like vector field 
$\xi'$ on $\Cobb(\Fsphere)=(W(\Fsphere),\sur(\Fsphere))$ such that 
$G'$  has a single critical point, 
while $g'=G'|_{\sur(\Fsphere)}$ has no critical points. Furthermore, the diffeomorphism 
induced by $\xi'$ on $(M(\Fsphere),T)$  is 
$\mathrm{Id}_{M(\Fsphere)}$. For instance, if $\Fsphere=\emptyset$, then for the cobordism 
$\Cobb(\Fsphere)=(M,T)\times [0,1]$ we let $G'=\pi_2$ and $\xi'=\del_t$. Then, 
$G=G'\circ D^{-1}$ and  $\xi=D_\star\xi'$ is the corresponding good Morse datum, 
where $D:\Cobb(\Fsphere)\to\Cobb$ is a diffeomorphism such that 
$D|_{(M,T)}=\mathrm{Id}$ and $D|_{(M(\Fsphere),T)}=d$.

{\bf Case 2.} $\Cobb$ is parametrized by an acceptable set of framed arcs $\Farc=\{\Farc_1,\ldots,\Farc_{n}\}$, and a diffeomorphism $d:(M,T(\Farc))\to(M',T')$. 
%Let 
%$N$ be a small open neighborhood of $D^2\times D^1\subset \R^3$ and 
%$H=N\times D^1$ and $B\subset H$ be defined as before.
For any $i$ consider a small neighborhood $N_i$ of
$\Farc_i$ in $(M,T)$. Let 
$$(W_{i},\sur_{i}):=\frac{\left(N_i,N_i\cap T\right)\times [0,1]
\coprod (H_i,B_i)}{(\Farc_i\times\{1\},(T\cap \Farc_i)\times \{1\})\sim \del_{-1}(H_i,B_i)}\subset \Cobb(\Farc).$$
Here $(H_i,B_i)$ denotes a copy of the standard saddle $(H,B)$ for 
$i=1,\ldots,n$. Let $N$ be a small neighborhood of $D^2\times D^1$ in $\R^3$ and $\ti{B}$ denote the corresponding saddle in $N\times D^1$.  After smoothing the corners obtained from attaching the 
saddles, $(W_{i},\sur_{i})$ is diffeomorphic with $(N\times D^1,\ti{B})$. Choose 
a diffeomorphism 
\[\phi_i:(W_{i},\sur_{i})\lra (N\times D^1,\ti{B})\] such that
\begin{align*}
&\phi_i\big(\del_1(H_i,B_i)\big)=
\big(D^2\times D^1\times \{1\},\ti{B}\cap(D^2\times D^1\times\{1\})\big)\ \ \text{and}\\
&\phi_i\big(N_i\times\{0\},(N_i\cap T)\times\{0\}\big)=
\big(N\times \{-1\},(\ti{B}\cap N)\times\{-1\}\big).
\end{align*}
Moreover, for a sufficiently small $\nu\in\R^+$ and every $t\in (0,1)$, 
if $(x,y,z)\in N$ belongs to the  
$\nu$-neighborhood of $\del N$, then 
\[\phi^{-1}(x,y,z,2t-1)\in N_i\times \{t\}.\] 
We define $G_i'$ by
\begin{align*}
G_i'(y):=\begin{cases}
\pi_2(y)\ &\text{if }\ y\in M\times [0,1]-\amalg_{i=1}^{n}W_{i}\\
\frac{\pi_2(\phi_i(y))+1}{2}\ &\text{if }\ y\in W_{i}
\end{cases}
\end{align*} 
This is a smooth function, by construction. We define the embedded gradient-like vector 
field $\xi'$ on $\Cobb(\Farc)$ by pulling back the vector field $2\del_t$ on 
$N\times D^1$ using  $\phi_i$ and extending it to the rest of 
$M\times [0,1]$ using $\del_t$.  It is now straightforward 
to check that the Morse function $G'$ together with the embedded gradient-like vector field 
$\xi'$ induces the identity diffeomorphism on $(M,T(\Farc))$. Hence, considering a diffeomorphism $D:\Cobb(\Farc)\to\Cobb$ for which $D|_{(M,T)}=\mathrm{Id}$ and $D|_{(M,T(\Farc))}=d$, the Morse function $G=G'\circ D^{-1}$ and the gradient-like vector field $\xi=D_{\star}\xi'$ is the corresponding Morse datum.

If $\Cobb$ is not an elementary cobordism, consider a parametrized Cerf decomposition 
\[\Cobb=\Cobb_1\cup_{(M_1,T_1)}\cdots \cup_{(M_{m-1},T_{m-1})}\Cobb_m\] for $\Cobb$. 
For each $\Cobb_i$ denote the corresponding Morse datum constructed as above by $(G_i,\xi_i)$.
Fix an $(m+1)$-tuple $\ub=(a=b_0<b_1<\cdots <b_m=b)$ of real numbers, and let 
$a_i:[0,1]\to [b_{i-1},b_i]$ be the diffeomorphism $a_i(t)=(1-t)b_{i-1}+tb_{i}$. 
Then, we can modify the Morse functions $a_i\circ G_i$ and the gradient-like vector fields $\xi_i$, on a collar 
neighborhood of $(M_i,T_i)$ such that they fit together to give a Morse function $G$ 
and an embedded gradient-like vector field $\xi$ on $\Cobb$. 
% 
%Let $G_i=a_i\circ G_i'\circ D_i^{-1}$ where $a_i:[0,1]\ra [b_{i-1},b_{i}]$ is the diffeomorphism 
%$a_i(t)=(1-t)b_{i-1}+tb_{i}$ and $D_i$ is the corresponding diffeomorphism from 
%$\Cobb(\Fsphere_i)$ or $\Cobb(\Farc_i)$ to $\Cobb_i$, which is equal to identity on 
%$(M_{i-1},T_{i-1})$ and to $d_i$ on $(M_{i-1}(\Fsphere_i),T_{i-1})$ or 
%$(M_{i-1},T_{i-1}(\Farc_{i}))$. 
%We can modify $G_i$ and $\xi=(D_i)_{\star}(\xi_i')$ on a collar 
%neighborhood of $(M_i,T_i)$ such that they fit together to give a Morse function $G$ 
%and an embedded gradient-like vector field $\xi$ on $\Cobb$. 
\end{proof}

\begin{lem} 
%\todo{EE: The statement is not clear.  AA Is the new statement better?}
Suppose $\Modi=(G,\ub,\xi)$ and $\Modi'=(G',\ub',\xi')$ are two good Morse data for a stable cobordism $\Cobb=(W,\sur)$ that induce the same parametrized Cerf decompositions. There exist diffeomorphisms $D:\Cobb\ra \Cobb$ and $\phi:\R\ra\R$ 
satisfying the followings:
\begin{enumerate}
\item $\ub'=\phi(\ub)$
\item $G'=\phi\circ G\circ D^{-1}$
\item for some $h\in C^{\infty}(W,\R^+)$ we have $D_*\xi=h.\xi'$
\item $D|_{M}=\mathrm{Id}_{M}$ and  $D|_{M'}=\mathrm{Id}_{M'}$.
\end{enumerate}
\end{lem}
\begin{proof}
See proof of \cite[Lemma 2.14]{Juh-TQFT}.
\end{proof}
\begin{prop}
For every stable cobordism  $\Cobb$  
there exists a good Morse datum $\Modi=(G,\ub,\xi)$, and 
therefore a parametrized Cerf decomposition.
\end{prop}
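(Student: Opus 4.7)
The plan is to adapt standard relative Morse theory for pairs to the stable cobordism $\Cobb=(W,\sur)$, following closely the argument Juh\'asz uses for the analogous statement in the sutured setting. First I would construct an initial Morse function $G:W\to[a,b]$ with $G^{-1}(a)=M$, $G^{-1}(b)=M'$, $G|_{\del_hW}=\pi_2$, and such that $g=G|_\sur$ is Morse. One concrete route is to build $g$ on $\sur$ first (it is prescribed to be constant on $T$ and $T'$, and to be projection on the horizontal part $\del_h\sur=\del T\times I$), then extend $g$ to a tubular neighborhood of $\sur\cup \del_h W$ in $W$ using normal coordinates, and finally extend to all of $W$ via a generic perturbation. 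An embedded gradient-like vector field $\xi$ is constructed locally: near each interior critical point of $G$ use coordinates in which $G$ has the standard Morse form and $\xi=(\pm x_i)$; near each critical point of $g$ on $\sur$ use the standard form $(\pm x_1,\pm x_2,y_1^2+y_2^2,0)$ from the definition, which builds the tangency of $\xi$ to $\sur$ into the setup; then extend via a partition of unity over a vector field that is tangent to $\sur$ and $\del_hW$ and satisfies $dG(\xi)>0$ (such a field exists locally because $G$ and $g$ have no critical points on $\del_hW$ and because tangency to $\sur$ is an open condition in the complement of the critical set).

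The substantive step is ensuring indefiniteness. For $G$ on $W$, classical Morse cancellation applies since $W$ has no closed components: every component meets $M$ or $M'$, so each index $0$ critical point may be paired with an adjacent index $1$ critical point via a single gradient trajectory and cancelled, and dually for indices $4$ and $3$. For $g$ on $\sur$, the stability hypothesis is precisely what is needed: a disk component of $\sur$ is a bigon strip meeting $T$ and $T'$ each in a single arc, on which $g$ may be taken to have no critical points at all (the level sets are properly embedded arcs from $\del_h^-W$ to $\del_h^+W$); and any non-disk component $\sur_\circ$ has $T\cap\sur_\circ$ and $T'\cap\sur_\circ$ with more than one component, forcing $\chi(\sur_\circ)\le 0$, so all interior local maxima and minima of $g|_{\sur_\circ}$ can be paired with index $1$ critical points and cancelled on the surface.

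The hard part will be carrying out these two sets of cancellations in a mutually compatible way: each $\sur$-cancellation must be realized by an isotopy of $G$ and $\xi$ that preserves the embedded gradient-like condition and does not introduce spurious critical points of $G$ in $W\setminus\sur$. This is handled by performing the cancellation in a tubular neighborhood of $\sur$, using the standard local form $(\pm x_1^2\pm x_2^2+y_1)$ of $G$ near a critical point of $g$ to see the Morse lemma cancellation on $\sur$ as the restriction of a compactly supported modification of $G$ that introduces a pair of cancelling critical points of $G$ on $W$ which can then be simultaneously cancelled. After this, to produce the tuple $\ub$, I would choose regular values $a=b_0<b_1<\cdots<b_m=b$ of both $G$ and $g$ separating the (now finitely many, all index $1,2,3$) critical points of $G$ and the (all index $1$) critical points of $g$, so that each interval $(b_{i-1},b_i)$ contains at most one critical point of $G\cup g$, and in intervals containing a critical point of $G$ it contains no critical point of $g$. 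With this choice, on every component of $\sur$ the level set $g^{-1}(b_i)$ lies between consecutive critical values of $g|_{\sur_\circ}$ on a surface where $g$ is indefinite with boundary behavior forcing the connected components to meet $\del_hF$, so $g^{-1}(b_i)$ is a disjoint union of arcs from $\del_h^-W$ to $\del_h^+W$; hence $\Modi=(G,\ub,\xi)$ is a good Morse datum and induces the desired parametrized Cerf decomposition.
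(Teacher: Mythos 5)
Your overall strategy (build an indefinite $g$ on $\sur$, extend to $G$ on $W$, choose regular values $\ub$ and an embedded gradient-like vector field) matches the paper's. Two points of difference are worth flagging.

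First, you devote considerable effort to cancelling index-$0$ and index-$4$ critical points of $G$ on $W$ and to reconciling these cancellations with the $\sur$-cancellations. That work is not required here: the paper's notion of a \emph{good} Morse datum asks only that $g=G|_\sur$ be indefinite, together with the level-set condition on $g^{-1}(b_i)$; it does not require $G$ itself to be indefinite on $W$. The paper's proof just extends $g$ generically to $G$ and never touches the index-$0/4$ critical points of $G$. (Indefiniteness of $G$ on $W$ only becomes relevant later, in the Cerf decomposition theorem, where the hypotheses read ``good indefinite Morse data'' rather than merely ``good''.) So the ``hard part'' you anticipate — simultaneous compatible cancellation in $W$ and on $\sur$ — is not part of this proposition's proof.

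Second, and this is the genuine gap, your choice of $\ub$ separates \emph{all} critical values of $G$ and of $g$ into distinct intervals, and you then claim $g^{-1}(b_i)$ is a union of arcs because the boundary behavior forces level-set components to reach $\del_h\sur$. That inference is not justified: an indefinite Morse function on a surface with boundary can have regular level sets containing closed components (a saddle may nucleate a circle and a later saddle may absorb it, so the intermediate regular level set between the two saddles contains a circle). Stability does not obviously rule this out. The paper's proof avoids this issue by first arranging all critical values of $g$ to coincide and then perturbing only slightly, so that all critical values of $g$ lie in a single narrow window $(b_{j-1},b_j)$ containing no critical value of $G$. With that choice, each $g^{-1}(b_i)$ for $i<j$ is isotopic to $g^{-1}(0)=T$, and for $i\geq j$ is isotopic to $g^{-1}(1)=T'$, both of which are unions of arcs by construction, so the good condition is automatic. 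To repair your version you should either adopt this clustering of critical values of $g$, or supply a separate argument that no intermediate level set can contain a closed component.
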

\begin{proof}
Let $\Cobb=(W,F)$ be a stable cobordism from $(M,T)$ to $(M',T')$ and let 
$g:\sur\ra [0,1]$ be a Morse function satisfying
$$g|_{T}\equiv 0,\ \ \ g|_{T'}\equiv 1,\ \ \ \text{and}\ \ \ g|_{\sur\cap \del_hW}=\pi_2.$$ 
Since every connected component of $\sur$ has non-empty intersection with either of 
$T$ and $T'$, we may assume that $g$ has no minimal or maximal index critical point i.e. all critical points of $g$ have index $1$. 
Further, we assume that it has the same value over all of its critical points. 
We extend $g$ to a tubular 
neighborhood  of $\sur$ such that it has no critical points and then extend it to a Morse function 
$G:W\ra [0,1]$. By a small perturbation in a neighborhood of the critical points of $G$, 
we may assume that $G$ has distinct critical values and these values  
are distinct from the critical value of $g$. 
Finally, %\todo{EE: Is this true? Shouldn't it be that the perturbation is supported in a neighborhood of F?} 
by a small perturbation in a neighborhood of the critical points of $g$ we can transform 
$G$ into a proper Morse function on $\Cobb$, such that for every $p,q\in\mathrm{Crit}(g)$ 
with $G(p)<G(q)$, $G$ has no critical point on $W$ above the interval  $(G(p),G(q))$. 
It is straightforward to show that there is an embedded 
gradient-like vector field for every Morse function on $\Cobb$.  Let $\xi$ be such a  
gradient-like vector 
field for $G$. Choose the ordered set 
\[\ub=\{0=b_0<b_1<\cdots <b_{m}=1\}\subset [0,1]\] 
of  regular  values for  $G$ such that for every $i=1,\ldots,m$, 
$\mathrm{Crit}_W(G)$ has at most one element in 
$G^{-1}[b_{i-1},b_{i}]$. Furthermore,  for exactly one 
$1\le j\le m$ all critical values of $g$ lie in $(b_{j-1},b_j)$, while $\mathrm{Crit}_W(G)$ does not intersect 
$G^{-1}[b_{j-1},b_{j}]$. The triple $(G,\ub,\xi)$ is a good Morse datum for $\Cobb$ and  it thus gives 
a parametrized Cerf decomposition of $\Cobb$.
\end{proof}
%%%%%%%%%%%%%%%%%%%%%%%%%%%%%%
\subsection{Cerf moves}
In this section, we describe {\emph{Cerf moves}} 
on Morse data and discuss how corresponding parametrized Cerf decompositions change under these moves. For this purpose, fix a cobordism $\Cobb=(W,F)$ from $(M,T)$ to $(M',T')$. Let $\Modi=(G,\ub,\xi)$ and $\Modi'=(G',\ub',\xi')$ be parametrized Morse data for $\Cobb$ so that $G,G':W\ra [a,b]$ are proper Morse functions on $\Cobb$.\\

{\bf Critical point cancellation/creation.} 
The Morse data $\Modi$ and $\Modi'$ are related by a {\emph{critical point cancellation}} if
\begin{enumerate}
\item $G$ and $G'$ are related by a critical point cancellation i.e. there exist a  family of smooth functions 
$\{G_t:W\ra [a,b]|t\in [-1,1]\}$ such that $G_{-1}=G$,  $G_1=G'$ and:
\begin{itemize}
\item For $t\in [-1,1]-\{0\}$,  $G_t$ is a proper Morse function on $\Cobb$ and $G_0$ 
has a \emph{death} singularity at some point $p_\circ\in W$,
\item The family is an elementary death ('chemin \'el\'ementaire de mort' in the sense 
of Cerf \cite[Section 2.3, p.71]{Cerf})  with support in a neighborhood $U$ of $p_\circ$. 
More precisely, $G_t$ does not depend on $t$ outside $U$ and for $t\in (0,1]$ it has no 
critical points in $U$. Furthermore, there exist  local coordinates $(x_1,\ldots,x_4)$ around
$p_\circ$, such that for $t\in [-1,1]$
$$G_t(x_1,\ldots,x_4)=G_0(p_\circ)+x_1^3+tx_1-x_2^2\cdots -x_k^2+x_{k+1}^2+\cdots +x_4^2$$
\end{itemize}

\item Assume $\ub=(a=b_0<b_1<\cdots <b_m=b)$. There is some $1\le j\le m-1$ such that $b_{j}=G_0(p_\circ)$, $\ub'=\ub-\{b_{j}\}$ while 
$$b_{j-1}<G_0(p_\circ)-2/3\sqrt{3}\ \ \ \text{and}\ \ \ b_{j+1}>G_0(p_\circ)+2/3\sqrt{3}.$$ 
\item Let $p$ and $q$ be the critical points of $G$ where $b_{j-1}<G(p)<b_j<G(q)<b_{j+1}$ 
and canceled against each other at $t=0$. The stable and unstable submanifolds $W^s(q)$ and $W^u(p)$ are 
transverse and intersect in a single flow line. Moreover, the neighborhood $U$ is in fact a 
neighborhood of 
$$(W^{u}(p)\cup W^s(q))\cap G^{-1}[b_{j-1},b_{j+1}],$$
and $\xi'$ coincide with $\xi$ outside $U$. 
\end{enumerate}
A {\emph{critical point creation}} is the reverse of a critical point cancellation. 

Suppose that the good Morse data $\Modi'$ is obtained from $\Modi$ by a critical point 
cancellation as above. The induced parametrized Cerf decomposition 
$\Ci(\Modi'):\Cobb'_1\cup\cdots \cup\Cobb'_{m-1}$ is related to 
$\Ci(\Modi):\Cobb_1\cup\cdots \cup\Cobb_m$ as follows. For any $i<j-1$ 
the parametrized elementary cobordisms $\Cobb'_i$ coincides with 
$\Cobb_{i}$, while for $i\ge j$ it coincides with $\Cobb_{i+1}$. 
Further, let $b(\Fsphere_j)$ in $(M_{j-1}(\Fsphere_j),T_{j-1})$ 
denote the belt sphere of the attached handle to $\Fsphere_j$. 
The framed sphere $\ti{\Fsphere}_{j+1}:=d_j^{-1}(\Fsphere_{j+1})$ 
intersects $b(\Fsphere_{j})$ in a single point, thus there is a 
diffeomorphism 
\[\phi:(M_{j-1},T_{j-1})\ra (M_{j-1}(\Fsphere_j)(\ti{\Fsphere}_{j+1}),T_{j-1})\] 
which is unique up to isotopy and fixes $(M_{j-1},T_{j-1})\cap 
(M_{j-1}(\Fsphere_j)(\ti{\Fsphere}_{j+1}),T_{j-1})$ 
(See \cite{Cerf}, \cite[Definition 2.17]{Juh-TQFT} and \cite[Theorem 5.4]{Mi}). 
Then, $\Cobb'_j$ is a cobordism from $(M'_{j-1},T'_{j-1})=(M_{j-1},T_{j-1})$ to 
$(M'_j,T'_j)=(M_{j+1},T_{j+1})$ parametrized by the  framed sphere 
$\Fsphere_j'=\emptyset$ and the diffeomorphism $d_j'$, which is 
isotopic to $d_{j+1}\circ \ti{d}_j\circ\phi$. Here,
\[\ti{d}_j:(M_{j-1}(\Fsphere_j)(\ti{\Fsphere}_{j+1}),T_{j-1})\lra 
(M_{j}(\Fsphere_{j+1}),T_{j})\]
is the diffeomorphism induced by $d_j$. For more details, see \cite[Lemma 2.15]{Juh-TQFT}.

{\bf Critical point switches.} The Morse data $\Modi$ and $\Modi'$ are related by 
{\emph{critical point switch}} if $\xi=\xi'$,  $\ub\setminus b_j=\ub'\setminus b_j'$ 
for some $j$, and $G$ is connected to $G'$ by a smooth 
family $\{G_t:W\ra [a,b]|{t\in [-1,1]}\}$ of Morse functions which
are proper for all but finitely many values of $t$ in $[-1,1]$ and $\mathrm{Crit}_{\Cobb}(G_t)$ is independent of $t$.  
Furthermore, depending on the 
type of critical point switch, the family $\{G_t\}$ satisfies one of the  
followings:

{\bf Type I.}   For critical points $ p,q\in \mathrm{Crit}_W(G)$ we have 
\[b_{j-1}<G(p)<b_{j}<G(q)<b_{j+1}\ \ \ \text{and}\ \ \ b_{j-1}<G'(q)<b'_{j}<G'(p)<b_{j+1}.\]

Then $G_0(p)=G_0(q)$ while $tG_t(p)>tG_t(q)$ 
for $t\neq 0$.  Further, the family $\{G_t\}_{t\in [-1,1]}$ is an elementary 
upward or downward switch ('chemin \'el\'ementaire de 
croisement, ascendant or descendente' in the sense of Cerf \cite[Chapter II, p.40]{Cerf})
in a neighborhood $U$ of 
\begin{align*}
&W_p^s(q):=W^s(q)\cap G^{-1} \left([G(p),G(q)] \right)\ \ \ \ \text{or}\ \ \ \ \
W^u_q(p):=W^u(p)\cap G^{-1}\left([G(p),G(q)]\right).
\end{align*}
In particular, $G_t$ is independent of $t$ outside $U$ while $G_t-G$ is constant in an open 
neighborhood containing the critical points inside $U$.

{\bf Type II.} For critical points $p\in\mathrm{Crit}_W(G)$ and $q_1,\ldots,q_{n}\in\mathrm{Crit}(g)$ we have 
\[b_{j-1}<G(p)<b_{j}<G(q_1)<\cdots <G(q_{n})<b_{j+1}\]
while $b_{j-1}<G'(q_1)<\cdots <G'(q_{n})<b_{j}<G'(p)<b_{j+1}$. Then 
$G_t(q_1)<\cdots <G_t(q_{n})<G_t(p)$ for some $\delta>0$ and every
$1-\delta<t\le 1$. 
Furthermore, the family $\{G_t\}_{t\in [-1,1]}$ is an elementary switch 
('chemin \'el\'ementaire de croisement' in the sense of  Cerf \cite[Chapter II, p.40]{Cerf})
with support in a neighborhood $U$ of
$$W^u_q(p):=W^u(p)\cap G^{-1}([G(p),G(q_{n})])$$
and $G_t|_{W-U}$ is independent of $t$.

{\bf Type III.} There are critical points $p,q\in\mathrm{Crit}(g)$ so that 
$b_{j-1}<g(p)<g(q)<b_{j}$, $g$ has no critical value in 
$(g(p),g(q))$ and $b_{j-1}<g'(q)<g'(p)<b_{j}$. In this case $G_0(p)=G_0(q)$, while $tG_t(p)>tG_t(q)$ for $t\neq 0$. 
Moreover, for a neighborhood $U$ of 
$$W^s(q)\cap G^{-1}([G(p),G(q)])\ \ \ \ \text{or}\ \ \ \ 
W^u(p)\cap G^{-1}([G(p),G(q)]),$$
$G_t$ is independent of $t$ in $W-U$. Furthermore, in a neighborhood $V\subset U$ of 
$p$ and $q$, $G_t-G$ is constant.

If the good Morse data $\Modi$ and $\Modi'$ are related by a critical point 
switch of type III, then it is straightforward to see that $\Ci(\Modi)$ and $\Ci(\Modi')$ 
are the same. However, if they are related by a critical point switch of type I or II, 
then $\Ci(\Modi)$ and $\Ci(\Modi')$ are related as in Lemma 2.16 of  \cite{Juh-TQFT}. 
Let us recall the statement of the aforementioned  Lemma (with a small modification)
for a critical point switch of type II. 

\begin{lem} (Critical point switch of type II) With the above notation fixed, if 
$\Modi=(G,\ub,\xi)$ and $\Modi'=(G',\ub',\xi')$ are related by a critical point switch of type II,
then the parametrized elementary cobordism $\Cobb_i$ coincides with 
$\Cobb_i'$ for any $i<j-1$ and $i>j$. Moreover, 
\begin{itemize}
\item $\Farc_{j+1}\cap d_j(b(\Fsphere_j))=\emptyset$ where $b(\Fsphere_j)$ denotes the belt sphere of the attached handle to $\Fsphere_j$,
\item $d_j(\Farc_j')=\Farc_{j+1}$ and $d_j'(\Fsphere_j)=\Fsphere_{j+1}'$,
\item The diagram 
\begin{diagram}
(M_{j-1}(\Fsphere_j),T_{j-1}(\Farc_j'))&\rTo{(d_j)^{\Farc_j'}}&(M_{j},T_{j}(\Farc_{j+1}))\\
\dTo{(d_j')^{\Fsphere_j}}&&\dTo{d_{j+1}}\\
(M_{j}'(\Fsphere_{j+1}'),T_{j}')&\rTo{d_{j+1}'}&(M_{j+1},T_{j+1}).
\end{diagram}
is commutative. Here, $(d_j)^{\Farc_j'}$ and $(d_{j}')^{\Fsphere_{j}}$ are induced by $d_{j}$ 
and $d_j'$, respectively.
\end{itemize}
\end{lem}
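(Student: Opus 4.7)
The plan is to reduce the statement to the fact that the Cerf-theoretic switch $\{G_t\}_{t\in[-1,1]}$ is supported in a small neighborhood $U$ of the partial unstable manifold $W^u(p)\cap G^{-1}([G(p),G(q_n)])$, together with the hypothesis $\xi=\xi'$. Outside $U$, the Morse data agree pointwise, so the parametrized elementary cobordisms $\Cobb_i$ and $\Cobb_i'$ are literally the same for $i<j-1$ and for $i>j$. This immediately reduces everything to a local analysis inside $U$.

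First I would verify the disjointness assertion in the first bullet. The belt sphere of the $k$-handle corresponding to $p$ is the intersection of $W^u(p)$ with a level set just above $G(p)$, and its image under $d_j$ inside $M_j$ is obtained by flowing along $\xi$. By the definition of an elementary upward switch, the support $U$ is a tubular neighborhood of $W^u_q(p)=W^u(p)\cap G^{-1}([G(p),G(q_n)])$, and the existence of the $1$-parameter family $\{G_t\}$ with $\mathrm{Crit}_{\Cobb}(G_t)$ independent of $t$ forces the descending trajectories of the $q_i$ under $\xi$ to miss $W^u(p)$ (otherwise the switch would create new intersections and would not be admissible in Cerf's sense). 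Intersecting with any regular level in $(G(p),G(q_1))$ gives that the descending arcs from the $q_i$ avoid the image of the belt sphere, which is precisely the first assertion, since $\Farc_{j+1}$ is by construction the image of these descending arcs under $d_j$.

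Next I would establish the identifications in the second bullet. Because $\xi=\xi'$ and the critical points are unchanged as a set, the descending manifolds $W^u(q_i)$ and $W^u(p)$ coincide as subsets of $W$; only their images in regular level sets differ because the regular levels themselves shift under $G\leadsto G'$. The arcs $\Farc_j'$ are obtained by intersecting $W^u(q_i)$ with $(G')^{-1}(b_{j-1})=G^{-1}(b_{j-1})$ and flowing through the product region below, while $\Farc_{j+1}$ is obtained by intersecting the same $W^u(q_i)$ with $G^{-1}(b_j)$ and flowing down to it. Both identifications are realized by the $\xi$-flow, so $d_j(\Farc_j')=\Farc_{j+1}$ with matching framings (the framings transport under the flow of the gradient-like vector field). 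The equality $d_j'(\Fsphere_j)=\Fsphere_{j+1}'$ is the mirror statement for the unstable manifold $W^u(p)$: in $\Modi$ it descends from $p$ to $G^{-1}(b_{j-1})$ giving $\Fsphere_j$, while in $\Modi'$ the same unstable manifold descends to $(G')^{-1}(b_j')$ giving $\Fsphere_{j+1}'$, and $d_j'$ is exactly the $\xi$-flow diffeomorphism identifying these two level sets outside a neighborhood of $\Fsphere_j$.

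The commutativity of the square in the third bullet then falls out by unwinding definitions: every arrow in the diagram is induced by the $\xi$-flow between appropriate pieces of level sets, with the four corners corresponding to the two orderings in which one can first remove a neighborhood of $W^u(p)$ and then perform band surgery along the $W^u(q_i)$ or vice versa. Since these two operations are supported in disjoint open sets (by the disjointness of $\Farc_{j+1}$ from the belt sphere of the $p$-handle established above) and are both realized via the same flow $\xi=\xi'$, the two compositions agree. The main obstacle I foresee is bookkeeping for the framings: one must check that the trivializations of the normal bundles transported along $\xi$ from the level set $(G')^{-1}(b_j')$ to the level set $G^{-1}(b_j)$ match the ones specified by the standard local models around each critical point. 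This is handled exactly as in the proof of Lemma~2.16 of \cite{Juh-TQFT} by choosing the local coordinates around $p$ and $q_i$ compatibly with $\xi$ throughout the family $\{G_t\}$, which is possible because $\{G_t\}$ is constant near the critical points.
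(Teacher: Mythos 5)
Your proposal is correct and follows essentially the same line of reasoning as the paper's cited proof (Lemma~2.16 of Juh\'asz's TQFT paper): outside the support $U$ of the elementary switch the Morse data agree verbatim, and inside it every parametrization arrow is realized by the flow of the fixed vector field $\xi=\xi'$, so the statements reduce to matching flow lines between shifted level sets. Two small remarks worth making. First, for the disjointness in the first bullet you appeal to Cerf admissibility of the family $\{G_t\}$; that is a valid argument, but there is a cleaner and more direct one built into the definition of an \emph{embedded} gradient-like vector field: $\xi$ is tangent to $\sur$ and $G$ has no critical points near $\sur$, so $p\notin\sur$ and hence $W^u(p)$ never meets $\sur$ at all; intersecting with the level $G^{-1}(b_j)$ immediately separates the belt sphere $d_j(\{0\}\times S^{4-k})$ from $T_j$ and thus from (a sufficiently thin tubular neighborhood of) the cores of $\Farc_{j+1}$. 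This is the new ingredient beyond the purely $3$-dimensional Cerf argument in Juh\'asz, and your write-up would be strengthened by making it explicit. Second, a small misstatement: $\Farc_{j+1}$ is not ``the image of the descending arcs under $d_j$'' but rather the descending arcs of the $q_i$ intersected with $g^{-1}(b_j)$ directly; it is $\Farc_j'$ that equals $d_j^{-1}(\Farc_{j+1})$. This does not affect the logic of the commuting square, which is correctly reduced to the fact that both composites are given by the $\xi$-flow on $M_{j-1}$ minus the (mutually disjoint) handle and band data, and that flow is oblivious to whether one passes through the intermediate level $b_j$ or $b_j'$.
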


\begin{proof}
See proof of Lemma 2.16 in \cite{Juh-TQFT}.
\end{proof}

{\bf Isotopy on embedded gradient-like vector field.} We say that the Morse data 
$\Modi=(G,\ub,\xi)$ and $\Modi'=(G',\ub',\xi')$ are related by doing 
{\emph{isotopy on the embedded gradient-like vector field}}, if $G=G'$ and $\ub=\ub'$. 

If the good Morse data $\Modi$ and $\Modi'$ are related by doing isotopy on the 
embedded-gradient like vector fields, their corresponding parametrized Cerf 
decompositions,  possibly after  reversing some of the framed spheres, are 
related by ambient isotopies \cite[Remark 2.11]{Juh-TQFT}. More precisely, 
for any $j$ there is an ambient isotopy 
$\{\phi_t\}_{t\in[0,1]}$ of $(M_{j-1},T_{j-1})$ with $\phi_0=\mathrm{Id}_{M_{j-1}}$, 
so that if $\Cobb_j$ is parametrized by $(\Fsphere_j,d_j)$ (or $(\Farc_j,d_j)$) 
then $\phi_1(\Fsphere_j)=\Fsphere_j'$ and $d_j'=d_j\circ (\phi_1')^{-1}$ 
($\phi_1(\Farc_j)=\Farc_j'$ and $d_j'=d_j\circ (\phi_1')^{-1}$). Here, $\phi_1'$ 
is the diffeomorphism induced by $\phi_1$.
%\begin{lem}
%If $\Modi$ is related to $\Modi'$ by doing isotopy on the embedded gradient-like vector 
%field, then the induced parametrized Cerf decompositions, 
%after possibly reversing some of the framed spheres, 
% are related as follows. For every $j\in\{1,\ldots,m\}$, there is an ambient isotopy 
% $\{\phi_t\}_{t\in[0,1]}$ of $(M_{j-1},T_{j-1})$ with $\phi_0=\mathrm{Id}_{M_{j-1}}$,
% such that 
%\begin{itemize}
%\item If $G$ has one critical point on $W_j$, then $\phi_1(\Fsphere_j)=\Fsphere_j'$ and $d_j'=d_j\circ (\phi_1')^{-1}$
%where 
%%$(\Fsphere_j,d_j)$ and $(\Fsphere_j',d_j')$ are the induced parametrizations of 
%%$(W_j,\sur_j)$ and $(W_j',\sur_j')$ and 
%\[\phi_1':(M_{j-1}(\Fsphere_j),T_{j-1})\ra (M_{j-1}(\Fsphere_j'),T_{j-1})\] 
%is the diffeomorphism induced by $\phi_1$.
%
%\item If $g$ has critical points in $\sur_j$, then $\phi_1(\Farc_j)=\Farc_j'$ and $d_j'=d_j\circ (\phi_1')^{-1}$
%where
%% $(\Farc_j,d_j)$ and $(\Farc_j',d_j')$ are the induced parametrizations of 
%%$(W_j,\sur_j)$ and $(W_j',\sur_j')$ and 
%\[\phi_1':(M_{j-1},T_{j-1}(\Farc_j))\ra (M_{j-1},T_{j-1}(\Farc_j'))\] 
%is the diffeomorphism induced by $\phi_1$.

%\end{itemize}
%\end{lem}

%\begin{proof}
%Remark 2.11 of \cite{Juh-TQFT}.
%\end{proof}

{\bf{Adding/removing regular values.}}  The Morse data $\Modi=(G,\ub,\xi)$ 
and $\Modi'=(G',\ub',\xi')$ are related by 
{\emph{adding or removing regular values}}, if $G=G'$ and $\xi=\xi'$. 
Thus, $\Modi(\ub\cup\ub')=(G,\ub\cup\ub',\xi)$ is a Morse datum for $\Cobb$ obtained from 
$\Modi$ and $\Modi'$ by adding regular values. 
If both $\Modi$ and $\Modi'$ are good, the induced parametrized 
Cerf decomposition $\Ci(\Modi(\ub\cup\ub'))$ is obtained from $\Ci(\Modi)$ and 
$\Ci(\Modi')$ by {\emph{splitting}} product cobordisms and cobordisms parametrized by  
acceptable sets of framed arcs.
\begin{defn}
We say that a parametrized Cerf decomposition $\Ci'$ is obtained from $\Ci$ by a 
{\emph{splitting}}, if there is some $j$ such that for any $i<j$ the parametrized elementary 
cobordism $\Cobb_i'$ coincides with $\Cobb_i$ while for any $i>j+1$ it coincides with 
$\Cobb_{i-1}$. Furthermore, the cobordism $\Cobb_j$ splits as 
$\Cobb'_j\cup_{(M'_{j},T'_{j})}\Cobb'_{j+1}$ in $\Ci'$, such that one of 
$\Cobb'_j$ or $\Cobb'_{j+1}$, say $\Cobb_j'$, is either parametrized by 
$(\Fsphere_j'=\emptyset,d_j')$ or $(\Farc_j',d_j')$. If $\Cobb_j'$ is a product then, 
depending on the types of $\Cobb'_{j+1}$ and $\Cobb_j$, we have either 
\begin{align*}
&\Fsphere_j=d_j'^{-1}(\Fsphere'_{j+1})\ \ \ \text{and} \ \ \ 
d_j=d_{j+1}'\circ (d_j')^{\Fsphere_j},\ \ \  \text{or}\\
&\Farc_{j}=d_j'^{-1}(\Farc_{j+1}')\ \ \  \ \text{and} \ \ \ d_j=d'_{j+1}\circ (d'_j)^{\Farc_j}. 
\end{align*}
If $\Cobb_j'$ is parametrized by $(\Farc_j',d_j')$ then $\Cobb_{j+1}'$ is parametrized 
by $(\Farc_{j+1}',d_{j+1}')$. Moreover, 
\[\Farc_j=\Farc_j'\amalg d_j'^{-1}(\Farc_{j+1}')\ \ \ \text{and}\ \ \  
d_j=d_{j+1}'\circ (d_j')^{\Farc_j-\Farc_j'}.\]
The reverse of this move, is called {\emph{merging}}.
\end{defn} 
Therefore, for any two good Morse data $\Modi=(G,\ub,\xi)$ and $\Modi=(G,\ub',\xi)$ 
we may change $\Ci(\Modi)$ to $\Ci(\Modi')$ by first splitting and then merging.\\

{\bf{Left-right equivalence.}} 
Let $\Cobb=(W,\sur)$ be a cobordism from $(M,T)$ to $(M',T')$. 
We say that the Morse functions $G$ and $G'$ on $\Cobb$ are related by a 
{\emph{left-right equivalence}} if there are diffeomorphisms $\Phi:W\ra W$ and $\phi:\R\ra\R$
such that $\Phi|_{(M,T)}=\mathrm{Id}_{(M,T)}$, $\Phi|_{(M',T')}=\mathrm{Id}_{(M',T')}$, and 
$G'=\phi\circ G\circ\Phi^{-1}$. Moreover, we say that the good Morse data $\Modi=(G,\ub,\xi)$ 
and $\Modi'=(G',\ub',\xi')$ are related by a 
{\emph{left-right equivalence}}, if $G$ and $G'$ are related by a left-right equivalence and 
under the corresponding diffeomorphisms $\ub'=\phi\circ\ub$ and $\xi'=\Phi_{\star}(\xi)$. 
In this case, the parametrized Cerf decomposition $\Ci(\Modi')$ is obtained from 
$\Ci(\Modi)$ by a {\emph{diffeomorphism equivalence}}. This means that  
$\Cobb_i'=\Phi(\Cobb_i)$ as parametrized elementary cobordisms i.e. 
depending on the type of $\Cobb_i$, 
$\Fsphere_i'=\Phi(\Fsphere_i)$ or $\Farc_i'=\Phi(\Farc_i)$ and 
$d_i'=\Phi_{i}\circ d_i\circ (\Phi_{i-1}')^{-1}$, where $\Phi_i=\Phi|_{(M_i,T_i)}$ and $\Phi_i'$
is the map induced by $\Phi_i$  on $(M_i(\Fsphere_{i+1}),T_i)$ or on $(M_i,T_i(\Farc_{i+1}))$.

\subsection{Parametrized Cerf decomposition theorem} 
The goal of this subsection is to show that any two good Morse data associated with a stable 
cobordism can be related by a sequence of Cerf moves. As before, let $\Modi=(G,\ub,\xi)$ 
and $\Modi'=(G',\ub',\xi')$ be good Morse data for $\Cobb$ so that $G,G':W\ra [a,b]$ are proper.

\begin{lem}\label{lem:simple-path} If there exists a smooth family $\{G_t\}_{t\in [0,1]}$ of 
proper Morse functions on $\Cobb$ with $G_0=G$ and 
$G_1=G'$ then $G$ is related to $G'$ by a left-right equivalence.
\end{lem}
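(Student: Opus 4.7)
The plan is to construct smooth families of diffeomorphisms $\Phi_t:W\to W$ and orientation-preserving maps $\phi_t:\R\to\R$, with $\Phi_0=\mathrm{Id}_W$ and $\phi_0=\mathrm{Id}_\R$, such that $\Phi_t$ preserves $(M,T)$, $(M',T')$, $\sur$, and $\del_hW$, and
\[
G_t=\phi_t\circ G_0\circ \Phi_t^{-1}\qquad\text{for every } t\in[0,1].
\]
Setting $\Phi=\Phi_1$ and $\phi=\phi_1$ then yields the desired left-right equivalence, and the paths $\{\Phi_t|_{(M,T)}\}$ and $\{\Phi_t|_{(M',T')}\}$ provide the isotopies to the identity (indeed, these restrictions will be the identity for every $t$).

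Differentiating in $t$ and writing $X_t=(\del_t\Phi_t)\circ\Phi_t^{-1}$ (a time-dependent vector field on $W$) and $\psi_t=(\del_t\phi_t)\circ\phi_t^{-1}$ (a time-dependent vector field on $\R$), the problem reduces to solving, for each $t$, the linear equation
\[
dG_t(X_t)=\psi_t\circ G_t-\del_t G_t\qquad\text{on }W,\qquad(\ast)
\]
subject to the boundary conditions that $X_t$ be tangent to $\sur$ and to $\del_hW$ and vanish on $M\cup M'$. Since $\{G_t\}$ consists of Morse functions without births or deaths, the implicit function theorem applied to $dG_t=0$ on $W$ and to $dg_t=0$ on $\sur$ (with $g_t=G_t|_\sur$) shows that the critical points of $G_t$ on $W$ and those of $g_t$ on $\sur$ trace out smooth arcs $p_i(t)$ and $q_j(t)$, with smooth, pairwise distinct critical values $c_i(t)=G_t(p_i(t))$ and $d_j(t)=g_t(q_j(t))$ lying in $(a,b)$. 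A partition of unity on $[a,b]$ produces a smooth $\psi_t:\R\to\R$ with $\psi_t(c_i(t))=\dot c_i(t)$, $\psi_t(d_j(t))=\dot d_j(t)$, and $\psi_t$ vanishing near $a$ and $b$; integrating $\psi_t$ gives $\phi_t$ fixing $a$ and $b$.

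The next step is to solve $(\ast)$ for $X_t$. The right-hand side vanishes at every critical point of $G_t$: differentiating $c_i(t)=G_t(p_i(t))$ and using $dG_t(p_i(t))=0$ gives $\dot c_i(t)=\del_t G_t(p_i(t))$, whence $\psi_t(c_i(t))-\del_t G_t(p_i(t))=0$, and likewise on $\sur$. Away from the critical loci, an embedded gradient-like vector field $\xi_t$ for $G_t$ that is tangent to $\sur$ and $\del_hW$ and vanishes in a collar of $M\cup M'$ yields the solution
\[
X_t^{\mathrm{reg}}:=\frac{\psi_t\circ G_t-\del_t G_t}{dG_t(\xi_t)}\,\xi_t.
\]
Near each interior critical point $p_i(t)$ a parametric Morse lemma provides coordinates in which $G_t$ is a fixed nondegenerate quadratic form, allowing us to write down a smooth local solution $X_t^{\mathrm{loc}}$ of $(\ast)$ vanishing at $p_i(t)$. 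Near each $q_j(t)\in\sur$ a parametric Morse lemma with boundary furnishes coordinates $(x_1,x_2,y_1,y_2)$ with $\sur=\{y_1=y_2=0\}$ in which $G_t$ has a standard model, and we pick $X_t^{\mathrm{loc}}$ tangent to $\sur$. A partition of unity splices $X_t^{\mathrm{reg}}$ and the local $X_t^{\mathrm{loc}}$'s into a global smooth $X_t$ solving $(\ast)$ with all required tangency properties.

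Integrating $X_t$ produces $\Phi_t$; by construction it fixes $M\cup M'$ pointwise, preserves $\sur$ and $\del_hW$, and restricts to the identity isotopy on $(M,T)$ and $(M',T')$. Combined with $\phi_t$, this gives $G'=\phi\circ G\circ\Phi^{-1}$, establishing the left-right equivalence. The main technical obstacle is the compatibility between the critical points of $G_t$ on $W$ and those of $g_t$ on $\sur$: one must simultaneously arrange $X_t$ to be tangent to $\sur$ and to solve $(\ast)$ near critical points of both functions, which requires the boundary-adapted parametric Morse lemma together with gradient-like vector fields chosen compatibly with the pair $(W,\sur)$. All remaining steps are standard partition-of-unity arguments and ODE integration.
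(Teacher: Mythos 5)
Your proposal is correct and follows essentially the same route as the paper's proof: both reduce the problem to integrating a time-dependent vector field on $W$ (equivalently, the horizontal part of a vector field on $W\times[0,1]$) that reparametrizes the family, using a reparametrization $\phi_t$ of $\R$ to absorb the motion of critical values, a parametric Morse lemma (adapted to the pair $(W,\sur)$) to supply smooth local solutions near the critical loci of $G_t$ and $g_t$, an embedded gradient-like vector field away from the critical loci, and a partition of unity to splice. The only difference is presentational — the paper first conjugates by $\phi_t$ so that critical values are frozen and then seeks a vector field along whose flow the normalized family is constant, whereas you keep both unknowns and derive the infinitesimal compatibility equation $dG_t(X_t)=\psi_t\circ G_t-\del_t G_t$ directly; these are the same computation in a different order.
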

\begin{proof} The proof  is similar to the proof of Lemma 3.1 in \cite{GWW} with 
minor modifications.  For every $t\in [0,1]$, $G_t$ is proper.  Therefore, we have a 
smooth family $\{\phi_t:\R\ra \R\}_{t\in [0,1]}$ of diffeomorphisms of 
$\R$ such that $\phi_0=\mathrm{Id}$ and for 
every $t\in [0,1]$ , $\phi_t^{-1}\circ G_t$ has the same critical 
values as $G$. Moreover, if for $p\in \mathrm{Crit}_{\Cobb}(G)$ and 
$p_t\in \mathrm{Crit}_{\Cobb}(G_t)$ we have  $\phi_t^{-1} G_t(p_t)=G(p)$ then 
either both $p$ and $p_t$ are critical points in $W$ or both are critical points in $\sur$. 

Let $G_t'=\phi_t^{-1}\circ G_t$.
Consider a point $(x_0,t_0)\in W\times [0,1]$ such that 
$x_0\in\mathrm{Crit}_W(G_{t_0}')$. For sufficiently 
small values of $\epsilon>0$ and $\delta>0$, 
we can find the local coordinates $\theta_{t}:B_{\epsilon}\subset \R^{4}\ra W$ for every 
$t\in (t_0-\delta,t_0+\delta)$ such that $G_t'\circ\theta_t$ takes the normal form  
$$G_{t}'\circ\theta_t(x_1,\ldots,x_4)=G_{t_0}'(x_0)+\sum \pm x_i^2.$$ 
Similarly, if $x_0\in\mathrm{Crit}(g_t)$, for sufficiently small $\epsilon,\delta>0$,  we can find local coordinates  
$\theta_{t}:B_{\epsilon}\subset \R^{4}\ra W$ for $t\in (t_0-\delta,t_0+\delta)$, 
so that $\sur$ is given by $\{x_3=x_4=0\}$  in these coordinates and 
$$G_t'\circ\theta_t(x_1,\ldots,x_4)=G_{t_0}'(x_0)\pm x_1^2\pm x_2^2+x_3.$$  
Then, for any $(x,t)$ in a neighborhood of $(x_0,t_0)$ defined as above, let 
\[v(x,t):=\left(\frac{d}{ds}\theta_{t+s}(\theta_t^{-1}(x))\big|_{s=0},1\right).\]
It is straightforward to show that $\{G_t'\}_{t\in[0,1]}$ 
is constant along the flow lines of  $v(x,t)$ in this 
neighborhood.  Consider a finite set of pairs $\{(U_i,{v}_i)|i=1,\ldots,n\}$, where each 
pair consists of an  open neighborhoods $U_i\subset W\times [0,1]$ as above and 
the corresponding vector field ${v}_i$, 
such that 
\begin{align*}
\left\{(x,t)\in W\times [0,1]\ \big|\ x\in\mathrm{Crit}_{\Cobb}(G_t')\right\}
\subset \bigcup_{i=1}^nU_i.
\end{align*} 
Consider an open set $U_0\subset W\times I$ in the complement of the critical points 
such that  $\bigcup_{i=0}^nU_i$ covers $W\times [0,1]$. We define vector field $v_0$ on $U_0$ as 
$$v_0(x,t):=(-(\del_tG_t')(dG_t'(\xi_t))^{-1}\xi_t,1).$$
Here, $\{\xi_t\}_{t\in [0,1]}$ is a smooth family of vector fields on 
$W$ such that for every $t\in [0,1]$, $\xi_t$ is an embedded gradient-like vector 
field for $G_t'$. Note that  $\{G_t'\}_{t\in[0,1]}$ remain constant along the flow lines of ${v_0}$. 
Thus, we may patch the above local vector fields and construct a global vector field $v$ 
on $W\times[0,1]$ such that $\{G_t'\}_{t\in[0,1]}$ remain constant along its 
flow lines. Hence, $G_1'\circ \Phi_1=G$, where $\Phi_1$ is the time-one map of the 
flow of $v$. Therefore, $\phi_1^{-1}\circ G'\circ \Phi_1=G$.
\end{proof}

\begin{defn}
Given a stable cobordism $\Cobb=(W,\sur)$,  a proper Morse function $G$ on 
$\Cobb$ is called {\emph{almost ordered}} if
\begin{enumerate}
\item $G$ is ordered as a Morse function on 
$W$, i.e. for any $p,q\in \mathrm{Crit}_W(G)$, $\mathrm{ind}(p)<\mathrm{ind}(q)$ 
implies $G(p)<G(q)$.
\item  For every $p\in\mathrm{Crit}_W(G)$ with $\mathrm{ind}(p)<2$, 
$G(p)$ is smaller than the critical values of $g$, while for $p\in\mathrm{Crit}_W(G)$ 
with  $\mathrm{ind}(p)>2$,
$G(p)$ is greater than the critical values of $g$.
\end{enumerate}
The Morse function $G$ on $\Cobb$ is 
called  {\emph{ordered}} if $G$ is almost ordered and for every $p\in \mathrm{Crit}_W(G)$ 
with $\mathrm{ind}(p)=2$, $G(p)$ is greater than the critical values of $g$. 
A good Morse datum $\Modi=(G,\ub,\xi)$ for $\Cobb$ is called \emph{almost ordered} 
if $G$ is almost ordered and it is called \emph{ordered} if $G$ is ordered.
\end{defn}

\begin{lem}\label{lem:putorder}
Any good Morse datum $\Modi=(G,\ub,\xi)$ for $\Cobb=(W,\sur)$ 
can be connected by a sequence of Cerf moves 
to an ordered good Morse datum $\Modi'=(G',\ub',\xi')$ such that 
under these moves, the Morse function remains constant in an 
open neighborhood of $\sur$. \end{lem}

\begin{proof} Consider a consecutive pair of critical points $p,q\in\mathrm{Crit}_{\Cobb}(G)$ with wrong order, say  
$\mathrm{ind}(p)>\mathrm{ind}(q)$ while $G(p)<G(q)$.
After removing the extra regular values we may assume $(G(p),G(q))$ contains exactly one $b_j\in\ub$. 
%In order to prove this lemma, it is enough to show that for every consecutive pair of critical points 
%$p,q\in\mathrm{Crit}_{\Cobb}(G)$ with wrong order, say  
%$G(p)<G(q)$,  the Morse datum $\Modi=(G,\ub,\xi)$ can be connected by Cerf moves to a 
%Morse datum $\Modi'=(G',\ub',\xi')$ for which $G'(q)<G'(p)$ 
%and $G'|_U=G|_{U}$ where $U$ is a neighborhood of $\sur$ . 
Depending on the type of the critical points $p$ and $q$ one 
of the followings hold:

{\bf Case 1.} $p,q\in\mathrm{Crit}_W(G)$ and $\mathrm{ind}(p)>\mathrm{ind}(q)$. 
By a dimension count one can show that for a generic embedded 
gradient-like vector field $\xi$, 
\[W^{u}(p)\cap W^{s}(q)\cap G^{-1}(b_j)=\emptyset.\] 
Since $\xi$ is gradient-like, for some $\delta>0$, the submanifolds $W^u(p)$ and 
$W^s(q)$ are disjoint from $\sur$ in $G^{-1}\left((G(p)-\delta,G(q)+\delta)\right)$.
Therefore, we have  an elementary switch 
%a 'chemin \'el\'ementaire de croisnement' 
supported in a neighborhood of 
\[W^{u}(p)\cap G^{-1}\left((G(p)-\delta,G(q)+\delta)\right)\ \ 
\text{or}\ \ W^{s}(q)\cap G^{-1}\left((G(p)-\delta,G(q)+\delta)\right),\]
and disjoint from $\sur$, that connects $G$ to a proper Morse function $G'$ satisfying $G'(q)<G'(p)$. 
Furthermore, in a small neighborhood $U$ of $\sur$ we have $G|_U=G'|_U$. 
Let $\xi'=\xi$ and pick $\ub'$ such that $\ub'-b_{j}'=\ub-b_{j}$ and $G'(q)<b_j'<G'(p)$. 
Then the resulted good Morse datum $\Modi'=(G',\ub',\xi')$ satisfies the required 
conditions.

{\bf Case 2.} $p\in\mathrm{Crit}(g)$ and $q\in\mathrm{Crit}_W(G)$, with  $\mathrm{ind}(q)\le 1$. 
Let
\[\{p_1,\ldots,p_{n_j}=p\}\subset \mathrm{Crit}(g)\] 
be the set of critical points of $g$ such that 
\[b_{j-1}<g(p_1)<\cdots <g(p_{n_j})<b_j.\] 
Similar to Case 1, after changing $\xi$ to a generic embedded gradient-like 
vector field and by a dimension count, one can assume that 
$$(\bigcup_{i=1}^{n_j}W^u(p_i))\cap W^{s}(q)\cap G^{-1}(b_j)=\emptyset.$$
Moreover, for some $\delta>0$, $W^s(q)$ is disjoint from $\sur$ in $G^{-1}\left((G(p_1)-\delta,G(q))\right)$.
Thus, as before, there is an elementary critical point switch,
%a 'chemin \'el\'ementaire de croisnement' 
supported in a neighborhood of 
\[W^{s}(q)\cap G^{-1}(G(p_1)-\delta,G(q)),\] 
and disjoint from $\sur$,
which changes the order of $q$ and $\{p_1,\ldots,p_{n_j}\}$. 
The rest of the argument is as in Case 1 with no modifications.

{\bf Case 3.} $q\in\mathrm{Crit}(g)$ and $p\in\mathrm{Crit}_W(G)$, with $\mathrm{ind}(p)\ge 2$. 
This is the same as Case 2.

By induction on the number of pairs of critical points with wrong order, we are done.

\end{proof}

\begin{prop}\label{prop:Cerf-1}
Let $\Modi=(G,\ub,\xi)$ and $\Modi'=(G',\ub',\xi')$ be ordered, indefinite and good Morse 
data for a stable cobordism $\Cobb=(W,\sur)$. 
Assume that on a tubular neighborhood $U$ of $\sur$ we have $G|_U=G'|_U$ and 
$\xi|_U=\xi'|_U$. Then $\Modi$ can be connected to $\Modi'$ by a sequence 
of Cerf moves such that it stays indefinite and almost ordered throughout. 
Further, the Cerf moves can be chosen so that the Morse function and the gradient-like 
vector field are not changed on $U$. 
\end{prop} 

\begin{proof}
Suppose $\Cobb$ is a cobordism from $(M,T)$ to $(M',T')$. 
Let $a= G|_{M}=G'|_{M}$ and $b=G|_{M'}=G'|_{M'}$.
First, we show that $G$ can be connected to $G'$ by a smooth, \emph{generic} 
family $\{G_t\}_{t\in [0,1]}$, in the sense of \cite[Definition 2.3]{GK}, such that for all but 
finitely many values of $t$ in $[0,1]$, $G_t$ is a proper, indefinite and almost 
ordered Morse function. Moreover, for every $t\in [0,1]$ we have 
$G_t|_U=G|_U=G'|_U$. The argument is similar to the proof of Theorem 4.5 in \cite{GK}.

The Morse functions $G$ and $G'$ coincide on $U$, so there is a generic family $\{G_t\}_{t\in [0,1]}$ 
connecting $G$ to $G'$ such 
that $G_t|_U=G|_U=G'|_U$.  Associated with  $\{G_t\}$ consider a 
generic family of embedded gradient-like vector fields 
$\{\xi_t\}_{t\in[0,1]}$ connecting $\xi$ to $\xi'$ such that $\xi_t|_U$ does not depend on $t$. 
The family $\{G_t\}$ is called \emph{indefinite} if for all but finitely many values of 
$t$, $G_t$ is an indefinite Morse function.  If $\{G_t\}$ is not indefinite, consider 
$r\in [a,b]$ such that an index zero critical point is born at time $r$ in 
$p_r\in W$. Corresponding to this critical point, we have a path of critical points 
\[P=\left\{(t,p_t)\in [r,s]\times W\ \big|\ p_t\in\mathrm{Crit}_W(G_t)\ \ 
\text{and}\ \ \mathrm{ind}(p_t)=0 \ \ \text{for}\ \ t\in(r,s)\right\}\]  
in $[0,1]\times W$ such that at $p_s$ the index zero 
critical point is canceled 
against an index one critical point.  Since $W$ is connected, for every $t\in (r,s)$ there 
is  an index one critical point $q_t$ of $G_t$ 
which cancels $p_t$, i.e. $W^s(q_t)\cap W^u(p_t)$ is a single flow line. 
Therefore, for some $\delta>0$, and an ordered sequence 
\[(r=t_0<t_1<\cdots <t_n=s),\] 
we have paths of index one critical points 
\[Q^i=\left\{(t,q_t^i)\in I_i\times W\ \big|\ q_t^i\in\mathrm{Crit}_W(G_t)\right\},\ \ \ \ i=1,\ldots,n,\] 
such that $q^i_t$ cancels $p_t$ for $t\in I_i$, where
\[I_1=[r,t_1+\delta), I_2=(t_1-\delta,t_2+\delta),\ldots,  I_n=(t_{n-1}-\delta,s].\]
Moreover, $q^1_r=p_r$ 
and $q^n_s=p_s$. Since $\{\xi_t\}$ is generic, for any  $1\le i\le n$ and $t\in I_i$, the submanifold
\begin{equation}\label{def:desdisk}
W_P^s(q^i_t)=W^s(q^i_t)\cap G_t^{-1}\left([G_t(p_t),G_t(q^i_t)]\right)
\end{equation}
is disjoint from $\sur$. Thus we can use the {\emph{Unmerge Lemma}} \cite[Lemma 4.6]{GK} to cancel 
$P$ against $Q^i$ on the non-overlapping parts of the intervals $I_i$ for $i=2,\ldots,n-1$. 
Then we use either {\emph{Eye Death Lemma}} \cite[Lemma 4.7]{GK} or 
{\emph{Swallowtail Death Lemma}} 
\cite[Lemma 4.8]{GK} to cancel over overlaps and on the intervals $I_1$ and $I_n$. 

Next, we make the family $\{G_t\}$ almost ordered. It is  straightforward that one can 
use generic homotopies which pass through {\emph{cusp-fold}} crossings 
to move all critical point creations before all critical point switches and all 
critical point cancellations after all critical point switches. See \cite[Section 2]{GK} for 
the exact definition of a cusp-fold crossing homotopy, see Figure \ref{fig:cusp-fold}.  
\begin{figure}[ht]
\def\svgwidth{9cm}
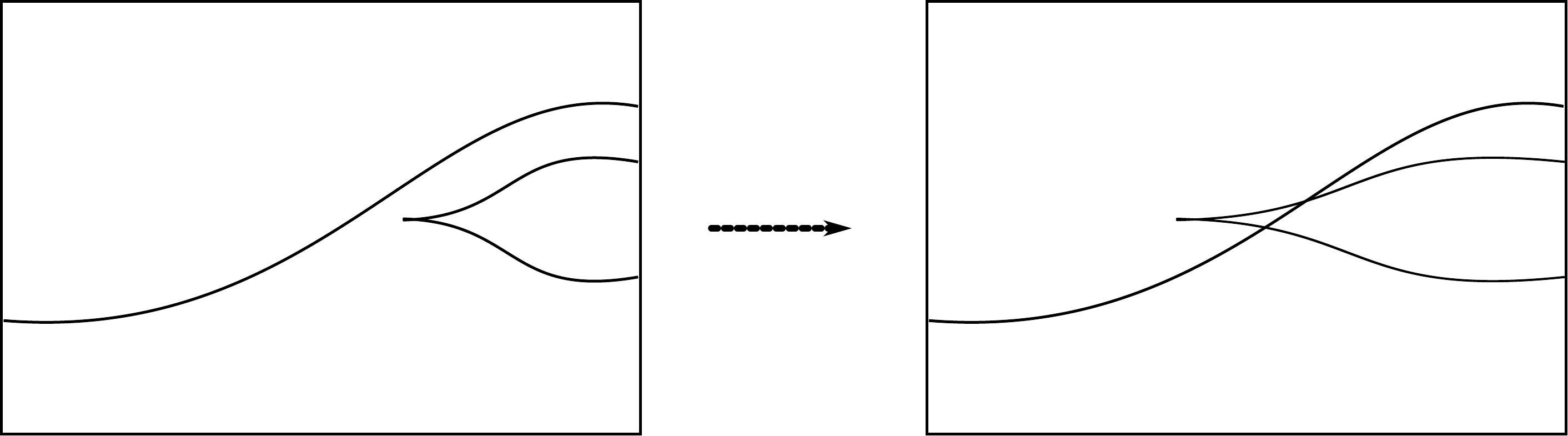
\caption{In a cusp-fold homotopy, the Cerf graphic, which is given by 
$\left\{(t,G_t(p))\subset [0,1]\times [a,b]\ \big|\ p\in \mathrm{Crit}(G_t)\right\}$,
changes as above.}\label{fig:cusp-fold}
\end{figure}

Then, we modify $\{G_t\}$ such that if an index one/two (two/three) critical 
point creation/cancellation happens at a point $p_t\in W$ and time $t$, then $G_t(p_t)$ 
is larger than the values of index one (two) critical points of $G_t$ and smaller than 
the values of index two (three) critical points of $G_t$. Since $G$ and $G'$ are ordered, such
modifications of $\{G_t\}$ can be achieved through generic 
homotopies supported in a neighborhood of an arc disjoint from $\sur$.

Suppose that for paths of critical points 
\begin{align*}
& P=\left\{(t,p_t)\in I_{pq}\times W \ \big|\ p_t\in\mathrm{Crit}_W(G_t)\right\}\ \   \text{and}\ \
 Q=\left\{(t,q_t)\in I_{pq}\times W\ \big|\ q_t\in \mathrm{Crit}_W(G_t)\right\}
\end{align*} 
with  $\mathrm{ind}(P)<\mathrm{ind}(Q)$ we have $G_t(p_t)>G_t(q_t)$ 
for any $t\in I_{pq}$.  Further, suppose that $G_t$ has no critical value in 
$(G_t(p_t),G_t(q_t))$. It follows from the definition of a gradient-like vector field 
that $W^s(p_t)$ and $W^u(q_t)$ are disjoint from $\sur$ in 
$G_t^{-1}\left([G_t(q_t),G_t(p_t)]\right)$. Moreover, since $\xi_t$ 
is generic, by counting dimensions we conclude that $W^s(p_t)$ does 
not intersect $W^u(q_t)$. Therefore, there is a homotopy supported in a 
neighborhood of the $1$-parameter family of descending disks 
$\{W^{s}_Q(p_t)\}$ or the $1$-parameter family of ascending disks 
$\{W_{P}^u(q_t)\}$ that pulls $P$ below $Q$. Note that $W^{s}_Q(p_t)$ 
and $W_{P}^u(q_t)$ are defined as in Equation \ref{def:desdisk}.

The other possibility is that,  for a path of index $1$ critical points 
\[P=\left\{(t,p_t)\in I_p\times W\ \big|\ p_t\in\mathrm{Crit}_W(G)\right\},\]
$g(q)<G_t(p_t)$ for some $q\in\mathrm{Crit}(g)$ and $G_t$ has no critical value in  
$(g(q),G_t(p_t))$. As before, we can modify $\{G_t\}$ by a
homotopy disjoint from $\sur$ and supported in a neighborhood of 
$\{W^{s}_q(p_t)\}$ to pull $P$ below $q$. Similarly, we can modify   
$\{G_t\}$ through homotopies such that for every 
index three critical point $p_t$ of $G_t$ (with $t\in[0,1]$) and every 
$q\in\mathrm{Crit}(g)$, we have $G_t(p_t)>g(q)$.\\ 

%
%
%Similarly, we  may change $\{G_t\}_t$ through homotopies such that for every 
%index three critical point $p_t$ of $G_t$ (with $t\in[0,1]$) and every 
%$q\in\mathrm{Crit}(g)$, we have $G_t(p_t)>g(q)$.\\ 

As a result, we obtain a generic, smooth path $\{G_t\}_{t\in[0,1]}$ of Morse 
functions which are proper, indefinite and almost ordered for all but 
finitely many values of $t$. Let $m$ and $m'$
denote the smallest and largest critical values of $g$. 
The next step is to modify $\{G_t\}$ through generic homotopies such that 
there exists an ordered set 
\[(0=t_0<t_1<\cdots <t_{k}=1)\subset [0,1]\] 
for which $G_{t_i}$ is proper, indefinite and almost ordered with no critical 
points in $W$ above the interval $[m,m']$. Moreover, for any  $1\le i\le k$ 
the family $\{G_t\}_{t\in [t_{i-1},t_{i}]}$ satisfies one of the followings:
\begin{enumerate}
\item For every $t\in [t_{i-1},t_{i}]$, $G_t$ is a proper Morse function on $\Cobb$.
\item $\{G_t\}_{t\in [t_{i-1},t_{i}]}$ corresponds to  a critical point creation/cancellation 
connecting $G_{t_{i-1}}$ and $G_{t_{i}}$.
\item $\{G_ t\}_{t\in [t_{i-1},t_{i}]}$ corresponds to switching  two critical points of 
$G_{t_{i-1}}$ on $W$ with equal index, thus to a critical point switch of type I. 
\item $\{G_ t\}_{t\in [t_{i-1},t_{i}]}$ corresponds to switching some
$p\in \mathrm{Crit}_W(G_{t_{i-1}})$ with all of the 
critical points of $g$, thus a critical point switch of type II. 
\end{enumerate}

In order to do so, we apply generic homotopies similar to Reidemeister II and 
Reidemeister III fold-crossings in \cite[p.11, p.12]{GK} [See Figures~\ref{fig:ReidmII} 
and~\ref{fig:ReidmIII}]. The only 
difference is that we also have paths consisting of the critical points of $g$. 
Assume that for some $c\in [0,1]$, there are index $2$ critical points $p$ 
and $p'$ of $G_c$, such that $G_c(p)=G_c(p')=d$ and $d<m'$. There is a 
critical point $q\in\mathrm{Crit}(g)$ so that $G_c$ has no critical value in the 
interval $(d,g(q))$. For sufficiently small $\delta>0$, consider paths of index $2$ critical points
\begin{align*}
&P=\{(t,p_t)\in (c-\delta,c+\delta)\times W \ \big|\ p_t\in\mathrm{Crit}_{W}(G_t)\}\ \ \ \text{and}\\
&P'=\{(t,p'_t)\in (c-\delta,c+\delta)\times W \ \big|\ p'_t\in\mathrm{Crit}_{W}(G_t)\}
\end{align*}
so that $p=p_c$ and $p'=p'_c$. Since $\{\xi_t\}$ is generic, we may arrange for 
descending disks $W^s(p)$ and $W^s(p')$ to be disjoint from $q$. Thus, we may 
modify $\{G_t\}_{t\in (s-\delta,s+\delta)}$ through generic homotopies, disjoint 
from $\sur$, to make $G_t(p_t),G_t(p'_t)>g(q)$ for any $t\in(c-\delta,c+\delta)$. 
We continue these modifications, until for any two critical point  
$p,p'\in\mathrm{Crit}_W(G_t)$ with $d=G_t(p)=G_t(p')$ either $d<m$ or $d>m'$. 

With a similar argument, we may use generic homotopies through 
Reidemeister II fold-crossings, that are disjoint from $\sur$, 
and make $\{G_t\}_{t\in[0,1]}$ to satisfy the required
conditions. Note that the family $\{G_t\}_{t\in[0,1]}$ remains indefinite and almost ordered 
under these homotopies.

\begin{figure}[ht]
\def\svgwidth{7cm}
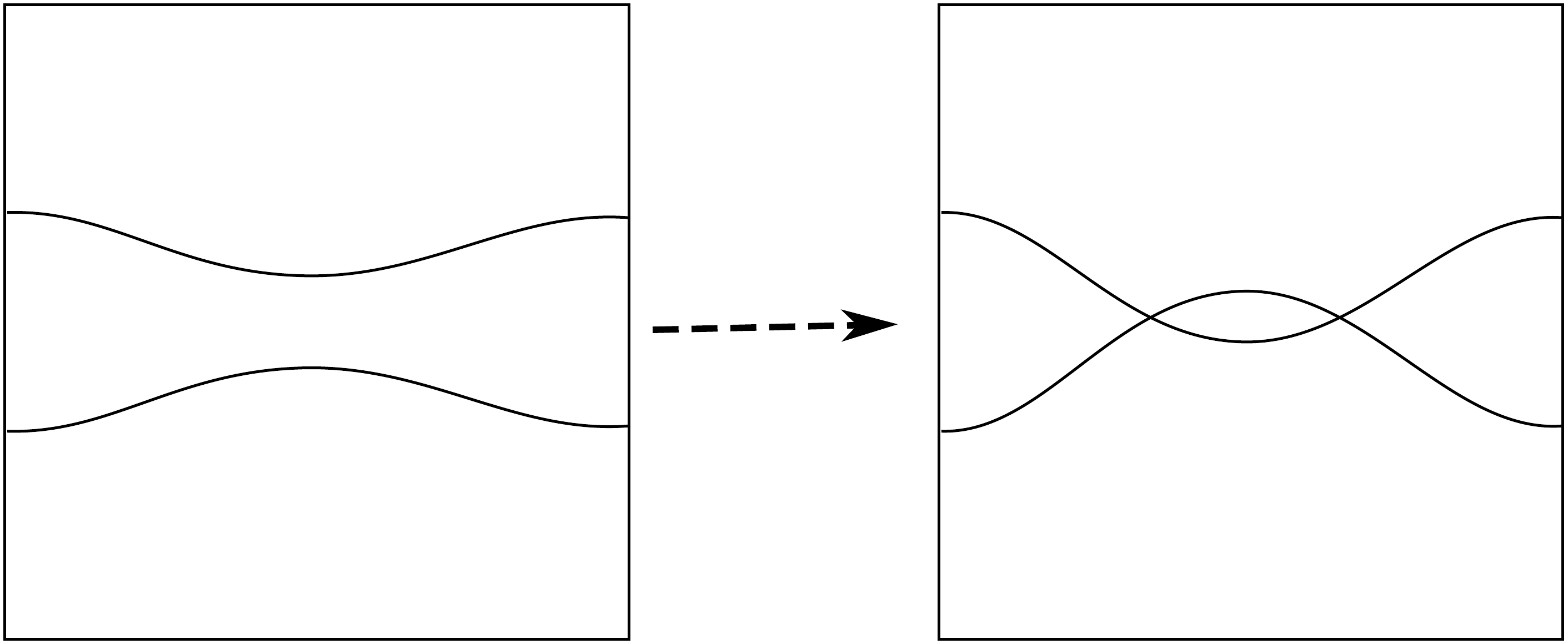
\caption{Reidemeister II fold-crossing.}\label{fig:ReidmII}
\end{figure}

\begin{figure}[ht]
\def\svgwidth{7cm}
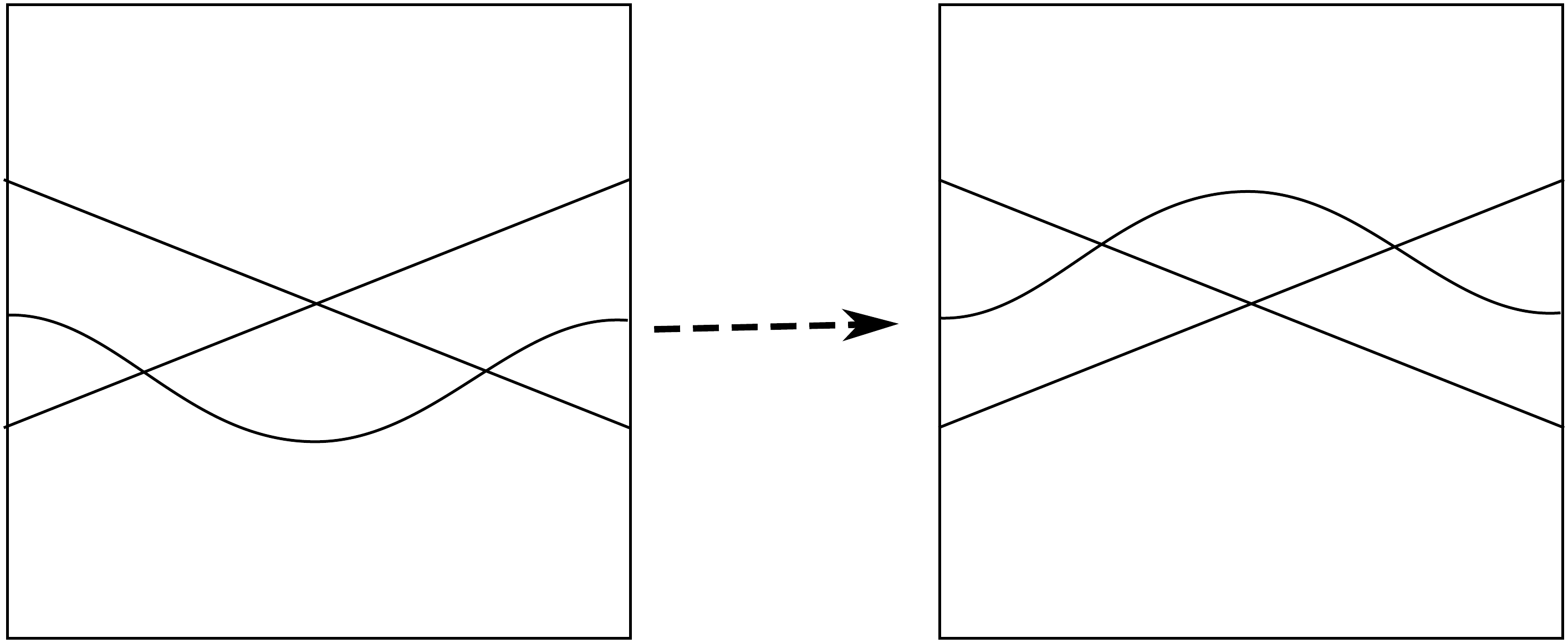
\caption{Reidemeister III fold-crossing.}\label{fig:ReidmIII}
\end{figure}

For any $1\le i\le k$, if $\{G_t\}_{t\in[t_{i-1},t_i]}$ is of type $(2)$ or $(3)$, 
we may choose the interval $[t_{i-1},t_i]$ sufficiently small so that we can 
obtain an elementary critical point creation/cancelation or critical point switch by a 
perturbation of $\{G_t\}_{t\in[t_{i-1},t_i]}$.
%\todo{AA: I don't know about 
%type 4. EE: Do we have a problem with type 4 here? I am not sure if I see your point.} 

Let us assume that an embedded gradient-like vector field $\xi_i$, together with 
an ordered set of regular values $\ub_i$ for $G_{t_i}$ is given,  
such that $\Modi_i=(G_{t_i},\xi_i,\ub_i)$ 
is a good Morse datum for $\Cobb$ for some $i\in\{0,1,..,k-1\}$. 
Moreover, assume that 
$\ub_i\cap [m,M]=\emptyset$. Then, depending on the type of the family 
$\{G_t\}_{t\in [t_i,t_{i+1}]}$ we may construct a good Morse datum 
$\Modi_{i+1}=(G_{t_{i+1}},\xi_{i+1},\ub_{i+1})$. 
In fact, if the family $\{G_t\}_{t\in[t_i,t_{i+1}]}$ satisfies (1), (2), (3) or (4), then 
$\Modi_{i+1}$ is obtained from $\Modi_i$  
by a left-right equivalence, a critical point creation/cancellation, a
critical point switch of type I or a  critical point switch of type II, respectively.  \\

Inductively, we get a sequence of good Morse data $\Modi_{i}=(G_{i}, \xi_{i},\ub_{i})$ 
for $i=1,2,\hdots,k$ with $G_k=G'$, which 
is obtained by applying a sequence of Cerf moves to $\Modi=\Modi_0$. 
Then, $\Modi_{k}$ is related to $\Modi'$ by isotopies of the 
gradient-like vector field and adding or removing regular values, and we are done.
\end{proof}

\begin{lem}\label{lem:sur-Mors}
Let $\Cobb=(W,\sur)$ be a stable cobordism and suppose that
$g,g':\sur\ra [a,b]$ are indefinite Morse functions over $\sur$. Then $g$ can 
be connected to $g'$ by a generic family $\{g_t\}_{t\in [0,1]}$ of indefinite Morse functions. 
\end{lem}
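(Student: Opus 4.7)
The plan is to first apply standard Cerf theory to connect $g$ and $g'$ by a generic smooth one-parameter family of functions on $\sur$ respecting the boundary conditions ($g_t|_T\equiv a$, $g_t|_{T'}\equiv b$, and $g_t|_{\sur\cap\del_hW}=\pi_2$), and then to modify this family through further generic homotopies to eliminate any index-$0$ or index-$2$ critical points that appear at intermediate times. For the first step, choose any smooth family $\{\tilde g_t\}_{t\in[0,1]}$ of such boundary-compatible functions with $\tilde g_0=g$ and $\tilde g_1=g'$, and perturb it within this class to a generic path $\{g_t\}_{t\in[0,1]}$. By Cerf's theorem, all but finitely many $g_t$ are then Morse functions on $\sur$, and at each of the exceptional parameters $t_j$ either a birth/death singularity of a canceling pair of critical points occurs, or else two critical values of $g_{t_j}$ coincide.

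At this stage the family may fail to be indefinite because index-$0$ critical points can be born (paired with index-$1$ critical points) and later die, and similarly for index-$2$. Because $g_0$ and $g_1$ are themselves indefinite, each maximal path of index-$0$ (respectively index-$2$) critical points of the family is an arc in $\sur\times[0,1]$ beginning with a birth and terminating with a death, paired at each end with an arc of index-$1$ critical points. The second step is to excise each such arc by the same mechanism employed at the beginning of the proof of Proposition~\ref{prop:Cerf-1}: we first apply the \emph{Unmerge Lemma} to separate overlapping paths of canceling index-$1$ partners into disjoint subarcs, and then apply the \emph{Eye Death Lemma} or the \emph{Swallowtail Death Lemma} (from Gay--Kirby) to remove the entire index-$0$ or index-$2$ arc together with the appropriate segments of its index-$1$ partners. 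These are codimension-two modifications of the space of 1-parameter families and can be carried out by generic homotopies supported in the interior of $\sur$, so the boundary conditions on $\{g_t\}$ are preserved throughout.

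The main obstacle is ensuring that the Eye Death and Swallowtail Death moves can actually be performed; concretely, one needs the relevant stable and unstable manifolds of the paths of critical points in $\sur\times[0,1]$ to be in general position, and one needs enough room on $\sur$ to realize the local models of these moves. Genericity of the family $\{g_t\}$ (together with a further small perturbation if necessary) provides the required transversality, while the two-dimensional versions of the local models are standard and apply on any surface; the stability hypothesis on $\Cobb$ enters only to guarantee that indefinite Morse functions on $\sur$ exist to begin with, so that the hypotheses of the lemma are nonvacuous component by component. Iterating the elimination procedure over all index-$0$ and then all index-$2$ arcs produces the desired generic family $\{g_t\}_{t\in[0,1]}$ of indefinite Morse functions connecting $g$ to $g'$.
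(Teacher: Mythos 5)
The paper proves this lemma with a single sentence: ``This is a corollary of Theorem 4.5 in \cite{GK}.'' Your proposal does not cite that theorem but instead sketches a proof of its two-dimensional specialization, using exactly the same Gay--Kirby toolkit (the Unmerge Lemma, the Eye Death Lemma, and the Swallowtail Death Lemma) that those authors use to establish their Theorem~4.5. So the content is consistent with the paper's intent, but the route is genuinely longer: you are in effect re-proving the cited theorem rather than invoking it. What the paper's approach buys is brevity and a clean appeal to an established black box; what your approach buys is a self-contained sketch and a clearer view of how the index-$0$ and index-$2$ arcs are actually excised, which mirrors the mechanism already used in the proof of Proposition~\ref{prop:Cerf-1}. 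One point to tighten: you should be more explicit that the elimination procedure terminates --- the Unmerge, Eye Death, and Swallowtail Death moves are applied so as to strictly reduce the number of index-$0$ (respectively index-$2$) arcs without spawning new ones, which is nontrivial and is a careful bookkeeping step in Gay--Kirby's argument. You should also be a little more careful about the role of stability: it is not merely ``nonvacuity.'' It guarantees, on each non-disk component of $\sur$, enough index-$1$ saddles (with the given boundary behavior) to serve as canceling partners when you run the death lemmas, which is needed for the moves to go through; on disk components the Morse functions carry no critical points at all and the homotopy is trivial. With those two caveats addressed, the argument is sound.
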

\begin{proof}
This is a corollary of Theorem 4.5 in \cite{GK}.
\end{proof}

\begin{prop}\label{prop:Cerf-2}
Let $\Modi=(G,\ub,\xi)$ be an indefinite, ordered and good Morse datum for 
the stable cobordism $\Cobb=(W,\sur)$. Fix a proper, indefinite and ordered 
Morse function $G'':W\ra [a'',b'']$ on $\Cobb$. Then $\Modi$ can be connected 
by a sequence of Cerf moves
to a Morse datum $\Modi'=(G',\ub',\xi')$, such that the Morse datum stays ordered throughout and $G'$ coincides 
with $G''$ in a neighborhood of $\sur$.
\end{prop}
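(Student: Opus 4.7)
The plan is to reduce Proposition~\ref{prop:Cerf-2} to the hypotheses of Proposition~\ref{prop:Cerf-1} by deforming $G$ inside a collar of $\sur$ until it agrees with $G''$ there. Set $g = G|_\sur$ and $g'' = G''|_\sur$; both are indefinite Morse functions on the surface $\sur$. Applying Lemma~\ref{lem:sur-Mors} yields a generic smooth family $\{g_t\}_{t\in[0,1]}$ of indefinite Morse functions on $\sur$ with $g_0 = g$ and $g_1 = g''$. A generic $1$-parameter family of Morse functions on a $2$-manifold has only two kinds of codimension-one degenerations, crossings (two critical values coincide) and cusps (birth-death of a pair of critical points of consecutive index). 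Cusps would force the appearance of an index-$0$ or index-$2$ critical point, contradicting indefiniteness, so $\{g_t\}$ is Morse at all times except finitely many crossing times $t_1 < \ldots < t_n$, at each of which exactly two critical values of $g_t$ swap.

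Next I extend $\{g_t\}$ to a family $\{\Modi_t = (G_t, \ub_t, \xi_t)\}$ of ordered good Morse data on $\Cobb$. The embedded gradient-like condition on $\xi$ forces $G$ to have the normal form $G(p) \pm x_1^2 \pm x_2^2 + y_1$ in a neighborhood of each critical point of $g$ in $\sur$ and a product form $g + y_1$ along a collar away from these critical points. Using these local models I extend each $g_t$ over a fixed tubular neighborhood $U = \sur \times D^2$ so that outside a slightly smaller collar $G_t \equiv G$ and inside $U$ the only new critical points of $G_t$ are those of $g_t$ on $\sur$; extending $\xi$ in the same manner yields an embedded gradient-like vector field $\xi_t$. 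Keeping the critical values of $g_t$ in a fixed interval strictly between the index-$1$ and index-$2$ critical values of $G$ on $W$ preserves the ordered and indefinite conditions, and $\ub_t$ is kept constant except for removing regular values that collide with a critical value at a crossing.

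On each sub-interval $(t_{i-1}, t_i)$ the family $\{G_t\}$ is a smooth family of proper Morse functions on $\Cobb$, so Lemma~\ref{lem:simple-path} yields a left-right equivalence connecting $\Modi_{t_{i-1}^+}$ to $\Modi_{t_i^-}$. By construction the singular transition at $t_i$ is precisely the swap of two consecutive $g_t$-critical values with no other $g_t$-critical values between them, which is a critical point switch of type III (possibly accompanied by removing a regular value from $\ub_t$). Concatenating these moves produces an ordered good Morse datum $\tilde\Modi = (\tilde G, \tilde\ub, \tilde \xi)$ with $\tilde G|_\sur = g'' = G''|_\sur$, related to $\Modi$ by only the permitted moves, with every intermediate datum ordered and indefinite.

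Finally, the pointwise agreement $\tilde G|_\sur = G''|_\sur$ is upgraded to an agreement on a tubular neighborhood of $\sur$ by one additional left-right equivalence. Near each critical point of $g''$ both $\tilde G$ and $G''$ are of the form $\pm x_1^2 \pm x_2^2 + y_1$ in positively oriented coordinates fixing $\sur$, while off these critical points $\tilde G$ and $G''$ are regular along $\sur$ and both admit a product description $g'' + y_1$ on a small collar. A standard Moser-type argument then produces a diffeomorphism $\Phi: W \to W$, isotopic to the identity and equal to the identity on $(M,T) \cup (M',T')$, such that $G'' \circ \Phi = \tilde G$ on a neighborhood of $\sur$, and this is the required final left-right equivalence. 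The main technical obstacle throughout is the collar extension of $\{g_t\}$: one must verify that it does not create spurious critical points in $W \setminus \sur$, does not disturb the $W$-critical data of $G$, and keeps the critical values of $g_t$ within the narrow range required to stay ordered; all of this is handled by the normal-form construction together with a small rescaling of the values of $g_t$.
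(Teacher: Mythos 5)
Your proof takes essentially the same route as the paper: connect $g$ to $g''$ by the generic family of indefinite Morse functions from Lemma~\ref{lem:sur-Mors}, extend each $g_t$ over a collar of $\sur$ while freezing $G$ elsewhere, realize the crossings as critical point switches of type III and the intermediate open intervals as left-right equivalences via the Moser-type vector-field argument of Lemma~\ref{lem:simple-path}, and finish with one more left-right equivalence to upgrade agreement on $\sur$ to agreement on a tubular neighborhood. The one housekeeping difference worth noting: the paper front-loads the removal of regular values (so that all critical values of $g$ lie in a single interval $(b_n,b_{n+1})$) and applies an initial rescaling left-right equivalence, which guarantees from the start that every crossing of the family is a legitimate type-III switch; your plan of deleting regular values only when they collide with the pair of critical values at a crossing works but is less clean and should really be done up front, and the collar extension (which the paper carries out explicitly with bump functions supported near the relevant unstable manifold) deserves a bit more detail than the normal-form appeal you give.
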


\begin{proof} 
After removing some regular values, we may assume that for some
integer $n$, all critical values of $g$ lie in the interval $(b_{n}, b_{n+1})$. 
Choose the regular values $m$ and $m'$ for $G''$ such that for any $p\in\mathrm{Crit}_W(G'')$
\begin{align*}
\begin{cases}
G''(p)<m\ \ \ &\text{if}\ \ \mathrm{ind}(p)=1\\
G''(p)>m'\ \ \ &\text{if}\ \ \mathrm{ind}(p)=2\\
\end{cases}.
\end{align*}
Moreover, we require that every critical point $p\in\sur$ of $g''=G''|_{\sur}$ satisfies 
$m<G''(p)<m'$. Let $a=\min(\ub)$ and $b=\max(\ub)$. Consider a diffeomorphism 
$\phi:[a,b]\ra [a'',b'']$ with 
\begin{align*}
\phi(a)=a'',\  \ \phi(b)=b'',\ \   
\phi(b_n)=m\  \ \ \text{and}\ \ \  
\phi(b_{n+1})=m'.
\end{align*}
We apply the left-right equivalence move defined by $\phi$ and 
$\mathrm{Id}_{W}$ on the Morse datum $(G,\ub,\xi)$ to get 
$(\phi\circ G,\phi(\ub),\xi)$. Lemma \ref{lem:sur-Mors} implies that 
$\phi\circ g$ can be connected to $g''$ by a generic family $\{g_t\}_{t\in [0,1]}$ 
of indefinite Morse functions on $\sur$ which fails to be proper at the times
\[0<c_1<c_2<\cdots  <c_l<1.\] 
Moreover, we may assume that for every $t\in[0,1]$ the
critical values of $g_t$ lie in interval $(m,m')$. For any $1\le i\le l$,
consider a sufficiently small $\delta_i>0$ such that 
\[0<t_1<c_1<t_2<t_3<c_2<\cdots <t_{2l-1}<c_l<t_{2l}<1\] 
where $t_{2i-1}=c_i-\delta_i$ and
$t_{2i}=c_i+\delta_i$. Let $p^i$ and $q^i$ denote the critical points of $g_{c_i}$ for which
\[g_{c_i}(p^i)=g_{c_i}(q^i),\ \ \ g_{2i-1}(p^i)<g_{2i-1}(q^i)\ \ \ 
\text{and}\ \ \  g_{2i}(p^i)>g_{2i}(q^i).\] 
Here $g_i=g_{t_i}$.
Since $\delta_i$ is sufficiently small, we 
may perturb $\{g_t\}_{t\in [t_{2i-1},t_{2i}]}$ and change it to 
an elementary switch. In particular, there is a vector field $\ti{\xi}_i$ on 
$\sur$ which is gradient-like for any $g_t$ and the homotopy $\{g_t\}_{t\in [t_{2i-1},t_{2i}]}$  is 
supported in a neighborhood $V_i$ of 
\[W^s\left(p^i\right)\cap g_{2i-1}^{-1}\left[g_{2i-1}(p^i),g_{2i}(p^i)\right]\ \ \ \ \text{or}\ \ \ \ 
W^u\left(q^i\right)\cap g_{2i-1}^{-1}\left[g_{2i}(q^i),g_{2i-1}(q^i)\right].\]
Without loss of generality, assume $V_i$ is a neighborhood of 
\[W^s\left(p^i\right)\cap g_{2i-1}^{-1}\left[g_{{2i-1}}(p^i),g_{{2i}}(p^i)\right].\]
For a bump function $\omega_i$ supported in $V_i$ we have
$$g_t=g_{{2i-1}}+(t-t_{2i-1})\omega_i\ \ \ \ \text{for}\ \ \ \ t\in [t_{2i-1},t_{2i}].$$
Furthermore, $\omega_i$ is constant in a neighborhood of $p^i$.

%Let $\ti{\xi}_{i}$ be an embedded gradient-like vector field for $g_{i}=g_{t_i}$. 
%For every $i=1,\ldots,l$, let $p^i$ and $q^i$ be the critical points of $g_{c_i}$ such that 
%\[g_{c_i}(p^i)=g_{c_i}(q^i),\ \ \ g_{2i-1}(p^i)<g_{2i-1}(q^i)\ \ \ 
%\text{and}\ \ \  g_{2i}(p^i)>g_{2i}(q^i).\] 
%
%Since $\delta_i$ is sufficiently small, we 
%may perturb $g_t$ for $t\in [t_{2i-1},t_{2i}]$ and change it to an elementary switch.
%%'chemin \'el\'ementaire de croisement'. 
%Moreover, we may arrange for $\ti{\xi}_{2i-1}=\ti{\xi}_{2i}$ such 
%that the elementary switch
%%'chemin \'el\'ementaire de croisement' family 
%$\{g_t\}_{[t_{2i-1},t_{2i}]}$ is 
%supported in a neighborhood $V_i$ of 
%\[W^u\left(p^i\right)\cap g_{2i-1}^{-1}\left[g_{2i-1}(p^i),g_{2i}(p^i)\right]\ \ \ \ \text{or}\ \ \ \ 
%  W^s\left(q^i\right)\cap g_{2i-1}^{-1}\left[g_{2i-1}(p^i),g_{2i}(p^i)\right].\]

Let $G_{2i-1}$ be an extension of $g_{2i-1}$ to an ordered Morse function on 
$\Cobb$. Consider an embedded gradient-like vector field $\xi_{i}$ for $G_{2i-1}$ 
such that $\xi_{i}|_{\sur}=\ti{\xi}_{i}$. 
%Without loss of generality we may assume 
%that $V_i$ is a neighborhood of 
%\[W^u\left(p^i\right)\cap g_{2i-1}^{-1}\left[g_{{2i-1}}(p^i),g_{{2i}}(p^i)\right].\]
We may extend 
$\omega_i$ to a bump function on $W$, denoted by $\Omega_i$ and supported in an 
open neighborhood $\ovl{V}_i$ of 
\[W^s\left(p^i\right)\cap G_{2i-1}^{-1}\left[G_{2i-1}(p^i),G_{2i}(p^i)\right],\]
and is constant on a neighborhood of $p^i$ in $W$. Moreover, for every $t\in[t_{2i-1}, t_{2i}]$ 
\[G_t=G_{2i-1}+(t-t_{2i-1})\Omega_i\]
is a Morse function on $\Cobb$ with $\mathrm{Crit}_{\Cobb}(G_t)
=\mathrm{Crit}_{\Cobb}(G_{2i-1})$ and corresponds to the embedded-gradient 
like vector field $\xi_{i}$. Thus, $\{G_t\}_{t\in [t_{2i-1},t_{2i}]}$ is
an elementary critical point switch. Further, 
$\Modi_{2i}=(G_{2i},\phi\circ\ub,\xi_{i})$ is an ordered, good Morse datum, 
obtained from $\Modi_{2i-1}=(G_{2i-1},\phi\circ\ub,\xi_{i})$ by a critical point switch of type III.
%is 'chemin \'el\'ementaire de croisement'. 

On the other hand, we may find a family 
\[\left\{\psi_t\ \big|\ t\in [t_{2i},t_{2i+1}]\right\}\ \ \ \text{for }\  i=1,\ldots,l-1,\] 
of diffeomorphisms of $\R$ such that $\psi_{t_{2i}}=\mathrm{Id}$ and $\psi_t\circ g_t$ have 
the same critical values as $g_{{2i}}$.  With the same argument as in the proof of Lemma 
\ref{lem:simple-path}, we may define a vector field $v(x,t)$
on $\sur\times [t_{2i},t_{2i+1}]$ such that the corresponding diffeomorphism 
\[\Psi_t:\sur\times\{t_{2i}\}\ra\sur\times \{t\}\] satisfies $\psi_t\circ g_t\circ\Psi_t|_{\sur\times \{t_{2i}\}}=g_{{2i}}$.
After choosing an arbitrary connection 
%\todo{EE: Why do we need a connection here? AA: Doesn't connection tell us how to lift the vector field? } 
on a tubular neighborhood  $\mathrm{nd}(\sur)$ of $\sur$ in $W$
we may extend $v$ to a vector field over $\mathrm{nd}(\sur)\times [t_{2i},t_{2i+1}]$ 
and correspondingly, extend
$\Psi_{t}$ from $\sur\times \{t_{2i}\}$ to a diffeomorphism 
\[\Psi_t:\mathrm{nd}(\sur)\times\{t_{2i}\}\lra \mathrm{nd}(F)\times\{t\}.\]%W\times [t_{2i},t_{2i+1}].\]

Suppose that we have an extension of $g_{{2i}}$ to an ordered Morse function 
$G_{{2i}}$ on $\Cobb$. Then for any $t\in [t_{2i},t_{2i+1}]$ the Morse function  
$G_{{2i}}\circ \Psi_{t}^{-1}$ is defined on the tubular neighborhood
$\mathrm{nd}(\sur)\times\{t\}\subset W$ of $\sur\times\{t\}$ and has no critical points. 
Fix the vector field $\del_t$ on the complement of $\sur\times [t_{2i},t_{2i+1}]$.
By patching the vector fields $v$ and $\del_t$ using a partition of unity, we get a
global vector field, 
still denoted by $v(x,t)$, on $W\times [t_{2i},t_{2i+1}]$. 
The flow of $v$ defines a family of diffeomorphisms 
\[\left\{\Psi_t:W\times \{t_{2i}\}\ra W\times\{t\}\ \big|\ t\in [t_{2i},t_{2i+1}]\right\}.\] 
Furthermore, the family
\[\left\{G_{t}:=G_{{2i}}\circ \Psi^{-1}_t:W\times\{t\}\ra \R\ \big|\ t\in[t_{2i},t_{2i+1}]\right\}\] 
is a family of 
ordered Morse function on 
$\Cobb$ such that $G_t|_{\sur}=\phi_t\circ g_t$. Therefore, any ordered 
Morse function $G_{{2i}}$ on $\Cobb$ with $G_{{2i}}|_{\sur}=g_{{2i}}$ can 
be connected by left-right equivalence to an ordered Morse function 
$G_{{2i+1}}$ on $\Cobb$ such that $G_{{2i+1}}|_{\sur}=g_{{2i+1}}$ 
and $G_{{2i+1}}|_{W-\mathrm{nd}(\sur)}=G_{{2i}}|_{W-\mathrm{nd}(\sur)}$.

We may thus connect $\Modi=(G,\xi,\ub)$ to a Morse datum 
$\ti{\Modi}=(\ti{G},\ti{\xi},\ub)$ by a sequence of Cerf moves, such that 
$$\ti{G}|_{\sur}=G''|_{\sur}\ \ \ \ \text{and}\ \ \ \ \ti{G}|_{W-U}=G|_{W-U}$$ 
for an open neighborhood $U$ of $\sur$.
Moreover, $\mathrm{Crit}_W(\ti{G})=\mathrm{Crit}_W(G'')$ is a subset of 
$W\setminus U$ and the Morse datum remains good, indefinite and ordered throughout.

Finally, after a small perturbation in a neighborhood of $\sur$, 
we may arrange for $t\ti{G}+(1-t)G''$ to be a family of Morse functions 
with no critical points in a neighborhood $\ti{U}\subset U$ of $\sur$.  
Let $v'(x,t)$ be the corresponding vector field on $\ti{U}\times [0,1]$ as in the proof of 
Lemma \ref{lem:simple-path}. Define a global vector field $v'$ on $W\times [0,1]$ 
by patching $v'$ in $\ti{U}$
with the vector field $\del_t$ on $(W-\sur)\times [0,1]$ using a partition 
of unity. Denote the time-one flow of $v'$ by $\Phi_1$. Then, the Morse datum 
$\Modi'=(G',\ub',\xi')$ is obtained from $\ti{\Modi}$ by the left-right equivalence 
corresponding to $\mathrm{Id}_\R$ and $\Phi_1^{-1}$ (so that in particular, we have 
$G'=\ti{G}\circ \Phi_1^{-1}$), and satisfies the required conditions.
\end{proof}

Combining Propositions ~\ref{prop:Cerf-1} and ~\ref{prop:Cerf-2}, we deduce that:

\begin{cor}\label{cor:PCD+order}
Let $\Modi=(G,\ub,\xi)$ and $\Modi'=(G',\ub',\xi')$ be ordered, 
good and indefinite Morse data for a stable cobordism 
$\Cobb=(W,F)$. Then, $\Modi$ and $\Modi'$ can be connected by 
a sequence of Cerf moves. Moreover, we can keep the Morse data 
almost ordered, and avoid index zero and four critical points 
throughout the sequence.
\end{cor}

Lemma ~\ref{lem:putorder} and Corollary~\ref{cor:PCD+order} 
implies the following theorem, which will be called the  
{\emph{parametrized Cerf decomposition theorem}} in this 
paper.

\begin{thm}\label{thm:Cerf}
Any two good and indefinite Morse data for an stable cobordism 
$\Cobb=(W,F)$, may be connected by a sequence of Cerf moves. 
Moreover, we can avoid index zero and four critical points 
throughout the sequence.
\end{thm}

\subsection{Ordered Morse data and parametrized decompositions}
Let $\Cobb=(W,\sur)$ be a stable cobordism from the tangle 
$(M,T)$ to $(M',T')$. Suppose that $G:\Cobb\to [a,b]$ is an 
indefinite, ordered Morse function. There is an ordered set of 
regular values $b=(b_0=a<b_1<b_2<b_3=b)$ so that for any 
$p\in\mathrm{Crit}_{\Cobb}(G)$, one of the following holds:
\begin{itemize}
\item if $p\in\mathrm{Crit}_W(G)$ and $\ind(p)=i$ then 
$b_{i-1}<G(p)<b_i$,
\item if $p\in\mathrm{Crit}(g)$ then $b_1<g(p)<b_2$.
\end{itemize}
Any such set of regular values gives a decomposition 
\[\Cobb=\Cobb_1\cup_{(M_1,T_1)}\Cobb_2\cup_{(M_2,T_2)}\Cobb_3\]
where $(M_i,T_i)=\Cobb\cap G^{-1}(b_i)$ and 
$\Cobb_i=\Cobb\cap G^{-1}([b_{i-1},b_i])$. Choose a 
gradient-like vector field $\xi$ for $G$. It induces a 
parametrization on each $\Cobb_i=(W_i,\sur_i)$. More precisely, 
for $i=1,3$, $\xi$ specifies a set 
$\Fsphere_i\subset M_{i-1}\setminus T_{i-1}$ of pairwise 
disjoint, framed $(i-1)$-spheres such that $W_i$ is 
diffeomorphic to $W(\Fsphere_{i})$, the cobordism obtained by 
attaching $i$-handles to $M_{i-1}$ along $\Fsphere_i$. 
Moreover, it determines a diffeomorphism 
$d_i:(M_{i-1}(\Fsphere_i),T_{i-1})\to (M_i,T_i)$. 
Similarly, $\xi$ determines a framed link $\Fsphere_2$ and an 
acceptable set of framed arcs $\Farc$ in $(M_1,T_1)$, along 
with a diffeomorphism 
$d_2:(M_1(\Fsphere_2),T_1(\Farc))\to(M_2,T_2)$. Any 
parametrized decomposition of this form is called 
\emph{indexed}. 
For any indexed parametrized decomposition of $\Cobb$ as above, 
there is an indefinite, ordered Morse function $G$, a 
gradient-like vector field $\xi$ and a $4$-tuple of regular 
values that specifies it. Any such triple $(G,\xi,b)$ is 
called a \emph{simplified Morse datum}.

%%%%%%%%%%%%%%%%%%%%%%%%
%%%%%%%%%%%%%%%%%%%%%%%%%

Suppose that \[\ti{\Cerf}:\Cobb=\Cobb_1\cup_{(M_1,T_1)}
\Cobb_2\cup_{(M_2,T_2)}\Cobb_3\quad\text{and}\quad 
\ti{\Cerf}':\Cobb=\Cobb_1'\cup_{(M'_1,T'_1)}
\Cobb_2'\cup_{(M'_2,T'_2)}\Cobb_3'\] are indexed parametrized 
decompositions for $\Cobb$. For $i=1,3$, let $(\Fsphere_i,d_i)$ 
and $(\Fsphere_i',d_i')$ denote the parametrization of 
$\Cobb_i$ and $\Cobb_i'$, respectively. In addition, assume 
$(\Fsphere_2,\Farc,d_2)$ and $(\Fsphere_2',\Farc',d_2')$ are 
the parametrization of $\Cobb_2$ and $\Cobb_2'$, respectively.  
Then, $\ti{\Cerf}'$ is obtained from $\ti{\Cerf}$ by creation 
of an index one/two critical point if:
\begin{itemize}
\item $\Cobb_3$ coincides with $\Cobb_3'$ as parametrized 
cobordisms.
\item $\Fsphere_1'=\Fsphere_1\amalg s$, where 
$s\subset M\setminus (T\cup\Fsphere_1)$ is a framed $0$-sphere, 
\item $\Fsphere_2'=d_1'\circ\ti{d}_1^{-1}(\Fsphere_2)\amalg k$ 
where $k\subset M_1'\setminus T_1'$ is a framed knot disjoint 
from $d_1'\circ\ti{d}_1^{-1}(\Fsphere_2)$. Here, 
$\ti{d}_1:M(\Fsphere_1')\to M_1(s)$ is the diffeomorphism 
induced by $d_1$. Moreover, 
$\Farc'=d_1'\circ\ti{d}_1^{-1}(\Farc)$
\item The framed knot $\ti{k}=\ti{d}_1\circ(d_1')^{-1}(k)$ 
intersects the belt sphere of $s$ at one point. As a result, 
there is a corresponding diffeomorphism 
\[\phi:(M_1,T_1)\to (M_1(s)(\ti{k}),T_1).\]
Note that $\phi(\Fsphere_2)=\Fsphere_2$ and 
$\phi(\Farc)=\Farc$.
\item $d_2=d_2'\circ D\circ \Phi$ where 
\begin{align*}
&\Phi:(M_1(\Fsphere_2),T_1(\Farc))
\to (M_1(s)(\ti{k})(\Fsphere_2),T_1(\Farc))\quad\text{and}\\
& D:(M_1(s)(\ti{k})(\Fsphere_2),T_1(\Farc))\to 
(M_1'(\Fsphere_2'),T(\Farc'))
\end{align*}
are diffeomorphisms induced by $\phi$ and 
$d_1'\circ(\ti{d}_1)^{-1}$, respectively.
\end{itemize}
The inverse of this move describes cancellation of a pair of 
index one/two critical points. Similarly, one may describe 
creation and cancellation of a pair of index two/three critical 
points. 

\begin{thm}\label{thm:inddecom}
With the above notation fixed, any two indexed parametrized 
decompositions \[\ti{\Cerf}:\Cobb=\Cobb_1\cup_{(M_1,T_1)}
\Cobb_2\cup_{(M_2,T_2)}\Cobb_3\ \ \ \text{and}\ \ \ 
\ti{\Cerf}':\Cobb=\Cobb_1'\cup_{(M'_1,T'_1)}
\Cobb_2'\cup_{(M'_2,T'_2)}\Cobb_3'\]
can be connected by a sequence of the following moves: 
\begin{enumerate}
\item Sliding one component of $\Fsphere_i$ on another 
component of $\Fsphere_i$, for $i=1,2,3$,
\item Sliding one component of $\Farc$ on another component of 
$\Farc\amalg \Fsphere_2$,
\item Sliding one component of $\Fsphere_2$ on a component of 
$\Farc$,
\item Creation and cancellation of index one/two or 
two/three critical points,
\item Diffeomorphism equivalences.  
\end{enumerate}
\end{thm}

\begin{proof}
The proof is a straightforward application of 
Corollary \ref{cor:PCD+order}.
\end{proof}

\newpage

\section{One-handles, Three-handles and the cobordism maps}\label{sec:connected-sum}
\subsection{Adding one-handles}\label{sec:1-handlediag}
Fix an algebra $\Ring$ over $\F=\Z/2\Z$ as before. Let $\Tangle=[M,T,\spinc,\la]$ be an $\Ring$-tangle and $\Fsphere\subset M\setminus T$ be a framed $0$-sphere. Recall that $M'=M(\Fsphere)$ denotes the $3$-manifold obtained from $M$ after attaching a $1$-handle along $\Fsphere$.  If components of $\Fsphere$ lie in the same component of $M$, then $M'$ is diffeomorphic to $M\# (S^1\times S^2)$. Otherwise, $M'$ has one component which is the connected sum of the corresponding components of $M$. Let $T'=T$ and $\la'=\la$. It is straightforward that $(M',T')$ is a balanced tangle with $\Ring$-coloring $\la'$. 

Associated with $\Fsphere$, we obtain a cobordim
$W(\Fsphere)$ from $M$ to $M'$, by attaching a $1$-handle along $\Fsphere\times\{1\}$ to $M\times [0,1]$. 
Let $\sur=T\times [0,1]\subset W(\Fsphere)$ and regard $\la$ also as a map  
$\la_\sur:\pi_0(\sur)=\pi_0(T)\ra \Ring$. Then, $(W(\Fsphere),\sur)$ is a stable cobordism from $(M,T)$ to $(M',T')$ and $\la_{\sur}$ induces the $\Ring$-colorings $\la$ and $\la'$ on $(M,T)$ and $(M',T')$, respectively. Given any $\SpinC$ class $\spinct$ on $W(\Fsphere)$ such that $\spinc=\spinct|_{M}$, we get an $\Ring$-cobordism $\Cob=[W(\Fsphere),\sur,\spinct,\la_\sur]$ from $\Tangle$ to $\Tangle'=[M',T',\spinc',\la']$ where $\spinc'=\spinct|_{M'}$. In fact, the $\SpinC$ structure $\spinc$ always determines $\spinct$ and thus $\spinc'$. Let $\Cob(\Fsphere)=\Cob$ and $\Tangle(\Fsphere)=\Tangle'$.

%Let
%When the feet of the one-handle are on the 
%same connected component of $M$, 
%$M'\simeq M\#(S^1\times S^2)$, and if we further require that the evaluation 
%of $c_1(\spinc')$ over the homology class $[\{pt\}\times S^2]$ is trivial, $\spinct$
%is again determined by $\spinc$. If this is not the case, we define the cobordism 
%map $\fmap_{\Cob}^\M$ associated with $\Cob$ to be the trivial map. In order to complete 
%our definition of the map $\fmap_\Cob^\M$ we thus need to focus on the case when the
%above condition on $c_1(\spinc)$ is satisfied or the feet of the one-handle are placed 
%on different connected components of $M$, when $\spinct$ and $\spinc'$ are both 
%determined by $\spinc$. 
%We will assume that this is the case in the 
%following discussions. With these assumptions on the $\SpinC$ structures fixed, let 
%\[\Tangle'=\Tangle(\Fsphere)=[M(\Fsphere),T,\la,\spinc']\ \ \ \text{and}\ \ \ 
%\Cob=\Cob(\Fsphere)=[W,\sur,\la_\sur,\spinct].\]

We may choose a 
Heegaard surface $\Sig$ for $(M,T)$ so that it cuts each one of the two disjoint balls 
in a disk. Denote the boundary curves of these two disks by $C_1$ and $C_2$, and 
their centers by $w_1$ and $w_2$, respectively. Further, each connected 
component $T_i$ of $T$ cuts $\Sig$ in a single transverse point $z_i$, let $\z=\{z_1,\ldots,z_{\ell}\}$. For appropriate collections $\alphas$ and $\betas$ of pairwise disjoint circles on $\Sig\setminus\z$ that bound disks on the 
two sides of $\Sig$ in $M\setminus T$, we get a Heegaard diagram  
\[\HD=(\Sig,\alphas,\betas,\la:\z=\{z_1,\ldots,z_{\ell}\}\ra \Ring,\spinc)\]
for $\Tangle$. 
%We may assume that each connected 
%component $T_i$ of $T$ cuts $\Sig$ in a single transverse point $z_i$ and that 
%the collections $\alphas$ and $\betas$ of simple closed curves on $\Sig$,
%which bound disks in $M-T$, give the $\Ring$-diagram
%\[H=(\Sig,\alphas,\betas,\la:\z=\{z_1,\ldots,z_n\}\ra \Ring)\]
%for $\Tangle$.\\
In this situation, a Heegaard diagram for $\Tangle'$ may 
be constructed as follows. There is a properly embedded cylinder $S$ on the one-handle 
attached to $M$ with boundary circles $C_1$ and $C_2$. If we remove the 
two disks with centers $w_1$ and $w_2$ from $\Sig$ and glue $S$ to the resulting 
surface (which has $C_1$ and $C_2$ as its two boundary components) we obtain a 
Heegaard surface $\Sig'$ for $M'$. Let $\alpha$ and 
$\beta$ be circles on $S$ which bound disks on the 
two sides of $\Sig'$, and cut each other in a pair of canceling intersection points, 
i.e. are Hamiltonian isotopes of each other. Then, 
$$(\Sig',\alphas'=\alphas\cup\{\alpha\},\betas'=\betas\cup\{\beta\},\z)$$
is a Heegaard diagram for $(M',T')$. Moreover, if $\HD$ 
is $\spinc$-admissible in the strong sense of \cite[Remark 4.6]{AE-1},
the diagram 
\[\HD'=(\Sig',\alphas',\betas',\la:\z\ra \Ring,\spinc')\]   
is a Heegaard diagram for $\Tangle'$. We will thus assume the above stronger form 
of admissibility for the Heegaard diagram $\HD$. 

The union of the cylinder $S$ 
and the aforementioned disks with centers $w_1$ and $w_2$ is a sphere, which 
will be denoted by $\ovl{S}$. 
%We identify $(\ovl{S},w_1,w_2)$ with 
%$(\mathbb{P}^1,0,\infty)$. 
We may label the two intersection 
points between $\alpha$ and $\beta$ by $\theta_{\alpha\beta}$ and
$\theta_{\beta\alpha}$, so that the bigons on the cylinder $S$ connect 
$\theta_{\alpha\beta}$ to $\theta_{\beta\alpha}$ as the domains 
of the Whitney disks for the Heegaard diagram $H_S=(\ovl{S},\alpha,\beta, w_1,w_2)$.

With the above two Heegaard diagrams fixed, and
given an $\Ring$-module $\M$, we construct a cobordism map from 
$\HFT^\M(\Tangle)$ to $\HFT^\M(\Tangle')$
associated with adding the $1$-handle along the framed $0$-sphere $\Fsphere$.
A completely similar construction would give a cobordism map associated 
with attaching a $3$-handle along a framed $2$-sphere. 

\subsection{Stretching the necks in Heegaard diagrams} 
The construction of the cobordism map for $1$-handles rests on a slight 
generalization of  \cite[Theorem 5.1]{OS-link}, which will be discussed in 
the present subsection.
\begin{prop}\label{prop:limit-moduli-space}
Fix the Heegaard diagrams 
\[\HD^i=(\Sig^i,\alphas^i,\betas^i,\z^i),\ \ \ i=1,2,\]
with extra marked points $\w^i=\{w^i_1,\ldots,w^i_l\}$ on $\Sig^i\setminus (\alphas^i\cup\betas^i\cup\z^i)$ for $i=1,2$. Let $\Sig$ denote the surface obtained from $\Sig^1$ and $\Sig^2$ by 
attaching  $l$ one-handles (necks) 
which connect $w^1_j$ to $w^2_j$, for $j=1,\ldots,l$.
Denote the number of curves in $\alphas^i$ and $\betas^i$ by $d^i$ and the 
genus of $\Sig^i$ by $g^i$. For $i=1,2$, choose a Whitney disk $\phi^i$ for $\HD^i$, with
$n_{w^1_j}(\phi^1)=n_{w^2_j}(\phi^2)=k_j$ and let $\phi=\phi^1\star\phi^2$ 
be the homotopy class of the Whitney disk obtained by joining
$\phi^1$ and $\phi^2$ along the necks.

\begin{enumerate}
\item $\mu(\phi)=\mu(\phi^1)+\mu(\phi^2)-2(k_1+\cdots  +k_l).$
\item Let $J_t$ be a path of almost-complex structures on $\Sig$, which are stretched along the necks, so that all necks have length $t$, and as $t\ra\infty$ converges to a degenerate path of almost complex structures on 
%\todo{AA I couldn't find the right notation for this. EE I think this is fine}
$$\Sig^1\vee\Sig^2=\frac{\Sig_1\amalg\Sig_2}{\{w^1_i\sim w^2_i\ |\ i=1,\ldots,l\}}$$
giving the path $J^i$ of almost complex structures on $\Sig^i$.
If for any $R\in\R$, there is some $t>R$ so that the moduli space $\Mod(\phi)$ is non-empty for $J_{t}$, then the moduli spaces of broken pseudo-holomorphic flowlines representing $\phi^1$ and $\phi^2$ are non-empty.
\item Suppose $\HD^i$ and $\phi^i$ satisfy the followings:
\begin{enumerate}
\item $\mu(\phi^1)=1$,  $\mu(\phi^2)=2(k_1+\cdots  +k_l)$ and $\la(\phi^1)\neq 0$,
\item All components of $\Sig^2\setminus \alphas^2$ and $\Sig^2\setminus \betas^2$ are punctured spheres and $d^2>g^2$.
\end{enumerate}
If $J^1$ and $J^2$ are generic, then for sufficiently large $t$, the moduli space  $\Mod(\phi)$ may be identified with the fiber product
\begin{align*}
\Mod(\phi^1)&\times_{\Sym^{k_1}(\D)\times \cdots   \times \Sym^{k_l}(\D)}
\Mod(\phi^2)
=\{(u^1,u^2)\ |\ u^i\in\Mod(\phi^i), \rho^1(u^1)=\rho^2(u^2)\}
\end{align*}
where $\rho^i=\rho^i_{1}\times\cdots \times\rho^{i}_l$ denotes the product of evaluation maps
%\todo{AA I understand that this is OS's notation, but inside the parenthesis looks wrong. EE: Why is that?}
\begin{align*}
\rho^i_j&=\rho_{w^i_j}:\Mod(\phi^i)\lra \Sym^{k_j}(\D),\ \ \ 
\rho^i_j(u)=u^{-1}(\{w^i_j\}\times \Sym^{d_j-1}).
\end{align*}
%
%
%
%Let $\mu(\phi^1)=1$, $\mu(\phi^2)=2(k_1+\cdots  +k_l)$ and suppose that 
% $\Sig^2\setminus \alphas^2$ and $\Sig^2\setminus \betas^2$ only have
% components of genus zero, while $d^2>g^2$. Consider 
%\todo{AA I understand that this is OS's notation, but inside the parenthesis looks wrong.}
%\begin{align*}
%&\rho^i:\Mod(\phi^i)\lra \Sym^{k_1}(\D)\times \cdots   \times \Sym^{k_l}(\D),\\
%&\rho^i(u):=\left(u^{-1}(\{w^i_1\}\times \Sym^{d_1-1})\right)\times\cdots \times \left(u^{-1}(\{w^i_l\}\times \Sym^{d_l-1})\right).
%\end{align*}
%If $J^1$ and $J^2$ are generic, then $\Mod(\phi)$ may be identified with the fiber product
%\begin{align*}
%\Mod(\phi^1)&\times_{\Sym^{k_1}(\D)\times \cdots   \times \Sym^{k_l}(\D)}
%\Mod(\phi^2)\\ 
%&=\{(u^1,u^2)\ |\ u^i\in\Mod(\phi^i), \rho^1(u^1)=\rho^2(u^2)\}
%\end{align*}
%for sufficiently large $t$.
\end{enumerate}
\end{prop}
\begin{proof}
The proof of Proposition~\ref{prop:limit-moduli-space} is basically the same 
as the proof of  \cite[Theorem 5.1]{OS-link}, and uses the cylindrical 
reformulation of Heegaard Floer homology by Lipshitz \cite{Robert-cylindrical}.
We only need to make a small modification to the last part of the argument 
of Ozsv\'ath and Szab\'o. In fact, the proof of the first two claims remains 
unchanged. We will thus focus on the proof of the last claim.

Consider a sequence $\{t_j\}$ of real numbers which converges to $\infty$ such that $\Mod(\phi)$ is nonempty for each $J_{t_j}$. In Lipshitz reformulation, as we stretch the necks (i.e. as $j\ra \infty$), 
every sequence $\{v_j\}_{j\in\Z^+}$ 
of curves with $v_j\in \Mod_{J_{t_j}}(\phi)$ has a subsequence which converges
to a pseudo holomorphic curve in the symplectic manifold
\begin{align*}
W(\infty)&=\left((\Sig^1-\w^1)\times [0,1]\times \R\right)\coprod
\left((\Sig^2-\w^2)\times [0,1]\times \R\right)\\
&=\left(W^1-\w^1\times [0,1]\times \R\right)\coprod
\left(W^2-\w^2\times [0,1]\times \R\right).
\end{align*}
which may be completed to a $J^1$-holomorphic curve in the symplectic manifold 
$W^1=\Sig^1\times[0,1]\times \R$ and 
a $J^2$-holomorphic curve in $W^2=\Sig^2\times[0,1]\times\R$. 
The components of this limit consist 
of pre-glued flowlines, boundary degenerations and nodal curves supported 
entirely inside the fibers of the projection map to $\D\simeq [0,1]\times\R$,
which are identified with $\Sig^1\vee\Sig^2$. By ignoring the matching 
conditions for the pre-glued flowlines we obtain the representative in 
$\Mod(\phi^1)$ and $\Mod(\phi^2)$ (in fact, this proves the second claim).

Let us now assume that $\mu(\phi^1)=1$. Since $\la(\phi^1)\neq 0$, in the above Gromov limit we  
obtain a $J^1$-holomorphic representative $u^1$ of $\phi^1$ and possibly a 
broken flow-line representative of $\phi^2$. If this latter representative has no 
closed components, a component $u^2$ of it forms a pre-glued flowline together
with $u^1$. Since other possible components of $\phi^2$ (including the 
boundary degenerations) each correspond to a positive share of the 
Maslov index, it follows that $\mu(u^2)\leq 2(k_1+\cdots  +k_l)$, with equality 
happening only if $u^2$ represents $\phi^2$ and there are no other components.
For a generic point 
\[\Delta=\rho^1(u^1)\in  \Sym^{k_1}(\D)\times \cdots   \times \Sym^{k_l}(\D)\]
the moduli space $(\rho^2)^{-1}(\Delta)$, which contains $u^2$, is of expected 
dimension $\mu(u^2)-2(k_1+\cdots  +k_l)$. Thus,
$u^2$ is forced to represent $\phi^2$ if $J^1$ (and thus $\Delta$) is generic.

The difference with the argument of Ozsv\'ath and Szab\'o appears when we 
consider the possibility of having closed components in the broken flowline 
representing $\phi^2$. Let us assume that these closed components represent 
$m[\Sig^2]$. After deleting these components we obtain a new class 
$\psi^2$, represented by $u^2$, with 
\[\mu(\psi^2)=\mu(\phi^2)-2m(d^2-g^2+1).\]
Moreover, each component $\Delta_j'$ of 
\[\rho^2(u^2)=\Delta'=(\Delta_1',\ldots,\Delta_l')\in\Sym^{k_1}(\D)\times 
\cdots  \times \Sym^{k_l}(\D)\] 
is obtained from the component $\Delta_j$ of $\rho^1(u^1)=\Delta$ by deleting 
$m$ points $\{p_1,\ldots,p_m\}$ from it. Note that these 
$m$ points are determined by the projection of the closed 
components in $W^2$ over $\D$, and are thus the same for 
$\Delta_1,\ldots,\Delta_l$.  
This means that the components of $\Delta$ have the points $p_1,\ldots,p_m$
in common. The subset of $\Mod(\phi^1)$ consisting of 
$J^1$-holomorphic curves satisfying this condition is of expected dimension
\[\mu(\phi^1)-2m(l-1)=1-2m(l-1).\]
If $m>0$, $l>1$, and $J^1$ is generic, this subset of $\Mod(\phi^1)$ 
is empty. The only remaining case is thus the case where $l=1$ and $m>0$.\\

In this latter case, the argument of Ozsv\'ath and Szab\'o may be used. 
The moduli space of all $u^2\in\Mod(\phi^2)$ with $\rho^2(u^2)=\Delta'$ 
is of expected dimension
\begin{align*}
\mu(\phi_2)-2(k_1-m)&=2k_1-2m(d_2+1-g_2)-2(k_1-m)
=-2m(d_2-g_2)<0
\end{align*}
and hence is empty for a generic choice of $J^1$ and $J^2$.

It follows that every weak limit of the curves in $\Mod(\phi)$, as we stretch the 
necks, is in correspondence with a pre-glued curve in 
\begin{align*}
&\{(u^1,u^2)\ |\ u^i\in\Mod(\phi^i), \rho^1(u^1)=\rho^2(u^2)\}
=\Mod(\phi^1)\times_{\Sym^{k_1}(\D)\times \cdots   \times \Sym^{k_l}(\D)}
\Mod(\phi^2).
\end{align*}
Moreover, from a pre-glued curve we may obtain an actual $J_{t_j}$-holomorphic
curve if $j$ is sufficiently large. This completes the proof.
\end{proof}

\subsection{Construction of the cobordism map}\label{sec:1-handlemap}
%The cobordism map is constructed from a chain map
%\[\fmap_{\Fsphere}:\CFT(\Tangle)\lra 
%\CFT(\Tangle').\]
Choose a generic path $J$ of almost complex structures 
on the Heegaard surface $\Sig$. We identify $(\ovl{S},w_1,w_2)$ with $(\mathbb{P}^1,0,\infty)$ and denote the induced complex structure on $\ovl{S}$ by $J_S$.  Let $J'$ be a generic path of almost-complex structures on $\Sig'$ which is sufficiently close to the join of $J$ and $J_S$.

%and choose the generic path $J'$ of almost 
%complex structures associated with $\Sig'$ sufficiently close to the join of 
%$J$ and the standard complex structure $J_S$ on the sphere $\ovl{S}$,
%which is identified with $\mathbb{P}^1$.
%Corresponding to $S$ we thus obtain the Heegaard diagram 
%$H_S=(\ovl{S},\alpha,\beta,\{w_1,w_2\})$.
Let $f_{\Fsphere}:\CFT_J(\HD)\ra\CFT_{J'}(\HD')$ be the homomorphism defined as 
\[f_{\Fsphere}(\x):=\x\times\{\theta_{\alpha\beta}\}\in
\CFT_{J'}(\HD')\]
for any generator $\x\in\Ta\cap\Tb$ representing the $\SpinC$ class $\spinc$.

%where $\CFT(\Tangle')$ is defined using the Heegaard diagram $\HD'$ and 
%the path $J'$ of almost complex structures.

\begin{prop}\label{prop:connected-sum}
The homomorphism $f_{\Fsphere}$ is a chain map.
%\begin{displaymath}
%\begin{split}
%\fmap_{\Fsphere}:\CFT(\Tangle)\lra
%\CFT(\Tangle')
%\end{split}
%\end{displaymath}
%defined above is a chain map.
\end{prop}
\begin{proof}
Let $\x\in\Ta\cap\Tb$ be an intersection point corresponding to the $\SpinC$ class 
$\spinc\in\SpinC(M)$. Suppose
\[\y\times \theta\in\mathbb{T}_{\alpha'}\cap\mathbb{T}_{\beta'}=
(\Ta\cap\Tb)\times (\alpha\cap\beta)\] is a generator contributing to $df_{\Fsphere}(\x)$, via 
$u'\in\Mod(\phi')$, where 
$\phi'\in\pi_2(\x\times\theta_{\alpha\beta},\y\times\theta)$.
Thus, $\mu(\phi')=1$ and $\la'(\phi')\neq 0$. The class $\phi'$ is the join of classes $\phi\in\pi_2(\x,\y)$ 
(corresponding to the Heegaard diagram $\HD$) and 
$\phi_S\in\pi_2(\theta_{\alpha\beta},\theta)$ (corresponding to the Heegaard 
diagram $\HD_S$), along a pair of necks corresponding to 
$w_1$ and $w_2$ which connect the two Heegaard diagrams. Further, $\la'(\phi')\neq 0$ implies that $\la(\phi)\neq 0$.

Let $|\theta|$ denote the number of intersection points in $\theta$ which are different from 
$\theta_{\alpha\beta}$. Thus, 
$|\theta|\in\{0,1\}$.
For a class $\phi'$ as above with $\mu(\phi')=1$, 
%if $\la(\phi)\neq 0$ 
we have
%\todo{AA I removed $\la(\phi)\neq 0$ from here. EE: fine}
\begin{align*}
1&=\mu(\phi')
=\mu(\phi)+\mu(\phi_S)-2n_{w_1}(\phi_S)-2n_{w_2}(\phi_S)
= \mu(\phi)+|\theta|.
\end{align*} 

Since the path $J'$ of almost complex structures is chosen 
close to the join of $J$ and $J_S$, and $\la(\phi)\neq 0$, Proposition~\ref{prop:limit-moduli-space} implies that
every pseudo-holomorphic representative $u'$ of $\phi'$ is 
in correspondence  with a
$J$-holomorphic curve $u\in\Mod(\phi)$ and a $J_S$-holomorphic curve 
$u_S\in\Mod(\phi_S)$, which are pre-glued.

If $\theta\neq \theta_{\alpha\beta}$, then $\mu(\phi)=0$ and $u$ is the constant 
map. It follows that $n_{w_i}(\phi_S)=n_{w_i}(\phi)=0$ and $\phi_S$ corresponds 
to one of the two bigon connecting $\theta_{\alpha\beta}$ to 
$\theta_{\beta\alpha}$.
The total contribution of such $u'$ to $df_\Fsphere(\x)$ 
is thus zero.

The remaining contributions to $df_{\Fsphere}(\x)$ come from 
the classes $\phi'$ such that 
$\mu(\phi)=1$, while  $\theta=\theta_{\alpha\beta}$ and 
$n_{w_i}(\phi_S)=n_{w_i}(\phi)=k_i$ for $i=1,2$. 
We would like to show that the total contribution to $df_{\Fsphere}(\x)$
from such $u'$ is equal to the corresponding contribution from $u\in\Mod(\phi)$ 
to $d(\x)$. It follows from Proposition~\ref{prop:limit-moduli-space}, that 
if the necks connecting 
$S$ to $\Sig$  are sufficiently 
stretched, then for $\phi'=\phi\star\phi_S$ the moduli space $\Mod(\phi')$ maybe identified with the fiber product $\Mod(\phi)\times_{\Sym^{k_1}(\D)\times \Sym^{k_2}(\D)}\Mod(\phi_S)$. In particular, any $u'\in\Mod(\phi')$ corresponds to the degeneration $u\star u_S$
with $u\in\Mod(\phi)$ and $u_S\in\Mod(\phi_{S})$ such that
%\begin{displaymath}
\[\rho_{\Sig}(u)=\rho_{S}(u_S)\in\Sym^{k_1}(\D)\times \Sym^{k_2}(\D),\]
where 
\begin{align*}
&\rho_{\Sig}=\rho_{w_1}\times\rho_{w_2}:\Mod(\phi)\ra \Sym^{k_1}(\D)\times \Sym^{k_2}(\D)\ \ \text{and}\\ 
&\rho_S=\rho_{w_1}\times\rho_{w_2}:\Mod(\phi_{S})\ra \Sym^{k_1}(\D)\times \Sym^{k_2}(\D)
\end{align*} 
are the evaluation maps.

%Let us denote the union of all such classes $\phi_S\in\pi_2(\theta_{\alpha\beta},
%\theta_{\alpha\beta})$ with $n_{w_i}(\phi_S)=k_i$ for $i=1,2$ by 
%$\phi_{k_1,k_2}$. Consider
% the evaluation maps
%\begin{align*}
%&\rho_{\Sig}=\rho_{w_1}\times\rho_{w_2}:\Mod(\phi)\ra \Sym^{k_1}(\D)\times \Sym^{k_2}(\D)\ \ \text{and}\\ 
%&\rho_S=\rho_{w_1}\times\rho_{w_2}:\Mod(\phi_{k_1,k_2})\ra \Sym^{k_1}(\D)\times \Sym^{k_2}(\D).
%\end{align*}
%If $u'\in\Mod(\phi\star \phi_S)$ correspond to the degeneration $u\star u_S$
%with $u\in\Mod(\phi)$ and $u_S\in\Mod(\phi_{k_1,k_2})$, we further find
%%\begin{displaymath}
%\[\rho_{\Sig}(u)=\rho_{S}(u_S)\in\Sym^{k_1}(\D)\times \Sym^{k_2}(\D).\]
%%\end{displaymath} 
%There are several classes $\phi_S$ which form $\phi_{k_1,k_2}$.  
%In fact such triangle classes are in correspondence with 
%the pairs $(a,b)$ of non-negative integers with $a+b=k_1+k_2$. 
%We thus obtain a map
%\begin{displaymath}
%\imath_{\phi}:\Mod(\phi')\simeq
%\Mod(\phi\star \phi_{k_1,k_2})
%\lra \Mod(\phi)\times_{\Sym^{k_1}(\D)\times \Sym^{k_2}(\D)}\Mod(\phi_{k_1,k_2}).
%\end{displaymath}
%By Proposition~\ref{prop:limit-moduli-space}, 
%if the necks connecting 
%$S$ to $\Sig$  are sufficiently 
%stretched and the paths of almost complex structures are generic 
%$\imath_{\phi}$ 
%is a bijection, reducing the proof  to the first part of the following lemma,
This reduces the proof to the first part of the following Lemma, which will be proved in Subsection~\ref{subsec:lemma}. With this 
lemma in place, the proof of the proposition is complete.
\end{proof}

\begin{lem}\label{lem:three-curves-general}
Let $\alpha,\beta,\gamma$ denote three 
curves on $\PP^1$ which are small Hamiltonian isotopes of one another and 
denote the corresponding top intersection points by $\theta_{a}\in\beta\cap\gamma,
\theta_b\in\alpha\cap\gamma$ and $\theta_c\in\alpha\cap\beta$. Let
$w_1$ and $w_2$ denote markings on the two domains in the complement 
of the isotopy regions. Let $\phi_{k_1,k_2}$ denote the set of Whitney disks 
$\phi\in \pi_2(\theta_c,\theta_c)$  with 
$n_{w_i}(\phi)=k_i,\ i=1,2$
and $\Delta_{k_1,k_2}$ denote the union of the triangle classes  
$\Delta\in \pi_2(\theta_c,\theta_a,\theta_b)$
with $n_{w_i}(\Delta)=k_i,\ i=1,2$. Then for generic 
\[(p_1,p_2)\in \Sym^{k_1}(\D)\times\Sym^{k_2}(\D)\] 
and a generic path of almost complex structures
%\todo{ AA why? What is the point of considering $\PP^1$? EE: It is just a notation fixed in the previous pages. Otherwise, nothing!}} 
on $\PP^1$ we have
\begin{align*}
&(i)\ \ n_{p_1,p_2}(\phi_{k_1,k_2}):=\#\left\{u\in\Mod(\phi_{k_1,k_2})\ \big|\ 
\rho_{w_i}(u)=p_i,\ i=1,2\right\}=1\ \ \text{and}\\
&(ii)\ n_{p_1,p_2}(\Delta_{k_1,k_2}):=
\#\left\{u\in\Mod(\Delta_{k_1,k_2})\ \big|\ \rho_{w_i}(u)=p_i,\ i=1,2\right\}=1.
\end{align*}
\end{lem}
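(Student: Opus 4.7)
\proof[Proof proposal]
Both claims are counts of pseudo-holomorphic polygons on the sphere $\PP^1$ with boundary on three mutually Hamiltonian-isotopic curves, and the plan is to establish them in parallel by induction on $k:=k_1+k_2$, using Gromov compactness to reduce each inductive step to a lower count of the same type.

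For the base case $k_1=k_2=0$, part (i) asserts that the constant disk at $\theta_c$ is the unique $J$-holomorphic representative in $\phi_{0,0}$: indeed, any $\phi\in\pi_2(\theta_c,\theta_c)$ with $n_{w_1}(\phi)=n_{w_2}(\phi)=0$ has trivial domain (it is supported in the small bigons between $\alpha$ and $\beta$, and Maslov/positivity forces it to be the constant), so $n_{\varnothing,\varnothing}(\phi_{0,0})=1$. Part (ii) for $k_1=k_2=0$ is the standard small-triangle count: for sufficiently close Hamiltonian isotopes there is a unique small $\Delta_0\in\pi_2(\theta_c,\theta_a,\theta_b)$ with zero $w_i$-multiplicities and $\mu=0$, whose holomorphic representative is unique (this is the genus-zero instance of Lemma 9.7 of Ozsv\'ath--Szab\'o's handleslide invariance argument, or equivalently the fact that $\theta_c$ represents the identity in Lagrangian Floer cohomology of a small isotope).

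For the inductive step, suppose $k\geq 1$ and (by symmetry) that $k_1\geq 1$. Write $p_1=\{q\}\cup p_1'$ with $p_1'$ consisting of $k_1-1$ points, and let $q$ tend to a generic point $q_\infty\in\partial\D$ of the disk (respectively, of the triangle in part (ii)). By Gromov compactness applied in Lipshitz's cylindrical formulation, any sequence of representatives of $\phi_{k_1,k_2}$ (respectively $\Delta_{k_1,k_2}$) passing through $(p_1,p_2)$ limits to a nodal configuration; for generic $p_1',p_2$ and generic path of almost complex structures on $\PP^1$, a dimension count rules out all codimension-one strata except a single boundary-degeneration, in which a holomorphic sphere $v\colon\PP^1\to\PP^1$ of degree one carrying $w_1$ with multiplicity one bubbles off at $q_\infty$, attached to a main-level polygon $u'$ representing $\phi_{k_1-1,k_2}$ (respectively $\Delta_{k_1-1,k_2}$) and passing through $(p_1',p_2)$. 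For a generic $q_\infty$ the sphere $v$ is unique up to reparametrization (it is the M\"obius map sending $0\mapsto q_\infty$ and $\infty\mapsto w_1$, fixed by the boundary matching), and by the inductive hypothesis $u'$ is unique. Standard gluing (in the form used in Proposition~\ref{prop:limit-moduli-space}) then promotes this pair to a unique honest representative, giving $n_{p_1,p_2}(\phi_{k_1,k_2})=n_{p_1',p_2}(\phi_{k_1-1,k_2})=1$, and the analogous equality for triangles, completing the induction.

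The main obstacle is rigorously classifying the Gromov limits: one must rule out interior sphere bubbles, boundary bubbles attached away from the chosen approach point, breakings into two nonconstant disks/triangles, and degenerations involving the small bigons between $\alpha$ and $\beta$ (respectively between any two of $\alpha,\beta,\gamma$). For generic evaluation points and generic almost complex structures, each such stratum has negative virtual dimension inside the constrained moduli space, so only the single boundary-degeneration described above survives; this is the transversality and dimension-count step that does the real work, and fits into the standard framework already invoked in Proposition~\ref{prop:limit-moduli-space}.
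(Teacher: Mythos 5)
Your proposal takes a genuinely different route from the paper's. The paper's proof has two stages: first it establishes that $n_{p_1,p_2}(\Delta_{k_1,k_2})$ is independent of the generic choice of $(p_1,p_2)$ by running a $1$-parameter family of constraint points and showing that the only potential ends of the resulting $1$-dimensional moduli space — disk degenerations at the corners $\theta_a,\theta_c$ — contribute zero; then it picks a very particular $(p_1^T,p_2^T)$, spread out along a single cylindrical end of the $Y$-shaped domain, and lets $T\to\infty$. In that limit the triangle degenerates into $\Delta_{0,0}$ joined to a string of Maslov-index-$2$ bigons each constrained to pass once over the relevant $w_i$, and the count of those constrained bigons is quoted from Lemma~7.3 of \cite{AE-1}. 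You instead propose an induction on $k_1+k_2$ where one constraint point tends to a point $q_\infty$ on the lateral boundary $\partial\D$ and a bubble peels off.

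There are two real gaps in your version. First, you apply the equality $n_{p_1,p_2}=n_{p'_1,p_2}$ implicitly — moving $q$ to $\partial\D$ is a degeneration of the constraint, so the identification with the count at $(p'_1,p_2)$ already presupposes the independence of the count from the generic choice of evaluation points. The paper proves this independence as a separate step precisely so that it can then specialize to a convenient $(p_1,p_2)$; your induction needs the same lemma and cannot simply absorb it into the gluing statement. Second, the degeneration you describe is a boundary degeneration, not a sphere bubble at an interior point: as $q\to q_\infty\in\partial\D$ with $u(q)=w_1$, energy concentrates at the Lagrangian boundary of the domain, and what splits off is a closed component over the boundary point in Lipshitz's picture — not a degree-one M\"obius map $\PP^1\to\PP^1$ with a two-point normalization. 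The index of such a boundary degeneration and the count of its constrained representatives are exactly the kind of nontrivial computation that Lemma~7.3 of \cite{AE-1} supplies in the paper's version, and the claim that "dimension counting rules out everything else" is asserted rather than carried out; in particular one must actually verify that the constrained boundary degeneration is rigid and contributes $1$, and that simultaneous matching at $w_2$ does not produce extra strata. Your overall skeleton (base case plus a one-point-at-a-time degeneration) is reasonable and could in principle be made to work, but as written it misses the independence argument and misidentifies the geometry of the limit object, so it does not yet constitute a proof.
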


\subsection{Proof of Lemma~\ref{lem:three-curves-general}}
\label{subsec:lemma} 
The proof is closely related to  the proof of \cite[Lemma 6.4]{OS-link}. 
We will prove the second claim. The first claim is in fact easier, and its proof 
is completely similar.
The first step
is to show that the number $n_{p_1,p_2}(\Delta_{k_1,k_2})$ does not depend 
on the generic choice of 
$(p_1,p_2)$. Given a generic path $\{(p_1^t,p_2^t)\}_{t\in[0,1]}$ in 
$\Sym^{k_1}(\D)\times \Sym^{k_2}(\D)$
connecting the generic points $(p_1^0,p_2^0)$ and $(p_1^1,p_2^1)$,  
consider the moduli space 
\begin{displaymath}
\left\{(u,t)\ \big|\ t\in[0,1],\ u\in\Mod(\Delta_{k_1,k_2}), \rho_{w_i}(u)=p_i^t,
\ i=1,2\right\},
\end{displaymath}
which is a smooth $1$-dimensional moduli space with ends determined by 
the Gromov limits of its points. 
Picture $\D$ as a $Y$ shape domain, and assume that 
the complex structure is translation invariant as we move towards infinity in any of 
three directions corresponding to $v_a,v_b$ and $v_c$.
Three types of the boundary points correspond to a degenerations of 
the domain into a disk $\phi$ and a triangle class $\Delta_\phi$. 
Since the path $\{(p_1^t,p_2^t)\}_t$ remains in a 
compact subset of the domain, the disk $\phi$ can not contain any of the 
pre-images
of $w_1$ or $w_2$, i.e. $n_{w_1}(\phi)=n_{w_2}(\phi)=0$. 
There are no holomorphic disks to $\theta_b$ with coefficient $0$ at 
$w_1$ and $w_2$. Thus,  $\phi$  corresponds to the 
disks contributing to $\ov\del(\theta_a)$ or $\ov\del(\theta_c)$
which are all zero, i.e. the total number of such ends is always zero.
The remaining ends  correspond to  
\begin{align*}
-&\left\{u\in\Mod(\Delta_{k_1,k_2})\ \big|\ \rho_{w_i}(u)=p_i^0,\ i=1,2\right\}\coprod
\left\{u\in\Mod(\Delta_{k_1,k_2})\ \big|\ \rho_{w_i}(u)=p_i^1,\ i=1,2\right\},
\end{align*}
implying the independence of $n_{p_1,p_2}(\Delta_{k_1,k_2})$ from the 
generic choice of 
$(p_1,p_2)$ in  the product $\Sym^{k_1}(\D)\times\Sym^{k_2}(\D)$.

Consider one of the branches of the $Y$-shape domain $\D$, for instance the one corresponding to $v_a$, and identify 
the branch with $[0,1]\times (0,\infty)$.
Denote the projection over the second factor by $\pi_\R$.
Choose the generic path $(p_1^T,p_2^T)$ of points in $\Sym^{k_1}(\D)
\times \Sym^{k_2}(\D)$  so that $p_1^T$ is 
a union of $k_1$ points $p_{1,1}^T,\ldots, p_{1,k_1}^T$ 
in the above branch so that 
$$\pi_\R(p_{1,1}^T)>T\ \ \text{and}\ \ \pi_\R(p_{1,i+1}^T)-\pi_\R(p_{1,i}^T)>T.$$
Similarly, assume that $p_2^T$ is 
a union of $k_2$ points $p_{2,1}^T,\ldots, p_{2,k_2}^T$ 
in the above branch so that 
$$\pi_\R(p_{2,1}^T)-\pi_\R(p_{1,k_1}^T)>T\ \ \text{and}\ \ 
\pi_\R(p_{2,i+1}^T)-\pi_\R(p_{2,i}^T)>T.$$
One may then consider the ends of the smooth $1$-dimensional moduli space 
\begin{displaymath}
\Ncal=\coprod_{T\in[1,\infty)}\left\{u\in\Mod(\Delta_{k_1,k_2})\ \big|\ 
\rho_{w_i}(u)=p_i^T,\ i=1,2\right\}.
\end{displaymath}
Two types of boundary ends correspond to degenerations of $u$ to a triangle
and a disk $\phi$ which goes to $\theta_b$ or starts from $\theta_c$.
Since the path of points remains in a compact subset of the branch corresponding
to vertices $v_b$ and $v_c$, we find $n_{w_1}(\phi)=n_{w_2}(\phi)=0$. There are no 
holomorphic disks to $\theta_b$ with coefficients $0$ at $w_1$ and $w_2$, and the 
disks $\phi$ which contribute to $\ov\partial(\theta_c)$ come in canceling pairs. 
Such pairs correspond to  pairs of points in the boundary of $\Ncal$
with canceling contributions. The remaining boundary components of $\Ncal$ are in correspondence with   
the points in union of 
\[-\left\{u\in\Mod(\Delta_{k_1,k_2})\ \big|\ \rho_{w_i}(u)=p_i^1,\ i=1,2\right\}\]
and the product
%\todo{AA this last paragraph is wrong, we are not considering all of the possibilities.
%EE I have added a paragraph to the proof. Is this what you mean?}
\begin{align*}
\Mod(\Delta_{0,0})&\times \left(\prod_{i=1}^{k_1}
\left\{u\in\Mod(\phi)\big|\rho_{w_1}(u)=t_{1,i}\right\}\right)
\times \left(\prod_{i=1}^{k_2}
\left\{u\in\Mod(\psi)\big|\rho_{w_1}(u)=t_{2,i}\right\}\right).
\end{align*}
Here $\phi\in\pi_2(\theta_a,\theta_a)$ denotes the homotopy types of the 
two disks connecting $\theta_a$ to itself with boundary on $\beta\amalg\gamma$,
$n_{w_1}(\phi)=1$ and $n_{w_2}(\phi)=0$. 
Similarly, $\psi\in\pi_2(\theta_a,\theta_a)$ denotes the homotopy types of the 
two disks connecting $\theta_a$ to itself with boundary on $\beta\amalg\gamma$,
$n_{w_1}(\phi)=0$ and $n_{w_2}(\phi)=1$. 
Moreover, $t_{1,i}$ and $t_{2,i}$ are arbitrary point on 
$[0,1]\times \R$. It is implied by \cite[ Lemma 7.3]{AE-1}  that 
the total number of points (counted with sign) in this latter end of the moduli space
is $1$. Consequently, $n_{p_1^1,p_2^1}(\Delta_{k_1,k_2})=1$ and the proof  
is complete.  

\subsection{Invariance of the cobordism map for one-handles}\label{subsec:CS-2} 
%\todo{AA how about we change the title of this section?  EE any suggestions? I changed it as above, but if you have a better suggestion, feel free to apply it}
Let $\Tangle=[M,T,\la,\spinc]$ be an $\Ring$-tangle and $\Fsphere\subset M\setminus T$ be a framed $0$-sphere. As before, $\Cob=\Cob(\Fsphere)$ denotes the parametrized $\Ring$-cobordism from $\Tangle$ to $\Tangle'=\Tangle(\Fsphere)$, obtained by attaching a $1$-handle to $\Tangle\times [0,1]$  along $\Fsphere\times\{1\}$.

%The goal of this section is to prove that $\fmap_{\Fsphere}$ induces an invariant homomorphism as follows.
%
%In view of the discussion of naturality for tangle Floer homology in 
%Section~\ref{sec:naturality}, we now prove:

\begin{thm}\label{thm:one-handle}
%Let $\Tangle$ be an $\Ring$-tangle and 
%\[\Cob=\Cob(\Fsphere):\Tangle\leadsto \Tangle'=\Tangle(\Fsphere)\] 
%denote the parametrized elementary $\Ring$-cobordism obtained from the product cobordism $\Tangle\times [0,1]$ by attaching a $1$-handle along a framed $0$-sphere $\Fsphere$ as before. 
For any $\Ring$-module $\Rin$, the chain map $f_{\Fsphere}$ from Proposition~\ref{prop:connected-sum} induces a homomorphism 
\[\fmap_{\Cob}^{\Rin}:\HFT^\Rin(\Tangle)
\lra \HFT^\Rin(\Tangle').\]

\end{thm} 
\begin{proof}
Any Heegaard surface for $\Tangle$ may be modified to intersect each component of $\Fsphere$ in a disk, by an isotopy supported in a neighborhood of two arcs in $M$. So we may just consider Heegaard diagrams for $\Tangle$ that the underlying Heegaard surface satisfies in this property. %\todo{AA I am not sure if it is really important.} 
From any Heegaard diagram $\HD$ for $\Tangle$ with such an underlying Heegaard surface $\Sig$, one may construct a Heegaard diagram $\HD'$ for $\Tangle'$ as described in Section \ref{sec:1-handlediag}.  Any choice of a path $J$ of almost complex structures for the Heegaard surface
$\Sig$ of $\Tangle$ gives a degenerate path of almost complex structures corresponding to 
the join of $\Sig$ and the standard model $\ovl{S}$ 
at $w_1$ and $w_2$. This path may be perturbed to a path $J'$ of almost complex structures corresponding to
$\Sig'$, the Heegaard surface for $\Tangle'$. Let $J_1$ and $J_2$ be paths of almost complex structures for $\Sig$ and $J_1'$ and $J_2'$ be the corresponding paths for $\Sig'$. It follows from standard techniques (involving a 
path connecting different choices of paths of almost complex structures) that \[f_{\Fsphere}\circ\Phi_{J_1\ra J_2}=\Phi_{J_1'\ra J_2'}\circ f_{\Fsphere}.\]
%We will
%thus drop $J$ and $J'$ from the notation for simplicity.\\

%The Heegaard surface $\Sig'$ for $(M',T')$ cuts the sphere corresponding to the 
%$1$-handle  in a simple closed curve. Every such surface in $(M',T')$ determines 
%the Heegaard surface $\Sig$  for $M$ and a cyliner $S$, as a subsurface of $\Sig'$.
%Moreover, we obtain the disk $D_i$, $i=1,2$ on $\Sig$ with center at a marked 
%point $w_i$.
%Corresponding to any choice of the paths $J$ of almost complex structures corresponding to 
%$\Sig$ we obtain a degenerate path of almost complex structures corresponding to 
%the join of the two surfaces $\Sig$ and the standard model $\ovl{S}$ 
%at $w_1$ and $w_2$.
%This path may be perturbed to a path $J'$ of almost complex structures corresponding to
% $\Sig'$. The independence of the isomorphism from the choice of these 
% paths of almost complex structures is proved by  standard techniques (involving a 
% path connecting different choices of paths of almost complex structures). So, we will
% thus drop $J$ and $J'$ from the notation for simplicity.\\

Let $\HD_0$ and $\HD_1$ be Heegaard diagrams for $\Tangle$ such that $\HD_1$ is obtained from $\HD_0$ by one Heegaard move (i.e. isotopy on an $\alpha$ or $\beta$ curve, handleslide on an $\alpha$ or $\beta$ curve, stabilization or destabilization) denoted by $\hmove$. We need to prove that the diagram
\begin{displaymath}
\begin{diagram}
\HFT^\Rin(\HD_0)
&\rTo{f_{\Fsphere}}&\HFT^\Rin(\HD_0')\\
\dTo{\Phi_{\hmove}}&&
\dTo{\Phi_{\hmove'}}\\
\HFT^\Rin(\HD_1) &\rTo{f_{\Fsphere}}&
\HFT^\Rin(\HD_1').
\end{diagram}
\end{displaymath}
is commutative. Here, $\Phi_e$ is the isomorphism associated with the Heegaard move, and $\HD_i'$ is the Heegaard diagram for $\Tangle'$ obtained from $\HD_i$ after joining it with $\HD_S=(\mathbb{P}^1,\alpha,\beta, w_1,w_2)$ at $w_1$ and $w_2$. Furthermore, $\HD_1'$ is obtained from $\HD_0'$ by a Heegaard move, denoted by $e'$.
% 
%
%
%So we need to prove that $\fmap_{\Fsphere}$ commutes with the isomorphisms corresponding to isotopies over the $\alpha$ or $\beta$ curves, handle-slides . This implies that $\fmap_{\Fsphere}$ induces a homomorphism from each vertex of the oriented graph corresponding to $\Tangle$ to the oriented graph corresponding to $\Tangle'$. Then, we should prove that 
% 
%Each  $\Ring$-diagram $H$ for $\Tangle$ corresponds to an isomorphism
%$$\Phi_{H}:\HFT^\Rin(H)\lra \HFT^\Rin(M,T,\la,\spinc),$$
%and the Heegaard moves $\underline\hmove$ changing $H_0$ to $H_1$ give 
%the isomorphims $\Phi_{{\underline\hmove} }$ such that
%$\Phi_{H_1}\circ \Phi_{{\underline\hmove}}=\Phi_{H_0}$.
%Correspondingly, we obtain a sequence of moves $\hmove$ which changes the 
%diagram $H_0'=H_0\#H_S$ to $H_1'=H_1\#H_S$, 
%where $H_S$ is the simple diagram corresponding to the standard
%model  $\ovl{S}$ in the above construction.
%Let us denote the chain map constructed in 
%Proposition~\ref{prop:connected-sum} from the diagram $H_0$  by 
%$\fmap_{H_0}$ and the corresponding map for $H_1$ 
%by $\fmap_{H_1}$. We then need to show that the  diagram 
%\begin{displaymath}
%\begin{diagram}
%\HFT_\Rin(H_0,\spinct|_{M})
%&\rTo{\fmap_{H_0}}&\HFT_\Rin(H_0'=H_0\# H_S,\spinct|_{M'})\\
%\dTo{\Phi_{\underline\hmove}}&&
%\dTo{\Phi_{\underline\hmove}}\\
%\HFT_\Rin(H_1,\spinct|_{M}) &\rTo{\fmap_{H_1}}&
%\HFT_\Rin(H_1'=H_1\# H_S,\spinct|_{M'}).
%\end{diagram}
%\end{displaymath}
%is commutative. 
%This is in turn
% reduced to the  case where  $\underline\hmove$ 
% consists of a single Heegaard 
%move. 

When $\hmove$ is a stabilization or destabilization,
the proof is straightforward, and the above diagram is in fact commutative in the level 
of chain complexes if the almost complex structure is chosen correctly. The remaining
cases are thus the cases where $\hmove$ is an isotopy (which may pass over
either of $w_1$ and $w_2$), or a handle slide.

We present the argument in the case where  
$\hmove$ is a $\beta$-isotopy or a handle-slide on $\betas$.
%In this case, $\HD_1'$ is obtained 
% from $\HD_0'$ by a Heegaard move, 
%which may also be denoted by 
%$\hmove$. 
Let $\gammas$ be 
a collection of curves which is obtained from $\betas$ by applying the Heegaard 
move $\hmove$, followed by small Hamiltonian isotopies. 
Choose a small isotope $\gamma$ of $\beta$ as well. Let 
$\gammas'=\gammas\amalg\{\gamma\}$. There is a generator associated with the Heegaard diagram $(\Sig,\betas,\gammas,\la,\spinc)$, denoted by $\Theta$, which represents the homology class $\Theta_{\beta\gamma}$. Furthermore, $\Theta'=\Theta\times\{\theta_{\beta\gamma}\}$ generates the top homology class $\Theta_{\beta'\gamma'}$ in $\HFT(\Sig',\beta',\gamma',\la',\spinc')$. 

% 
% 
% either of the Heegaard diagrams $(\Sig,\betas,\gammas,\la,\spinc)$ and 
% $(\Sig',\betas',\gammas',\la',\spinc')$, which represents the top generators $\Theta_{\beta\gamma}$ and $\Theta_{\beta'\gamma'}$, respectively.  
% 

% 
% 
% which will be denoted by 
% \[\Theta=\Theta_{\beta\gamma}\ \  \text{and}\ \  
% \Theta'=\Theta_{\beta'\gamma'}=\Theta\times \{\theta_{\beta\gamma}\},\] 
% respectively. 
The Heegaard triple $(\Sig,\alphas,\betas,\gammas,\la,\spinc)$ determines 
a holomorphic triangle map, and together with the generator $\Theta$, this 
gives a chain map
\[f=f_{\alpha\beta\gamma}:\CFT(\Sig,\alphas,\betas,\la,\spinc)\lra 
\CFT(\Sig,\alphas,\gamma,\la,\spinc).\]
Similarly, the Heegaard triple $(\Sig',\alphas',\betas',\gammas',\la',\spinc')$ and 
the generator $\Theta'$ determine 
a chain map
\[f'=f_{\alpha'\beta'\gamma'}:\CFT(\Sig',\alphas',\betas',\la',\spinc')\lra 
\CFT(\Sig',\alphas',\gamma',\la',\spinc').\]
To complete the proof, it is enough to show that 
$f'\circ f_{\Fsphere}=f_{\Fsphere}\circ f$.

For this purpose, fix the generator 
$\x$ of $\CFT( \Sig,\alphas,\betas,\la,\spinc)$ and suppose 
$$\Delta'\in\pi_2(\x\times \theta_{\alpha\beta},\Theta\times\theta_{\beta\gamma},
\y\times \theta)$$
is a triangle class contributing to $f'\circ f_{\Fsphere}$, where $\theta$ 
is one of the two intersection points in $\alpha\cap\gamma$.
$\Delta'$ gives the triangle classes 
\begin{displaymath}
\begin{split}
&\Delta\in\pi_2(\x,\Theta,\y)\ \ \ \  
\text{and} \ \ \ \ 
\Delta_S\in\pi_2(\theta_{\alpha\beta},\theta_{\beta\gamma},\theta)
\end{split}
\end{displaymath}
which support holomorphic representatives. Moreover we have
\begin{displaymath}
\begin{split}
&n_{w_i}(\Delta)=n_{w_i}(\Delta_S)\ \ \text{for}\ i=1,2
\quad \quad\text{and}\quad\quad 
0=\mu(\Delta')=\mu(\Delta)+\mu(\Delta_S)-2n_{w_1}(\Delta_S)
-2n_{w_2}(\Delta_S).
\end{split}
\end{displaymath}
Since $\mu(\Delta_S)=|\theta|+2n_{w_1}(\Delta_S)+2n_{w_2}(\Delta_S)$
we find $\mu(\Delta')=\mu(\Delta)+|\theta|$. From here, we conclude that
$\theta$ is the top generator $\theta_{\alpha\gamma}$
and that $\mu(\Delta)=0$.

Let  $k_i=n_{w_i}(\Delta_S)$ for $i=1,2$ and consider
the evaluation maps
\begin{align*}
&\rho_{\Sig}=\rho_{w_1}\times\rho_{w_2}:\Mod(\Delta)\ra \Sym^{k_1}(\D)\times \Sym^{k_2}(\D)\ \ \text{and}\\ 
&\rho_S=\rho_{w_1}\times\rho_{w_2}:\Mod(\Delta_S)\ra \Sym^{k_1}(\D)\times \Sym^{k_2}(\D).
\end{align*}
If $u'\in\Mod(\Delta')$ corresponds to the degeneration $u\star u_S$
with $u\in\Mod(\Delta)$ and $u_S\in\Mod(\Delta_S)$, we find
%\begin{displaymath}
\[\rho_{\Sig}(u)=\rho_{S}(u_S)\in\Sym^{k_1}(\D)\times \Sym^{k_2}(\D).\]
%\end{displaymath} 
There are several classes $\Delta_S$  with the property that 
$n_{w_i}(\Delta_S)=k_i$. Let 
$\Delta_{k_1,k_2}$ denote the set of all the above classes.
We thus obtain a map
\begin{displaymath}
\imath_{\Delta'}:
\Mod(\Delta')\lra \Mod(\Delta)\times_{\Sym^{k_1}(\D)\times \Sym^{k_2}(\D)}
\Mod(\Delta_{k_1,k_2}).
\end{displaymath}

By the argument of Proposition~\ref{prop:limit-moduli-space}, if the neck is sufficiently 
stretched and the paths of almost complex structures are generic 
$\imath_{\Delta'}$ 
is a bijection, reducing the
proof  to the second claim in Lemma~\ref{lem:three-curves-general}.
This completes the proof of Theorem~\ref{thm:one-handle}.
\end{proof}

\subsection{Three-handles}
Let $\Tangle=[M,T,\la,\spinc]$ be an $\Ring$-tangle as before, and $\Fsphere\subset M\setminus T$ be a framed $2$-sphere. Further, assume that $\langle c_1(\spinc),[a(\Fsphere)]\rangle=0$, where $a(\Fsphere)$ is the attaching sphere of $\Fsphere$. Then, $\Fsphere$ specifies an $\Ring$-cobordism
\[\Cob=\Cob(\Fsphere)=[W(\Fsphere),F(\Fsphere)=T\times[0,1],
\la_\sur:\pi_0(T)=\pi_0(F(\Fsphere))\ra \Ring,\spinct_\spinc],\]
where $W(\Fsphere)$ is obtained from $M\times[0,1]$ by attaching a $3$-handle along 
the framed sphere $\Fsphere\times\{1\}\subset M\times \{1\}$. The $\SpinC$ class $\spinct=\spinct_{\spinc}$ is the $\SpinC$ structure on $W=W(\Fsphere)$ determined by $\spinc$ i.e. $\spinct|_{M}=\spinc$.
%We require that $\langle c_1(\spinc),[\Fsphere]\rangle=0$, and then 
%the $\SpinC$ structure $\spinct$ over $W=W(\Fsphere)$ is determined by $\spinc$.
The cobordism 
$\Cob(\Fsphere)$ connects $\Tangle$ to 
\[\Tangle'=\Tangle(\Fsphere)=[M'=M(\Fsphere),T'=T\times\{1\},
\la':\pi_0(T')=\pi_0(T)\ra\Ring,\spinc'=\spinct|_{M'}].\]
If we reverse the cobordism $\Cob(\Fsphere)$ we obtain an 
$\Ring$-cobordism from $\Tangle'$ to $\Tangle$ which corresponds to attaching
a $1$-handle.

Consider a Heegaard surface $\Sig$ for $M$ that cuts the framed sphere 
$\Fsphere$ in a cylinder $S\subset \Sig$ with boundary components $C_1$ and $C_2$.  
A Heegaard surface $\Sig'$ for $M'$ is then obtained from $\Sig$ by removing 
$S$, and gluing a pair of disks with centers $w_1$ and $w_2$ to $C_1$ and $C_2$,
respectively. Let $\HD'=(\Sig',\alphas',\betas',\la':\z\ra \Ring,\spinc')$ be a
Heegaard diagram for $\Tangle'$. Consider a pair of Hamiltonian isotopic curves $\alpha$ and $\beta$ on $S$ that are both isotopic to the core of the cylinder and 
cut each other transversely in $\theta_{\alpha\beta}$ and $\theta_{\beta\alpha}$. Then,
\[\HD=(\Sig,\alphas=\alphas'\cup\{\alpha\},\betas=\betas'\cup\{\beta\},
\la:\z\ra \Ring,\spinc)\]
is a Heegaard diagram for $\Tangle$, provided that we use the stronger form 
of admissibility for $\HD'$, as discussed in   \cite[Remark 4.6]{AE-1}. 
%We thus obtain a chain map
%\[\fmap_{\Fsphere}:\CFT(\Tangle)\lra 
%\CFT(\Tangle').\]

Choose a generic path $J'$ of almost complex structures 
associated with the surface $\Sig'$. On the sphere $\ovl{S}$, obtained by attaching a pair of disks with centers $w_1$ and $w_2$ to $S$, consider a complex structure $J_S$, so that 
$(\ovl{S},w_1,w_2)$ is identified with $(\PP^1,0,\infty)$. Then, let $J$ be a generic path of almost 
complex structures on $\Sig$ sufficiently close to the join of 
$J$ and $J_S$.
%the standard complex structure $J_S$ on the sphere $\ovl{S}$ 
%obtained from $S$
%by attaching a pair of disks with centers $w_1$ and $w_2$ to $S$, so that 
%$(\ovl{S},w_1,w_2)$ is identified with $(\PP^1,0,\infty)$.
The chain map 
\[f_{\Fsphere}:\CFT_{J}(\HD)\lra \CFT_{J'}(\HD').\]
is defined as follows. For a generator 
\[\x=\x'\times \theta\in\Ta\cap\Tb=(\mathbb{T}_{\alpha'}\cap
\mathbb{T}_{\beta'})\times (\alpha\cap \beta)\] 
let 
\[f_{\Fsphere}(\x)=\begin{cases} \x'\ \ \ \ \text{if}\ \ \ \ \theta=\theta_{\beta\alpha}\\
0\ \ \ \ \ \text{otherwise}
\end{cases}\]
%
%(where this latter complex is defined using the Heegaard diagram $\HD$ and the path 
%$J$ of almost complex structures), define
%$\fmap_{\Fsphere}(\x):=\x'$ in 
%$\CFT(\Tangle')$ if $\theta=\theta_{\beta\alpha}$ is the intersection point
%with lower homological grading (the bottom intersection point), and 
%define $\fmap_{\Fsphere}(\x)=0$, otherwise.
%Here we assume that $\CFT(\Tangle')$ is defined using the 
%Heegaard diagram $H'$ and the path $J'$ of almost complex structures.\\

An argument, which is basically the dual of the argument used for $1$-handles 
implies the following theorem.

\begin{thm}\label{thm:three-handle}
With the above notation fixed, $f_{\Fsphere}$ is a chain map, and for any $\Ring$-module $\Rin$, it induces a natural homomorphism 
\begin{displaymath}
\fmap^\M_{\Fsphere}=\fmap^{\M}_{\Cob}:\HFT^\Rin(\Tangle)
\lra \HFT^\Rin(\Tangle').
\end{displaymath}

%Let $\Ring$ denote a commutative algebra over $\Z/2\Z$, 
% $\Rin$ denote an $\Ring$-module and $\Tangle=[M,T,\spinc,\la]$ denote an 
%$\Ring$-tangle. Let $\Fsphere\subset M-T$ be a framed $2$-sphere such that
%$\langle c_1(\spinc),[\Fsphere]\rangle=0$ and 
%\[\Cob=\Cob(\Fsphere):\Tangle\leadsto \Tangle'
%=\Tangle(\Fsphere)\] denote the $\Ring$-cobordism
%which is obtained from the product  $\Tangle\times[0,1]$ by adding 
%a $3$-handle along  $\Fsphere$. Then the
%map $\fmap_{\Cob}=\fmap_{\Fsphere}$ defined above is a 
%chain map and  induces a natural
%homomorphism
%\begin{displaymath}
%\begin{split}
%\fmap^\M_{\Cob}:\HFT^\Rin(\Tangle)
%\lra \HFT^\Rin(\Tangle').
%\end{split}
%\end{displaymath} 
\end{thm}

\newpage

\section{Framed arcs, framed knots and cobordism maps}\label{sec:map}
%\todo{AA Write an intro paragraph for this Section}
\subsection{Special tangles corresponding to cobordisms}\label{SpTangle}
Assume $\Cob=[W,\sur,\spinct,\la_\sur]$ is an $\Ring$-cobordism 
from the $\Ring$-tangle
$\Tangle= [M,T,\spinc,\la]$ to another $\Ring$-tangle 
$\Tangle'=[M',T',\spinc',\la']$.
In this subsection we introduce an $\Ring$-tangle $\Tangle_\sur$ associated with
$\Cob$ playing the role of $\#^kS^1\times S^2$ in 
defining the chain maps in Heegaard Floer theory. The important 
feature of $\Tangle_\sur$ is the existence of a distinguished generator
$\Theta_\sur\in \HFT(\Tangle_\sur)$, which plays the role of the 
{\emph{top generator}} in $\HFT^-(\#^kS^1\times S^2)$.

Denote the stable cobordism $(W,\sur)$ by $\Cobb$. Recall that the positive boundary of $\Cobb$,  denote by 
$\del^+\Cobb=(M^+,T^+)$, is identified with the product tangle  
\[(M^+,T^+)=(\del^+M\times [0,1], \del^+ T\times [0,1]).\]
Thus, denote
\begin{align*}
&(\del^+M^+,\del^+T^+)=(\del^+M\times\{1\},\del^+T\times \{1\})\ \ \text{and}\ \ 
(\del^-M^+,\del^-T^+)=(\del^+M\times\{0\},\del^+T\times \{0\}).
\end{align*}

Let $\arcj\subset \sur$ be a properly embedded, simple arc such that
$\del\arcj\subset T^+$. Associated with $\arcj$ we define a 
pair $(M_{\arcj},T_{\arcj})$ by doing surgery on $(M^+,T^+)$ at  
points $\arcj\cap T^+$ with the framing induced by $\sur$. More precisely, 
let $\mathrm{nd}(\arcj)\subset W$ be a small tubular neighborhood of 
$\arcj$ in $W$. The intersection $\sur\cap\del\mathrm{nd}(\arcj)$ 
induces a framing on $\del\mathrm{nd}(\arcj)$. Using this framing, 
we define
\begin{align*}
&M_{\arcj}= \left(M^+\setminus M^+\cap \nd(\arcj)\right)\cup \partial \nd(\arcj)
\quad\text{and}\quad
T_{\arcj}=\del(\sur\setminus\nd(\arcj))\setminus\left(\del\sur\cap(\del W\setminus M^+)\right).
\end{align*}

If the end points of $J$ are on distinct connected components of $T^+$
then $(M_J,T_J)$ is a balanced tangle. In this case,
consider a small product neighborhood of $M^+$, and an identification of 
it with 
$M^+\times [0,\epsilon]$, such that 
$\sur\cap \left(M^+\times [0,\epsilon]\right)=T^+\times [0,\epsilon]$. Denote the 
$4$-manifold obtained from attaching the one-handle $\nd(\arcj)$ to 
$M^+\times [0,\epsilon]$ by 
$W_{\arcj}$. Let $\sur_\arcj=\sur\cap W_{\arcj}$ and note that 
$\Cobb_{\arcj}=(W_{\arcj},\sur_{\arcj})$ is a cobordism from 
$(M^+,T^+)$ to $(M_{\arcj},T_{\arcj})$. 

\begin{defn}
A set $\Sarc=\{\arcj_1,\ldots,\arcj_n\}$ of pairwise disjoint, properly embedded, 
simple arcs on $\sur$ satisfying $\del \arcj_i\subset T^+$ for $i=1,\ldots,n$ 
is called a \emph{spanning} set if each connected component of 
$\sur\setminus\left(\coprod_{i=1}^n\arcj_i\right)$ is a disc and contains exactly one connected 
component of $T$ and one connected component of $T'$.
\end{defn}

Consider a spanning set $\Sarc=\{\arcj_1,\ldots,\arcj_n\}$ of arcs on 
$\sur$. After doing surgery on 
$(M^+,T^+)$ along the elements of $\Sarc$, we get 
a balanced tangle, which is denoted by $(M_{\Sarc},T_{\Sarc})$.
In particular, if $\arcj$ is a single arc, 
$(M_{\{\arcj\}},T_{\{\arcj\}})=(M_\arcj,T_\arcj)$. The diffeomorphism type of $(M_J,T_J)$ does not change under an \emph{arc slide} move, i.e.  sliding one foot of an arc $J_i$ over another arc $J_j$, while fixing the rest of the arcs.  
\begin{lem}\label{lem:arc-slide}
The diffeomorphism type of the tangle $(M_{\Sarc},T_{\Sarc})$ is independent of the choice of $\Sarc$.
\end{lem}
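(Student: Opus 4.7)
The plan is to establish an arc-slide calculus for spanning sets on $\sur$, analogous to the handle-slide calculus for $1$-handle attachments in Kirby theory. Concretely, I would first show that any two spanning arc systems $\Sarc$ and $\Sarc'$ on $\sur$ can be connected by a finite sequence of two types of moves: (i) ambient isotopies of $\sur$ rel $T\cup T'$, and (ii) \emph{arc slides}, in which an arc $\arcj_i\in\Sarc$ is replaced by the concatenation of $\arcj_i$ with a parallel copy of some $\arcj_j\in\Sarc$ pushed along the boundary of a disk component of $\sur-\Sarc$. This is the surface analogue of the statement that any two cut systems on a closed surface are connected by isotopies and handle slides; the argument proceeds by induction on the minimal geometric intersection number between $\Sarc$ and $\Sarc'$, using the disk complementary regions of a spanning set to perform innermost-disk reductions.

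Next I would verify that each of the two moves preserves the diffeomorphism type of $(M_\Sarc,T_\Sarc)$. For an ambient isotopy of $\sur$, the tubular neighborhood $\nd(\arcj_i)$ and the framing induced by $\sur$ are carried to those of the isotoped arc; extending the isotopy to $W$ shows the two surgery descriptions yield diffeomorphic tangles. For an arc slide, I would interpret the construction $4$-dimensionally: the cobordism $\Cobb_\Sarc=(W_\Sarc,\sur_\Sarc)$ attaches a $4$-dimensional $1$-handle to $M^+\times[0,\epsilon]$ along each endpoint-pair of $\arcj_i$, with a band of $\sur$ running through the handle realising the band surgery on $T^+$. An arc slide of $\arcj_i$ over $\arcj_j$ within $\sur$ is then realised as an ambient $1$-handle slide in $W_\Sarc$ accompanied by a compatible band slide of the corresponding band of $\sur_\Sarc$. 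Since $1$-handle slides and the induced band slides do not change the diffeomorphism type of the attached cobordism, one obtains a diffeomorphism $\Cobb_\Sarc\cong\Cobb_{\Sarc'}$ restricting to the identity on $(M^+,T^+)$, and restricting further to a diffeomorphism of the outgoing boundary tangles $(M_\Sarc,T_\Sarc)\cong(M_{\Sarc'},T_{\Sarc'})$.

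The main obstacle will be bookkeeping the framings under an arc slide: one must confirm that the framing of $\nd(\arcj_i')$ induced by the slid arc $\arcj_i'\subset\sur$ agrees, after the $1$-handle slide, with the framing obtained by sliding the original $1$-handle at $\arcj_i$ over the one at $\arcj_j$, and similarly that the band in $\sur_\Sarc$ through $\arcj_i'$ is isotopic in $W_\Sarc$ to the slid band. This reduces to a local model computation in a regular neighborhood of $\arcj_i\cup\arcj_j$ in $\sur$, where the surface itself provides the common framing reference on both sides of the slide. With this local check in place, the combination of connectedness of the spanning-arc graph under slides and isotopies with the move-by-move invariance established above proves the lemma.
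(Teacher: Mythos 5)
Your proposal reaches the same structural conclusion as the paper — namely, that any two spanning arc systems are related by arc slides and isotopies, and that these moves preserve the diffeomorphism type of $(M_{\Sarc},T_{\Sarc})$ — but the route to the connectedness statement is genuinely different. The paper interprets the arcs $\arcj_1,\dots,\arcj_n$ as the unstable manifolds of the index-one critical points of a Morse function $g\colon \sur\to[0,1]$ with a compatible gradient-like vector field, and then invokes the Cerf-theoretic uniqueness result (Theorem 4.5 of Gay--Kirby) to connect $(g,\xi)$ to $(g',\xi')$ by critical-point switches and vector-field isotopies; these translate directly into arc slides. You instead propose a combinatorial innermost-disk induction on the geometric intersection number of $\Sarc$ and $\Sarc'$. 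That argument is in principle viable and has the virtue of being self-contained, but for a surface with boundary and arcs rel $T\cup T'$ one has to handle carefully the possibility of bigons cut off along $\partial\sur$ and the constraint that each complementary disk must contain exactly one component of $T$ and one of $T'$ throughout the reduction — the Morse-theoretic approach sidesteps this bookkeeping by outsourcing the combinatorics to Cerf theory. Conversely, your second step (realizing an arc slide as a $1$-handle slide together with a compatible band slide in the $4$-dimensional cobordism $\Cobb_\Sarc$, and checking the induced framings agree via a local model near $\arcj_i\cup\arcj_j$) supplies detail that the paper leaves implicit in its final sentence ``since $(M_{\Sarc},T_{\Sarc})$ does not change under arc slides.'' So your approach trades a cleaner, citation-based connectedness argument for a more elementary one, while also filling in a justification the paper takes for granted.
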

\begin{proof}
Associated with any spanning set $\Sarc$ of arcs on $\sur$, we may define a Morse function $g:\sur\ra [0,1]$ 
and a gradient like vector field $\xi$, such that
\begin{enumerate}
\item $g|_{\sur\cap\del^{-}W}\equiv 0$, $g|_{\sur\cap\del^+W}\equiv 1$ and $g$ has no 
critical point in a neighborhood of $\del\sur$.
\item All critical points of $g$ have index one. 
\item The arcs in $J$ are the unstable manifolds of the 
critical points of $g$.
\end{enumerate}
It follows from \cite[Theorem 4.5]{GK} that any two Morse data $(g,\xi)$ and $(g',\xi')$ as 
above, can be connected by a sequence of critical point switches and isotopies of the 
gradient-like vector fields. Thus, the corresponding spanning sets can be connected by a 
sequence of arc slides. So, $(M_J,T_J)$ does not depend on the choice of $J$.
%
%Let $\Sarc$ and $\Sarc'$ be two sets of disjoint, properly embedded simple 
%arcs on $\sur$ satisfying the above conditions. Consider the Morse functions $g$ 
%and $g'$ together with gradient-like vector fields $\xi$ and $\xi'$ associated 
%with $\Sarc$ and $\Sarc'$, respectively. Theorem 4.5 in \cite{GK} implies that 
%the Morse data $(g,\xi)$ can be connected to $(g',\xi')$ by a sequence of critical 
%point switches and isotopies of the gradient-like vector field. Thus $\Sarc$ can 
%be connected to $\Sarc'$ by a sequence of arc slides. Here, an arc slide of $\arcj_1$ 
%over $\arcj_2$ is replacing $\arcj_1$ with the arc obtained by sliding one of the 
%feet of $\arcj_1$ over $\arcj_2$. As a result, since 
%$(M_{\Sarc},T_{\Sarc})$ does not change under arc slides, it is independent of  
%$\Sarc$. 
\end{proof}

We may thus denote the balanced tangle $(M_{\Sarc},T_{\Sarc})$ by 
$(M_{\sur},T_{\sur})$ and the cobordism 
$\Cobb_{\Sarc}$ 
from $(M^+,T^+)$ to $(M_{\sur},T_{\sur})$ 
by $\Cobb_{\sur}$. 

\begin{defn}
A spanning set $J=\{J_1,\ldots,J_n\}$ of arcs is called \emph{ordered}, if 
\begin{enumerate}
\item endpoints of each $J_i$ lie on distinct components of $T^+$, 
\item if for some $j<k$ an endpoint $e_j$ of $\arcj_j$ and an endpoint $e_k$ of 
$\arcj_k$ are on the same component of $T^+$, then $\pi(e_j)>\pi(e_k)$. Here, $\pi:T^+=\amalg_{i=1}^\el(\{p_i\}\times [0,1])\to [0,1]$ denotes the projection on $[0,1]$.
\end{enumerate}
\end{defn}

If a spanning set $J=\{J_1,\ldots,J_n\}$ of arcs is ordered, doing surgery along $J_1,\ldots,J_i$ on $(M^+,T^+)$ would result in a balanced tangle, for any $1\le i\le n$.

\begin{lem}
For any stable cobordism $(W,\sur)$, there is an ordered spanning set of arcs on $\sur$.
\end{lem}
\begin{proof}
The proof is straightforward. 
\end{proof}

The $\Ring$-coloring $\la_\sur$ induces an $\Ring$-coloring $\la_\sur\circ\imath$ on 
$(M_\sur,T_\sur)$, where $\imath:\pi_0(T_\sur)\to\pi_0(\sur)$ is defined by inclusion. 
Abusing the notation we denote this coloring by $\la_\sur$. Let 
$H_i\in \mathrm{H}_2(M_\sur,\Z)$ be the homology class 
represented by the belt sphere of the attached $1$-handle 
corresponding to $J_i$. Moreover, associated with every component $\partial^+_kM$ 
of $\partial^+M$, with $k=1,\ldots,\ell$, we obtain a homology class 
$H_k'\in \mathrm{H}_2(M_\sur,\Z)$.
Consider the $\SpinC$ class 
$\spinc_0\in\SpinC(M_\sur)$ such that $\langle c_1(\spinc_0),H_i\rangle=
\langle c_1(\spinc_0),H'_k\rangle=0$ for every $1\le i\le m$ and every 
$1\le k\le \ell$. We then define  $\Tangle_\sur=[M_\sur,T_\sur,\spinc_0,\la_\sur]$.

%
%Let $H_i\in \mathrm{H}_2(M_\sur,\Z)$ be the homology class 
%represented by the belt sphere of the attached $1$-handle 
%corresponding to $J_i$. Consider the $\SpinC$ class 
%$\spinc_0\in\SpinC(M_\sur)$ such that $\langle c_1(\spinc_0),H_i\rangle=0$ for every $1\le i\le m$. Then, we define  $\Tangle_\sur=[M_\sur,T_\sur,\spinc_0,\la_\sur\circ\imath]$ denote 
%the $\Ring$-tangle obtained from $(M_\sur,T_\sur)$, the $\SpinC$ 
%class $\spinc_0$ and the representation $\la_\sur\circ\imath$,
%where $\imath:\pii(T_\sur)\ra \pii(\sur)$ is induced by inclusion.
Similar to $\Tangle_\sur$, we define another tangle $\Tangle_F^b$ as follows.  Let us assume that 
$\del^+T=\{p_1,\ldots,p_\el\}\subset \del^+M$ and correspondingly, 
$T^+$ is a union of components  
\[T^+=\coprod_{i=1}^{\el}T^+_i=\coprod_{i=1}^{\el} \left(\{p_i\}\times [0,1]\right)
\subset M^+.\]
For every
$1\le i\le \el$ consider a point $\ovl{p}_i\in \del^+M$ close to $p_i$ and 
let $\ovl{T}_i^+=\ovl{p}_i\times [0,1]$ and 
$\ovl{T}^+:=\amalg_{i=1}^{\el}\ovl{T}_i^+$ in $M^+$. Setting $T_\sur^b=T_\sur\amalg\ovl{T}^+$ one gets a balanced tangle $(M_\sur,T_\sur^b)$.

Suppose that $\sur=\coprod_{i=1}^m\sur_i$ and $\del^+M=\amalg_{k=1}^\ell\del^+_kM$. Associated to each component $\del^+_kM$ of $\del^+M$, we define a monomial 
\[\law_k=\prod_{p_j\in \del_k^+M}\left(\la(p_j)\zet_j\right)
\in\F[\la_1,\ldots,\la_m,\zet_1,\ldots,\zet_\el]\]   
where $\la(p_j)=\la_i$ if {$p_j\in \sur_i$.
%Consider a variable $\la_i$ associated with each component $\sur_i$
%of $\sur$ and a variable $\zet_j$ associated with each component of 
%$\ovl{T}=\coprod_{j=1}^\el \ovl{T}_j$. Furthermore, assume that 
%$\del^+M=S=\coprod_{k=1}^\ell S_k$ are the connected components of 
%$\del^+M$, while $T_j$ is the connected component of $T_\sur$ which includes 
%$(p_j,0)\subset \del^+M\times[0,1]$  for $j=1,\ldots,\el$. 
%Define $\la(p_j)=\la_i$ if $p_j\in \sur_i$ and set
%\[\la^k=\prod_{p_j\in S_k}\left(\la(p_j)\zet_j\right)
%\in\F[\la_1,\ldots,\la_m,\zet_1,\ldots,\zet_\el].\]   
Denote the genus of $\del_k^+M$ by $g_k$. With this notation fixed, let  
\[\Ring_{\sur}^b:=\frac{\F[\la_1,\ldots,\la_m,\zet_1,\ldots,\zet_{\el}]}
{\langle \law_k\ |\ g_k>0,\ k=1,\ldots,\ell\rangle}.\]
The map $\la_{\sur}^b:\pi_0(T_{\sur}^b)\ra\Ring_{\sur}^b$ defined by
\begin{displaymath}
\begin{cases}
\la_\sur^b(\ovl{T}_j^+)=\zet_j\ \ \ \ \ &1\le j\le\el\\
\la_\sur^b(T_\sur^j)=\la(p_j) \ \ \ \ \ &1\le j\le \el,
\end{cases}
\end{displaymath}
is an $\Ring_\sur^b$-coloring of $(M_\sur,T_\sur^b)$. Here, $T_\sur^j$ denotes the component of $T_\sur$ that intersects $\del^+M$ in $p_j$. As a result, we get an $\Ring_F^b$-tangle $\Tangle^b=[M_\sur,T_\sur^b,\spinc_0,\la_\sur^b]$.

\begin{remark}
Let $\Ring$ denotes the $\Z$-algebra associated with the sutured manifold corresponding to $(M_\sur,T_\sur^b)$, see Section \ref{sec:alg}. Then $\Ring_\sur^b$ is the quotient of $\Ring\otimes\F$ by the ideal generated by the binomials $\la_i-\la_j$
for all $i$ and $j$, so that $T^i_\sur$ and $T^j_\sur$ lie on the boundary of the same component of $\sur$.
\end{remark}

The algebra $\Ring$ has a natural $\Ring_\sur^b$-module structure given by the homomorphism $\phi:\Ring^b_\sur\to\Ring$ defined as 
\[
\begin{cases}
\begin{split}
&\phi(\la_i)=\la_\sur(\sur_i)&1\le i\le m\\
&\phi(\zet_j)=1&1\le j\le\el.
\end{split}
\end{cases}
\]
Further, considering this module structure on $\Ring$ it is straightforward that
\[\HFT^\Ring(\Tangle^b)=\HFT(\Tangle_\sur).\]

\subsection{The distinguished generator}
For every $1\le i\le\el$, consider a product disk 
$(D_i,\del D_i)$ in the balanced tangle $(M^+,T^+\amalg\ovl{T}^+)$ 
such that 
\[\ovl{T}^+_i\amalg-T^+_i\subset \del D_i\ \ \ \text{and}\ \ \  
\del D_i\setminus(\ovl{T}_i^+\amalg-T^+_i)\subset\del M^+.\] 
Abusing the notation, we 
denote the intersection of $D_i$ with $M_{\sur}$ by $D_i$ as well. Let
$D=\amalg_{i=1}^{\el}D_i$. For every $1\le j\le n$, we may attach an 
oriented one handle $D'_j$ (i.e. a band) to $D$ such that $D'_j\subset M_{\sur}$ is 
embedded in the one handle associated with $\arcj_j$ and 
\[\del D'_j\setminus(\del D'_j\cap D)\subset \sur.\] Denote the resulting embedded, 
oriented surface in $(M_{\sur},T_{\sur}^b)$ by $\sur'$. 
Note that 
\[(\sur',\del\sur')\subset\left(M_{\sur},\del M_{\sur}
\cup T_\sur^b\right)\]
and that this pair is uniquely determined up to isotopy. 
%\todo{AA This looks correct to me, but could you also double check it please.}

Let $(X_{\sur},\tau_{\sur}^b)$ be the balanced 
sutured manifold associated with the tangle 
$(M_{\sur},{T}_{\sur}^b)$. The surface 
$\sur'\cap X_{\sur}\subset X_{\sur}$ is a decomposing surface for $(X_\sur,\tau_\sur^b)$, in the sense of \cite[Definition 2.7]{Juh-surface}. It is straightforward to check that 
this surface  decomposes $(X_\sur,\tau_\sur^b)$ into a product sutured manifold. Thus, 
there exists a unique relative 
$\SpinC$ structure 
$\relspinc_{\sur'}\in\SpinC(X_{\sur},\tau_{\sur}^b)$ 
which is \emph{outer} with respect to 
$\sur'$. Recall that 
$\relspinc\in\SpinC(X_{\sur},\tau_{\sur}^b)$ 
is called {outer} with respect to $\sur'$, if it is represented by a unit vector field 
$v$ on $X_{\sur}$ such that $v_{p}\neq -(\nu_{\sur'})_p$ for every $p\in\sur'$. 
Here $\nu_{\sur'}$ is the unit normal vector field of $\sur'$ with respect 
to some Riemannian metric on $X_{\sur}$, See \cite[Definition 1.1]{Juh-surface}.
Note that $[\relspinc_{\sur'}]=\spinc_0$.

If the components $T_\sur^{j_1}$ and $T_\sur^{j_2}$ of $T_\sur$ lie on the same component $\sur_i$ of $\sur$, then 
\[[\tau^{j_1}_\sur]=[\tau^{j_2}_\sur]\in \Ht_1(X_{\sur},\Z)\]
where $\tau_\sur^j$ denotes the suture associated with $T_\sur^j$.  Thus, for any component $\sur_i$ of $\sur$, there is a well-defined homology class $h_i=[\tau^j_\sur]\in \Ht_1(X_{\sur},\Z)$ where $T_{\sur}^j\subset F_i$.  As a result, the algebra $\Ring_{\sur}^b$ admits a filtration by 
$\Hbb=\Ht^2(X_{\sur},\del X_{\sur},\Z)$, which is defined by
\begin{displaymath}
\begin{split}
&\chi:G(\Ring_{\sur}^b)\rightarrow \Hbb=\Ht^2(X_{\sur},\del X_{\sur},\Z)\\
&\chi(\prod_{i=1}^{m}\la_i^{a_i}\prod_{j=1}^\el\zet_j^{b_j}):
=\sum_{i=1}^m a_i\PD(h_i)+\sum_{j=1}^\el b_j\PD(\ovl{\tau}_j^+)
\end{split}
\end{displaymath}
where $\ovl{\tau}_j^+$ is the suture corresponding to
$\ovl{T}_j^+\subset T^b_\sur$. 
Therefore, for every
$\spinc\in\SpinC(M_\sur)$
$$\CFT(M_{\sur},T_{\sur}^b,\la_{\sur}^b,\spinc)$$
may be decomposed into sub-complexes associated with 
relative $\SpinC$ structures. 
More precisely, let 
$\HD=(\Sig,\alphas,\betas,\la_\sur^b:\z\amalg\ovl{\z}\ra\Ring_\sur^b,\spinc_0)$ 
be an $\Ring_{\sur}^b$-diagram for $\Tangle^b_\sur$, where 
$\z=\{z_1,\ldots,z_\el\}$ and $\ovl{\z}=\{\ovl{z}_1,\ldots,\ovl{z}_\el\}$, while 
$z_i=T_{\sur}^i\cap\Sig$ and $\ovl{z}_i=\ovl{T}_i^+\cap\Sig$. Then 
$$\CFT(\Sig,\alphas,\betas,\la_\sur^b,\spinc_0)
=\bigoplus_{\relspinc\in\spinc_0\subset\SpinC(M_{\sur},
T_{\sur}^b)}\CFT(\Sig, \alphas,\betas,\la_{\sur}^b,\relspinc)$$
and $\CFT(\Sig, \alphas,\betas,\la_\sur^b,\relspinc)$ is generated by the 
elements $\la.\x$, where $\la\in G(\Ring_{\sur}^b)$ is a monomial in $\Ring_\sur^b$ 
and $\x\in\Ta\cap\Tb$ is an intersection point satisfying 
$$\relspinc(\la.\x):=\relspinc(\x)+\chi(\la)=\relspinc.$$

For a relative class $\relspinc\in\spinc_0$, let 
\[\la=\prod_{i=1}^m\la_i^{a_i}\prod_{j=1}^{\el}\zet_j^{b_j}\ \  \text{and}\ \  
\la'=\prod_{i=1}^m\la_i^{c_i}\prod_{j=1}^\el\zet_{j}^{d_j}\] be non-zero monomials 
in $G(\Ring_{\sur}^b)$ and $\x,\y\in\Ta\cap\Tb$ be intersection points such that 
$\relspinc(\la.\x)=\relspinc(\la'.\y)=\relspinc$. Then, we say $\phi\in\pi_2(\x,\y)$ 
connects $\la.\x$ to $\la'.\y$ if 
\begin{displaymath}
\begin{cases}
a_i+\sum_{T^j_\sur\subset \sur_i}n_{z_j}(\phi)=c_i\ \ \ \ \ &1\le i\le m\\
b_j+n_{\ovl{z}_j}(\phi)=d_j\ \ \ \ \ &1\le j\le\el.
\end{cases}
\end{displaymath}
Moreover, we define the relative grading of $\la.\x$ and $\la'.\y$ by
$$\gr(\la.\x,\la'.\y)=\mu(\phi)$$
where $\phi\in\pi_2(\x,\y)$ is a disk connecting $\la.\x$ to $\la'.\y$. 

\begin{lem}
The relative grading $\gr$ is well-defined and is independent of the choice of 
$\phi$. It induces a relative grading on the chain complex $\CFT(M_\sur,T_{\sur}^b,\la_{\sur}^b,\relspinc)$
and the differential lowers this grading by one.
\end{lem}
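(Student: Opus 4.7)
The plan is to establish well-definedness by analyzing the periodic domain obtained from the difference of two connecting disks, and then to derive the remaining claims as formal consequences. First I would take $\phi_1,\phi_2\in\pi_2(\x,\y)$ with both connecting $\la.\x$ to $\la'.\y$ and consider $\Pcal:=\phi_1-\phi_2$. The multiplicity constraints built into the definition of ``connecting'' force $\sum_{T_j\subset\sur_i}n_{z_j}(\Pcal)=0$ for every $i$ and $n_{\ovl{z}_j}(\Pcal)=0$ for every $j$. In particular $\Pcal$ is a doubly periodic domain for the Heegaard diagram $(\Sig,\alphas,\betas)$ whose evaluation under the representation $\la_\sur^+$ is trivial, so the admissibility hypothesis on the underlying $\Ring$-diagram applies.

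The main step is to show that $\mu(\Pcal)=0$, which by the standard Maslov index formula reduces to verifying that $\langle c_1(\spinc_0),H(\Pcal)\rangle=0$. This is exactly the reason we chose $\spinc_0\in\SpinC(M_\sur)$ to satisfy $\langle c_1(\spinc_0),H_i\rangle=0$ for the classes $H_i$ coming from the belt spheres of the one-handles attached along the spanning arcs $\arcj_i$: any periodic domain for $(\Sig,\alphas,\betas)$ with the above vanishing multiplicities represents, after capping, a two-cycle in $M_\sur$ lying in the span of the $H_i$ together with classes whose pairing with $c_1(\spinc_0)$ vanishes for cohomological reasons (torsion, or genus zero surface components killed by the definition of $\Ring_\sur^+$). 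The expected obstacle here is to identify the homology class $H(\Pcal)$ precisely within $\Ht_2(M_\sur,\Z)$ and check that its pairing with $c_1(\spinc_0)$ is controlled by the $H_i$; this will use the sutured manifold decomposition along $\sur'$ constructed in the previous subsection and the fact that $\relspinc_{\sur'}$ is outer with respect to $\sur'$, so that $[\relspinc_{\sur'}]=\spinc_0$ pairs trivially with exactly the required classes.

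Once $\mu(\phi_1)=\mu(\phi_2)$ is established, well-definedness of $\gr(\la.\x,\la'.\y)$ is immediate, and additivity under juxtaposition of homotopy disks shows that $\gr$ descends to a genuine relative $\Z$-grading on the subcomplex $\CFT(M_\sur,T_\sur\sqcup\ovl{T}_\sur,\la_\sur^+,\relspinc)$, where the decomposition by relative $\SpinC$ structures is compatible with $\gr$ because $\relspinc(\la.\x)$ depends only on $\la$ and $\spinc(\x)$. Finally, for the behavior of the differential, I would note that a term $\la(\phi)\cdot \y$ contributes to $\del(\x)$ only when $\phi\in\pi_2(\x,\y)$ admits a rigid pseudo-holomorphic representative, which forces $\mu(\phi)=1$; unwinding the definition of $\la(\phi)$ then shows that $\phi$ is a disk connecting $\x$ to $\la(\phi).\y$, so by construction $\gr(\x,\la(\phi).\y)=1$, and the differential lowers the relative grading by one. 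The hard part will be the homology-class identification in the middle step; everything else is formal.
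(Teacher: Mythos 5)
Your overall strategy — reduce well-definedness to showing $\mu(\phi_1)=\mu(\phi_2)$, and then verify that $\langle c_1(\spinc_0),H(\Pcal)\rangle=0$ using the defining vanishing $\langle c_1(\spinc_0),H_i\rangle=0$ — is the same as the paper's. The derivations at the end (additivity under juxtaposition, and the differential dropping grading by one because rigid disks have $\mu=1$) are also correct and match.

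However, you explicitly flag the ``homology-class identification'' as the hard part and do not carry it out, and the tools you propose for it are not the right ones. Admissibility is a red herring: it only says that a periodic domain with $\la(\Pcal)\neq 0$ has a negative coefficient, which gives you nothing about its Maslov index. Likewise, the sutured decomposition along $\sur'$ and the fact that $\relspinc_{\sur'}$ is outer with respect to $\sur'$ play no role in this lemma; they are the machinery for the \emph{next} statement (Proposition~\ref{prop:generator}, identifying the top generator), not for well-definedness of the grading.

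The step you are missing is a short, concrete one. Because the Heegaard surface $\Sig=\Sig^n$ is built by attaching one-handles to $S$ along spanning arcs, with $\alphas=\{\alpha_1,\dots,\alpha_n\}$ and $\betas$ Hamiltonian isotopes of the $\alphas$, every doubly periodic domain decomposes as
\[
\Dcal(\phi_1)-\Dcal(\phi_2)=\sum_{i=1}^n m_i\Pcal_i+\sum_{k=1}^{l} n_k A_k,
\]
where $\Pcal_i=D_i^+-D_i^-$ is the explicit periodic domain whose filling represents the belt-sphere class $H_i$, and the $A_k$ are the components of $\Sig\setminus\alphas$. Each $\Pcal_i$ has $\sum_{T_j\subset\sur_r}n_{z_j}(\Pcal_i)=0$ and $n_{\ovl z_j}(\Pcal_i)=0$, and the ``connecting'' hypothesis forces the same vanishing for the full domain $\Dcal(\phi_1)-\Dcal(\phi_2)$; subtracting, the $A_k$-part must also have these vanishing multiplicities, which forces $n_k=0$ for all $k$. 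Then
\[
\mu(\phi_1)-\mu(\phi_2)=\sum_i m_i\langle c_1(\spinc_0),H_i\rangle=0
\]
by the choice of $\spinc_0$. This is the entire content of the lemma's proof in the paper, and it is purely combinatorial — no reference to $\sur'$, outer relative $\SpinC$ structures, or admissibility is needed.
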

\begin{proof}
Suppose $\phi,\phi'\in\pi_2(\x,\y)$ connect $\la.\x$ to $\la'.\y$. 
Then, $\Dcal=\Dcal(\phi)-\Dcal(\phi')$ is a periodic domain such that \[\sum_{T_\sur^j\subset\sur _i}n_{z_j}(\Dcal)=0\ \ \ \ \ \text{and}\ \ \ \ \ n_{\ovl{z}_j}(\Dcal)=0\]
for any $i$ and $j$. Thus $\Dcal=\sum_{i=1}^nm_i\Pcal_i$ where $\Pcal_i$ is the periodic domain associated with $H_i$. Since,  $\langle c_1(\spinc_0),H_i\rangle=0$ for any $i$, 
\[\mu(\phi)-\mu(\phi')=\langle c_1(\spinc_0), H(\Dcal)\rangle =0. \]
%
%
% and 
%$\Sig-\{\alphas\}=\amalg_{k=1}^l A_k$. From here we find
%\begin{align*}
%\mu(\phi)-\mu(\phi')&=\sum_{i=1}^nm_i\langle 
%c_1(\spinc_0),H_i\rangle+\sum_{k=1}^ln_k\langle c_1(\spinc_0),H(A_k)\rangle\\
%&=\sum_{k=1}^ln_k\langle c_1(\spinc_0),H(A_k)\rangle.
%\end{align*}
%Moreover, $\sum_{T_j\subset\sur_i}n_{z_j}(\Pcal_k)=0$ for every $i$ and $k$. 
%Hence, $n_k=0$ for all $k$, which implies that $\mu(\phi)=\mu(\phi')$.
\end{proof}

\begin{prop}\label{prop:generator}
With respect to the relative grading defined above, the \emph{top-dimensional} homology 
group in $\HFT(M_\sur,T_{\sur}^b,\la_{\sur}^b,\relspinc_{\sur'})$ is 
isomorphic to $\F$. If $\Theta_{\sur}^b$ denotes the generator of this 
homology group, then for the homomorphism
\begin{align*}
\phi:\HFT(M_\sur,T_{\sur}^b,\la_\sur^b,
\relspinc_{\sur'})\lra&\HFT^{\F}(M_\sur,T_{\sur}^b,\la_{\sur}^b,
\relspinc_{\sur'})=\mathrm{SFH}(X_\sur,\tau_{\sur}^b,\relspinc_{\sur'})=\F,
\end{align*}
 we have $\phi(\Theta_{\sur}^b)=1$.  
Here, $\F$ is an $\Ring_{\sur}^b$-module  with trivial module structure and 
the homomorphism $\phi$ is induced from the surjection $\Ring_\sur^b\ra \F$,
which sends all variables to zero.
\end{prop}

\begin{proof}
Let $\pi:T^+=\amalg_{i=1}^\el(\{p_i\}\times[0,1])\ra [0,1]$ denote the projection on $[0,1]$. 
Consider an ordered set of spanning arcs  $\Sarc=\{\arcj_1,\cdots .,\arcj_n\}$ on $\sur$.
%{\todo{You have defined the ordered set of spanning arcs before.
%}}
The result of surgery on $(M^+,T^+\amalg\ovl{T}^+)$ along 
$\arcj_1,\ldots,\arcj_i$ is a balanced tangle for any $i$, denoted by  $(M^{i},T^{i}\amalg\ovl{T}^{i})$. Note that $\ovl{T}^i=\ovl{T}^+$ for any $i$, and $(M^n,T^n\amalg\ovl{T}^n)=(M_\sur,T_{\sur}^b)$.

Let $\spinc^i_0$ denote the $\SpinC$ structure on $M^{i}$ 
such that for any $1\le j\le i$ we have $\langle c_1(\spinc^i_0),H_j\rangle=0$ and 
for $1\le k\le \ell$ we have $\langle c_1(\spinc^i_0),H_k'\rangle=0$.
%{\todo{I added an extra condition, as explained before. Please check!}} 
Thus, $\spinc^n_0=\spinc_0$.
Assume that the components of $T^{i}=\amalg_{j=1}^{\el}T^i_j$ are 
labeled such that $T_j^i\cap \del^+M=\{p_j\}$. Inductively, we define an $\F$-algebra  $\Ring^i$ and a map 
$\la^i:\pi_0(T^i\amalg\ovl{T}^i)\ra \Ring^i$, such that 
$\Tangle^i=[M^i,T^i\amalg\ovl{T}^i,\spinc_0^i,\la^i]$ becomes an $\Ring^i$-tangle. For $i=0$, we have
$(M^0,T^0\amalg\ovl{T}^0)=(M^+,T^+\amalg\ovl{T}^+)$. Let  
$$\Ring^0:=\frac{\Z_2[\zet_1',\ldots,\zet_{\el}',\zet_1,\ldots,\zet_\el]}
{\langle \la^k\ |\ g_k>0  \rangle}$$
where $\la^k=\prod_{p_j\in \del^+_kM}(\zet_j\zet_j')$. An $\Ring^0$-coloring $\la^0:\pi_0(T^0\amalg\ovl{T}^0)\ra \Ring^0$ is defined as 
\[\la^0(T^0_j)=\zet_j'\ \ \ \text{and}\ \ \  \la^0(\ovl{T}^0_j)
=\zet_j\ \ \ \ \ \ \ \ \ \ \text{for~every}\ \ \ 1\le j\le \el.\]
Then, if $\arcj_i$ connects $T^{i-1}_a$ and $T^{i-1}_b$,
we define 
$$\Ring^i:=\frac{\Ring^{i-1}}{\langle 
\la^{i-1}(T^{i-1}_a)-\la^{i-1}(T^{i-1}_b) \rangle}$$
and $\la^{i}:\pi_0(T^{i}\amalg\ovl{T}^{i})\ra\Ring^i$ is 
the map induced by $\la^{i-1}$.
Note that $\Ring^n=\Ring_{\sur}^b$.

We may also construct an $\Ring^i$-diagram 
\[\HD^i=(\Sig^i,\alphas^i,\betas^i,\la^{i}:\z^i\amalg\ovl{\z}^i\to\Ring^i,\spinc_0^i)\] for $\Tangle^i$, inductively. For $i=0$, $\Tangle^0$ is a product tangle so let $\Sig^0=\del^+M\times \{1/2\}$, $\alphas^0=\betas^0=\emptyset$, $\z=\{p_1,\ldots,p_{\el}\}\times\{1/2\}$ and $\ovl{\z}=\{\ovl{p}_1,\ldots,\ovl{p}_{\el}\}\times\{1/2\}$. Given an $\Ring^{i-1}$-diagram 
$\HD^{i-1}$ for $\Tangle^{i-1}$, we construct an $\Ring^i$-diagram
for $\Tangle^i$ as follows. As before, suppose $J_i$ connectes $T^{i-1}_a$ to $T^{i-1}_b$.

\begin{enumerate}
\item The Heegaard surface $\Sig^i$ is obtained from $\Sig^{i-1}$ by adding a one-handle with feet 
near $z_a$ and $z_b$, where $z_a=T^{i-1}_{\bullet}\cap \Sig^{i-1}$ for $\bullet=a,b$.
\item $\alphas^i=\alphas^{i-1}\cup \{\alpha_i\}$ where $\alpha_i$ is the core of the attached one-handle.
\item $\betas^i=\betas^{i-1}\cup\{\beta_i\}$ where $\beta_i$ is a Hamiltonian  translate of $\alpha_i$, intersecting $\alpha_i$ in a pair of canceling 
intersection points.  The area bounded between $\alpha_i$ and $\beta_i$ gives 
a $2$-chain $\Pcal=D_i^+-D_i^-$ with $\del \Pcal=\alpha_i-\beta_i$, the components 
$D_i^+$ and $D_i^-$ 
of $\Sig^i-\alphas^i-\betas^i$ are bigons  such that one of them intersects 
$\z$ in $z_a$ while the other one intersects $\z$ in $z_b$. 
\item $\z^i=\z^{i-1}$ and $\ovl{\z}^i=\ovl{\z}^{i-1}$.
\end{enumerate}

%
%We may construct an $\Ring_i^+$-diagram for 
%$\Tangle^i=[M^i,T^i\amalg\ovl{T}^i,\spinc_0^i,\la_i^+]$ using an 
%$\Ring_{i-1}^+$-diagram 
%\[H^{i-1}=(\Sig^{i-1},\alphas^{i-1},\betas^{i-1},\la_{i-1}^+:
%\z^{i-1}\amalg\ovl{\z}^{i-1}\ra\Ring^+_{i-1},\spinc_0^{i-1})\]
%for $\Tangle^{i-1}$ by setting  
%$$H^i=(\Sig^i,\alphas^i=\alphas^{i-1}\cup\{\alpha_i\}, 
%\betas^{i}=\betas^{i-1}\cup\{\beta_i\},\la_i^+:\z^i\amalg\ovl{\z}^i\ra\Ring_i^+,
%\spinc_0^i),$$
% $\alpha_i$ is the core of the one-handle and 
%$\beta_i$ is a Hamiltonian  translate of $\alpha_i$.  Moreover, 
%$\z^{i}=\z^{i-1}$, $\ovl{\z}^i=\ovl{\z}^{i-1}$ and $\la_{i}^+$ is the map induced by 
%$\la_{i-1}^+$. Furthermore, $\beta_i$ intersects $\alpha_i$ in a pair of canceling 
%intersection points and the area bounded between $\alpha_i$ and $\beta_i$ gives 
%the $2$-chain  $\Pcal=D_i^+-D_i^-$ with $\del \Pcal=\alpha_i-\beta_i$, 
%where the  connected components 
%$D_i^+$ and $D_i^-$ 
%of $\Sig^i-\alphas^i-\betas^i$ are bigons  such that one of them intersects 
%$\z$ in $z_a$ while the other one intersects $\z$ in $z_b$. \\

As a result, we get an 
$\Ring_\sur^b$-diagram 
$$\HD:=\HD^n=(\Sig,\alphas=\{\alpha_1,\ldots,\alpha_n\},\betas=\{\beta_1,\ldots,\beta_n\},
\la^b_\sur:\z\amalg\ovl{\z}\ra\Ring^b_\sur,\spinc_0)$$
for $\Tangle^b$.
Any pair of curves $(\alpha_i,\beta_i)$ intersect in a pair of points $x_{i}^+$ and  
$x_{i}^-$, so that the bi-gons $D_i^+$ and $D^-_i$ connect $x_i^+$ to $x_i^-$. 
Thus, corresponding to any map $\epsilon:\{1,\ldots,n\}\ra\{+,-\}$ we have an 
intersection point 
$$\x^\epsilon=\{x_1^{\epsilon(1)},\ldots,x_n^{\epsilon(n)}\}\in\Ta\cap\Tb.$$
By an argument similar to the argument of Proposition \ref{prop:connected-sum},
the map 
$$f_i:\CFT(\Sig^{i-1},\alphas^{i-1},\betas^{i-1},
\la^{i-1},\spinc_0^{i-1})\otimes\Ring^{i}\ra\CFT(\Sig^i,\alphas^i,\betas^i,\la^{i},
\spinc_0^i)$$
which is defined by $f_i(\y)=\y\times\{x_{i}^+\}$, is a chain map if the path of 
almost complex structures is chosen correctly. 
In particular, the intersection point $\theta^+:=\{x_1^+,\ldots,x_{n}^+\}$ is closed in 
the chain complex
\[\CFT(\Sig,\alphas,\betas,\la^b_\sur,\spinc_0)=\CFT(\Tangle^b),\] 
i.e. we have $\del\theta^+=0$.\\

For every $1\le i\le n$, let 
$(\sur_i',\del\sur_i')\subset (M^i,\del M^i\cup T^i\cup\ovl{T}^i)$ denote the 
embedded, oriented surface obtained from $D$ by attaching the embedded one 
handles $D'_1,\ldots,D'_i$. Each $\sur_i'$ is then a decomposing surface in 
$(M^i,T^i\amalg\ovl{T}^i)$. We may inductively construct a closed subsurface 
$P^i\subset \Sig^i$ such that the Heegaard diagram  $\HD^i$, together with $P^i$, is 
a diagram adapted to $\sur_i'$ in the sense of \cite[Definition 4.3]{Juh-surface}. 
More precisely, $P^i$ is a closed subsurface of $\Sig^i$ such that the boundary of 
$P^i$ is a union of polygons, whose vertices are $P^i\cap (\z\amalg\ovl{\z})$ and 
its edges are decomposed as $\del P^i=A^i\cup B^i$ where 
\[A^i\cap B^i\subset \z\amalg\ovl{\z},\  \ \ \alphas^i\cap B^i=\emptyset\ \ \  \text{and} 
\ \ \ \betas^i\cap A^i=\emptyset.\] 
Finally, the equivalence class of $\sur_i'$ is given by 
smoothing the corners of 
\[\left(P^i\times\left\{{1}/{2}\right\}\right)\cup
\left(A^i\times \left[{1}/{2},1\right]\right)\cup
\left(B^i\times \left[0,{1}/{2}\right]\right)\subset 
\left(M^i,T^i\amalg\ovl{T}^i\right).\]

For every $i$, suppose that $\arcj_i$ connects $T^{i-1}_{a_i}$ to $T^{i-1}_{b_i}$. 
For $i=1$, $\sur_1'$ is obtained from $D$ by attaching a one-handle to $T^0_{a_1}$ and 
$T^0_{b_1}$. Thus, $P^1$ is a union of a rectangle whose vertices are 
$z_{a_1}, z_{b_1}, \ovl{z}_{a_1}$ and $\ovl{z}_{b_1}$ and contains the intersection 
point $x_1^-$, together with $n-2$ bigons disjoint from $\alphas^1$ and $\betas^1$, 
whose vertices are $z_j$ and $\ovl{z}_j$ for $j\neq a_1,b_1$.  For any 
$i>1$, the subsurface $P^i$ may be constructed from $P^{i-1}$ by attaching an 
embedded one-handle in $\Sig^i$ which intersects $\alpha_i$ and $\beta_i$ and 
contains $x_i^-$, as illustrated in Figure \ref{fig:Surface-proj}.\\

\begin{figure}%[ht!]
\def\svgwidth{12cm}
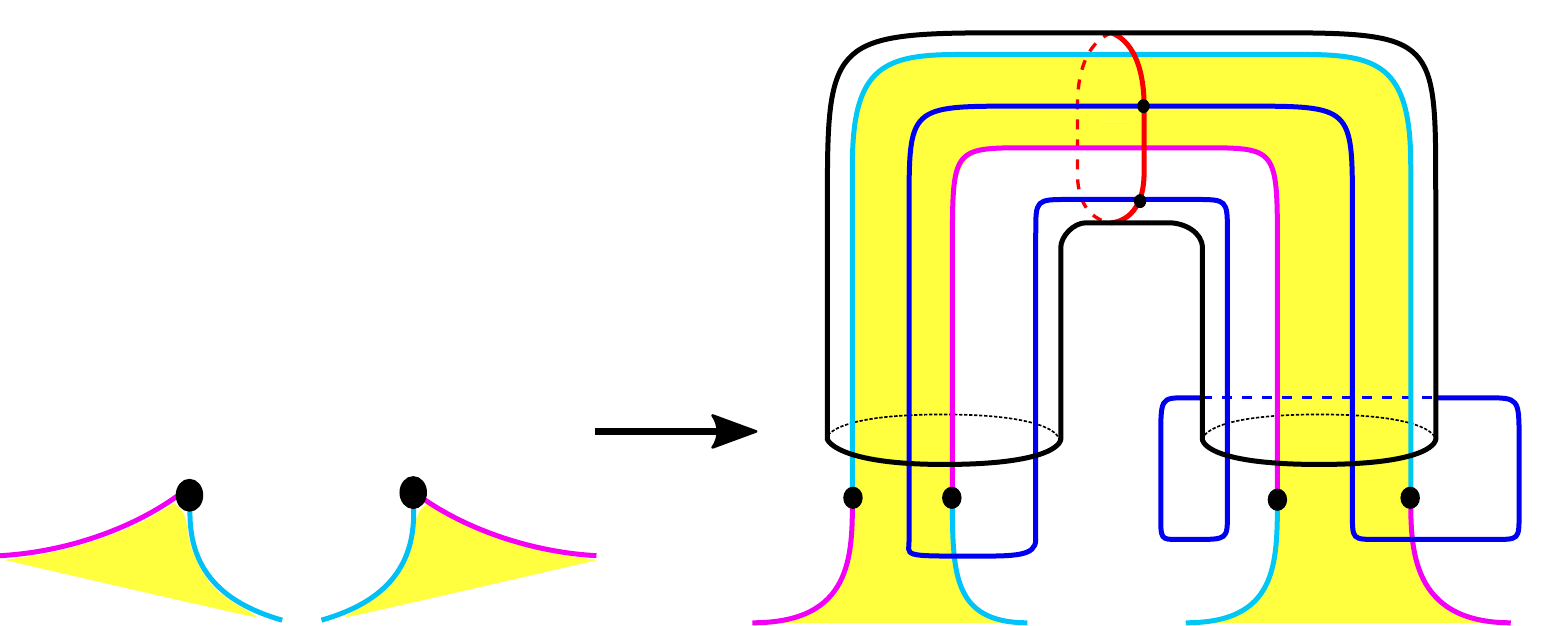
\caption{
The $i$-th one-handle is attached over the marked points $z_{a_i}$ and $z_{b_{i}}$.
The curves $\alpha_i$ and $\beta_i$, their intersection points $x_i^+$ and $x_i^-$ and 
the modification changing $P^{i-1}$ to $P^i$ are illustrated.
}\label{fig:Surface-proj}
\end{figure}

Thus, for $i=n$, ${\theta}^+$ is the only intersection point that does not 
intersect $P^n$. As a result, ${\theta}^+$ is the only intersection point for 
which $\relspinc({\theta}^+)=\relspinc_{\sur'}$ and thus ${\theta}^+$ 
is the generator of 
\[\mathrm{SFH}(X_\sur,\tau^b_{\sur},\relspinc_{\sur'})=\F.\]

For every $\epsilon:\{1,\ldots,n\}\ra\{+,-\}$, there is a positive disk 
\[\phi\in\pi_2(\theta^+,\x^{\epsilon})\ \ \  \text{with}\ \ \  
\mu(\phi)=\#\{i\ |\ \epsilon(i)=-\}.\] 
Furthermore, $\relspinc(\la_\sur^b(\phi).\x^{\epsilon})=\relspinc_{\sur'}$ and if for a 
monomial $\la\in\Ring_{\sur}^b$ we have 
$\relspinc(\la.\x^{\epsilon})=\relspinc_{\sur'}$ then $\la={\la}_{\sur}^b(\phi). \ti{\la}$ 
for some  $\ti{\la}\in\Ring_{\sur}^b$. Hence $\Theta_\sur^b={\theta}^+$,
which is closed by our earlier considerations, generates the 
top-dimensional homology group in 
\[\HFT(M_{\sur},T_{\sur}^b,{\la}^b_{\sur},\relspinc_{\sur'})\] 
with respect to the relative grading defined above. The above observations complete the 
proof of the proposition.
\end{proof}

The algebra $\Ring$ has a natural $\Ring_\sur^b$-module structure given by the homomorphism $\phi:\Ring^b_\sur\to\Ring$ defined as 
\[
\begin{cases}
\begin{split}
&\phi(\la_i)=\la_\sur(\sur_i)&1\le i\le m\\
&\phi(\zet_i)=1&1\le i\le\el.
\end{split}
\end{cases}
\]
Further, considering this module structure on $\Ring$ it is straightforward that
\[\HFT^\Ring(\Tangle^b)=\HFT(\Tangle_\sur).\]
%Let $\Tangle^+$ be the  
%$\Ring_{\sur}^+$-tangle corresponding to an $\Ring$-cobordism 
%\[\Cob(\Farc,\Fsphere,\spinct)=[W,\sur,\spinct=\spinct,\la_\sur]:
%\Tangle=[M,T,\spinc,\la]\leadsto\Tangle'=[M',T',\spinc'=\spinc,\la']\] 
%which corresponds to some acceptable set 
%$\Farc$ of framed arcs, a framed link $\Fsphere$ and a $\SpinC$ 
%structure $\spinct\in \SpinC(W)$. 
%Then, $\Ring$ is an $\Ring_{\sur}^+$-module via a 
%homomorphism 
%$\phi:\Ring_{\sur}^+\ra\Ring$ defined by
%\begin{displaymath}
%\phi(\la_i)=\la_{\sur}(\sur_i)\  \ \ \text{for}\ 1\le i\le m\ \ \ \ \ \text{and}\ \ \ \ \ 
%\phi(\zet_j)=1\ \ \ \text{for}\  1\le j\le \el.
%\end{displaymath}
%On the other hand, if we abuse the notation and denote $\la_F\circ \phi$ by 
%$\la_F$ as well, \[\Tangle_F=[M_{\sur},T_{\sur},\spinc_0,\la_F:\pi_0(T_\sur)\ra \Ring]\] 
%is an $\Ring$-tangle. Furthermore, 
\begin{defn}\label{def:distinguished-generator}
The image of the homolog class $\Theta_\sur^b\otimes 1$ under the homomorphism
\[\HFT(\Tangle^b)\otimes_\phi\Ring\to\Ht_\star(\CFT(\Tangle^b)\otimes_\phi\Ring)=\HFT^\Ring(\Tangle^b)=\HFT(\Tangle_\sur)\]
is denoted by $\Theta_\sur$ and is called the  {\emph{distinguished generator}}  of 
$\HFT(\Tangle_\sur)$.\\ 
\end{defn}

\subsection{Framed arcs, framed knots and  the cobordism map} 
Let $(M,T)$ be a balanced tangle, $\Farc=\{\Farc_1,\ldots,\Farc_n\}$ be an acceptable 
set of framed arcs in $(M,T)$ and $\Fsphere=\{\Fsphere_1,\ldots,\Fsphere_m\}$ be a 
set of framed circles in $M\setminus (\Farc\cup T)$. In other words, each $\Fsphere_i$ 
is determined by a framing on a knot $K_i$ and each $\Farc_i$ is determined 
by a framing on an arc $I_i$. Let $L=\amalg_{i=1}^m K_i$ and $I=\amalg_{i=1}^nI_i$. As discussed in Section \ref{subsec:elem-cob}, $(\Farc,\Fsphere)$ specifies a cobordism $\Cobb=(W,F)=\Cobb(\Farc,\Fsphere)$ from $(M,T)$ to $(M',T')=(M(\Fsphere),T(\Farc))$. Here, $T(\Farc)$ is constructed from band surgery on $T$ along $\Farc$ and
$M(\Fsphere)$ is constructed from $M$ by surgery along the framed link $\Fsphere$.\\

%
%
%Moreover, $(M',T')=(M(\Fsphere),T(\Farc))$
%where $T(\Farc)$ is constructed from band surgery on $T$ along $\Farc$ and
%$M(\Fsphere)$ is constructed from $M$ by surgery along the framed link $\Fsphere$.\\

\begin{defn} With the above notation fixed, a \emph{stalk} $s(\Fsphere)=\{s_1,\ldots,s_m\}$ for the framed link $\Fsphere$ (disjoint from $\Farc$) in $(M,T)$ is a set of embedded arcs in $M\setminus (T\cup\Farc)$ such that $s_i$ 
connects $K_i$ to $\del^+M\setminus T$.
\end{defn}

For every $i$, if $\Farc$ intersects $T_i$, let $r_i\subset T_i$ be the segment where 
\[\del^+ r_i\subset \del^+M,\ \ \  \del^-r_i\subset T_i\cap\Farc\ \ \  
\text{and}\ \ \  (T_i-r_i)\cap\Farc=\emptyset.\]  
If $T_i\cap\Farc=\emptyset$, let $r_i=\emptyset$ and denote $r(\Farc)=\amalg_{i=1}^\el r_i$. Let
\[B(\Farc)=I\cup r(\Farc)\ \ \ \ \text{and}\ \ \ \  B(\Fsphere)=L\cup s(\Fsphere).\]

%
%\[B=B(\Farc)\cup B(\Fsphere,s(\Fsphere))=
%\left((\coprod_{i=1}^n I_i)\cup(\coprod_{i=1}^\el r_i)\right)\cup
%\left(\coprod_{i=1}^m (K_i\cup s_i)\right).\] 

Consider small tubular 
neighborhoods $\nd(r(\Farc))$ and $\nd(s(\Fsphere))$ of $r(\Farc)$ and 
$s(\Fsphere)$, respectively and let
\[N=\Farc\cup\Fsphere\cup\nd(r(\Farc))\cup\nd(s(\Fsphere))\]
be the resulting neighborhood of $B=B(\Farc)\cup B(\Fsphere)$. For each $i=1,\ldots,n$, 
let $A_i\subset \del N$ be a sphere with $4$ boundary components resulted 
from the intersection of an
enlarged neighborhood of $I_i$ with $\del N$. Such neighborhoods are 
illustrated in Figure~\ref{fig:framed-arcs}. 
%Similarly, for each $i=1,\ldots,m$, let $B_i=\del \Fsphere_i\setminus (\Fsphere_i\cap\nd(s(\Fsphere)))$, a punctured torus corresponding to $K_i$.  Such neighborhoods are 
%illustrated in Figure~\ref{fig:framed-arcs}. 
The intersection of $T$ with 
$M''=M-N$ defines a new tangle $(M'',T''=T\cap M'')$.

\begin{figure}%[ht]
\def\svgwidth{10cm}
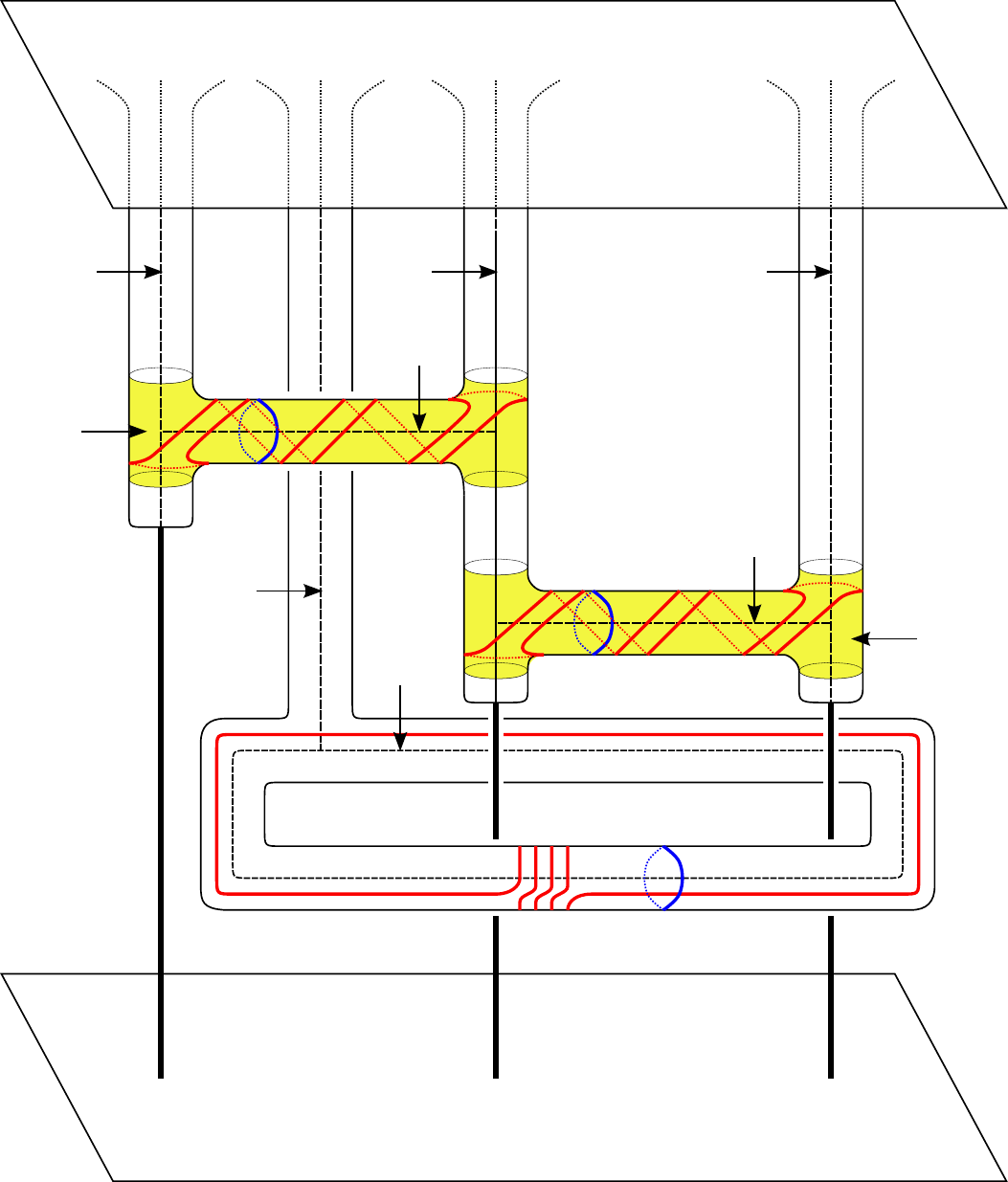
\caption{A Heegaard diagram subordinate to a framed knot $\Fsphere_1$ and 
a pair of framed arcs
$\Farc_1$ and $\Farc_2$ with one end point on the same strand $T_b$ of $T$,
and the other ends on the strands $T_a$ and $T_c$. A tubular neighborhood of 
the union of $K_1,s_1,I_1,I_2,r_a,r_b$ and $r_c$ is deleted to obtain the tangle 
$(M'',T'')$. Attaching disks to the meridians $\beta_{1}$ and $\beta_2$ 
of $\Farc_1$ and $\Farc_2$ and the meridian $\beta_{3}$ of $\Fsphere_1$ 
gives a Heegaard diagram for $(M,T)$ while 
the framings of $\Farc_1,\Farc_2$ and $\Fsphere_1$ determine the curves 
$\gamma_{1},\gamma_{2}$ and 
$\gamma_3$. The curves $\gamma_1$ and $\gamma_2$ 
live in the $4$-punctured spheres 
$A_1$ and $A_2$, respectively.  
}\label{fig:framed-arcs}
\end{figure}

\begin{defn}\label{def:subordinate-HD}
A Heegaard triple {\emph{subordinate}} to the framed arcs $\Farc$,  
the framed link $\Fsphere$, and the stalk $s(\Fsphere)$ for the 
balanced tangle $(M,T)$ is a Heegaard triple
$$(\Sig,\alphas=\{\alpha_1,\ldots,\alpha_{\ell}\},\betas=\{\beta_1,\ldots,\beta_{\ell}\},
\gammas=\{\gamma_1,\ldots,\gamma_{\ell}\},\z)$$
satisfying the following conditions:
\begin{enumerate}
\item $(\Sig,\alphas,\{\beta_{n+m+1},\ldots,\beta_{\ell}\},\z)$ is a Heegaard diagram for 
$(M'',T'')$. Fix an identification of $\Sig[\beta_{n+m+1},\ldots,\beta_\ell]$ with $\del^+M''$. 
\item For $i=m+n+1,\ldots, \ell$, $\gamma_i$ is obtained by a  small Hamiltonian isotopy 
from $\beta_i$ supported away from the marked points,
so that $|\beta_{i}\cap\gamma_{i}|=2$.
\item For any $i=1,\ldots,m$, the curves $\beta_{n+i}$ and $\gamma_{n+i}$ lie on the punctured torus $\del \Fsphere_i\cap\del ^+M''$, representing the meridian and the framing of $K_i$, respectively. Further, they meet in a single transverse intersection point.

%
%
%$\del\Fsphere_i-\left(\Fsphere_i\cap\nd(s(\Fsphere))\right)$.
%The curve $\beta_{n+i}$ represents the meridian of $K_i$ while 
%$\gamma_{n+i}$ represents the framing, and they meet
%in a single transverse intersection point.
\item For any $j=1,\ldots,n$, the curves $\beta_{j}$ and 
$\gamma_{j}$ lie on the punctured sphere $A_{j}$. Moreover, $\beta_{j}$ represents the meridian of $\Farc_j$ and meets 
$\gamma_{j}$ in two transverse intersection points, 
while $\gamma_{j}$ is obtained from 
$\beta_{j}$ by an isotopy corresponding to the framing of $\Farc_j$ 
(which crosses two of the boundary components of $A_j$), as illustrated 
in Figure~\ref{fig:framed-arcs}.
\item The Heegaard diagrams
$(\Sig,\alphas,\betas,\z)$ and $(\Sig,\alphas,\gammas,\z)$ are diagrams for  
$(M,T)$ and $(M',T')$, respectively.
\end{enumerate}
We say that a Heegaard triple is \emph{subordinate} to the framed arcs $\Farc$ and 
the framed link $\Fsphere$ if it is subordinate to the framed arcs $\Farc$ and 
the framed link $\Fsphere$ and some stack $s(\Fsphere)$ for $\Fsphere$.
\end{defn}

The existence of Heegaard triples subordinate to an acceptable set of framed 
arcs $\Farc$ and a framed link $\Fsphere$ 
for a stable cobordism $\Cobb$ as above
%\[\Cob=\Cob(\Farc,\Fsphere)=[W=W(\Fsphere),\sur=\sur(\Farc),\la_\sur]:
%\Tangle=[M,T,\la]\leadsto \Tangle'=[M',T',\la']\] 
and  the correspondence between different such Heegaard triples is 
addressed in the following lemma. 

\begin{lem}\label{lem:HDs-for-framed-arcs}
Let $\Farc=\{\Farc_1,\ldots,\Farc_n\}$ be an acceptable set of framed arcs in $(M,T)$ and $\Fsphere=\{\Fsphere_1,\ldots,\Fsphere_m\}$ be a 
framed link in $M\setminus(T\cup\Farc)$.
There is a Heegaard triple subordinate to $\Farc$ and $\Fsphere$. 
Further, every two such triples may be connected (after composing 
with a diffeomorphism of  the diagram) by a sequence of  following moves,
all supported away from the set $\z$  of marked points:
\begin{enumerate}
\item Isotopies and handle slides among $\{\alpha_1,\ldots,\alpha_{\ell}\}$,
\item Isotopies and handle slides among $\{\beta_{n+m+1},\ldots,\beta_{\ell}\}$ 
while carrying the corresponding isotopy or handle slide among 
$\{\gamma_{n+m+1},\ldots,\gamma_{\ell}\}$,
\item Stabilization (and destabilization); 
i.e. taking the connected sum of the Heegaard triple with a 
triple $(E,\alpha,\beta,\gamma)$, where $E$ is a surface of genus one, 
$|\alpha\cap\beta|=1$, and $\gamma$ is obtained 
by a small Hamiltonian isotopy from  $\beta$ such that 
$|\beta\cap\gamma|=2$,
\item Isotopies or handle slides of $\beta_{n+j}$ along the curves in
$\{\beta_{n+m+1},\ldots,\beta_{\ell}\}$ for $j=1,\ldots,m$,
\item Isotopy or handle slides of $\gamma_{n+j}$ along the curves in 
$\{\gamma_{n+m+1},\ldots,\gamma_{\ell}\}$ for $j=1,\ldots,m$, 
\item For $i=1,\ldots,n$, isotopy or handle slide of $\beta_i$ along the curves $\{\beta_{n+m+1},\ldots,\beta_{\ell}\}$, while carrying the corresponding isotopy or handle slide on $\gamma_i$,
%\todo{AA I added point (6), does it look ok? EE. Fine!}
\item Handle slide of a curve in $\{\beta_{n+m+1},\ldots,\beta_{\ell}\}$ 
along some $\beta_{n+j}$ for $j=1,\ldots,m$, while doing a 
handle slide of the corresponding  curve in $\{\gamma_{n+m+1},\ldots,\gamma_{\ell}\}$ 
along $\gamma_{n+j}$.
\end{enumerate}
\end{lem}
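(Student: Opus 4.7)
\proof
The plan is to mimic the classical construction of Heegaard triples subordinate to a framed link (as in \cite{OS-4mfld}) and adapt it to simultaneously handle both framed one-spheres and framed arcs. The key observation is that both types of surgery can be realized as the addition of a single compression/band handle along a single curve (the meridian $\beta$) on the Heegaard surface, while the framing is recorded by a second curve $\gamma$ on the same local piece of $\Sigma$.

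First I would prove existence. Start by considering the punctured tangle $(M'',T'')$ obtained from $(M,T)$ by removing a small tubular neighborhood of $B=B(\Farc)\cup B(\Fsphere,s(\Fsphere))$; note that $(M'',T'')$ is a balanced tangle and hence admits a Heegaard diagram $(\Sigma'',\alphas,\{\beta_{n+m+1},\dots,\beta_\ell\},\z)$ by standard results. The marked points $\z$ can be arranged to sit on the segments of $T''$ that lie in $\partial^+M''$. Next, attach a one-handle to $\Sigma''$ for each framed knot $\Fsphere_i$, using the stalk $s_i$ to connect $\partial\Fsphere_i$ to $\partial^+M''\cap \Sigma''$; the meridian of $K_i$ becomes the new $\beta_{n+i}$ and the framing curve becomes $\gamma_{n+i}$, meeting $\beta_{n+i}$ transversely in one point on the punctured torus $\partial\Fsphere_i-(\Fsphere_i\cap\nd(s(\Fsphere)))$. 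Similarly, for each framed arc $\Farc_j$, attach a punctured sphere piece $A_j$ (formed from the boundary of an enlarged neighborhood of $I_j\cup r$) to $\Sigma''$; the meridian of $\Farc_j$, which separates $A_j$ into a pair of twice-punctured disks, is $\beta_j$, and the framing arc isotoped inside $A_j$ across two boundary components gives $\gamma_j$, meeting $\beta_j$ in two intersection points. Finally, complete $\{\gamma_{n+m+1},\dots,\gamma_\ell\}$ by small Hamiltonian translates of the corresponding $\beta$-curves. A direct check shows that $(\Sigma,\alphas,\betas,\z)$ presents $(M,T)$ (all surgeries are undone by filling the meridians), while $(\Sigma,\alphas,\gammas,\z)$ presents $(M',T')$ since each $\gamma$-curve in $A_j$ and in $\partial\Fsphere_i$ implements the band surgery and the Dehn filling respectively.

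For uniqueness, the standard strategy is to express any subordinate Heegaard triple as coming from a parametrized Morse function/handle decomposition on $(M,T)$ in which the framed arcs and framed spheres appear as cores of the added handles, and then to invoke the relative Reidemeister--Singer theorem. Concretely: given two subordinate triples $\Tt_1,\Tt_2$ for the same $(\Farc,\Fsphere,s(\Fsphere))$, restrict them to $(M'',T'')$ to obtain two Heegaard diagrams for the same tangle, which by the analogue of \cite[Proposition~2.2]{OS-4mfld} in the tangle setting are connected by isotopies, handle slides among the $\alpha$-curves and the curves $\{\beta_{n+m+1},\dots,\beta_\ell\}$, and stabilizations supported away from $\z$; these correspond to moves (1), (2), and (3). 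The attached surgery pieces on $\partial\Fsphere_i$ and on $A_j$ are canonical up to isotopy (the meridian is determined by $\Fsphere_i$ or $\Farc_j$, and the framing curve by the framing); any two choices of $\beta_j,\gamma_j$ inside such a piece are isotopic to each other through the allowed isotopies. Slides of the surgery curves $\beta_1,\dots,\beta_{n+m}$ over the remaining $\beta$-curves, and simultaneously of the $\gamma$-curves, account for moves (4), (5), and (6); these are forced because changing the way the meridional/framing disks thread through $\Sigma''$ amounts to handle-sliding them over the compression bodies of $(M'',T'')$.

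The main obstacle I anticipate is the careful bookkeeping on the punctured spheres $A_j$ associated to the framed arcs, because a framed arc changes the combinatorial type of $T$ (merging or splitting strands) and the meridian $\beta_j$ meets $\gamma_j$ in two points rather than one; one must verify that the handle-slide moves in (4)--(6) suffice to transport one admissible choice of the pair $(\beta_j,\gamma_j)$ on $A_j$ to any other, and that no move involving the marked points $\z$ is needed. This is handled by observing that the isotopy class of the pair $(\beta_j,\gamma_j)$ on $A_j$ relative to its four boundary circles and to $\z\cap A_j$ is rigid once the framing and the combinatorics of the band surgery are fixed, so any discrepancy between two choices on $A_j$ is absorbed into slides of type (4) and (5) (together with an isotopy of $\Sigma''$). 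With these uniqueness statements assembled, a sequence of the listed moves connects $\Tt_1$ and $\Tt_2$ after possibly composing with a self-diffeomorphism of the ambient Heegaard triple.
\endproof
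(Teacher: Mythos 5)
Your general strategy --- build the triple by starting from a diagram of the punctured tangle $(M'',T'')$ and attaching standard surgery pieces, then reduce uniqueness to the tangle analogue of the Reidemeister--Singer moves from \cite{OS-4mfld} --- is the same as the paper's, which simply cites the proofs of Lemmas~4.5 and~4.8 from \cite{OS-4mfld}.

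However, your uniqueness argument has a gap. You only treat the case where the two subordinate triples $\Tt_1,\Tt_2$ are attached to the \emph{same} stalk $s(\Fsphere)$. But by Definition~\ref{def:subordinate-HD}, ``subordinate to $\Farc$ and $\Fsphere$'' means subordinate to $\Farc$, $\Fsphere$ and \emph{some} stalk, so the lemma must also connect two triples built from different stalks. This is exactly what move~(6) is for, and your write-up never engages with it: you claim that ``slides of the surgery curves $\beta_1,\dots,\beta_{n+m}$ over the remaining $\beta$-curves\dots account for moves~(4),~(5), and~(6),'' but that description only covers~(4) and~(5). Move~(6) goes in the opposite direction --- a handle slide of an ambient curve in $\{\beta_{n+m+1},\dots,\beta_\ell\}$ over a knot meridian $\beta_{n+j}$, with the corresponding slide of a $\gamma$-curve over the framing curve $\gamma_{n+j}$. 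The paper invokes the proof of Lemma~4.8 from \cite{OS-4mfld} precisely to show that a change of stalk for the framed link $\Fsphere$ is realized by a sequence of such reverse slides; your argument simply omits this. Note also that move~(6) is deliberately restricted to the knot meridians $\beta_{n+j}$ with $j=1,\dots,m$, not the arc meridians $\beta_1,\dots,\beta_n$, another constraint your reverse-engineering of the move list does not reproduce.

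A secondary, more cosmetic point: the rigidity you assert for the pair $(\beta_j,\gamma_j)$ on the $4$-punctured sphere $A_j$ is stated too sweepingly. What is true and used is that, once $\Farc_j$ and its framing are fixed, any two choices of $(\beta_j,\gamma_j)$ satisfying Definition~\ref{def:subordinate-HD} are isotopic in $A_j$ rel boundary; but the embedding of $A_j$ into $\Sigma$ does depend on choices, and the way to absorb that dependence is via moves~(4) and~(5), not by pure rigidity. Worth being more careful there, though the real issue is the missing stalk-change argument.
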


\begin{proof}
Given the stack $s(\Fsphere)$ for $\Fsphere$, the proof of Lemma 4.5 from 
\cite{OS-4mfld} may be used to show that  Heegaard diagrams subordinate to 
$\Farc$, $\Fsphere$ and $s(\Fsphere)$ exist and that
for every pair $H$ and $H'$ of such diagrams, $H$ may be changed to $H'$ via 
a sequence of moves of types $1,2,3,4,5$ and $6$ in the statement of the lemma.
If the stacks $s(\Fsphere)$ and $s'(\Fsphere)$ are different, the proof of  
\cite[Lemma 4.8]{OS-4mfld} implies that there is a Heegaard diagram $H$ as above 
subordinate to $\Farc$, $\Fsphere$ and $s(\Fsphere)$ and a Heegaard diagram 
$H'$ subordinate to $\Farc$, $\Fsphere$ and the stalk $s'(\Fsphere)$ such that 
the following is true. There is  a sequence of handle slides of 
some  particular curves in $\{\beta_{n+m+1},\ldots,\beta_{\ell}\}$ over the curves  
$\beta_{n+j}$ for $j=1,\ldots,m$ and other curves in 
$\{\beta_{n+m+1},\ldots,\beta_{\ell}\}$
(and a corresponding sequence of handle slides for $\gammas$) which change 
$H$ to $H'$. The lemma then follows.  
\end{proof}

Let $H=(\Sig,\alphas,\betas,\gammas,\z)$ 
be a Heegaard triple subordinated to 
the framed arcs $\Farc$ and the framed link $\Fsphere$ as above. 
Associated with $H$ we have a cobordism 
$$\Cobb_{H}=(W_H,\sur_{H}):(M,T)\amalg(M_{\beta\gamma},
T_{\beta\gamma})\leadsto (M({\Fsphere}),T({\Farc})),$$
where $(M_{\beta\gamma},T_{\beta\gamma})$ is the balanced tangle 
determined by the Heegaard diagram $(\Sig,\betas,\gammas,\z)$. This cobordism is related to $\Cobb(\Farc,\Fsphere)=(W,\sur)$ as follows. 

%Let $(M^+,T^+):=\del^+\Cobb(\Farc,\Fsphere)$ and $(M_{\sur},{T}_{\sur})$ 
%denote the associated balanced tangles as in Subsection \ref{SpTangle}. The tangle 
%$(M_{\beta\gamma},T_{\beta\gamma})$ is constructed from $(M_{\sur},{T}_{\sur})$ 
%by attaching $\ell-n-m$ one-handles to $M_\sur-T_{\sur}$. Let the spheres 
%$S_1,\ldots,S_{\ell-n-m}$ denote the cores of these one-handles.\\

%
%
%Associated with every framed arc $\Farc_i$ a properly embedded 
%arc $J_i\subset \sur$  is obtained as follows. 

Each $\Farc_i$ determines an embedded arc $J_i$ on 
$\sur$ with endpoints on $T$. After applying a smooth isotopy supported in a neighborhood 
of $\del\sur$ on $J=\{J_1,\ldots,J_n\}$ which moves the endpoints to $\partial^+\sur$, 
it becomes a spanning set of arcs on $\sur$. 
%
%These endpoints may first be moved to 
%$\del^+T$ and then to the interior of $T^+$ by an isotopy. More precisely,
%after applying a smooth isotopy supported in a 
%neighborhood of $\del \sur$ on the arcs $J=\{J_1,\ldots,J_{n}\}$ we may 
%assume that 
%$$\coprod_{i=1}^{n}\del J_i\subset \sur\cap M^+={T}^+.$$
%As before, $(M^+,T^+)=\del^+\Cobb(\Farc,\Fsphere)$.
%We call $\Farc$ {\emph{ordered}} if 
%the corresponding spanning set $J$ is ordered. It is relatively easy to show that 
%every acceptable set of framed arcs $\Farc$ may be changed to an ordered 
%set of framed arcs by performing a number of arc slides.
Let  $\nd(J)$ be a tubular 
neighborhood of $J$. Then,
$$(W-\nd(J),\sur\cap (W-\nd(J)))$$
gives a cobordism from $(M,T)\amalg (M_\sur,{T}_{\sur})$ to $(M({\Fsphere}),T({\Farc}))$.

\begin{lem}\label{HT:Arcs}
Under the above assumptions, after attaching $3$-handles along 
 $S_{1},\ldots,S_{\ell-n-m}\subset M_{\beta\gamma}$ to $\Cobb_{H}$, we obtain
the cobordism  $$(W-\nd(J),\sur\cap (W-\nd(J))).$$
\end{lem}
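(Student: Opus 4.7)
The plan is to follow the strategy used by Ozsv\'ath and Szab\'o in the closed four-manifold case and adapt it to incorporate the surface $\sur$ and the framed arcs $\Farc$. The argument breaks into a reduction to a convenient model Heegaard triple, followed by an explicit identification of the cobordisms for that model.

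First, by Lemma~\ref{lem:HDs-for-framed-arcs}, any two triples subordinate to $(\Farc,\Fsphere)$ are connected by the moves (1)--(6). Each of these moves can be realized by a diffeomorphism between the corresponding cobordisms $\Cobb_H$ that restricts to the identity on the $(M,T)$ and $(M(\Fsphere),T(\Farc))$ boundary components, and is an isotopy on $(M_{\beta\gamma},T_{\beta\gamma})$ that carries the distinguished spheres $S_i$ (corresponding to the small Hamiltonian isotope pairs $\beta_i,\gamma_i$ for $i>n+m$) to the analogous spheres for the new diagram. In particular, the cobordism obtained after attaching $3$-handles along these $S_i$ is a diffeomorphism invariant of the pair $(\Farc,\Fsphere)$, so it suffices to verify the lemma for one convenient subordinate triple.

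Next, I would build such a model Heegaard triple directly from the geometry of $(W-\nd(J), \sur\cap(W-\nd(J)))$ by choosing an adapted Morse function. The positive boundary contains $2$-handles implementing the framed surgery along $\Fsphere$ and handle attachments realizing the band surgery along the $J_i\subset\sur$; together with auxiliary $1$-handle/$2$-handle pairs to bring the Heegaard genus up to $\ell$, these yield a Heegaard surface $\Sig$ separating the cobordism. The $\alpha$-curves are the compressions on the $(M,T)\sqcup(M_\sur,T_\sur)$ side, and the $\gamma$-curves are the compressions on the $(M(\Fsphere),T(\Farc))$ side. The $\beta$-curves are chosen so that compressing along them recovers $(M,T)\sqcup (M_{\beta\gamma},T_{\beta\gamma})$, where $(M_{\beta\gamma},T_{\beta\gamma})$ is obtained from $(M_\sur,T_\sur)$ by introducing $\ell-n-m$ trivial $1$-handles whose core spheres are the $S_i$. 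One then checks that the resulting $(\Sig,\alphas,\betas,\gammas,\z)$ fulfills each clause of Definition~\ref{def:subordinate-HD}: for $i>n+m$, $\gamma_i$ is a small Hamiltonian translate of $\beta_i$ (these are the meridians of the auxiliary trivial handles); for $i=1,\dots,m$, $\beta_{n+i},\gamma_{n+i}$ realize the meridian and framing of $\Fsphere_i$ on the punctured torus $\del\Fsphere_i-(\Fsphere_i\cap\nd(s(\Fsphere)))$; and for $j=1,\dots,n$, $\beta_j,\gamma_j$ live on the four-punctured spheres $A_j$ near each framed arc $\Farc_j$, with $\beta_j$ the meridian and $\gamma_j$ its translate by the framing.

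For this model, the triangle cobordism $\Cobb_H=(W_{\alpha\beta\gamma},\sur_\z)$ admits an explicit handle decomposition in which the $\ell-n-m$ auxiliary trivial $1$-handles in $M_{\beta\gamma}$ appear as a ``stacked'' end with core spheres precisely the $S_i$. Attaching $3$-handles along the $S_i$ cancels these trivial handles, changing the negative boundary from $(M,T)\sqcup(M_{\beta\gamma},T_{\beta\gamma})$ to $(M,T)\sqcup(M_\sur,T_\sur)$. A direct inspection of the surviving handle structure then produces a diffeomorphism to $(W-\nd(J),\sur\cap(W-\nd(J)))$: the remaining $2$-handles along $\gamma_{n+1},\dots,\gamma_{n+m}$ realize the framed surgery along $\Fsphere$, while the $\beta_j,\gamma_j$ pieces on each $A_j$ realize the band surgery along $\Farc_j$ with the prescribed framing, and the surface $\sur_\z$ restricts to $\sur\cap(W-\nd(J))$. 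The main obstacle is the local analysis near each framed arc $\Farc_j$: one must verify that compressing along $\beta_j$ and then $\gamma_j$ on the four-punctured sphere $A_j$ really produces the band surgery that takes $T$ to $T(\Farc_j)$ with the correct framing, which is a local calculation in the standard pair $\mathcal{H}=(H,B)$ of Section~\ref{sec:Cerf}.
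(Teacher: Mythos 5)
Your proposal takes a genuinely different route from the paper, so let me compare them and also point out where your version has gaps.

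The paper's proof works directly with the given subordinate triple $H$, with no reduction to a model. It chooses an identification $W = (M\times[0,1]) \cup_{\{1\}\times\Fsphere}\bigl(\bigcup D^2\times D^2\bigr)$ together with a Morse function whose restriction $\pi_2|_{\sur}$ has all its index-$1$ critical points at time $1/2$, arranged so that $\Farc$ and $\Fsphere$ lie in the compression body $C(\betas)\subset M$ (which is allowed by Definition~\ref{def:subordinate-HD}). With these choices, both cobordisms appear simultaneously as complements in $W$ of nested neighborhoods: $\Cobb_H$ is $W$ minus a small neighborhood $N_{1/2}$ of the whole $3$-complex $C(\betas)\times\{1/2\}$, while $(W-\nd(J),\sur\cap(W-\nd(J)))$ is $W$ minus a smaller neighborhood $N'\subset N_{1/2}$ of the $1$-complex $B'(\Farc)$. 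The difference $N_{1/2}\setminus N'$ is precisely the $\ell-n-m$ four-dimensional $3$-handles attached along the $S_i$. This is a single geometric observation, valid for an arbitrary subordinate triple.

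Your approach instead reduces to a convenient model triple and then inspects the handle structure. That is a legitimate strategy in principle, but it has two problems as written. First, your reduction step asserts that every move of Lemma~\ref{lem:HDs-for-framed-arcs} is realized by a diffeomorphism of $\Cobb_H$ carrying the $S_i$ to "the analogous spheres for the new diagram." This is false as stated for stabilization: a stabilization changes $\ell$, hence changes the diffeomorphism type of $\Cobb_H$ (it has a different negative boundary $(M_{\beta\gamma},T_{\beta\gamma})$) and changes the number of $S_i$. What you actually need there is that a stabilization adds a canceling pair — one extra $1$-handle on the $(M_{\beta\gamma},T_{\beta\gamma})$-end together with the extra $3$-handle attached along the new sphere that cancels it — so the composite after attaching all $3$-handles is unchanged; this requires a separate check, not a diffeomorphism of $\Cobb_H$. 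Second, the "main obstacle" you flag at the end — verifying that compressing along $\beta_j$ and $\gamma_j$ on $A_j$ produces the band surgery — is already part of the definition of a subordinate triple (conditions (4) and (5) of Definition~\ref{def:subordinate-HD}) and is not where the content of the lemma lies; the content is the global identification of $\Cobb_H$ with the complement of a neighborhood of $C(\betas)\times\{1/2\}$ inside $W$, which your sketch leaves as "a direct inspection of the surviving handle structure." If you adopt the paper's nested-neighborhood observation, both of these difficulties disappear and the reduction step becomes unnecessary.
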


\begin{proof}
Denote the compression body obtained by attaching disks to $\Sig\times[0,1]$ along $\betas\times\{1\}$ by $C(\betas)$. 
Without loss of generality, we may assume that   
\[\Farc_i,\Fsphere_j\subset C(\betas)\ \ \ \ \ \text{for }\ i\in\{1,\ldots,n\},\ j\in\{1,\ldots,m\} .\] 
Furthermore, we may consider an identification 
\[W=\left(M\times [0,1]\right)\cup_{\{1\}\times \Fsphere}\left(\bigcup_{i=1}^m
D^2\times D^2\right)\] 
such that $\pi_2|_{\sur}$ is a Morse function with all critical points of index one. Here, 
$\pi_2$ is the projection map from $M\times[0,1]\setminus \{1\}\times\Fsphere$ 
onto $[0,1]$.  Moreover, we assume that 
\[\mathrm{Crit}(\pi_2|_{\sur})\subset M\times \{1/2\}.\]  
Thus, $T_{1/2}=\sur\cap(M\times\{1/2\})$ is a properly embedded, 
oriented, singular 1-dimensional submanifold of $M\times\{1/2\}$. For every framed arc $\Farc_i$ we have a singular point 
\[q_i\in T_{1/2}\cap \left(C(\betas)\times\{1/2\}\right)\subset \sur.\] 
Let $B'(\Farc)\subset T_{1/2}$ be the subspace corresponding to $B(\Farc)$. 
Let $N',N_{1/2}\subset W$ be tubular neighborhoods of $B'(\Farc)$ and 
$C(\betas)\times \{1/2\}$, respectively. Then
\begin{align*}
&\left(W-N',\sur\cap (W-N')\right)=\left(W-\nd(J),\sur\cap (W-\nd(J))\right)
\ \ \ \text{and}\\ 
&\left(W-N_{1/2},\sur\cap (W-N_{1/2})\right)=\Cobb_H.
\end{align*}

Thus, $\left(W-\nd(J),\sur\cap (W-\nd(J))\right)$ is obtained from $\Cobb_H$ by 
attaching $3$-handles along the spheres 
$S_1,\ldots,S_{\ell-n-m}\subset M_{\beta\gamma}$.
\end{proof}

It follows from the proof of Lemma \ref{HT:Arcs} that $(M_\sur,T_\sur)$ is obtained by 
surgery on  $(M_{\beta\gamma},T_{\beta\gamma})$ along the $2$-spheres 
$S_1,\ldots,S_{\ell-m-n}$. Abusing the notation, let $\spinc_0\in\SpinC(M_{\beta\gamma})$ 
denote the $\SpinC$ class obtained from $\spinc_0\in\SpinC(M_\sur)$ which satisfies
\[\langle c_1(\spinc_0), S_i\rangle =0\ \ \ \text{for~any}\ \ i\in\{1,2,\ldots,\ell-n-m\}.\]
For any Heegaard tripe $H$ subordinated to $\Farc$ and $\Fsphere$, let 
\[r:\SpinC(\Cobb({\Farc},\Fsphere))\ra\SpinC(\Cobb_H).\] 
denote the restriction map.

%
%In the above situation, 
%Lemma \ref{HT:Arcs} implies that associated with every Heegaard triple  
%$H$ subordinate to the framed arcs $\Farc$ and the framed link $\Fsphere$ 
%we have a restriction map 
%\[r:\SpinC(\Cobb({\Farc},\Fsphere))\ra\SpinC(\Cobb_H).\] 
%Abusing the notation, let $\spinc_0\in\SpinC(M_{\beta\gamma})$ denote 
%the $\SpinC$ class obtained from $\spinc_0\in\SpinC(M_\sur)$ which satisfies
%\[\langle c_1(\spinc_0), S_i\rangle =0\ \ \ \forall\ \ i\in\{1,2,\ldots,\ell-n-m\}.\]

\begin{lem}
For every $\spinct\in\SpinC(\Cobb({\Farc},\Fsphere))$, the restriction 
$r(\spinct)|_{M_{\beta\gamma}}$ is the $\SpinC$ class  
$\spinc_0\in\SpinC(M_{\beta\gamma})$.
\end{lem}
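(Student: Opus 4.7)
The plan is to reduce everything to Lemma \ref{HT:Arcs} and the naturality of the first Chern class. Recall that Lemma \ref{HT:Arcs} asserts that the cobordism $(W-\nd(J),\sur\cap (W-\nd(J)))$ is obtained from $\Cobb_H$ by attaching $3$-handles along the spheres $S_1,\dots,S_{\ell-n-m}\subset M_{\beta\gamma}$. Consequently, each $S_i$ bounds the core $3$-ball of the corresponding $3$-handle inside $W-\nd(J)$, so the inclusion $i\colon M_{\beta\gamma}\hookrightarrow W-\nd(J)$ satisfies $i_*[S_i]=0$ in $H_2(W-\nd(J);\Z)$.

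First, I would unpack the definition of the restriction map $r$: since $\Cobb_H\subset (W-\nd(J))\subset W$ and $\spinct$ lives on $W$, the class $r(\spinct)$ is obtained by restricting $\spinct$ to the sub-cobordism $\Cobb_H$, and thus $r(\spinct)|_{M_{\beta\gamma}}$ coincides with the restriction of $\spinct|_{W-\nd(J)}$ along $M_{\beta\gamma}\hookrightarrow W-\nd(J)$. By the naturality of $c_1$ with respect to restriction of $\SpinC$ structures, we then get
\[
\bigl\langle c_1\bigl(r(\spinct)|_{M_{\beta\gamma}}\bigr),[S_i]\bigr\rangle
=\bigl\langle c_1(\spinct|_{W-\nd(J)}),\,i_*[S_i]\bigr\rangle = 0
\]
for every $i=1,\dots,\ell-n-m$, which is the defining condition for $\spinc_0\in\SpinC(M_{\beta\gamma})$ in the statement of the lemma.

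Second, I would verify the complementary half of the characterization, namely that $r(\spinct)|_{M_{\beta\gamma}}$ restricts to the class $\spinc_0\in\SpinC(M_\sur)$ under the natural identification of $M_\sur$ with the complement of the $1$-handles inside $M_{\beta\gamma}$. Here I would run the same argument one level deeper: the belt spheres $H_j$ used to define $\spinc_0\in\SpinC(M_\sur)$ also bound in the $4$-dimensional piece of $W$ lying between $M^+$ and $M_\sur$ (this piece is precisely the $\Ring$-cobordism $\Cobb_\sur$ from Subsection \ref{SpTangle}, which sits as a sub-cobordism of $W$ via the collar of $\del^+ W$). Naturality of $c_1$ then forces the evaluations $\langle c_1(r(\spinct)|_{M_\sur}),[H_j]\rangle$ to vanish, matching $\spinc_0\in\SpinC(M_\sur)$.

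The main obstacle is bookkeeping rather than geometry: one must keep straight the four inclusions $M_{\beta\gamma}\hookrightarrow \Cobb_H\hookrightarrow (W-\nd(J))\hookrightarrow W$ (and the analogous chain through $\Cobb_\sur$) and check that the two listed conditions together uniquely pin down the $\SpinC$ class $\spinc_0\in\SpinC(M_{\beta\gamma})$, up to the usual $2$-torsion ambiguity that is absorbed into the statement. No further holomorphic or topological input is needed; the proof is essentially a diagram chase in cohomology, with Lemma \ref{HT:Arcs} supplying the crucial null-homology of the $S_i$ in the ambient $4$-manifold.
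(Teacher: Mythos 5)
Your argument is correct, and since the paper's own proof consists only of the phrase ``This is straightforward,'' your write-up fills in exactly the kind of argument the authors presumably had in mind.

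The two halves are each sound. For the pairing with the $S_i$: Lemma \ref{HT:Arcs} exhibits each $S_i\subset M_{\beta\gamma}$ as the boundary of the core $3$-ball of one of the $3$-handles building $W-\nd(J)$ out of $\Cobb_H$, so $[S_i]$ dies in $H_2(W-\nd(J);\Z)$ (and hence in $H_2(W;\Z)$); naturality of $c_1$ under the chain of inclusions $M_{\beta\gamma}\hookrightarrow\Cobb_H\hookrightarrow W-\nd(J)\hookrightarrow W$ then kills the pairing of $c_1\bigl(r(\spinct)|_{M_{\beta\gamma}}\bigr)$ with $[S_i]$. For the pairing with the $H_j$: the belt spheres of the $4$-dimensional $1$-handles in $\Cobb_\sur=W_\sur$ bound the co-core $3$-balls inside $W_\sur\subset W$, and the restriction of $\spinct$ to $M_\sur$ agrees (on the overlap, hence everywhere by uniqueness of extension across $3$-balls) with the restriction of $r(\spinct)|_{M_{\beta\gamma}}$ under the identification of $M_\sur$ minus balls with a subsurface of $M_{\beta\gamma}$, so the same $c_1$-naturality argument applies. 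Taken together these verify both defining properties of $\spinc_0\in\SpinC(M_{\beta\gamma})$.

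One small remark: your parenthetical about a ``usual $2$-torsion ambiguity that is absorbed into the statement'' is not needed here and is a bit misleading. The restriction to $M_\sur$ is matched directly (not merely via its $c_1$), and the $\SpinC$ structure on each $S^1\times S^2$ summand created by the $3$-dimensional $1$-handles is pinned down exactly by the vanishing of the $c_1$-pairing with $[S_i]$, since $H^2(S^1\times S^2;\Z)\cong\Z$ has no torsion. So the two conditions determine $\spinc_0\in\SpinC(M_{\beta\gamma})$ uniquely, with no residual ambiguity to worry about.
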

\begin{proof}
This is straightforward. 
\end{proof}
Suppose $\la_\sur:\pi_0(\sur)\to\Ring$ be an $\Ring$-coloring for $\Cobb(\Farc,\Fsphere)$. Given a $\SpinC$ class $\spinct\in\SpinC(\Cobb(\Farc,\Fsphere))$
we abuse the notation and call the  Heegaard triple   
$(\Sig,\alphas,\betas,\gammas,\la)$ 
$\spinct$-admissible,  if it is 
$r(\spinct)$-admissible. Here $\la:\z\to\Ring$ denotes the map induced by $\la_\sur$. Note that
every  diagram $(\Sig,\alphas,\betas,\gammas,\la)$ subordinate to  $\Farc$ 
and $\Fsphere$ may be transformed to a $\spinct$-admissible Heegaard 
diagram by performing isotopies on the curves in $\alphas$, following a 
procedure similar to \cite[Section 4.2]{AE-1}. 
Let us assume that  the Heegaard 
diagram $(\Sig,\alphas,\betas,\gammas,\la)$ subordinate to $\Farc$ and $\Fsphere$
is $\spinct$-admissible. We then call $\HD=(\Sig,\alphas,\betas,\gammas,\la,\spinct)$ an
$\Ring$-diagram for 
\[\Cob=\Cob(\Farc,\Fsphere)=[W,\sur,\spinct,\la_\sur]:\Tangle=[M,T,\spinc=\spinct|_M,\la_T]
\leadsto  \Tangle'=[M',T',\spinc'=\spinct|_{M'},\la'_T],\]
subordinate to $\Farc$ and $\Fsphere$. Here, $\la_T$ and $\la'_{T}$ are the $\Ring$-colorings induced by $\la_\sur$ on $(M,T)$ and $(M',T')$, respectively.

For an appropriate generic family of almost complex structures $J$, the $\Ring$-diagram $\HD$ determines a holomorphic triangle map 
%The $\Ring$-diagram $H$  determines a holomorphic triangle map 
\begin{displaymath}
\begin{split}
&\fmap_{\alpha\beta\gamma}:\HFT_J(\Sig,\alphas,\betas,\la,\spinc)\otimes_\Ring
\HFT_J(\Sig,\betas,\gammas,\la,\spinc_0)
%\\&\ \ \ \ \ \ \ \ \ \ \ \ \ \ \ \ \ \ \ \ \ \ \ \ \ \ \ \ \ \ \ \ \ 
\lra \HFT_J(\Sig,\alphas,\gammas,\la,\spinc').
\end{split}
\end{displaymath}

From the Heegaard diagram 
$(\Sig,\betas,\gammas,\z)$ for $(M_{\beta\gamma},T_{\beta\gamma})$ we may 
construct a Heegaard diagram $H'=(\Sig',\betas',\gammas',\z)$ for 
$(M_{\sur},T_{\sur})$ where $\Sig'$ is obtained by surgery on$\Sig$ along 
$\beta_{n+m+1},\ldots,\beta_{\ell}$, while 
\[\betas'=\{\beta_{1},\ldots,\beta_{n+m}\}\ \  \text{and}\ \  
\gammas'=\{\gamma_{1},\ldots,\gamma_{n+m}\}.\]
%Similarly, we may construct another Heegaard diagram $H''=(\Sig'',\betas'',\gammas'',\z)$ 
%for $(M_{\sur},T_{\sur})$ where $\Sig''$ is obtained from $\Sig'$ by surgering out 
%$\beta_{n+1},\ldots,\beta_{n+m}$, while 
%\[\betas''=\{\beta_{1},\ldots,\beta_{n}\}\ \  \text{and}\ \  
%\gammas''=\{\gamma_{1},\ldots,\gamma_{n}\}.\]
%Note that $H'$ is obtained from $H''$ by $m$ stabilizations.
Following the construction in Section \ref{sec:1-handlemap}, we get a homomorphism  

$$\fmap_{\beta\gamma}:\HFT(\Sig',\betas',\gammas',\la,\spinc_0)\ra
\HFT(\Sig,\betas,\gammas,\la,\spinc_0).$$
Then, we set
$\Theta_{\beta\gamma}:=\fmap_{\beta\gamma}(\Theta_{\sur})$
and define
\begin{displaymath}
\begin{split}
&\fmap_{\HD,J}:
\HFT_J(\Sig,\alphas,\betas,\la,\spinc)\lra\HFT_J(\Sig,\alphas,\gammas,\la,\spinc')\\
&\fmap_{\HD,J}(\x):=\fmap_{\alpha\beta\gamma}(\x\otimes \Theta_{\beta\gamma})
\ \ \ \ \ \ \text{for~any}\ \ \ \x\in \HFT_J(\Sig,\alphas,\betas,\la,\spinc).
\end{split}
\end{displaymath}

\begin{thm}\label{thm:inv-arcs-2-handles}
For any $\Ring$-module $\M$, the homomorphism $\fmap_{\HD,J}$ induces a homomorphism 
\[\fmap_{\Cob,\Farc,\Fsphere}^{\M}:\HFT^{\M}(\Tangle)\to\HFT^{\M}(\Tangle').\] 
\end{thm}

\subsection{Proof of Theorem~\ref{thm:inv-arcs-2-handles}}
It follows from standard arguments in Floer theory that for appropriate families of almost complex structures $J$ and $J'$, we have 
\[\fmap_{\HD,J'}\circ\Phi_{J\to J'}=\Phi_{J\to J'}\circ\fmap_{\HD,J}.\]
So we denote the induced map by $\fmap_{\HD}$. Let
$$\HD=(\Sig,\alphas,\betas,\gammas,\la,\spinct)\ \ \text{and}\ \ 
\HD'=(\Sig',\alphas',\betas',\gammas',\la',\spinct)$$ 
be $\Ring$-diagrams for $\Cob$ subordinate to $\Farc$ and $\Fsphere$. Assume that $\HD'$ is obtained from $\HD$ by one Heegaard move $e$ of the type specified in Lemma~\ref{lem:HDs-for-framed-arcs}. Associated with $\hmove$, let  
\begin{displaymath}
\begin{split}
& \Phi_{\hmove}:
\HFT^{\M}\left(\Sig,\alphas,\betas,\la,\spinc\right)\ra
\HFT^{\M}\left(\Sig',\alphas',\betas',\la',\spinc\right),\\
& \Psi_{\hmove}:
\HFT^{\M}\left(\Sig,\alphas,\gammas,\la,\spinc'\right)\ra
\HFT^{\M}\left(\Sig',\alphas',\gammas',\la',\spinc'\right).
\end{split}
\end{displaymath}
be the isomorphisms defined in Section ~\ref{sec:HFisom}. We need to prove that the diagram  
\begin{diagram}
\HFT^{\M}(\Sig,\alphas,\betas,\la,\spinc)&\rTo{\fmap_{\HD}}
&\HFT^{\M}(\Sig,\alphas,\gammas,\la,\spinc')\\
\dTo{\Phi_{\hmove}}&&\dTo{\Psi_{\hmove}}\\
\HFT^{\M}(\Sig',\alphas',\betas',\la',\spinc)&\rTo{\fmap_{\HD'}}
&\HFT^{\M}(\Sig',\alphas',\gammas',\la',\spinc')
\end{diagram}
commutes. Let us first consider the Heegaard move $\hmove$
which changes $\alphas$ to $\alphas'$ and keeps $\Sig,\betas,\gammas$ and $\z$
unchanged. The $4$-manifold $W_{\alpha\beta\gamma}$ is obtained by attaching 
$3$-handles to $W_{\alpha'\alpha\beta\gamma}$. Abusing the notation, denote 
the restriction of $\spinct$ to $W_{\alpha'\alpha\beta\gamma}$ by $\spinct$.  Let 
$\Sqmap_{\hmove}$ be the homomorphism associated with the $\Ring$-diagram 
$(\Sig,\alphas',\alphas,\betas,\gammas,\la,\spinct)$ and the distinguished 
generators $\Theta_{\alpha'\alpha}$ and $\Theta_{\beta\gamma}$, defined by counting 
holomorphic squares. Considering different possible degenerations of a square either to 
a bigon and a square or to a pair of triangles  gives the relation
\begin{displaymath}
\Psi_{\hmove}\circ \fmap_{\HD}-
\fmap_{\HD'}\circ \Phi_{\hmove}
=\Sqmap_{\hmove}\circ d+d\circ \Sqmap_{\hmove},
\end{displaymath}
following the standard arguments in Heegaard Floer theory. With a similar argument, if $e$ is a Heegaard move that changes $\betas$ to $\betas'$ (respectively $\gammas$ to $\gammas'$) and keeps $\Sig$, $\alphas$, $\gammas$ (respectively $\betas$) and $\z$ fixed,  the diagram commutes. 

Suppose $\hmove$ changes both $\betas$ and $\gammas$ simultaneously to $\betas'$ and $\gammas'$.  We obtain a pair of maps 
$\Sqmap_i=\Sqmap_{\hmove,i},\ i=1,2$ which correspond to the Heegaard
quadruples 
\begin{displaymath}
(\Sig,\alphas,\betas,\betas',\gammas',\la,\spinct)\ \ \text{and}\ \ 
(\Sig,\alphas,\betas,\gammas,\gammas',\la,\spinct),
\end{displaymath}
respectively. 
Denote the holomorphic triangle maps corresponding to the Heegaard subdiagrams 
\begin{displaymath}
(\Sig,\alphas,\betas,\gammas',\la,\spinct),\ (\Sig,\betas,\gammas,\gammas',\la,\spinct)
\  \  \text{and}\ \ (\Sig,\betas,\betas',\gammas',\la,\spinct)
\end{displaymath} 
by $\fmap_{\alpha\beta\gamma'},
\fmap_{\beta\gamma\gamma'}$ and $\fmap_{\beta\beta'\gamma'}$, respectively.\\

The images of the distinguished generator 
$\Theta_{\beta\gamma}\otimes\Theta_{\gamma\gamma'}$
under $\fmap_{\beta\gamma\gamma'}$ and the distinguished generator 
$\Theta_{\beta\beta'}\otimes\Theta_{\beta'\gamma'}$ under 
$\fmap_{\beta\beta'\gamma'}$ is the distinguished generator 
$\Theta_{\beta\gamma'}$ which corresponds to $(\Sig,\betas,\gammas',\la,\spinct)$.
This should be done independently for each one of the Heegaard moves. 
The proofs follow from the
standard arguments in Heegaard Floer theory 
since the Heegaard triples $(\Sig,\betas,\gammas,\gammas',\z)$ and 
$(\Sig,\betas,\betas',\gammas')$ have  standard forms.\\

Let us abuse the notation and denote 
$\fmap_{\alpha\beta\gamma'}(-\otimes \Theta_{\beta\gamma'})$
by $\fmap_{\alpha\beta\gamma'}$.
Setting $\Sqmap_{\hmove}=\Sqmap_1+\Sqmap_2$, 
the study of different possible degenerations of a square to a bigon  
and a square or to two triangles gives 
\begin{displaymath}
\begin{split}
&\fmap_{\alpha\beta\gamma'}-\fmap_{\HD'}\circ \Phi_{\hmove}=
\Sqmap_1\circ d+d\circ \Sqmap_1\ \ \text{and}\\
&\Psi_{\hmove}\circ\fmap_{\HD}-\fmap_{\alpha\beta\gamma'}
= \Sqmap_2\circ d+d\circ \Sqmap_2\\
\Rightarrow \ & \Psi_{\hmove}\circ\fmap_{\HD}
-\fmap_{\HD'}\circ \Phi_{\hmove}=
\Sqmap_{\hmove}\circ d+d\circ \Sqmap_{\hmove}.
\end{split}
\end{displaymath}

Note that the map $\Sqmap_{\hmove}$ is trivial 
when $\hmove$ is a stabilization or destabilization, provided that the complex 
structure is sufficiently stretched along the neck.

\newpage

\section{The cobordism map and its invariance}\label{sec:inv}
%\subsection{The map associated with a cobordism}
Let $\Cob=[W,\sur,\spinct,\la_\sur]$ be a stable 
$\Ring$-cobordism from 
$\Tangle=[M,T,\spinc,\la]$ to  $\Tangle'=[M',T',\spinc',\la']$. 
Consider an indexed parametrized decomposition 
$$\ti{\Cerf}:\Cobb=\Cobb_1\cup_{(M_1,T_1)}
\Cobb_2\cup_{(M_{2},T_2)}\Cobb_{3}.$$
Recall that for $i=1,3$, the cobordism $\Cobb_i$ is 
parametrized by a set 
$\Fsphere_i\subset M_{i-1}\setminus T_{i-1}$ of pairwise 
disjoint framed $(i-1)$-spheres, and a diffeomorphism 
\[d_i:(M_{i-1}(\Fsphere_i),T_{i-1})\to (M_i,T_i).\] 
Further, $\Cobb_2$ is parametrized by a framed link 
$\Fsphere_2$ and a set of pairwise disjoint framed arcs 
$\Farc$ in $(M_1,T_1)$ along with a diffeomorphism 
\[d_2:(M_{1}(\Fsphere_2),T_{1}(\Farc))\to (M_2,T_2).\]

For each $\Cobb_i=(W_i,\sur_i)$, let $\spinct_i$ and 
$\la_i$ be the $\SpinC$ structure and the $\Ring$-coloring 
induced by $\spinct$ and $\la_{\sur}$, respectively. Note that 
$\spinct$ determines $\spinct_i$ for $i=1,2,3$, while 
$\spinct_2$ determines $\spinct$. Let 
$\Cob_i=[W_i,\sur_i,\spinc_ti,\la_i]$.  In addition, $\spinct$ 
and $\la_{\sur}$ induce an $\SpinC$ structure on $(M_i,T_i)$, 
denoted by $\spinc_i$, and an $\Ring$-coloring denoted by 
$\la_i$, abusing the notation. For 
$0\le i\le 3$, let $\Tangle_i=[M_i,T_i,\spinc_i,\la_i]$, 
where $\Tangle_0=\Tangle$, and $\Tangle_3=\Tangle'$.

For every $\Ring$-module $\Rin$,
the constructions of Section~\ref{sec:connected-sum} and 
Section~\ref{sec:map} associate naturally defined 
$\Ring$-homomorphisms 
\begin{displaymath}
\fmap_i=\HFT^\M(d_i)\circ \fmap_{\Fsphere_i}^\M:
\HFT^{\Rin}(\Tangle_{i-1})
\lra \HFT^{\Rin}(\Tangle_i)
\end{displaymath}
to the parametrized $\Ring$-cobordism $\Cob_i$ for $i=1,3$ and 
\begin{displaymath}
\fmap_2=\HFT^\M(d_2)\circ \fmap_{\Cob_2,\Farc,\Fsphere_2}^\M:
\HFT^{\Rin}(\Tangle_{1})
\lra \HFT^{\Rin}(\Tangle_{2})
\end{displaymath}
to the parametrized $\Ring$-cobordism $\Cob_2$.
Subsequently, we may define
\begin{equation}\label{eq:cob-defention}
\begin{split}
&\fmap_{\Cob,\ti{\Cerf}}^\M:\HFT^{\M}(\Tangle)\lra 
\HFT^\M(\Tangle'),\quad\quad
\fmap_{\Cob,\ti{\Cerf}}^\M=\fmap_3\circ
\fmap_2\circ\fmap_1.
\end{split}
\end{equation}
The homomorphism $\fmap_{\Cob,\ti{\Cerf}}^\M$ is 
well-defined and natural, while {\emph{a priori}} it depends 
on the indexed parametrized decomposition $\ti{\Cerf}$.

\begin{thm}\label{thm:map-definition}
Let $\Cob$ be an $\Ring$-cobordism from the $\Ring$-tangle 
$\Tangle$ to the $\Ring$-tangle
$\Tangle'$. For every 
$\Ring$-module $\Rin$, the $\Ring$-homomorphism
$$\fmap_{\Cob,\ti{\Cerf}}^\M:
\HFT^{\Rin}(\Tangle)\ra\HFT^{\Rin}(\Tangle')$$
is an invariant of $\Cob$. 
More precisely, this $\Ring$-homomorphism
does not depend on the choice of the indexed parametrized 
decomposition $\ti{\Cerf}$ for $\Cobb$, which was used in its 
definition.
\end{thm}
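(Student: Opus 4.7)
The plan is to invoke the Parametrized Cerf Decomposition Theorem (Theorem 4.10), which states that any two indefinite good Morse data on $\Cobb = (W,\sur)$ can be joined by a finite sequence of (a) critical point switches, (b) critical point creations/cancellations, (c) isotopies of the embedded gradient-like vector field, (d) adding/removing regular values, and (e) left-right equivalences, staying indefinite and almost ordered and avoiding index-$0$ and index-$4$ critical points throughout. It therefore suffices to check that $\fmap^\M_{\Cob,\Modi}$ is unchanged under each single move. Because we have only defined $\fmap^\M_{\Cob,\Modi}$ for \emph{ordered} Morse data, I would first observe that passing between two ordered Morse data through an almost-ordered intermediate stage still only requires the five moves above, thanks to the moves used to reorder index-$2$ critical points (type I switches) described in Proposition~4.8.

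Moves (c) and (e) alter the Cerf decomposition by an ambient isotopy of $W$, respectively a diffeomorphism isotopic to the identity on $\partial_v W$; under both the parametrizing diffeomorphisms $d_1,d_2,d_3$ are composed with isotopic diffeomorphisms, and Theorem~3.1 (the functoriality statement for $\ATangleCat$) guarantees that isotopic diffeomorphisms induce the same map on $\HFT^\M$. Move (d) splits or merges elementary cobordisms, so invariance reduces to a composition identity. For splittings that introduce a product factor the identity is immediate; for splitting a cobordism parametrized by an acceptable set $\Farc$ of framed arcs together with a framed link $\Fsphere$ into sub-cobordisms parametrized by $(\Farc^1,\Fsphere^1)$ and $(\Farc^2,\Fsphere^2)$, invariance is precisely the composition law of Theorem~7.1, summed over the $\SpinC$ structures on $W$ that restrict correctly (note the use of the affine-set-of-$\SpinC$-structures formulation of $\Ring$-cobordisms explained in the introduction, which absorbs any indeterminacy from $\delta H^1(M_{\Tangle''};\Z)$).

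Move (a) has three types. Type~III (switches of two $g$-critical points) induces the same parametrized Cerf decomposition, so the map is literally the same. Type~I swaps two $G$-critical points of equal index across a common interval; since the corresponding pair of framed spheres can be taken disjoint and supported away from one another, the composition of the two elementary cobordism maps is insensitive to their order, which is immediate from the definitions in Sections~5 and~7 (for the $1$/$3$-handle case) and from Theorem~7.1 (for the $2$-handle case). Type~II swaps a $G$-critical point past critical points of $g$; here the relevant framed sphere and the acceptable set of arcs interact through an arc slide and a possible handle slide on $\Fsphere$, and invariance reduces to Theorem~6.4 (handle-slide and arc-slide invariance of $\fmap^\M_{\Farc,\Fsphere,\spinct}$) combined with Theorem~7.1.

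The main obstacle, and the step that requires the most work, is move (b): critical point cancellation/creation. Since index $0$ and $4$ critical points are avoided, we must treat $1$/$2$ and $2$/$3$ cancellations in $W$ as well as cancellations of a $g$-critical point against a $1$- or $2$-handle of $G$. In each case the two canceling handles together contribute a cobordism diffeomorphic to a product (with an explicit product diffeomorphism prescribed by the unique flow line between the critical points), and we must verify that the composition of the two elementary maps equals the map $\HFT^\M(d)$ attached to that product. My approach is to choose a Heegaard decomposition adapted to the canceling pair, in which the new $\alpha$- and $\beta$-curves created by the $1$/$2$-pair meet in two cancelling intersection points supported on a properly embedded sphere, and to split the holomorphic triangle map using Proposition~5.2 (the neck-stretching lemma) together with the model triangle count of Lemma~5.4. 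The $2$/$3$-cancellation is dual. For cancellations of a $g$-critical point against a $G$-critical point, the framed arc intersects the attaching or belt sphere of the relevant handle in a single point, and the corresponding composition is again computed by a neck-stretching argument reducing the pair of handle maps to a stabilization isomorphism followed by its inverse, hence to the identity. Once this is carried out for all cancellation types, Theorem~4.10 assembles everything into the statement that $\fmap^\M_{\Cob,\Modi}$ is independent of $\Modi$, which is the theorem.
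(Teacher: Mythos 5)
Your proposal tracks the paper's own argument closely: apply Theorem~\ref{thm:Cerf} to reduce to invariance under the listed Cerf moves, dispose of the moves involving only $G$-critical points via Theorem~\ref{thm:Handle-Slide-Invariance} together with the arguments of Subsection~4.4 of \cite{OS-4mfld}, and reduce the type-II critical-point switch (an index-$2$ critical point of $G$ passing critical points of $g$) to Proposition~\ref{prop:commutativity}, which as you observe is an immediate consequence of the composition law (Theorem~\ref{thm:composition}). One correction: there is no Cerf move in which a critical point of $g=G|_\sur$ cancels against a $1$- or $2$-handle of $G$ --- by definition a Morse function on $\Cobb$ has no critical points of $G$ in a neighborhood of $\sur$, and the indefiniteness of $g$ on the $2$-dimensional surface $\sur$ forces $g$ to have only index-$1$ critical points, so the only creations/cancellations arising in Theorem~\ref{thm:Cerf} are $1/2$ and $2/3$ cancellations among $G$-critical points in $W\setminus\sur$, and your extra case need not be treated.
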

\begin{proof}
With the above notation fixed, Theorem~\ref{thm:inddecom} 
reduces the proof to showing the 
invariance of the homomorphism $\fmap_{\Cob,\ti{\Cerf}}^{\M}$ 
under the following changes: 
\begin{enumerate}
\item Sliding a component of $\Fsphere_i$ on another component 
of $\Fsphere_i$ for $i=1,2,3$,
\item Sliding a component of $\Farc$ on another component of 
$\Farc$ or a component of $\Fsphere_2$,
\item Sliding a component of $\Fsphere_2$ on a component of 
$\Farc$,
\item Creation and cancellation of index one/two or 
two/three critical points,
\item Diffeomorphism equivalences.
\end{enumerate}
Invariance under the move $(1)$ follows from the arguments in 
\cite[Subsection 4.4]{OS-4mfld}, and invariance under the moves 
$(4)$ and $(5)$ is straightforward.  
%\todo{AA I am not sure about this.\\
%EE: We also need to address move (5).\\
%EE: I thought again about (5) and I think it is actually 
%trivial, having proved the other parts.} 
We will prove invariance under the moves $(2)$ and $(3)$ in Sections 
\ref{sec:arcslides} and \ref{sec:weakcomposition}. 
\end{proof}

\subsection{Invariance under arc slides.}\label{sec:arcslides}
Let $\Cob=[W,\sur,\spinct,\la_\sur]$ be an $\Ring$-cobordism 
defined by an acceptable set of framed arcs $\Farc$ and a 
framed link $\Fsphere$ in an $\Ring$-tangle 
$\Tangle=[M,T,\spinc,\la]$. Thus, the $\Ring$-cobordism 
$\Cob$ is from $\Tangle$ to 
$\Tangle'=[M(\Fsphere),T(\Farc),\spinc',\la']$ where 
$\spinc'=\spinct|_{M(\Fsphere)}$ and $\la'$ is the 
$\Ring$-coloring induced by $\la_{\sur}$ on 
$(M(\Fsphere),T(\Farc))$. Suppose $\ti{\Farc}$ is an 
acceptable set of framed arcs obtained from $\Farc$ by arc 
slides. Corresponding to $(\ti{\Farc},\Fsphere)$ we get a 
diffeomorphism 
\[D:(W,\sur)\to (\ti{W},\ti{\sur})\]
where $(\ti{W},\ti{\sur})$ is the cobordism from $(M,T)$ to 
$(M(\Fsphere),T(\ti{\Farc}))$ defined by 
$(\ti{\Farc},\Fsphere)$. 
Note that $d^0=D|_{(M,T)}$ is isotopic to identity and let 
$d=D|_{(M(\Fsphere),T(\Farc))}$. So, $(\Farc,\Fsphere,d)$ and 
$(\ti{\Farc},\Fsphere,\mathrm{id})$ are two parameterizations 
of $(\ti{W},\ti{\sur})$. The $\SpinC$ structure $\spinct$ and 
the $\Ring$-coloring $\la_{\sur}$ induce a $\SpinC$ structure 
and and $\Ring$-coloring on $(\ti{W},\ti{\sur})$, denoted by 
$\ti{\la}_{\sur}$ and $\ti{\spinct}$. Set 
$\ti{\Cob}=[\ti{W},\ti{\sur},\ti{\spinct},\ti{\la}_{\sur}]$ 
and let $\Tangle''=[M(\Fsphere),T(\ti{\Farc}),\spinc'',\la'']$ 
where $\spinc''$ and $\la''$ are induced by $\ti{\la}_{\sur}$ 
and $\ti{\spinct}$, respectively. Thus, $d$ is a diffeomorphism 
between the $\Ring$-tangles $\Tangle'$ and $\Tangle''$.

\begin{thm}\label{thm:Handle-Slide-Invariance}
With the above notation fixed, for any $\Ring$-module $\M$
\[\HFT^{\M}(d)\circ\fmap^{\M}_{\Cob,\Farc,\Fsphere}=\fmap^{\M}_{\ti{\Cob},\ti{\Farc},\Fsphere}:
\HFT^\M(\Tangle)\lra 
\HFT^\M(\Tangle'').
\]
\end{thm}
\begin{proof}
Assume $\ti{\Farc}$ is obtained from $\Farc$ by a single arc 
slide of $\Farc_1$ over $\Farc_2$. Let
$$\HD=(\Sig,\alphas=\{\alpha_1,\ldots,\alpha_{\ell}\},\betas
=\{\beta_1,\ldots,\beta_{\ell}\},\gammas=
\{\gamma_1,\ldots,\gamma_{\ell}\},\la:\z\ra \Ring,\spinct)$$
be an $\Ring$-diagram subordinate to $\Farc$ and $\Fsphere$, as 
in Definition~\ref{def:subordinate-HD}. Then, we obtain an 
$\Ring$-diagram subordinate to $\ti{\Farc}$ and $\Fsphere$ by 
handle sliding $\beta_2$ over $\beta_1$ and $\gamma_1$ over 
$\gamma_2$ as in Figure~\ref{fig:arc-slide}.  Here, $\beta_i$ 
and $\gamma_i$ are the closed curves corresponding to $\Farc_i$ 
for $i=1,2$. Let 
\[\ti{\HD}=(\Sig,\alphas,\ti{\betas},
\ti{\gammas},\la:\z\to\Ring,\ti{\spinct})\] 
be the resulting 
Heegaard triple. Following an argument analogous to the proof 
of Theorem \ref{thm:inv-arcs-2-handles}, we show that the 
diagram
\begin{diagram}
\HFT^{\M}(\Sig,\alphas,\betas,\la,\spinc)&\rTo{\fmap^{\M}_{\HD}}
&\HFT^{\M}(\Sig,\alphas,\gammas,\la,\spinc')\\
\dTo{\Phi}&&\dTo{\Phi'}\\
\HFT^{\M}(\Sig,\alphas,\ti{\betas},\la,\spinc)&\rTo{\fmap^{\M}_{\ti{\HD}}}
&\HFT^{\M}(\Sig,\alphas,\ti{\gammas},\la,\spinc'')
\end{diagram}
commutes. Here, $\Phi$ and $\Phi'$ are isomorphisms 
corresponding to the aforementioned handle slides. 

On the other hand, let $h:\Sig\to\Sig$ be the diffeomorphism, 
which maps $\beta_2$ to $\beta_1$, $\gamma_1$ to $\gamma_2$ 
and preserves the rest of $\beta$ and $\gamma$ curves. Then, 
$h$ induces the diffeomorphism $D$ from $(W,\sur)$ to 
$(\ti{W},\ti{\sur})$ as well as the diffeomorphism $d$, and 
\[\ti{\HD}'=(\Sig,h(\alphas),\ti{\betas},\ti{\gammas},
\la:\z\to\Ring,\ti{\spinct})\]
is the corresponding Heegaard triple subordinate to 
$(\ti{\Farc},\Fsphere)$ for $(\ti{W},\ti{\sur})$. Note that 
$\ti{\HD}'$ is obtained from $\ti{\HD}$ by a sequence $e$ of 
isotopy and handle slide on $\alpha$ curves i.e. moves of 
type (1) in Lemma \ref{lem:HDs-for-framed-arcs}. 
Theorem  \ref{thm:inv-arcs-2-handles} implies that 
$\Psi_e\circ \fmap_{\ti{\HD}}=\fmap_{\ti{\HD}'}\circ\Phi_e$, 
where 
\[\begin{split}
&\Phi_e:\HFT^{\M}(\Sig,\alphas,\ti{\betas},\la,\spinc)\to
\HFT^{\M}(\Sig,h(\alphas),\ti{\betas},\la,\spinc)\ \ 
\text{and}\\ 
&\Psi_e:\HFT^{\M}(\Sig,\alphas,\ti{\gammas},\la,\spinc')\to
\HFT^{\M}(\Sig,h(\alphas),\ti{\gammas},\la,\spinc'')
\end{split}\]  
are the isomormphisms associated with the Heegaard moves $e$. 
Since $d$ is induced by $h$, it follows that, 
\[d_\star\circ \fmap^{\M}_{\HD}=\Psi_e\circ \Phi'\circ
\fmap^{\M}_{\HD}=\fmap^{\M}_{\ti{\HD}'}\circ\Phi_e\circ \Phi=
\fmap^{\M}_{\ti{\HD}'}\circ d^0_\star.\]
Therefore, $\HFT^{\M}(d)\circ\fmap^{\M}_{\Cob,\Farc,\Fsphere}
=\fmap^{\M}_{\ti{\Cob},\ti{\Farc},\Fsphere}.$
\end{proof}

\begin{figure}%[ht]
\def\svgwidth{11cm}
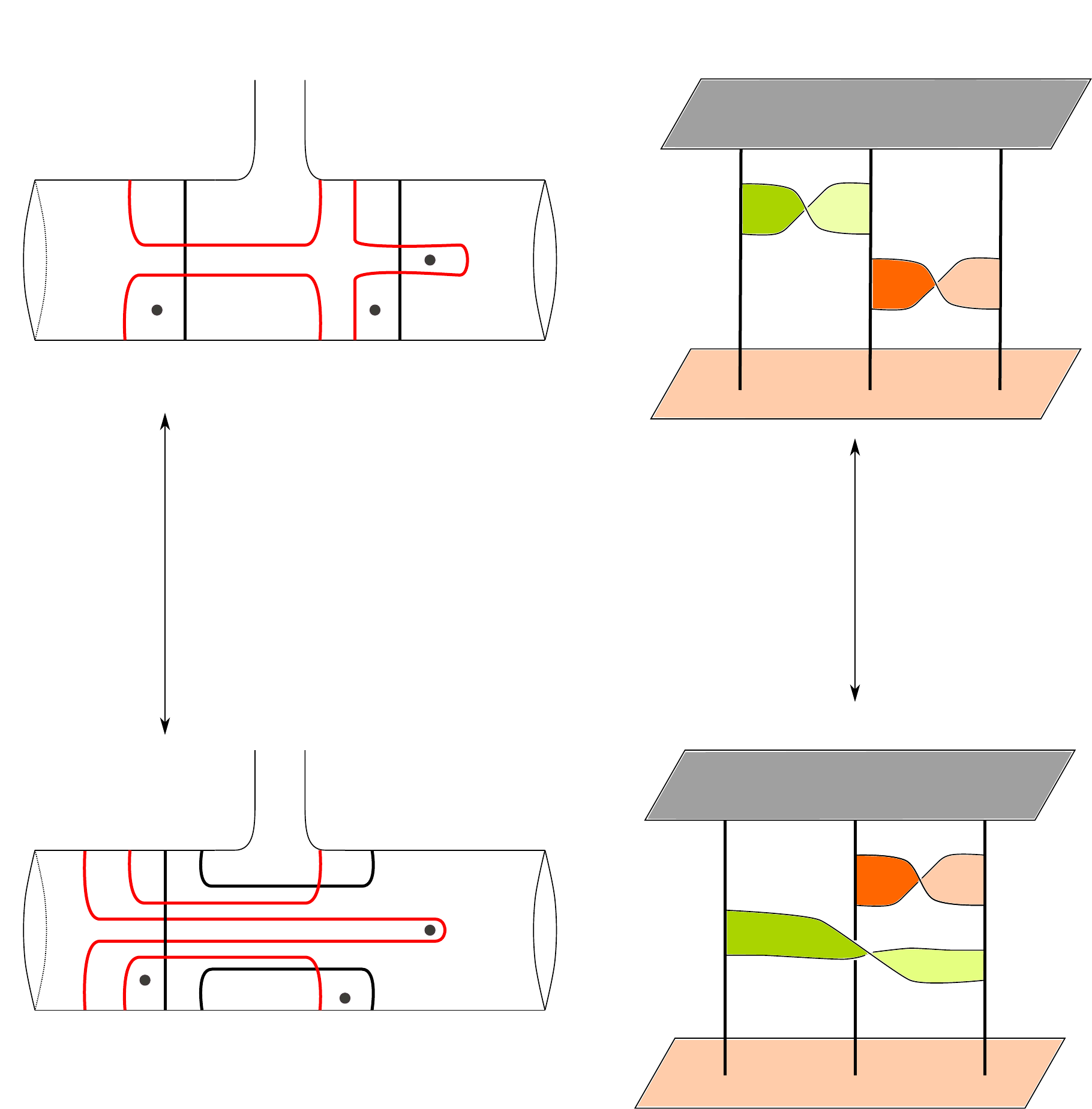
\caption{Sliding a framed arc $\Farc_1$ over 
another framed arc $\Farc_2$.
}\label{fig:arc-slide}
\end{figure}

We may thus restrict our attention to ordered sets of framed 
arcs. This is particularly useful when we study the 
composition law in the following subsections.

\subsection{A composition law for framed arcs and links}
\label{sec:weakcomposition}
Let $\Cob=[W,\sur,\spinct,\la_\sur]$ be an $\Ring$-cobordism 
from the 
$\Ring$-tangle $\Tangle=[M,T,\spinc,\la]$ to the $\Ring$-tangle 
$\Tangle'=[M',T',\spinc',\la']$. Further, assume that 
$(W,\sur)$ is the cobordism corresponding to an acceptable set 
of framed arcs $\Farc$, and a framed link $\Fsphere$. Given a 
decomposition $\Fsphere=\Fsphere^1\amalg \Fsphere^2$ and 
$\Farc=\Farc^1\amalg\Farc^2$ where $\Farc^1$ is acceptable, 
we obtain a decomposition 
\[(W,\sur)=(W_1,\sur_1)\cup_{(M'',T'')}(W_2,\sur_2).\]
Here, $(W_1,\sur_1)$ is the cobordism associated with 
$(\Farc^1,\Fsphere^1)$ and $(W_2,\sur_2)$ is the cobordism 
associated with $(\Farc^2,\Fsphere^2)$ in 
$(M'',T'')=(M(\Fsphere^1),T(\Farc^1))$. For $i=1,2$, let 
$\la_i$ denote the $\Ring$-coloring induced by $\la_\sur$ on 
$(W_i,\sur_i)$, and $\spinct_i=\spinct|_{W_i}$. Consider the 
corresponding $\Ring$-cobordisms 
\begin{align*}
&\Cob^1=[W_1,\sur_1,\spinct_1,\la_1]
:\Tangle\leadsto\Tangle''\quad\text{and}\quad 
\Cob^2=[W_2,\sur_2,\spinct_2,\la_2]
:\Tangle''\leadsto\Tangle',
\end{align*}
where $\Tangle''$ denotes the $\Ring$-tangle obtained by 
equipping $(M'',T'')$ with the induced $\Ring$-coloring and 
$\SpinC$ structure from $\la_\sur$ and $\spinct$, respectively. 

The restrictions of $\spinct$ to $W_1$ and $W_2$ remain 
unchanged, under modifying $\spinct$ by adding an element of 
$\delta H^1(M'',\Z)$, where $\delta:H^1(M'',\Z)\to H^2(W,\Z)$
is the connecting homomorphism in the Mayer-Vietoris 
sequence for $(W_1,W_2)$. So, let $\spincT$ be the set of all 
$\SpinC$ classes on $W$ so that their restrictions to $W_1$ 
and $W_2$ are equal to $\spinct_1$ and $\spinct_2$, 
respectively.

\begin{thm}\label{thm:composition}
With the above notation fixed, for  every $\Ring$-module 
$\M$ we  have
\[\fmap^{\M}_{\Cob^2,\Farc^2,\Fsphere^2}\circ
\fmap^{\M}_{\Cob^1,\Farc^1,\Fsphere^1}=\sum_{\bar{\spinct}
\in\spincT}\fmap_{\Cob(\bar{\spinct}),\Farc,\Fsphere}^\M,\]
where $\Cob(\bar{\spinct})$ is the $\Ring$-cobordism obtained 
from $\Cob$ by replacing $\spinct$ with $\bar{\spinct}$.
\end{thm}

\begin{proof}
Any acceptable set of framed arcs can be turned into an ordered 
set by arc slides. So Theorem \ref{thm:Handle-Slide-Invariance} 
implies that without loss of generality, we may assume $\Farc$ 
is ordered such that 
\[\Farc^1=\left(\Farc_1,\ldots,\Farc_{n_1}\right)\ \ \ \ 
\text{and}\ \ \ \ 
\Farc^2=\left(\Farc_{n_1+1},\ldots,\Farc_{n_1+n_2}\right), 
\ \ \ \text{where}\ n_1+n_2=n.\]
Furthermore, suppose that
\[ \Fsphere^1=\left(\Fsphere_1,\ldots,\Fsphere_{m_1}\right)
\ \ \ \ \text{and}\ \ \ \ 
\Fsphere^2=\left(\Fsphere_{m_1+1},\ldots,\Fsphere_{m=m_1+m_2}
\right).\] 

Let us fix an $\Ring$-diagram 
$\HD=(\Sig,\alphas,\betas,\gammas,\la,\spinct)$ subordinate
to $(\Farc,\Fsphere)$.  Let
$\deltas=\{\delta_1,\ldots,\delta_\ell\}$ denote a set of curves 
obtained as follows:
\begin{itemize}
\item Let $\delta_i$ be a Hamiltonian isotope of $\gamma_i$ 
for every $i$ in $$A=\{1,\ldots,n_1\}\cup\{n+1,\ldots,n+m_1\}.$$
\item For every $i\in\{1,\ldots,\ell\}-A$, let $\delta_i$ be a 
Hamiltonian isotope of $\beta_i$.
\end{itemize}
The $\SpinC$ structures $\spinct_1$ and $\spinct_2$ induce 
$\SpinC$ structures on $W_{\alpha\beta\delta}$ and 
$W_{\alpha\delta\gamma}$, which will also be denoted by 
$\spinct_1$ and $\spinct_2$, by slight abuse of notation.
Then the $\Ring$-diagrams
\[\HD_1=(\Sig,\alphas,\betas,\deltas,\la,\spinct_1)
\ \ \ \text{and}\ \ \ 
\HD_2=(\Sig,\alphas,\deltas,\gammas,\la,\spinct_2)\]
are subordinate to $(\Farc^1,\Fsphere^1)$ and 
$(\Farc^2,\Fsphere^2)$ and correspond to $\Cob_1$ and 
$\Cob_2$, respectively.
These two Heegaard triples determine the maps
$\fmap_1=\fmap_{\Cob^1,\Farc^1,\Fsphere^1}$ and
$\fmap_2=\fmap_{\Cob^2,\Farc^2,\Fsphere^2}$. 
Furthermore, the $\Ring$-diagram
\[(\Sig,\alphas,\betas,\deltas,\gammas,\la,\spincT)\]
and the distinguished generators 
\[\Theta_{\beta\delta}\in 
\HFT(\Sig,\betas,\deltas,\la,\spinct_1|_{M_{\beta\delta}}
=\spinc_0)\ \ \ \text{and}\ \ \
\Theta_{\delta\gamma}\in 
\HFT(\Sig,\deltas,\gammas,\la,\spinct_2|_{M_{\delta\gamma}}
=\spinc_0)\]
determine a holomorphic square map 
\[\Sqmap:\HFT(\Sig,\alphas,\betas,\la,\spinc)=\HFT(\Tangle)
\lra \HFT(\Sig,\alphas,\gammas,\la,\spinc')=\HFT(\Tangle').\]
Considering different possible degenerations of a square 
class of index $0$ and applying a mild generalization of   
\cite[Theorem 8.16]{OS-3m1}, we obtain the relation
\[\fmap_2\circ \fmap_1-
\sum_{\bar{\spinct}\in\spincT}
\fmap_{\alpha\beta\gamma,\bar{\spinct}}
(-\otimes \fmap_{\beta\delta\gamma}(\Theta_{\beta\delta}
\otimes\Theta_{\delta\gamma}))=\Sqmap\circ d+d\circ \Sqmap.\]
Here $\fmap_{\alpha\beta\gamma,\bar{\spinct}}$ and 
$\fmap_{\beta\delta\gamma}$ are the 
holomorphic triangle maps associated with the Heegaard triples
\[(\Sig,\alphas,\betas,\gammas,\la,\bar{\spinct})
\ \ \ \text{and}\ \ \ 
(\Sig,\betas,\deltas,\gammas,\la,\spinct_{0}),\]
respectively, and  $\spinct_{0}$ denotes a canonically 
determined 
$\SpinC$ structure on the $4$-manifold $W_{\beta\delta\gamma}$.
In order to complete the proof, it thus suffices to show that 
\[\fmap_{\beta\delta\gamma}(\Theta_{\beta\delta}
\otimes\Theta_{\delta\gamma})=\Theta_{\beta\gamma}.\]

Since $(\Sig,\alphas,\betas,\gammas,\z)$ is a Heegaard triple 
subordinate to $(\Farc,\Fsphere)$, the proof of 
Proposition~\ref{prop:generator} implies the existence of a 
labeling for the intersection points of $\beta_i$ and 
$\gamma_i$ by $d_i^+$ and $d_i^-$, for 
$i\in\{1,\ldots,\ell\}\setminus\{n+1,\ldots,n+m\}$, 
such that $\Theta_{\beta\gamma}$ is represented by 
\[\theta^+_{\beta\gamma}=\{d_1^+,\ldots,d_{\ell}^+\}.\]
Here, $d_i^+$ denotes the only intersection point of 
$\beta_i$ and $\gamma_i$ for $n+1\le i\le n+m$. Similarly, it 
follows from the definition of $\deltas$ that we may label the 
intersection points of $\deltas$ with $\betas$ and $\gammas$ 
such that $\Theta_{\beta\delta}$ and $\Theta_{\delta\gamma}$ 
are respectively represented by:
\[\theta^+_{\beta\delta}=\{c_1^+,\ldots,c_{\ell}^+\}\ \ \ \ \ \ 
\text{and}\ \ \ \ \ \theta^+_{\delta\gamma}
=\{b_1^+,\ldots,b_{\ell}^+\}.\] 

Let us now assume that 
$\theta\in\mathbb{T}_{\beta}\cap\mathbb{T}_\gamma$  
contributes to 
$\fmap_{\beta\gamma\delta}(\Theta_{\beta\delta}
\otimes\Theta_{\delta\gamma})$ through a triangle class 
\[\Delta=\Delta^\ell\in
\pi_2\left(\theta^+_{\beta\delta},\theta^+_{\delta\gamma},
\theta\right)\]
of Maslov index $0$. For any $1\le i\le \ell$, consider the 
Heegaard triple 
\[H^i=(\Sig^i=\Sig[\betas_{i+1},\ldots,\betas_{\ell}],\betas^i
=\{\beta_1,\ldots,\beta_i\},\deltas^i=\{\delta_1,\ldots,\delta_i\},
\gammas^i=\{\gamma_1,\ldots,\gamma_i\},\z)\]
where $\Sig[\betas_{i+1},\ldots,\betas_{\ell}]$ denotes the 
surface obtained from $\Sig$ by performing surgery along the 
$\beta$-curves $\{\beta_{i+1},\beta_{i+2},\ldots,\beta_{\ell}\}$. 
Therefore,  $\Sig^\ell$ is obtained from $\Sig^{\ell-1}$ by 
attaching a $1$-handle, and corresponding to $\Delta^{\ell}$, 
when the necks are sufficiently stretched, we obtain a 
triangles class $\Delta^{\ell-1}$ on $\Sig^{\ell-1}$ and a 
class $\Delta'$ on the attached one-handle which connects 
\[c^+_\ell\in \beta_{\ell}\cap \delta_\ell, \ \ \ \ \ 
b^+_\ell\in \delta_\ell\cap \gamma_\ell\ \ \ \ \ 
\text{and}\ \ \ \ \ 
d_\ell^\star\in\beta_\ell\cap\gamma_\ell,\]
where $\star$ is either $+$ or $-$. It also follows that 
\[\mu(\Delta^\ell)=\mu(\Delta^{\ell-1})
-\epsilon(d_\ell^\star),\]
where $\epsilon(d_\ell^+)=0$ and $\epsilon(d_\ell^-)=1$. 
We are thus forced to have $\star=+$.
It also follows from the argument of 
Proposition~\ref{prop:limit-moduli-space} and the second 
part of Lemma~\ref{lem:three-curves-general} that, if the 
the necks are sufficiently stretched,
$\Mod(\Delta^\ell)$ may be identified with 
$\Mod(\Delta^{\ell-1})$. This 
argument may in fact be repeated again and again to show that 
the generator $\theta$ uses the intersection points 
\[d^+_i\in\beta_i\cap\gamma_i,\ \ \ \ \text{for}\ \ 
i=n+m+1,\ldots,\ell,\]
and that $\Mod(\Delta^\ell)$ may be identified with 
$\Mod(\Delta^{n+m})$, for a triangle class over $\Sig^{n+m}$.

Now $\Sig^{n+m}=\Sig^{n+m-1}\# E$ where $E$ is a surface of 
genus $1$. If we stretch the connected sum neck, the complex 
structure on $\Sig^{n+m}$ converges to the join of complex 
structures on the $\Sig^{n+m-1}-\{w\}$ and $E-\{w'\}$.  
We also obtain a decomposition of $\Delta^{n+m}$ to the 
triangle classes $\Delta^{n+m-1}$ (on $\Sig^{n+m-1}$) and 
$\Delta'$ (on $E$). The generator $\theta$ is forced to use 
the unique intersection point 
$d_{n+m}\in\beta_{n+m}\cap\gamma_{n+m}$.
From the choice of the intersection points in 
$\theta_{\beta\delta}^+$ and $\theta_{\delta\gamma}^+$ it also 
follows that $\mu(\Delta^{n+m-1})=\mu(\Delta^{n+m})=0$. 
Furthermore, as the weak limit of a sequence of holomorphic 
curves in $\Mod(\Delta^{n+m})$ as the neck is stretched, 
we obtain a degenerate  holomorphic curve $u^{n+m-1}$ in 
the $0$-dimensional moduli space $\Mod(\Delta^{n+m-1})$, 
which has coefficient $k$ at $w$. The 
holomorphic curve $u^{n+m-1}$  determines a point 
$\rho(u^{n+m-1})$ in $\Sym^{k}(\D)$. Let $\Delta_k$ denote 
the union of all triangle classes $\Delta'$ over the genus-one 
surface $E$ with coefficient $k$ over the marked point $w'$. 
The arguments of Section 12 of \cite{Robert-cylindrical}, and 
in particular Lemma 12.2, Proposition 12.4 (in fact, 
Proposition A.3), imply that in the aforementioned weak limit,
$u^{n+m-1}$ is paired with a degenerate curve $v$ on $E$, 
which is the union of 
\[E\times \rho(u^{n+m-1})\subset E\times \D\]
and the unique holomorphic representative of $\Delta_0$, to 
produce the only possible curve in the weak limit. Every such 
weak limit may be perturbed to a holomorphic curve representing 
$\Delta^{n+m}$, giving an identification of 
$\Mod(\Delta^{n+m})$ with $\Mod(\Delta^{n+m-1})$, if the 
connected sum neck is sufficiently long.   Again, we may repeat 
the above argument to find an identification of 
$\Mod(\Delta^{\ell})$ with $\Mod(\Delta^n)$, provided that 
attaching the $1$-handles and taking connected sum with 
surfaces of genus $1$ is done using sufficiently stretched 
necks.

\begin{figure}
\def\svgwidth{14cm}
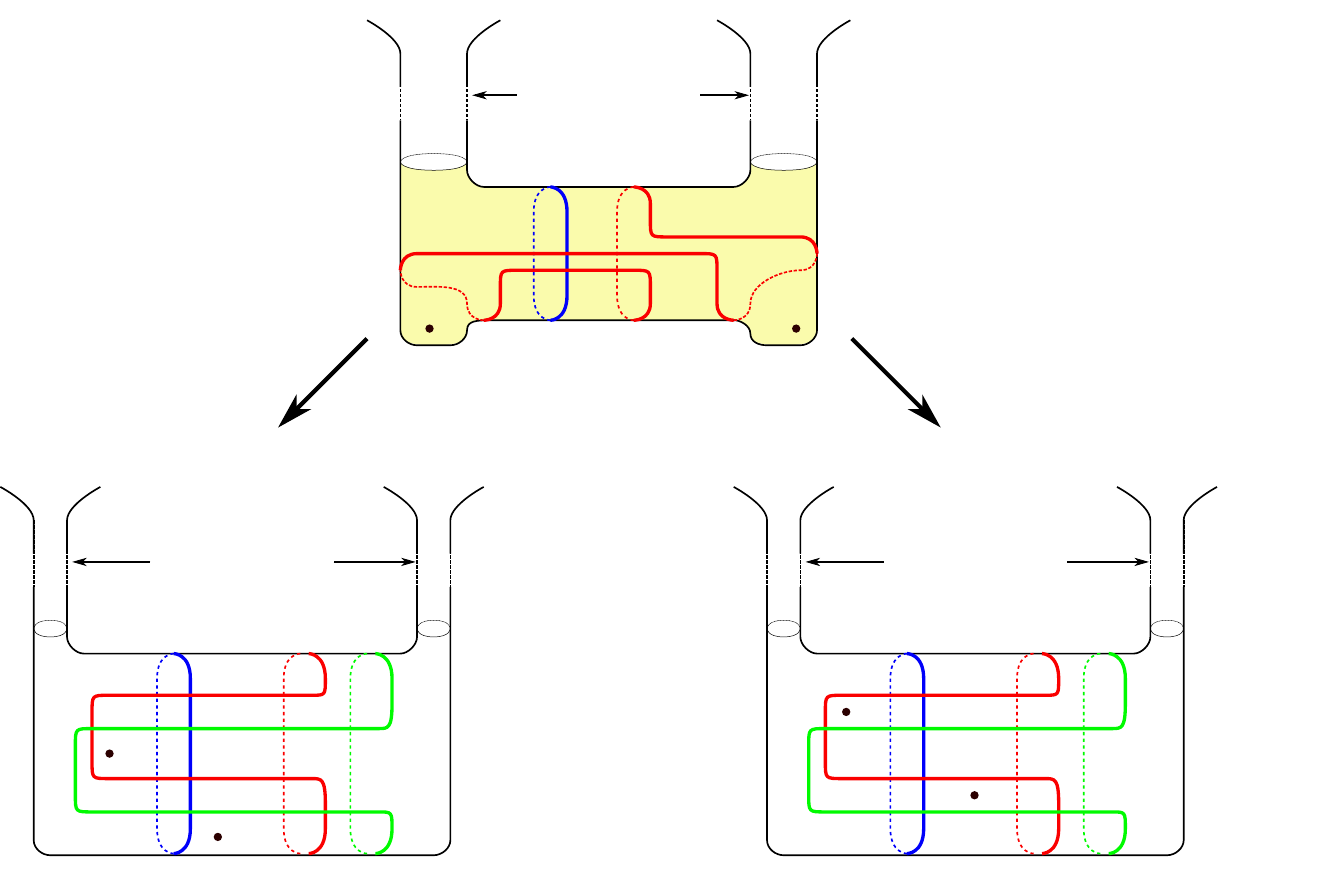
\caption{As we stretch the neck, $\Sig^i$ degenerates to 
a subsurface $\Sig^{i-1}$ and a sphere, which are attached 
by two long necks. When $i\in\{1,\ldots,n_1\}$ a pair of markings 
$z,z'\in\z$ will land on the sphere as illustrated on the 
left-hand-side, while for $i\in\{n_1+1,\ldots,n\}$ their location 
follows the pattern illustrated on the right-hand-side.
}\label{fig:Three-curves}
\end{figure}

The surface $\Sig^n$ is obtained from $\Sig^{n-1}$ by 
attaching a $1$-handle and moving  two of the markings (which 
we denote by $z,z'\in \z$) over the attached $1$-handle. 
The curves $\beta_n,\delta_n$ and $\gamma_n$ are all isotopic 
to the belt circle of the attached $1$-handle, while the 
positions of $z$ and $z'$ in the cylinder representing the neck 
is the position illustrated on the right-hand-side of  
Figure~\ref{fig:Three-curves}. Correspondingly, when the necks 
are sufficiently stretched, we obtain a triangles class 
$\Delta^{n-1}$ on $\Sig^{n-1}$ and a class $\Delta'$ on the 
attached one-handle which connects the intersection points
\[c^+_n\in \beta_{n}\cap \delta_n, \ \ \ \ \ 
b^+_n\in \delta_n\cap \gamma_n\ \ \ \ \ \text{and}\ \ \ \ \ 
d_n^\star\in\beta_n\cap\gamma_n,\]
where $\star$ is either $+$ or $-$.  It also follows that 
\[\mu(\Delta^n)=\mu(\Delta^{n-1})-\epsilon(d_n^\star),\]
where $\epsilon(d_n^+)=0$ and $\epsilon(d_n^-)=1$. 
We are thus forced to have $d_n=d^+_n$. It also follows from 
the argument of Proposition~\ref{prop:limit-moduli-space} 
and the second part of Lemma~\ref{lem:three-curves-general} 
that, if the the necks are sufficiently stretched,
$\Mod(\Delta^n)$ may be identified with $\Mod(\Delta^{n-1})$. 
This argument may  be repeated  to show that the 
generator $\theta$ uses the intersection points 
\[d^+_i\in\beta_i\cap\gamma_i,\ \ \ \ \text{for}
\ \ i=1,\ldots,n,\]
and that $\Mod(\Delta^\ell)$ may be identified with 
$\Mod(\Delta^{1})$, which consists of a single point.

The above argument shows that $\theta$ is forced to 
represent the generator $\Theta_{\beta\gamma}$, and that 
the total contribution of holomorphic triangles 
to the coefficient of $\theta$ in 
$\fmap_{\beta\gamma\delta}(\Theta_{\beta\delta}
\otimes\Theta_{\delta\gamma})$
is $1$. This completes the proof of the theorem.
\end{proof}

\begin{cor}
Assume $\Cob=[W,\sur,\spinct,\la_{\sur}]$ is an 
$\Ring$-cobordism from $\Tangle$ to $\Tangle'$, determined 
by an acceptable set of framed arcs $\Farc$ and a framed link 
$\Fsphere$ i.e. $(\Farc,\Fsphere,\mathrm{id})$ gives a 
parametrization of $(W,\sur)$. Let $\Farc'$ be an acceptable 
set of framed arcs obtained from $\Farc$ by sliding some of its 
components over some components of $\Fsphere$, and similarly 
$\Fsphere'$ be the framed link obtained from $\Fsphere$ by 
sliding some of its components over some of the framed arcs in 
$\Farc$. Denote the induced diffeomorphisms by
\[d_{\Farc}:\Tangle(\Farc',\Fsphere)\to \Tangle' \ \ \ 
\text{and}\ \ \ d_{\Fsphere}:\Tangle(\Farc,\Fsphere')\to
\Tangle'.\]
Then for any $\Ring$-module $\M$ we have
\[\fmap_{\Cob,\Farc,\Fsphere}^{\M}=\HFT^{\M}(d_{\Farc})\circ
\fmap^{\M}_{\Cob,\Farc',\Fsphere}\ \ \ \text{and}\ \ \ 
\fmap_{\Cob,\Farc,\Fsphere}^{\M}=\HFT^{\M}(d_{\Fsphere})\circ
\fmap^{\M}_{\Cob,\Farc,\Fsphere'}.\]
\end{cor}
\begin{proof}
Note that $(\Farc',\Fsphere,d_{\Farc})$ and 
$(\Farc,\Fsphere',d_{\Fsphere})$ are parametrizations of 
$\Cob$, or $(W,\sur)$). The claim is then a straightforward 
result of Theorems \ref{thm:Handle-Slide-Invariance} and 
\ref{thm:composition}
\end{proof}

This completes the proof of Theorem~\ref{thm:map-definition}, 
and we may now denote the $\Ring$-homomorphism
associated with the $\Ring$-cobordism $\Cob$
and the $\Ring$-module $\M$ by $\fmap_{\Cob}^{\M}$.

\subsection{The composition law}\label{subsec:composition}
In this subsection, we prove a generalization of 
Theorem~\ref{thm:composition}.

\begin{thm}\label{thm:composition-law}
Suppose that $\Cob_i=[W_i,\sur_i,\spinct_i,\la_i]:
\Tangle_{i-1}\ra \Tangle_i$ are $\Ring$-cobordisms for 
$i=1,\ldots,m$. Let $\Cobb=(W,\sur)$ be the stable cobordism 
obtained by putting $\Cobb_i=(W_i,\sur_i)$ together and 
$\la:\pi_0(\sur)\ra \Ring$ denote the $\Ring$-coloring 
induced by $\la_i$. For every $\spinct\in\SpinC(W)$ with 
$\spinct|_{W_i}=\spinct_i$, let
 $\Cob(\spinct)=[W,\sur,\spinct,\la]$. Then 
\begin{equation}\label{eq:composition-law}
\sum_{\substack{
\spinct\in\SpinC(W)\\ \spinct|_{W_i}
=\spinct_i}} \fmap_{\Cob(\spinct)}^{\M}
=\fmap_{\Cob_m}^\M\circ \cdots  \circ \fmap^{\M}_{\Cob_1}.
\end{equation}
\end{thm}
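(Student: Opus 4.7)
\medskip

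The plan is to reduce to the case $m=2$ by induction and then exploit the invariance of $\fmap_{\Cob}^\M$ under the choice of parametrized Cerf decomposition (Theorem~\ref{thm:map-definition}) together with the composition law for arcs and links (Theorem~\ref{thm:composition}).

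First I would argue by induction on $m$. The case $m=1$ is trivial. For the inductive step, set $\Cob'=\Cob_1\cup\cdots\cup\Cob_{m-1}$ with $(W',\sur')=\bigcup_{i<m}(W_i,\sur_i)$, and apply the inductive hypothesis to the $m-1$ composition. Composing the resulting identity with $\fmap_{\Cob_m}^\M$ on the left and invoking the $m=2$ case for $\Cob'\cup\Cob_m$, the only remaining bookkeeping is to check that the double summation
$$\sum_{\spinct'\in\SpinC(W')}\sum_{\spinct\in\SpinC(W),\ \spinct|_{W'}=\spinct',\ \spinct|_{W_m}=\spinct_m}(\cdot)$$
re-indexes as the single sum over $\spinct\in\SpinC(W)$ with $\spinct|_{W_i}=\spinct_i$. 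This is a direct consequence of the cohomology Mayer--Vietoris sequence for the decomposition $W=W'\cup W_m$ and the fact that affine sets of $\SpinC$ structures are the correct bookkeeping device (cf.\ Definition~\ref{defn:cobordism}).

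The main case is $m=2$. Choose indefinite ordered good Morse data $\Modi_1=(G_1,\ub_1,\xi_1)$ and $\Modi_2=(G_2,\ub_2,\xi_2)$ for $(W_1,\sur_1)$ and $(W_2,\sur_2)$, each inducing a parametrized Cerf decomposition of the form $\Cob_i=\Cob_i^{(1)}\cup\Cob_i^{(2)}\cup\Cob_i^{(3)}$ with $\Cob_i^{(1)}$ adding framed $0$-spheres $\Fsphere_i^0$, $\Cob_i^{(2)}$ adding framed arcs $\Farc_i$ and a framed link $\Fsphere_i$, and $\Cob_i^{(3)}$ adding framed $2$-spheres. Concatenating $\Modi_1$ and $\Modi_2$ produces a good Morse datum $\Modi$ for $(W,\sur)$. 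I would then apply Theorem~\ref{thm:Cerf} to rearrange critical points of $\Modi$ so that the resulting parametrized Cerf decomposition of $\Cob$ takes the standard three-stage form required by~\eqref{eq:cob-defention}. Concretely: move the index-$1$ critical points of $\Cob_2^{(1)}$ past the index-$3$ critical points of $\Cob_1^{(3)}$ (type I critical-point switches, which preserve goodness and the ordered property), then absorb the index-$1$ critical points of $\Cob_2^{(1)}$ into a larger $1$-handle cobordism by merging (regular-value manipulation), and symmetrically for the $3$-handles. After these moves, the combined middle layer is a single parametrized arc-and-knot cobordism to which Theorem~\ref{thm:composition} applies directly, giving the sum over $\SpinC$ structures on the middle $4$-manifold with prescribed restrictions.

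The key step, and main obstacle, is reconciling the $\SpinC$ summation. Theorem~\ref{thm:composition} produces a sum over $\SpinC$ structures on the middle $4$-manifold of the rearranged decomposition; I must identify this with the sum over $\spinct\in\SpinC(W)$ satisfying $\spinct|_{W_i}=\spinct_i$. This requires tracking, through the Mayer--Vietoris sequence, how $\SpinC$ structures on $W$ with prescribed restrictions to $W_1$ and $W_2$ pull back to $\SpinC$ structures on the $4$-manifolds appearing in the rearranged Cerf decomposition; by the acyclicity properties of the $1$- and $3$-handle cobordism maps (which carry no $\SpinC$-structure summation since the $H^2$ contribution from the corresponding boundary circles/meridians vanishes), the only nontrivial contribution comes from the middle layer, and the bijection follows from a diagram chase. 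Combining Theorems~\ref{thm:one-handle}, \ref{thm:three-handle}, \ref{thm:composition}, and the invariance of Theorem~\ref{thm:map-definition} then yields \eqref{eq:composition-law}, completing the proof.
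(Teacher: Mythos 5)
Your high-level plan is reasonable (reduce to $m=2$, use the Cerf machinery to reorganize the concatenated elementary cobordisms into the standard ``$1$-handles, then arcs/$2$-handles, then $3$-handles'' form), and the Mayer--Vietoris bookkeeping for the $\SpinC$ sum is essentially right, but there is a genuine gap at the central step: you invoke Theorem~\ref{thm:Cerf} and Theorem~\ref{thm:map-definition} as if they already license the rearrangement, and they do not. Theorem~\ref{thm:map-definition} establishes invariance of $\fmap^\M_{\Cob,\Modi}$ only for \emph{ordered} good Morse data, and its proof reduces to the seven moves it lists, none of which includes swapping an index-$3$ critical point of $G$ past an index-$1$ critical point (nor past an index-$2$ critical point or past a critical point of $g$). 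But the concatenated Morse datum you build from $\Modi_1$ and $\Modi_2$ is \emph{not} ordered---the $3$-handles of $\Cob_1$ sit below the $1$-handles of $\Cob_2$---so rearranging it into ordered form requires exactly those commutations, and the claim that they do not change the composed cobordism map is not automatic. Your parenthetical ``type I critical-point switches, which preserve goodness and the ordered property'' is incorrect on both counts: the Morse datum is not ordered before the switch, and more importantly Theorem~\ref{thm:map-definition}'s invariance statement simply does not cover this switch.

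This missing verification is in fact the heart of the paper's proof. After the reduction (via the definition of $\fmap_\Cob$ and Theorem~\ref{thm:composition}) to the case where $\Cob_1$ and $\Cob_2$ are single out-of-order elementary cobordisms, the paper enumerates the five adjacent out-of-order pairs---$(S^2,S^0)$, $(S^2,S^1)$, $(S^2,\text{arc})$, $(S^1,S^0)$, and $(\text{arc},S^0)$---and checks in each case, at the level of Heegaard diagrams with stretched necks, that $\fmap_{\Cob_2}\circ\fmap_{\Cob_1}$ agrees with the map for the reordered decomposition. Cases $(S^2,S^1)$ and $(S^1,S^0)$ follow \cite{OS-4mfld}; cases involving $S^2$ with $S^0$, $S^2$ with a framed arc, and a framed arc with $S^0$ require direct arguments with the $\theta_{top}/\theta_{bottom}$ generators and an appeal to the composition machinery of Theorem~\ref{thm:composition}. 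Without these explicit checks your argument does not close, so you should replace the appeal to Theorem~\ref{thm:Cerf}/\ref{thm:map-definition} with a case analysis of adjacent elementary swaps (or prove a lemma extending the invariance of Theorem~\ref{thm:map-definition} to the additional ``de-ordering'' critical point switches, which amounts to the same work).
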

\begin{proof}
The definition of the cobordism invariants and 
Theorem~\ref{thm:composition} reduce the proof of 
Theorem~\ref{thm:composition-law} to the case where 
$m=2$, both $\Cob_1$ and $\Cob_2$ are parametrized elementary 
$\Ring$-cobordisms, and one of the following happens.
\begin{enumerate}
\item $\Cobb_1$ corresponds to a framed $2$-sphere 
$\Fsphere_1$, and $\Cobb_2$ corresponds to a 
framed $0$-sphere $\Fsphere_2$. 
\item $\Cobb_1$ corresponds to a framed $2$-sphere 
$\Fsphere_1$, and $\Cobb_{2}$ corresponds to a 
framed knot $\Fsphere_2$. 
\item $\Cobb_1$ corresponds to a framed $2$-sphere $\Fsphere$, 
and $\Cobb_{2}$ corresponds to a framed arc $\Farc$.
\item  $\Cobb_1$ corresponds to a framed knot $\Fsphere_1$, and 
$\Cobb_2$ corresponds to a framed $0$-sphere $\Fsphere_2$. 
\item $\Cobb_1$ corresponds to a framed arc $\Farc$, and 
$\Cobb_2$ corresponds to a framed $0$-sphere $\Fsphere$. 
\end{enumerate}
For all cases, there is a unique $\SpinC$ class $\spinct$ on 
$W$ such that $\spinct|_{W_i}=\spinct_i$ for $i=1,2$. We
may thus set  $\Cob=\Cob(\spinct)$. Let 
$\Tangle=\Tangle_0=[M,T,\spinc,\la]$ 
and $\Tangle'=\Tangle_2=[M',T',\spinc',\la']$. 

In the first case, $W_1$ is determined by the framed $2$-sphere 
$\Fsphere_1$, so the underlying tangle of $\Tangle_1$ is 
$(M_1,T_1)=(M(\Fsphere_1),T)$. Considering this identification, 
we may assume that $\Fsphere_2$ lies in 
$M\setminus \Fsphere_1=M\cap M_1$. Thus, $\Fsphere_2$ specifies 
a cobordism $\Cobb_1^r$ from $(M,T)$ to $(M(\Fsphere_2),T)$. 
Similarly, $\Fsphere_1$ specifies a cobordism $\Cobb_2^r$ from 
$(M(\Fsphere_2),T)$ to $(M',T')$. As a result we get an indexed 
parametrized decomposition 
$\Cobb_1^r\cup_{(M(\Fsphere_2),T)}\Cobb_2^r$ for $(W,\sur)$. 

The $\Ring$-coloring $\la$ and the $\SpinC$ class $\spinct$ 
make $\Cobb_1^r$ and $\Cobb_2^r$ into $\Ring$-cobordisms, 
denoted by $\Cob_1^r$ and $\Cob_2^r$, respectively. Let   
$\Tangle^r=[M(\Fsphere_2),T,\spinc^r
=\spinct|_{M(\Fsphere_2)},\la]$. 
Consider the following diagram:
\begin{displaymath}
\begin{diagram}
\HFT^\M(\Tangle)&\rTo{\fmap^{\M}_{\Cob_1}}&
\HFT^\M(\Tangle_1)\\
\dTo{\fmap^{\M}_{\Cob_1^r}}&
&\dTo{\fmap^{\M}_{\Cob_2}}\\
\HFT^\M(\Tangle^r)&\rTo{\fmap^{\M}_{\Cob_2^r}}&
\HFT^\M(\Tangle').
\end{diagram}
\end{displaymath}  

Every generator of $\CFT^\M(\Tangle)$ is of the form 
$\x\times\theta$, where $\x$ is a generator of the chain 
complex $\CFT^\M(\Tangle_1)$ and $\theta$ is one of the two  
intersection points $\theta_{\alpha\beta}^1$ and 
$\theta_{\beta\alpha}^1$ corresponding to the framed 
$2$-sphere $\Fsphere_1$, (see Section~\ref{sec:1-handlemap}). 
If the necks in the corresponding Heegaard diagrams for 
$\Fsphere_1$ and $\Fsphere_2$ are sufficiently stretched, this 
generator is mapped to 
$\x\times \theta\times \theta^2_{\alpha\beta}$ under 
$\fmap^{\M}_{\Cob_1^r}$. Here $\theta^2_{\alpha\beta}$ and 
$\theta^2_{\beta\alpha}$ are the two intersection points which 
correspond to the $1$-handle, attached to $\Fsphere_2$. 
The generator $\x\times\theta\times \theta^2_{\alpha\beta}$ 
goes to zero under $\fmap^{\M}_{\Cob_2^r}$ unless 
$\theta=\theta^1_{\beta\alpha}$, when it is mapped to 
$\x\times \theta^2_{\alpha\beta}$. On the other hand,
the image of $\x\times\theta$ under $\fmap^{\M}_{\Cob_1}$
is zero unless $\theta=\theta^1_{\beta\alpha}$, when 
\[\fmap^{\M}_{\Cob_1}(\x\times\theta^1_{\beta\alpha})=\x\ \ \ 
\text{and}\ \ \ \fmap^{\M}_{\Cob_2}\left(
\fmap^{\M}_{\Cob_1}(\x\times\theta^1_{\beta\alpha})\right)
=\x\times\theta^2_{\alpha\beta}.\]
This implies that the above diagram is commutative and 
\begin{align*}
\fmap_{\Cob_2}^\M\circ\fmap_{\Cob_1}^\M=\fmap_{\Cob_2^r}^{\M}
\circ\fmap_{\Cob_1^r}^{\M}=\fmap_{\Cob}^{\M}.
%\sum_{\substack{\spinct\in\SpinC(W)\\ \spinct|M=\spinc,
%\spinct|_{M'}=\spinc'}}
\end{align*}
A similar modification to the proofs of  
\cite[Proposition 4.19 and Proposition 4.18]{OS-4mfld}, 
imply the heorem in the second and fourth cases, respectively. 
The only remaining cases are thus the third and the fifth 
cases.

In the third case, $\Cob_{1}$ corresponds to attaching a 
$3$-handle along $\Fsphere$  and $\Cob_{2}$
corresponds to a framed arc  $\Farc$. This framed arc  
may clearly be isotoped to $M\setminus\Fsphere$. 
Therefore, we get an indexed parametrized decomposition of 
$(W,F)$ by switching the order of attached handles. More 
precisely, let $\Cobb_1^r$ to be the cobordism determined by 
$\Farc$ from $(M,T)$ to $(M,T(\Farc))$ and $\Cob_2^r$ be the 
cobordism determined by $\Fsphere$ from $(M,T(\Farc))$ to 
$(M',T')$. For $i=1,2$, the $\Ring$-cobordism obtained by 
equipping $\Cobb_i^r$ with the $\Ring$-coloring induced by 
$\la$ and $\SpinC$ structure induced by $\spinct$ is denoted 
by $\Cob_i^r$. 
Correspondingly, we may choose a Heegaard surface $\Sig$ in 
$M$ with the following properties:
\begin{itemize}
\item The intersection of $\Sig$ with $\Fsphere$ is an annulus 
$A$, such that the surface $\Sig^\circ$ in $M_1$ obtained by 
cutting off $A$ and gluing in a pair of disks to the two 
boundary components, is a Heegaard surface for $M_1$.  
Here, $M_1$ is the underlying $3$-manifold of $\Tangle_1$. 
\item Each component of $T$ intersects $\Sig$ transversely 
in exactly one point. We let $\z=T\cap \Sig$.
\item There are two collections $\alphas$ and $\betas$ of 
attaching circles on $\Sig$ such that $\alphas\cap A$ is a 
single circle $\alpha$ which is a small Hamiltonian isotope 
of the circle $\{\beta\}=\betas\cap A$. Moreover, the diagram 
$(\Sig,\alphas,\betas,\la:\z\ra \Ring,\spinc)$ is a Heegaard 
diagram for $\Tangle$ and 
\[(\Sig^\circ,\alphas\setminus\{\alpha\},\betas
\setminus\{\beta\},\la:\z\ra\Ring,\spinc_1)\]
is a Heegaard diagram from $\Tangle_1$, 
where $\spinc_1=\spinct|_{M_1}$.
\item There is a third collection $\gammas$ of pairwise 
disjoint circles on $\Sig$ so that $\gammas\cap A$ is a 
single circle $\gamma$ which is a small Hamiltonian isotope 
of both $\alpha$ and $\beta$ and
$$(\Sig^\circ,\alphas\setminus\{\alpha\},
\betas\setminus\{\beta\},\gammas\setminus\{\gamma\},\z)$$
is subordinate to the framed arc $\Farc$ in $\Tangle_1$. 
In particular, the diagram
$(\Sig^\circ,\alphas\setminus\{\alpha\},\gammas
\setminus\{\gamma\},\z)$ 
is a Heegaard diagram for $(M',T')$.
\end{itemize}
Note that $(\Sig,\alphas,\betas,\gammas,\z)$ is subordinate 
to $\Farc$, where $\Farc$ is considered as a framed arc in 
$\Tangle$. Consider such a Heegaard diagram and assume that the 
almost complex structure is sufficiently stretched along the 
boundary circles of $A$. Every generator of 
$\CFT^{\M}(\Tangle)$ is of the form $\x\times\theta$, where 
$\x$ is a generator of $\CFT^{\M}(\Tangle_1)$ and $\theta$ is 
one of the two intersection points $\theta_{\alpha\beta}$ and 
$\theta_{\beta\alpha}$ between $\alpha$ and $\beta$ which 
correspond to $\Fsphere$. The image of such a generator under 
$\fmap_{\Cob_1}^{\M}$ is trivial unless 
$\theta=\theta_{\beta\alpha}$, when we have 
$\fmap_{\Cob_1}^{\M}(\x\times \theta_{\beta\alpha})=\x$.
On the other hand, we may use the argument of 
Theorem~\ref{thm:composition} and show that 
\[\fmap_{\Cob_1^r}^{\M}(\x\times\theta)=\fmap_{\Cob_2}^{\M}
(\x)\times\theta'\] 
where $\theta'\in\alpha\cap\gamma$ is the intersection point 
which corresponds to $\theta$. Every such generator is mapped 
to $0$ by $\fmap_{\Cob_2^r}^{\M}$ unless 
$\theta'=\theta_{\gamma\alpha}$, or equivalently, unless 
$\theta=\theta_{\beta\alpha}$. If this condition is satisfied, 
then 
\[\fmap_{\Cob_2^r}^{\M}\left(\fmap_{\Cob_1^r}^\M
(\x\times\theta_{\beta\alpha})\right)=\fmap_{\Cob_2}^\M(\x),\]
which completes the proof of the third case. The proof of the 
fifth case is similar to the proof of the third case.
\end{proof}

The composition law of Theorem~\ref{thm:composition-law} 
implies, in particular, that the left-hand-side expression is 
well-defined. Let us define 
\[W^0=\bigcup_{i\equiv 0\ \mathrm{mod}\ 2 } W_i
\ \ \ \text{and}\ \ \ 
W^1=\bigcup_{i\equiv 1\ \mathrm{mod}\ 2 } W_i.\] 
Then $W^0\cup W^1=W$ and $W^0\cap W^1=\cup_{i=1}^{m-1} M_i$, 
where $M_i=M_{\Tangle_i}$, and we obtain the following 
cohomology long exact  sequence
\begin{diagram}
\cdots  &\rTo& \bigoplus_{i=1}^{m-1}H^1(M_i,\Z)&\rTo{\delta}
& H^2(W)&\rTo{\pi}&\bigoplus_{i=1}^mH^2(W_i,\Z)&\rTo&\cdots  
\end{diagram}
If $\spinct,\spinct'\in\SpinC(W)$ restrict to $\spinct_i$ 
on $W_i$, then $\spinct-\spinct'\in H^2(M,\Z)$ is a class in 
the kernel of $\pi$, and is thus in the image of $\delta$. 
In particular, the subset $\spincT\subset \SpinC(W)$ which 
appears in the summation of the left-hand-side of 
Equation~\ref{eq:composition-law} is the orbit of a fixed 
$\SpinC$ structure $\spinct$ under the action of the 
$\Z$-module $\Image(\delta)$ of $H^2(W,\Z)$. On the other hand, 
using an appropriate Morse datum $\Modi$, we may represent 
every $\Z$-submodule of $H^2(W,\Z)$ as $\Image(\delta)$ for 
some decomposition of the stable cobordism $(W,\sur)$. 
In particular, for every affine set $\spincT$ of $\SpinC$ 
structures over a $\Z$-submodule of $H^2(W,\Z)$ which 
restrict to $\spinc$ and $\spinc'$ on the two ends, the sum 
\[\sum_{\spinct\in\spincT}\fmap^\M_{\Cob_\spinct}:
\HFT^\M(\Tangle)\ra \HFT^\M(\Tangle')\]
is well-defined. 

\begin{defn}\label{def:cobordism-map}
Let $\Cob=[W,\sur,\spincT,\la_\sur]:\Tangle\leadsto \Tangle'$ 
be an arbitrary $\Ring$-cobordism from the $\Ring$-tangle 
$\Tangle$ to the $\Ring$-tangle $\Tangle'$, where $\spincT$ is 
a subset of $\SpinC(W)$ which is affine over a $\Z$-submodule 
of $H^2(W,\Z)$. For every $\spinct\in\spincT$ define 
$\Cob_\spinct=[W,\sur,\spinct,\la]$. We then define the 
cobordism map associated with $\Cob$ by 
\[\fmap_\Cob^\M:
=\sum_{\spinct\in\spincT}\fmap_{\Cob_\spinct}^\M.\]
\end{defn}  

With the above definition in place, we may then re-state 
Theorem~\ref{thm:map-definition} and 
Theorem~\ref{thm:composition} as the following theorem.

\begin{thm}\label{thm:functor}
Fix an algebra $\Ring$ over $\F$ and an $\Ring$-module 
$\M$. Assigning the $\Ring$-module $\HFT^\M(\Tangle)$ to every
$\Ring$-tangle $\Tangle\in \mathrm{Obj}(\ACobCat)$ and the 
$\Ring$-homomorphism $\fmap_\Cob^\M:\HFT^\M(\Tangle)\ra 
\HFT^\M(\Tangle')$ to every $\Ring$-cobordism
\[(\Cob:\Tangle\leadsto\Tangle')\in 
\mathrm{Mor}(\Tangle,\Tangle')\in \mathrm{Mor}(\ACobCat)\] 
gives a well-defined functor
\[\HFT^\M:\ACobCat\lra \AModuleCat.\]
\end{thm}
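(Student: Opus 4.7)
The assignment on objects is already justified: by Theorem~\ref{thm:functor-for-tangle}, for each $\Ring$-module $\M$, every $\Ring$-tangle $\Tangle$ determines a well-defined $\Ring$-module $\HFT^\M(\Tangle)$, functorially with respect to the morphisms in $\ATangleCat$. The substance of the theorem lies in verifying three things for the extension to $\ACobCat$: that the morphism assignment $\Cob\mapsto \fmap_\Cob^\M$ of Definition~\ref{def:cobordism-map} produces a well-defined $\Ring$-homomorphism; that it is compatible with composition; and that it sends identities to identities. The first point, for a fixed $\SpinC$ structure $\spinct$, is Theorem~\ref{thm:map-definition}. To extend to an affine set $\spincT\subset\SpinC(W)$, I would first observe that only finitely many $\spinct\in\spincT$ contribute nontrivially to $\fmap_\Cob^\M$: for any fixed $\Ring$-diagram adapted to a parametrized Cerf decomposition of $\Cob$ built from Section~\ref{sec:map}, the holomorphic triangle (and quadrilateral) maps realizing each $\fmap_{\Cob_\spinct}^\M$ count polygons whose $\SpinC$ classes lie in a bounded region compatible with the admissibility hypothesis, so only finitely many $\spinct\in\spincT$ admit nonconstant contributions. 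This makes the sum in Definition~\ref{def:cobordism-map} a finite, well-defined $\Ring$-homomorphism.

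The composition property is the core of the argument. Suppose $\Cob_1:\Tangle\leadsto\Tangle'$ and $\Cob_2:\Tangle'\leadsto \Tangle''$ are $\Ring$-cobordisms with affine $\SpinC$ sets $\spincT_1$ and $\spincT_2$, and let $\Cob=\Cob_1\cup_{\Tangle'}\Cob_2$ with its affine set $\spincT$ as described immediately after Definition~\ref{defn:cobordism}. By construction, $\spincT$ consists of exactly those $\spinct\in\SpinC(W)$ whose restriction to $W_i$ lies in $\spincT_i$ for $i=1,2$, because the Mayer--Vietoris sequence realizes the submodule of $H^2(W,\Z)$ underlying $\spincT$ as the preimage under $\pi$ of the product of the submodules underlying $\spincT_1$ and $\spincT_2$. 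Concatenating parametrized Cerf decompositions of $\Cob_1$ and $\Cob_2$ produces a parametrized Cerf decomposition of $\Cob$, and so applying Theorem~\ref{thm:composition-law} for each pair $(\spinct_1,\spinct_2)\in\spincT_1\times\spincT_2$ yields
\begin{align*}
\fmap_{\Cob_2}^\M\circ \fmap_{\Cob_1}^\M
&=\sum_{\spinct_2\in\spincT_2}\sum_{\spinct_1\in\spincT_1}\fmap_{\Cob_{2,\spinct_2}}^\M\circ \fmap_{\Cob_{1,\spinct_1}}^\M\\
&=\sum_{(\spinct_1,\spinct_2)}\ \sum_{\substack{\spinct\in\SpinC(W)\\ \spinct|_{W_i}=\spinct_i}}\fmap_{\Cob_\spinct}^\M
=\sum_{\spinct\in\spincT}\fmap_{\Cob_\spinct}^\M=\fmap_\Cob^\M,
\end{align*}
where the third equality reorganizes the double sum using the Mayer--Vietoris identification of $\spincT$.

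For identities, the identity morphism of the object $\Tangle$ in $\ACobCat$ is the $\Ring$-cobordism $\Tangle\times[0,1]$ with trivially extended $\SpinC$ structure and $\la$. Choosing the Morse datum on this product with no critical points, the associated parametrized Cerf decomposition has no $1$-, $2$-, or $3$-handles and no bands, so $\fmap_{\Tangle\times[0,1]}^\M$ reduces to the natural isomorphism induced by the identity diffeomorphism, which is the identity of $\HFT^\M(\Tangle)$ by Theorem~\ref{thm:functor-for-tangle}. The same argument applied to an arbitrary diffeomorphism $d:\Tangle\to\Tangle'$ (viewed as a mapping cylinder cobordism) confirms that the restriction of $\HFT^\M$ to $\ATangleCat$ coincides with the functor of Theorem~\ref{thm:functor-for-tangle}.

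The most delicate step I expect will be the matching of the affine $\SpinC$ set of the composite cobordism with the double-summation arising from the composition law. The rigorous identification requires chasing the Mayer--Vietoris sequence for the decomposition $W=W^0\cup W^1$ (and its Morse-theoretic refinements) and checking that when one uses a concatenated Cerf decomposition the submodule controlling the affine set for $\Cob$ is precisely $\pi^{-1}$ of the product of the submodules for $\Cob_1$ and $\Cob_2$; any mismatch would spoil the bijection used in the calculation above. All other ingredients are already in place.
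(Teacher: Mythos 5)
Your proof is correct, and it takes essentially the same approach the paper intends: the paper offers no separate argument for Theorem~\ref{thm:functor}, explicitly stating it as a restatement of Theorem~\ref{thm:map-definition} (well-definedness for a single $\SpinC$ structure) together with the composition law (Theorem~\ref{thm:composition-law}), and the paragraph preceding Definition~\ref{def:cobordism-map} carries out the Mayer--Vietoris bookkeeping you flag as the delicate step. The one place you diverge slightly is the justification that the sum in Definition~\ref{def:cobordism-map} is well defined: you argue directly via $\spincT$-admissibility that only finitely many $\spinct$ contribute, whereas the paper observes that any affine set over a submodule of $H^2(W,\Z)$ arises as the orbit of $\Image(\delta)$ for a suitable decomposition and then appeals to Theorem~\ref{thm:composition-law} to identify the sum with a manifestly finite composition; your finiteness route is a bit more direct and avoids having to realize the submodule by a Morse-theoretic decomposition.
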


\subsection{Action of $\Lambda^*(H_1(W,\Z)/\mathrm{Tors})$}
Let us assume that $\Cob=[W,\sur,\spinct,\la_\sur]$ is an 
$\Ring$-cobordism from $\Tangle=[M,T,\spinc,\la]$ to 
$\Tangle'=[M',T',\spinc',\la']$. Consider a decomposition of 
$\Cob$ as
\[\Cob=\Cob_1\cup_{\Tangle_1}\Cob_2\cup_{\Tangle_2} \Cob_3\]
where $\Cob_1$ corresponds to the addition of $1$-handles, 
$\Cob_2$ corresponds to the addition of $2$-handles along 
some framed link and band surgeries along framed arcs,
and $\Cob_3$ corresponds to the addition of $3$-handles. Let 
$(M_i,T_i)$ be the underlying tangle of $\Tangle_i$ and 
$\Cob'=\Cob_2=[W',\sur',\spinct',\la_{\sur'}]$.
It is clear that $H_1(W',\Z)=H_1(W,\Z)$.

Assume $\Cob'$ is parametrized by a pair $(\Farc,\Fsphere)$ 
of an acceptable set of framed arcs $\Farc$, and a framed 
link $\Fsphere$ such that  $M_2=M_1(\Fsphere)$ and 
$T_2=T_1(\Farc)$. The homomorphism
$\fmap^\M_{\Cob'}=\fmap^{\M}_{\Cob',\Farc,\Fsphere}$ is 
defined  using an $\Ring$-diagram
\[\HD=(\Sig,\alphas,\betas,\gammas,\la,\spinct)\]
subordinate to $(\Farc,\Fsphere)$. Let us denote by 
$W_{\alpha\beta\gamma}$ the $4$-manifold obtained from the 
Heegaard triple $\HD$. There is an epimorphism 
\[\pi: H_1\left(M_{\alpha\beta}\amalg M_{\beta\gamma}
\amalg M_{\alpha\gamma},\Z\right)/\mathrm{Tors}
\lra H_1(W_{\alpha\beta\gamma},\Z)/\mathrm{Tors}=
H_1(W,\Z)/\mathrm{Tors}.\]
Every element $\zeta\in H_1(W,\Z)/\mathrm{Tors}$ may be 
represented as $\pi(\zeta_{\alpha\beta},\zeta_{\beta\gamma},
\zeta_{\alpha\gamma})$ with 
\begin{align*}
(\zeta_{\alpha\beta},\zeta_{\beta\gamma},\zeta_{\alpha\gamma})
\in H_1(M_{\alpha\beta}\amalg M_{\beta\gamma}\amalg 
M_{\alpha\gamma},\Z)/\mathrm{Tors}. 
\end{align*}
We may then define 
\begin{align*}
&\fmap_{\HD}^\zeta:
\HFT(\Sig,\alphas,\betas,\la,\spinct|_{M_{\alpha\beta}})\lra 
\HFT(\Sig,\alphas,\gamma,\la,\spinct|_{M_{\alpha\gamma}})\\
&\fmap^\zeta_{\HD}(\x):=
\fmap_{\alpha\beta\gamma}\left((\zeta_{\alpha\beta}\cdot\x)
\otimes \Theta_{\beta\gamma}+\x\otimes (\zeta_{\beta\gamma}
\cdot\Theta_{\beta\gamma})\right)-\zeta_{\alpha\gamma}\cdot
\fmap_{\alpha\beta\gamma}(\x\otimes\Theta_{\beta\gamma}).
\end{align*}
Correspondingly, we may define 
\begin{align*}
&\bar{\fmap}_{\HD}^\M:\HFT^\M(\Tangle_1)\otimes 
\Lambda^*(H_1(W,\Z)/\mathrm{Tors})\lra \HFT^\M(\Tangle_2),
\quad\quad\bar{\fmap}_{\HD}^\M(\x\otimes\zeta):=
\fmap_{\HD}^\zeta(\x).
\end{align*}
It is then implied by \cite[Lemma 2.6]{OS-4mfld}  that the 
above map is in fact well-defined, and does not depend on 
the representation of $\zeta$ as $\pi( \zeta_{\alpha\beta},
\zeta_{\beta\gamma},\zeta_{\alpha\gamma})$.
After composing with the maps $\fmap^\M_{\Cob_1}$ and 
$\fmap^\M_{\Cob_3}$ we obtain an induced map, which may 
be denoted by 
\begin{align*}
&\ovl\fmap_{\Cob}^\M:\HFT^\M(\Tangle)\otimes 
\Lambda^*(H_1(W,\Z)/\mathrm{Tors})\lra \HFT^\M(\Tangle').
\end{align*}
We may then follow the steps taken in \cite{OS-4mfld} to show 
the invariance of the map $\fmap_{\Cob}^\M$
and show that $\ovl\fmap_{\Cob}^\M$ is also well-defined.

\subsection{Relative $\SpinC$ structures and the cobordism map}
\label{subsec:Filtration}
Suppose $\Cobb=(W,\sur)$ is a stable cobordism from a balanced 
tangle $(M^0,T^0)$ to a balanced tangle $(M^1,T^1)$. Let 
$(X^i,\tau^i)$ be the sutured manifold associated with 
$(M^i,T^i)$ for $i=0,1$. Specifically, 
$X^i=M^i\setminus\nd(T^i)$ for a tubular neighborhood 
$\nd(T^i)$ around $T^i$. Moreover, if 
$T^i=\amalg_{j=1}^{\el}T^i_j$, then 
$\tau^i=\amalg_{j=1}^{\el}\tau^i_j$ 
where $\tau^i_j\subset \del \nd(T^i)$ is the meridian of 
$T^i_j$  for each $1\le j\le \el$. 

Denote the Poincar\'e dual of the homology class 
$[\tau^i_j]\in H_1(X^i,\Z)$ by $\chi^i_j\in 
\Ht^2(X^i,\del X^i,\Z)$. Recall that the set of relative 
$\SpinC$ classes on $(M^i,T^i)$,  denoted by 
$\SpinC(M^i,T^i)$, %(or $\SpinC(X^i,\tau^i)$) 
is an affine space over $\Ht^2(X^i,\del X^i,\Z)$ and sits 
in the exact sequence
\begin{displaymath}
\begin{diagram}
0&\rTo &\big\langle \chi_j^i%\in\Ht^2(X,\partial X;\Z)
\ |\ 1\le j\le \el\big\rangle_\Z
&\rTo &\SpinC(M^i,T^i)&\rTo&\SpinC(M^i)&\rTo&0.
\end{diagram}
\end{displaymath}
The surface $\sur$ induces an equivalence relation on the 
components of $T^i$ by setting $T^i_j\sim T^i_k$ if they are 
subsets of the boundary of the same component of $\sur$. Let
\begin{displaymath}
\mathcal{I}_{i}=\left\langle \chi^i_j-\chi^i_k~|~ T^i_j
\sim T^i_k\right\rangle\subset \Ht^2(X^i,\del X^i,\Z)
\end{displaymath}
and define
\begin{equation}\label{quotient:hom}
\begin{split}
&\SpinC_{\sur}(M^i,T^i):=\frac{\SpinC(M^i,T^i)}{\mathcal{I}_i}
\quad\text{and}\quad
\Hbb^i_{\sur}:= \frac{\Ht^2(X^i,\del X^i,\Z)}{\mathcal{I}_i}.\\
\end{split}
\end{equation}
Consequently, if $\sur=\amalg_{j=1}^m\sur_j$, each component 
$\sur_j$ of $\sur$ determines a equivalence class in 
$\Hbb^i_{\sur}$, denoted by $\eta_j^i$.

Correspondingly, this equivalence relation specifies an 
$\mathbb{F}$-algebra ($\mathbb{F}=\Z/2\Z$), denoted by 
$\Ring_{\sur}$, which is isomorphic to a quotient of 
$\Ring_i=\Ring_{(M_i,T_i)}\otimes_{\mathbb{Z}}\mathbb{F}$ 
by the ideal determined by $\sim$. More precisely, consider 
the polynomial ring 
$\F[\la_1,\ldots,\la_m]$.  If $\del^+M^0=\amalg_{i=1}^k S_i^-$ and 
$\del^-M^0=\amalg_{j=1}^lS_{j}^+$, then
$$\del_h^-W=\amalg_{i=1}^kS_i^-\times [0,1]\ \ \ \ 
\text{and}\ \ \ \ \del_h^+W=\amalg_{j=1}^lS_j^+\times [0,1].$$
Associated with any connected component of $\del_h W$, 
we define 
\[\la_i^-:=\prod_{\sur_j\cap S_i^-\neq\emptyset}\la_j\ \ \ 
\text{for}\ \ 1\le i\le k \ \ ,\ \ \la^+_i:=
\prod_{\sur_j\cap S_i^+\neq
\emptyset}\la_j\ \ \ \text{for}\ \ \ 1\le i\le l.\]
If we set $\la^-=\sum_{i=1}^k\la_i^-$ and 
$\la^+=\sum_{i=1}^l\la_i^+$, it follows that
\begin{equation}\label{Cob-algebra}
\Ring_{\sur}:=\frac{\F[\la_1,\ldots,\la_{m}]}
{\langle \la_i^-~|~g(S_i^-)>0\rangle 
+\langle \la_i^+~|~g(S_i^+)>0\rangle
+\langle\la^+-\la^-\rangle}.
\end{equation}

The map $\la_{\sur}:\pi_0(\sur)\ra \Ring_{\sur}$ mapping 
$\sur_i$ to $\la_i$ for $i=1,\ldots,m$, is an 
$\Ring_{\sur}$-coloring on $(W,\sur)$. Thus, 
$\Cob=[W,\sur,\spinct,\la_{\sur}]$ is an 
$\Ring_{\sur}$-cobordism for every $\spinct\in\SpinC(W)$. 
Let $\la^i:\pi_0(T^i)\ra\Ring_{\sur}$ denote the  
$\Ring_{\sur}$-coloring induced by $\la_{\sur}$ on 
$(M_i,T_i)$. Associated with this coloring, there is a natural 
filtration on $\Ring_{\sur}$ by $\Hbb_\sur^i$ defined by 
\begin{displaymath}
\begin{split}
&\chi^i:G(\Ring_{\sur})\ra\Hbb_{\sur}^i=\Ht^2(X^i,\del X^i,\Z)
\quad\quad
\chi^i(\prod_{j=1}^m\la_j^{a_j}):=\sum_{j=1}^m a_j\eta_j^i,
\end{split}
\end{displaymath}
see \cite[Section 3.2]{AE-1}.
Thus, for any $\SpinC$ class $\spinc^i\in\SpinC(M^i)$ we have
$$\HFT(M^i,T^i,\spinc^i,\la^i)=\bigoplus_{\relspinc^i
\in\spinc^i\subset\SpinC_{\sur}(M^i,T^i)}\HFT
(M^i,T^i,\relspinc^i,\la^i)$$ 

\begin{lem}\label{Filter-preserv}
Consider the $\Ring_\sur$-cobordism 
$\Cob=[W,\sur,\spinct,\la_\sur]$ as above and
let $\spinc^0=\tfrak|_{M^0}$ and $\spinc^1=\tfrak|_{M^1}$. 
Suppose that for an element $\x$ in 
$\HFT(M^0,T^0,\relspinc^0,\la^0)$ with 
$\relspinc^0\in\spinc^0$ we have 
$\fmap_{\Cob}(\x)\in\HFT(M^1,T^1,\relspinc^1,\la^1)$ where 
$\relspinc^1\in\spinc^1$. Then, $\fmap_{\Cob}$ induces the maps
\[\fmap_{\Cob}:\HFT(M^0,T^0,\relspinc^0
+\sum_{j=1}^m a_j\eta_j^0,\la^0)\to 
\HFT(M^1,T^1,\relspinc^1+\sum_{j=1}^ma_j\eta^1_j,\la^1),\]
for every choice of $a_1,\ldots,a_m\in\Z$.
\end{lem}
\begin{proof}
It is enough to prove the lemma in the case where $(W,\sur)$ is 
determined by $(\Farc,\Fsphere)$, for an acceptable set of 
framed arcs $\Farc$ and a framed link $\Fsphere$. Let
 $$\HD=(\Sig,\alphas,\betas,\gammas,\spinct,\la_{\sur})$$
be an $\Ring_{\sur}$-diagram for $\Cob$ whose underlying 
Heegaard triple is subordinate to $(\Farc,\Fsphere)$.  Further, 
assume that the distinguished generator $\Theta_{\beta\gamma}$ 
is represented by an intersection point 
$\theta_{\beta\gamma}\in\Tb\cap \Tc$ and the marked points in 
$\z$ are labelled such that $z_i$ corresponds to $T_i^0$.  
Consider the intersection points $\x,\y\in\Ta\cap\Tb$ and let
\[\relspinc(\y)-\relspinc(\x)=\sum_{j=1}^ma_j\eta_j^0,\]
where $a_j\in \Z$ for all $j$.  Here, abusing the notation 
$\relspinc(\cdot)$ denotes the equivalence class of the 
relative $\SpinC$ structure represented by the corresponding 
generator.  Let $\Delta_x\in\pi_2(\x,\Theta_{\beta\gamma},\x')$ 
and $\Delta_y\in\pi_2(\y,\Theta_{\beta\gamma},\y')$ be 
triangle classes representing the $\SpinC$ structure $\tfrak$ 
for some $\x',\y'\in\Ta\cap\Tc$.   Then, there are disks 
$\phi\in\pi_2(\y,\x)$ and $\psi\in\pi_2(\x',\y')$ such that 
$\Delta_y=\phi\star\Delta_x\star\psi$. Since 
$a_j=n_j(\phi)=\sum_{T_i^0\subset \del\sur_j}n_{z_i}(\phi)$, 
\[\begin{split}
\relspinc(\prod_{j=1}^m\la_j^{n_j(\Delta_y)}\cdot\y')-
\relspinc(\prod_{j=1}^m\la_j^{n_j(\Delta_x)}\cdot\x')
&=\sum_{j=1}^m(n_j(\Delta_y)-n_j(\Delta_x))\eta_j^1+
\relspinc(\y')-\relspinc(\x')\\
&=\sum_{j=1}^m(n_j(\Delta_y)-n_j(\Delta_x))\eta_j^1
-\sum_{j=1}^mn_j(\psi)\eta_j^1\\
&=\sum_{j=1}^mn_j(\phi)\eta_j^1=\sum_{j=1}^ma_j\eta_j^1
\end{split}
\]
where $n_j(\Delta_{\bullet})
=\sum_{T_i^0\subset\del\sur_j}n_{z_i}(\Delta_{\bullet})$ 
for $\bullet=x,y$ and 
$n_j(\psi)=\sum_{T_i^0\subset\del\sur_j}n_{z_i}(\psi)$.
This completes the proof of lemma.
\end{proof}

\newpage
\section{Applications and special cases}
\label{sec:applications}
\subsection{Cobordisms between closed $3$-manifolds}
Let $\sY=(Y,p)$ be an oriented, closed $3$-manifold $Y$ with 
a based point $p\in Y$.  Associated with $\sY$, there is a 
balanced tangle $(Y_p,T_p)$ defined as follows.  Let 
$p_-,p_+\in Y$ be points close to $p$ and $T\subset Y$ be 
an embedded oriented arc passing through $p$ such that 
$\del^-T=p_-$ and  $\del^+T=p_+$. Then, $Y_p$ is constructed 
by removing small disjoint balls (also disjoint from $p$) 
around $p_-$ and $p_+$ and $T_p=T\cap Y_p$. Note that 
$\del^-Y_p$ and $\del^+Y_p$ are the boundary of spheres around 
$p_-$ and $p_+$, respectively, and $T_{p}$ has one connected 
component. So, there is an obvious $\mathbb{F}[\la]$-coloring 
on $(Y_p,T_p)$ labeling $T_p$ by $\la$. Here, as before 
$\mathbb{F}=\Z/2\Z$. Let $\Tangle_{\sY,\spinc}=
(Y_p,T_p,\spinc,\la_p)$ for every $\spinc\in\SpinC(Y)$.

\begin{defn}
Let $\sY=(Y,p)$ and $\sY'=(Y',p')$ be oriented, closed, based 
$3$-manifolds. A \emph{decorated} cobordism $\sX=(X,\sig)$ 
from $\sY$ to $\sY'$ is a smooth, oriented $4$-manifold
$X$ with $\del X=-Y\amalg Y'$ and a properly embedded arc 
$\sig\subset X$ such that $\del \sig=p\amalg p'$.
\end{defn}

Associated with any decorated cobordism $\sX=(X,\sig)$ from 
$\sY=(Y,p)$ to $\sY'=(Y',p')$ one may construct a stable 
cobordism $(X_{\sigma},\sur_{\sigma})$ from $(Y_p,T_p)$ to 
$(Y'_p,T'_p)$, as follows.  Let $T\subset Y$ and $T'\subset Y'$ 
be embedded, oriented arcs containing $p$ and 
$p'$ respectively, and let $\del^\bullet T=p_\bullet$ and 
$\del^\bullet T'=p'_{\bullet}$ for $\bullet=+,-$. Consider 
parallel disjoint copies of $\sig$ in $X$, denoted by $\sig^-$ 
and $\sig^+$, such that $\del\sig^-=p_-\amalg p_-'$, 
$\del \sig^+=p_+\amalg p_+'$ and 
$\sig^-\cup T'\cup\sig^+\cup T$ bounds an embedded disk $D$ in 
$X$. Then, $X_\sigma$ is obtained from $X$ by removing small, 
disjoint tubular neighborhoods around $\sig^-$ and $\sig^+$, 
while $\sur_{\sigma}=D\cap X_{\sigma}$. Let $\la_{\sigma}$ 
denote the $\mathbb{F}[\la]$-coloring on 
$(X_\sigma,\sur_{\sigma})$ which labels $F_{\sigma}$ by $\la$, 
and set $\Cob_{\sX,\spinct}=(X_{\sigma},\sur_{\sigma},\spinct,
\la_{\sigma})$ for every $\spinct\in\SpinC(X)$.

For every closed, oriented, based $3$-manifold $\sY=(Y,p)$ and 
every $\spinc\in\SpinC(Y)$, the homology groups 
$\HFT^{\M}(\Tangle_{\sY,\spinc})$, for $\M$ equal to $\F$, 
$\F[\la]$, $\F[\frac{1}{\la}]$ or $\F[\la,\frac{1}{\la}]$, 
are equal to $\ov\HFT(Y,\spinc;\F)$, $\HFT^-(Y,\spinc;\F)$, 
$\HFT^+(Y,\spinc;\F)$ and $\HFT^\infty(Y,\spinc;\F)$, 
respectively. Moreover, for every decorated cobordism 
$\sX=(X,\sig)$ from $(Y,p)$ to $(Y',p')$ and every $\SpinC$ 
structure $\spinct\in\SpinC(X)$, the cobordism map 
$\fmap^{\Rin}_{\Cob_{\sX,\spinct}}$ is the cobordism map of 
Ozsv\'ath and Szab\'o in any of the aforementioned cases.

\subsection{Functoriality of link Floer homology} 
Another important example of $\Ring$-tangles is given by 
multi-pointed links.
\begin{defn}
A \emph{multi-pointed link} is a triple $\Lin=(Y,L,\p)$ 
where $L$ is an oriented link in a closed, connected, 
oriented $3$-manifold $Y$ together with a finite set 
$\p\subset L$ of based points such that every component of 
$L$ contains at least one base point.
\end{defn}
Associated with any multi-pointed link $\Lin=(Y,L,\p)$ we 
define a balanced tangle $(Y_{\p},L_{\p})$ as follows. Assume 
$\p=\{p_1,\ldots,p_n\}$ and consider $n$ pairwise of disjoint arc 
segments $I=\amalg_{i=1}^nI_i$ in $L$ such that $p_i\in I_i$. 
Using the orientation induced from $L$ on $I$, let 
$\p_-=\del^-I$ and $\p_+=\del^+I$. Then, $Y_{\p}$ is obtained 
from $Y$ by removing small disjoint ball neighborhoods around 
the points in $\p_-$ and $\p_+$ and 
$\del^\bullet Y_{\p}\subset \del Y_{\p}$ is the union of sphere 
boundary components around $\p_\bullet$ for $\bullet =+, -$.   
Furthermore, \[L_{\p}=-\left((L\backslash I)\cap 
Y_{\p}\right)\amalg\left(I\cap Y_{\p}\right).\]

\begin{defn}
A \emph{decorated cobordism} from $\Lin=(Y,L,\p)$ to 
$\Lin'=(Y',L',\p')$ is a triple  $\mathcal{F}=(X,\sur,\sigma)$ 
as follows.
\begin{enumerate}
\item $X$ is a smooth, oriented 4-manifold with 
$\del X=-Y\amalg Y'$.
\item $\sur\subset X$ is a smoothly embedded, oriented surface 
such that $\del \sur=-L\amalg L'$.
\item $\sig\subset \sur$ is a union of embedded, pairwise 
disjoint, oriented arcs such that $\del^-\sig=\p$, 
$\del^+\sig=\p'$ and every component of $\sur\setminus\sig$  
with positive genus intersects more than one component of 
$L\setminus\p$ and $L'\setminus\p'$.
\end{enumerate}
\end{defn}

To any decorated cobordism $\mathcal{F}=(X,F,\sigma)$ from 
$\Lin=(Y,L,\p)$ to $\Lin'=(Y',L',\p')$, we assign a cobordism 
$(X_{\sigma},\sur_{\sigma})$ from the tangle $(Y_{\p},L_{\p})$ 
to $(Y'_{\p'},L'_{\p'})$.  We choose the labeling for  
$\p=\amalg_{i=1}^np_i, \p'=\amalg_{i=1}^np_i'$ and 
$\sig=\amalg_{i=1}^n\sig_i$ such that 
$\del\sig_i=p_i\amalg p_i'$ for all $1\le i\le n$. Let 
$I=\amalg_{i=1}^nI_i$ and $I'=\amalg_{i=1}^nI'_i$ where 
$I_i\subset L$ and $I_i'\subset L'$ are connected segments 
containing $p_i$ and $p_i'$, respectively. Let 
$$\p_\bullet=\del^\bullet 
I=\amalg_{i=1}^np_{i\bullet}\quad\text{and}\quad\p'_\bullet
=\del^\bullet I'=\amalg_{i=1}^np'_{i\bullet}$$
where $p_{i\bullet}=\del^\bullet I_i$ and 
$p'_{i\bullet}=\del^\bullet I'_i$ for $\bullet=-,+$. Consider 
parallel copies $\sig_i^-,\sig_i^+\subset \sur$ of each 
$\sig_i$ such that $\del \sig_i^\bullet
=p_{i\bullet}\amalg p'_{i\bullet}$ for $\bullet=-,+$. 
Moreover, $\sig_i^-\cup\sig_i^+\cup I_i\cup I_i'$ bounds a 
disk $D_i\subset \sur$ containing $\sig_i$ such that 
$D_1,\ldots,D_n$ are pairwise disjoint. Let 
$\sur^\circ=\sur\setminus(\amalg_{i=1}^nD_i)=\amalg_{j=1}^m F_j^\circ$. For each $1\le i\le n$, $m^{\bullet}(i)$ is defined  such that $\sig_i^\bullet\subset\del F_{m^{\bullet}(i)}^{\circ}$ where $\bullet=+,-$.

Then, $X_{\sigma}$ is constructed from $X$ by removing disjoint 
small tubular neighborhood of the arcs 
$\amalg_{i=1}^n(\sig_i^-\amalg\sig_i^+)$ while 
$$\sur_{\sigma}=-(\sur^\circ\cap X_{\sigma})
\amalg(\amalg_{i=1}^nD_i\cap X_{\sigma}).$$

Abusing the notation, we denote $D_i\cap X_{\sigma}$ by $D_i$ 
and $\sur^\circ_j\cap X_{\sigma}$ by $\sur^\circ_j$ where 
$\sur^\circ=\amalg_{j=1}^m\sur^\circ_j$. The algebra associated 
to the cobordism $(X_{\sigma},F_{\sigma})$, defined as in 
Equation~\ref{Cob-algebra}, is equal to
\[\Ring=\frac{\F[\zet_1,\ldots,\zet_m,\la_1,\ldots,\la_n]}{\sum_{i=1}^n \la_i\zet_{m^+(i)}-\sum_{i=1}^n\la_i\zet_{m^-(i)}},\]
and the map $\la_{\sigma}:\pi_0(\sur_{\sigma})\ra \Ring$,
defined by
\begin{displaymath}
\begin{cases}
\la(\sur^\circ_j)=\zet_j\ \ \ \ 1\le j\le m\\
\la(D_i)=\la_i\ \ \ \ \ 1\le i\le n,
\end{cases}
\end{displaymath}
is an $\Ring$-coloring on $(X_{\sigma},F_{\sigma})$. So, 
$\Cob_{\mathcal{F},\spinct}=[W_{\sigma},\sur_{\sigma},
\spinct,\la_{\sigma}]$ is an $\Ring$-cobordism for every 
$\spinct\in\SpinC(X)$.  Let $\Tangle_{\Lin,\spinc}$ 
(respectively $\Tangle_{\Lin',\spinc'}$) denote the 
$\Ring$-tangle obtained from equipping $(Y_{\p},L_{\p})$ 
(respectively $(Y'_{\p'},L'_{\p'})$) with the coloring induced 
from $\la_{\sigma}$ and the $\SpinC$ structure 
$\spinc=\spinct|_{Y}$ (respectively $\spinc'=\spinct|_{Y'}$).

Then for every $\Ring$-module $\Rin$ we have an 
$\Ring$-homomorphism 
$$\fmap_{\Cob_{\mathcal{F},\spinct}}^{\Rin}:
\HFT^{\Rin}(\Tangle_{\Lin,\spinc})
\ra\HFT^{\Rin}(\Tangle_{\Lin',\spinc'}).$$
For $\Rin=\Ring$, following the discussions of 
Section~\ref{subsec:Filtration}, 
the chain complexes $\CFT(\Tangle_{\Lin,\spinc})$ 
and  $\CFT(\Tangle_{\Lin',\spinc'})$ are 
$(\Ring,\Hbb_{\sur})$ and $(\Ring,\Hbb_{\sur}')$ filtered chain 
complexes where $\Hbb_{\sur}$ and $\Hbb_{\sur}'$ are defined 
as in Equation~\ref{quotient:hom}. Moreover, the cobordism map 
$\fmap_{\Cob_{\mathcal{F},\spinct}}$ preserves the relative 
filtration in the sense of Lemma \ref{Filter-preserv}. 
In particular, for $\Rin=\F[\la_1,\ldots,\la_n]$ which has 
the structure of an $\Ring$-module via  the homomorphism 
$\phi:\Ring\ra\Rin$ which maps all $\zet_i$ to zero, 
we obtain an invariant homomorphism 
$$\fmap_{\mathcal{F},\spinct}=\fmap_{\Cob_{\mathcal{F},
\spinct}}^{\Field[\la_1,\ldots,\la_n]}:\mathrm{HFL}^-
(Y,L,\p,\spinc)\ra\mathrm{HFL}^-(Y',L',\p',\spinc').$$

\newpage
%----------------------------------------------
\bibliographystyle{hamsalpha}
\bibliography{HFBibliography}
\end{document}